\xpatchcmd{\proof}{\itshape}{\normalfont\proofnamefont}{}{}
\newcommand{\proofnamefont}{\bfseries}
\newtheorem{proposition}{Proposition}
\newtheorem{corollary}[proposition]{Corollary}
\newtheorem{lemma}[proposition]{Lemma}
\newtheorem{remark}{Remark}
\newtheorem{example}{Example}
\newtheorem{theorem}[proposition]{Theorem}
\newtheorem{definition}{Definition}
\def\1{~\mbox{I\hspace{-.6em}1}} % fonction carct\'eristique
\title{Extremes for stationary regularly varying random fields over arbitrary index sets}
\author{ Riccardo Passeggeri\\
{\tt riccardo.passeggeri14@imperial.ac.uk}\\
Department of Mathematics,  Imperial College London,  \\
80 Queen's Gate, London,\\
SW7 2AZ, UK\\[1cm]
{\bf Olivier Wintenberger}\\
{\tt olivier.wintenberger@sorbonne-universite.fr}\\
LPSM, Sorbonne Universit\'{e}, \\
4 place Jussieu,\\ 
Paris, 75005, France}
\begin{document}

\maketitle

% The correct dates will be entered by the editor

\abstract{We consider the clustering of extremes for stationary regularly varying random fields over arbitrary growing index sets. We study sufficient assumptions on the index set such that the limit of the point random fields of the exceedances above a high threshold exists. Under the so-called anti-clustering condition, the extremal dependence is only local. Thus the index set can have a general form compared to previous literature \cite{BP,Stehr}. However, we cannot describe the clustering of extreme values in terms of the usual spectral tail measure \cite{SW} except for hyperrectangles or index sets in the lattice case. Using the recent extension of the spectral measure for star-shaped equipped space \cite{SZM}, the $\upsilon$-spectral tail measure provides a natural extension that describes the clustering effect in full generality. }\\

{\bf Keywords:} {Extremes, Regular variation, Extremal index, Max-stable random field, Space-time models}

{\bf MSC Classification:} {60G70, 60G60, 62G32}
\maketitle
\section{Introduction}
Asymptotic results for extreme values of random fields have attracted much attention recently, see Samorodnistky and Wu \cite{SW}, Basrak and Planinic \cite{BP}, and Jakubowski and Soja-Kukieła \cite{JS}, to name a few. Extending the basic results of Basrak and Segers \cite{BS} in the context of time series, the newly developed approaches focus on stationary regularly varying $\mathbb{R}^{d}$-valued random fields $\mathbf{X}=( \mathbf{X}_{\mathbf{t}})_{\mathbf{t}\in\mathbb{Z}^{k}}$: The random vectors $(\mathbf{X}_{\mathbf{t}_{1}},....,\mathbf{X}_{\mathbf{t}_{n}})$ are regularly varying in $\mathbb{R}^{nd}$ for each $\mathbf{t}_{1},...,\mathbf{t}_{n}\in\mathbb{Z}^{k}$. The existence of the spectral tail random field $\mathbf\Theta:=(\mathbf{\Theta}_{\mathbf{t}})_{\mathbf{t}\in\mathbb{Z}^{k}}$ characterizes the limit behavior of the extremes around the origin $\{\bf 0\}$ under the condition that $\bf X_{\bf0}$ is extreme and normalized by $|\bf X_{\bf0}|$. The random field $\mathbf \Theta$ characterizes the extrema of $\mathbf{X} $ and hence any asymptotic extreme value set. One phenomenon is the clustering of extrema, i.e., the tendency for extrema to occur locally. To formalize this phenomenon, the approach is to extend the basic result of Davis and Hsing \cite{DHs} via the convergence of the point process of exceedances. More precisely, we define $N_n$ as a simple point random field of exceedances on a hyperrectangle $C_n=[1,n]^k$, $n\ge 1$
	\begin{equation*}
	N_{n}:=\sum_{\mathbf{t}\in C_n}\varepsilon_{a_{n^k}^{-1}\mathbf{X}_{\mathbf{t}}}\,.
	\end{equation*}
The level of excesses is set to $a_n$, which satisfies $\lim_{n\to \infty}n\mathbb P(|\mathbf{X}_{\mathbf{0}}| > a_n)=1$, as if the observations were independent.
Samorodnistky and Wu \cite{SW} show that $N_n$ converges to a cluster point random field $N$ on $\mathbb R^d\setminus\{0\}$ and an explicit representation of the latter is given in Basrak and Planinic \cite{BP}. More precisely, there is a spectral cluster field $\mathbf{Q}:=(\mathbf{Q}_{\mathbf{t}})_{\mathbf{t}\in\mathbb{Z}^k}$ whose distribution is derived from that of $(\mathbf{\Theta}_{\mathbf{t}})$ and for which holds
\begin{equation*}
N^{\Lambda}=\sum_{i=1}^{\infty} \sum_{\mathbf{t}\in\mathbb{Z}^k}\varepsilon_{\Gamma_{i}^{-1/\alpha} \mathbf{Q}_{ i,\mathbf{t}}}\,,
\end{equation*}
where $ (\sum_{\mathbf{t}\in\mathbb{Z}^k}\varepsilon_{ \mathbf{Q}_{ i,\mathbf{t}})_{i\ge 1}})$ are independent and identically distributed (iid) copies of $\sum_{\mathbf{t}\in\mathbb{Z}^k}\varepsilon_{ \mathbf{Q}_{ \mathbf{t}}}$, independent of the points $(\Gamma_{i})$ of a standard Poisson process.
The random field $\mathbf{Q}$ is crucial since it accurately describes the asymptotic clustering phenomenon. The paper aims to introduce and analyze a new setting adapted to index sets other than the hyperrectangle $C_n$. 

Let us consider $\Lambda_n$ as an arbitrary index set of $\mathbb Z^k\setminus\{\mathbf 0\}$. Such an extension of the rectangular index set is not straightforward, as Stehr and R\'onn-Nielsen \cite{Stehr} showed in the asymptotically independent case ($\mathbf \Theta_{\mathbf t}=0$ for all ${\mathbf t}\ne \mathbf 0$). For index sets $(\Lambda_n)$, a geometric condition must hold. To motivate the study of index sets that are not rectangular, let us describe the most common index sets in the literature. The spatio-temporal sets $\Lambda_n$ are typically of the form $\mathcal C\times \{T,\ldots,mT\}$, where $\mathcal C$ is a fixed lattice of $\mathbb{Z}^{2}$ and $T\ge 1$ is the observation period through time expressed in space-time units, and $m$ is the number of observation periods. As usual, we consider a stationary, regularly varying random field $(\mathbf{X}_{\mathbf t})$. Remark that the assumption of stationarity in time and space on $\mathbf X$ is standard in environmental statistics, even when the spatial grid $\mathcal C$ is large but finite, see the review paper by Davison, Padoan and Ribatet \cite{DPR} and references therein. We first obtain the existence of a limiting point random field $N^ \Lambda$ as follows 
	\begin{equation*}
	N^{\Lambda}_{n}:=\sum_{\mathbf{t}\in \Lambda_{n}}\varepsilon_{{a_{n}^\Lambda}^{-1}\mathbf{X}_{\mathbf{t}}}\stackrel{d}{\to}N^{\Lambda}\,,\qquad n\to\infty\,,
	\end{equation*}
where $a_{n}^\Lambda$ comes from $\lim\limits_{n\to\infty}|\Lambda_n|\mathbb{P}(|\mathbf{X}_{\mathbf{0}}| > a_{n}^\Lambda)=1$.

In contrast to the hyperrectangle case, the distribution of the edge point random field $N^ \Lambda$ depends on the asymptotic lattice properties of the general lattice $\Lambda_n$. Not surprisingly, the asymptotic form of the index set $\Lambda_n$ constrains the clustering effect. We derive the limiting distribution of the point random field under a sufficient condition that ensures that $\Lambda_n$ consists asymptotically of translated versions of countably many fixed sets $\mathcal D_j$. It appears that the limiting cluster point random field distribution is a mixture of expressions of the spectral tail random field over the different $\mathcal D_j$. However, a representation of the clusters using the original spectral tail random field $(\mathbf{\Theta}_{\mathbf{t}})$ is limited to specific  $\Lambda_n$. We derive the representation of the limiting points similar as in Basrak and Planinic \cite{BP} only when all the $\mathcal D_j$'s are lattice. This condition is satisfied for $C_n$, and we recover the characterization of the clusters first provided in Basrak and Planinic \cite{BP}.

For irregular index set $\Lambda_n$, the representation of the (asymptotic) clusters does not naturally use the original spectral tail random field $(\mathbf{\Theta}_{\mathbf{t}})$. Instead, one has to introduce the concept of the $\Upsilon$-tail field that characterizes the limiting behaviour of the extremes around the region $\Upsilon$ given that $\bf X_{\bf t}$ is extreme over $\Upsilon$ and normalized by a modulus of $({\bf X}_{\bf t})_{{\bf t}\in \Upsilon}$. This framework has already been developed for iid sequences by Ferreira and de Haan \cite{ferreira} and for time series cases by Segers et al. \cite{SZM} but not for random fields.

Our main contribution is to introduce a very general setting for index sets, namely Condition $(\mathcal{D}^{\Lambda})$, which does not involve any topological properties. This condition allows for countably many different shapes, and it is only an asymptotic condition. In particular, it implies that some shapes repeat approximately an infinite number of times proportional to $|\Lambda_n|$. Surprisingly, the shape of the asymptotic local region $\Upsilon$ is arbitrary. In contrast with existing results such as Stehr and R\'onn-Nielsen \cite{Stehr} we show that convexity is not required when dealing with extremes. Under the anti-clustering condition, we deal with index sets that are local regions such as $\Upsilon$ reproduced over a lattice. 

The rest of the paper is organized as follows. In Section \ref{sec:prel} are collected preliminaries, notation and main assumptions. That the crucial Condition $(\mathcal{D}^{\Lambda})$ implies asymptotic local region reproduced over lattices is the main theoretical challenge of the paper, alleviated in Section \ref{sec:lattice}. The asymptotic clusters are studied in Section \ref{sec:spectral} for any index set $\Lambda_n$ satisfying Condition $(\mathcal{D}^{\Lambda})$. Their characterization is provided using the $\Upsilon-$spectral tail field in Section \ref{sec:Upsilon}. Two applications of this new approach are developed in Section \ref{sec:appl}, determining the extremal index and providing sufficient conditions for max-stable random fields. Section \ref{sec:proof} contains the proofs of the results of Section \ref{sec:lattice} and the rest of the proofs are collected in Section \ref{sec:proof2} and \ref{sec:proof3}.

\section{Preliminaries, notation and main assumptions}\label{sec:prel}
Let $(\mathbf{X}_{\mathbf{t}})_{\mathbf{t}\in\mathbb{Z}^{k}}$ be an $\mathbb{R}^{d}$-valued regularly varying stationary random field.

\subsection{Spectral tail fields}
Let us recall two fundamental results of Samorodnitsky and Wu  \cite{SW}: the existence of the tail field and the time change formula for the tail and spectral tail fields.
\begin{theorem}
	[Theorem 2.1 in \cite{SW}] An $\mathbb{R}^{d}$-valued stationary random field $(\mathbf{X}_{\mathbf{t}})_{\mathbf{t}\in\mathbb{Z}^{k}}$ is jointly regularly varying with index $\alpha$ if and only if there exists a random field $(\mathbf{Y}_{\mathbf{t}})_{\mathbf{t}\in\mathbb{Z}^{k}}$ such that
	\begin{equation*}
	\mathcal{L}\big(x^{-1}\mathbf{X}_{\mathbf{t}}:\mathbf{t}\in\mathbb{Z}^{k}\big||\mathbf{X}_{\mathbf{0}}|>x\big)\stackrel{fdd}{\to}\mathcal{L}(\mathbf{Y}_{\mathbf{t}}:\mathbf{t}\in\mathbb{Z}^{k})
	\end{equation*}
	as $x\to\infty$, and $\mathbb{P}(|\mathbf{Y}_{\mathbf{0}}|>y)=y^{-\alpha}$ for $y\geq1$. We call $(\mathbf{Y})_{\mathbf{t}\in\mathbb{Z}^{k}}$ the \textnormal{tail field} of $(\mathbf{X}_{\mathbf{t}})_{\mathbf{t}\in\mathbb{Z}^{k}}$.
\end{theorem}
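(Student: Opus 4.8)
The plan is to prove the equivalence between joint regular variation of the stationary field and the existence of the tail field $(\mathbf{Y}_\mathbf{t})$. Since joint regular variation is a finite-dimensional notion — it concerns the regular variation of every vector $(\mathbf{X}_{\mathbf{t}_1},\dots,\mathbf{X}_{\mathbf{t}_n})$ in $\mathbb{R}^{nd}$ — the whole argument can be reduced to finite index sets and then reassembled via Kolmogorov's extension theorem. So the first step is to fix a finite $I=\{\mathbf{t}_1,\dots,\mathbf{t}_n\}\subset\mathbb{Z}^k$ containing $\mathbf{0}$ and show that joint regular variation of $(\mathbf{X}_\mathbf{t})_{\mathbf{t}\in I}$ is equivalent to the convergence of $\mathcal{L}(x^{-1}(\mathbf{X}_\mathbf{t})_{\mathbf{t}\in I}\mid |\mathbf{X}_\mathbf{0}|>x)$ to some limiting law. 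This is essentially the vector regular variation characterization of Basrak and Segers \cite{BS}: conditioning on $|\mathbf{X}_\mathbf{0}|>x$ rather than on the norm of the whole vector being large is an equivalent normalization because $|\mathbf{X}_\mathbf{0}|\le |(\mathbf{X}_\mathbf{t})_{\mathbf{t}\in I}|$ and, by stationarity and a union bound, $\mathbb{P}(|(\mathbf{X}_\mathbf{t})_{\mathbf{t}\in I}|>x)\le |I|\,\mathbb{P}(|\mathbf{X}_\mathbf{0}|>x)(1+o(1))$, so the two conditioning events are comparable up to constants and regular variation transfers.

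The second step is to verify the normalization of the limit at the origin: $\mathbb{P}(|\mathbf{Y}_\mathbf{0}|>y)=y^{-\alpha}$ for $y\ge 1$. This follows directly from the one-dimensional regular variation of $|\mathbf{X}_\mathbf{0}|$: for $y\ge 1$,
\begin{equation*}
\mathbb{P}(|\mathbf{Y}_\mathbf{0}|>y)=\lim_{x\to\infty}\frac{\mathbb{P}(|\mathbf{X}_\mathbf{0}|>xy)}{\mathbb{P}(|\mathbf{X}_\mathbf{0}|>x)}=y^{-\alpha}\,,
\end{equation*}
using that $|\mathbf{X}_\mathbf{0}|$ is regularly varying with index $\alpha$ (a consequence of joint regular variation applied to the single point $\{\mathbf{0}\}$) and that the conditioning event forces $|\mathbf{Y}_\mathbf{0}|\ge 1$ a.s. The third step is consistency: the family of limiting laws $\mathcal{L}((\mathbf{Y}_\mathbf{t})_{\mathbf{t}\in I})$ indexed by finite $I\ni\mathbf{0}$ is consistent under restriction (adding coordinates and then projecting them out returns the original law, because the conditioning event $|\mathbf{X}_\mathbf{0}|>x$ does not change when $I$ grows), so Kolmogorov's extension theorem produces a single random field $(\mathbf{Y}_\mathbf{t})_{\mathbf{t}\in\mathbb{Z}^k}$ whose finite-dimensional distributions are exactly these limits; that is precisely the asserted fdd convergence. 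The converse direction is easier: if such a $(\mathbf{Y}_\mathbf{t})$ exists, then for each finite $I$ the conditional law of $x^{-1}(\mathbf{X}_\mathbf{t})_{\mathbf{t}\in I}$ given $|\mathbf{X}_\mathbf{0}|>x$ converges, and combined with the regular variation of $|\mathbf{X}_\mathbf{0}|$ (which we still need to assume or extract — here it is built in since the norm of the limit has an exact Pareto tail at the origin) one reconstructs vague convergence of $t^{-1}\mathbb{P}(t\,(\mathbf{X}_\mathbf{t})_{\mathbf{t}\in I}/x\in\cdot\,,\ |\mathbf{X}_\mathbf{0}|>x)$ and hence joint regular variation of the vector, by reversing the Basrak–Segers equivalence.

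The main obstacle is the careful handling of the two different conditioning/normalization events in the forward direction: one must pass from ``$|(\mathbf{X}_\mathbf{t})_{\mathbf{t}\in I}|$ large'' (the natural event for vector regular variation) to ``$|\mathbf{X}_\mathbf{0}|$ large'' (the event used in the statement), and show that the limiting measures are mutually absolutely continuous with the Radon–Nikodym factor accounted for by stationarity. Concretely, one writes the conditional law given $|\mathbf{X}_\mathbf{0}|>x$ as a ratio of the exceedance measure evaluated on $\{|\mathbf{x}_\mathbf{0}|>1\}$ and shows the limit exists and is a probability measure; the shift-invariance of the exceedance measure (a consequence of stationarity) is what guarantees the limiting object is a genuine field rather than just a collection of incompatible marginals. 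Since this theorem is quoted verbatim from \cite{SW}, in the paper one would simply cite it; the sketch above records the proof idea for completeness.
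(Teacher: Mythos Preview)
The paper does not prove this theorem at all: it is stated in the preliminaries as a result quoted verbatim from \cite{SW} (Theorem 2.1 there), with no proof given. Your sketch is a reasonable reconstruction of the standard Basrak--Segers-type argument extended to random fields, and you correctly note at the end that in the paper one simply cites the result; so there is nothing to compare against, and your proposal is appropriate as a record of the underlying idea.
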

\begin{theorem}
	[Theorem 3.2 in \cite{SW}] Let $(\mathbf{Y})_{\mathbf{t}\in\mathbb{Z}^{k}}$ be the tail field corresponding to an $\mathbb{R}^{d}$-valued stationary random field $(\mathbf{X}_{\mathbf{t}})_{\mathbf{t}\in\mathbb{Z}^{k}}$ that is jointly regularly varying with index $\alpha$ and define $\mathbf{\Theta}_{\mathbf{t}}=\mathbf{Y}_{\mathbf{t}}/|\mathbf{Y}_{\mathbf{0}}|$, $\mathbf{t}\in\mathbb{Z}^{k}$. Let $g:(\mathbb{R}^{d})^{\mathbb{Z}^{k}}\to\mathbb{R}$ be a bounded measurable function. Take any $\mathbf{s}\in\mathbb{Z}^{k}$. Then the following identities hold:
	\begin{equation}\label{timechangeY}
	\mathbb{E}[g(\mathbf{Y}_{\mathbf{t} - \mathbf{s}})\mathbf{1}(\mathbf{Y}_{-\mathbf{s}}\neq\mathbf{0})]=\int_{0}^{\infty}\mathbb{E}[g(r\mathbf{\Theta}_{\mathbf{t}})\mathbf{1}(r|\mathbf{\Theta}_{\mathbf{s}}|>1)]d(-r^{-\alpha}),
	\end{equation}
	\begin{equation}\label{timechangeTheta}
	\mathbb{E}[g(\mathbf{\Theta}_{\mathbf{t}  - \mathbf{s}})\mathbf{1}(\mathbf{\Theta}_{-\mathbf{s}}\neq\mathbf{0})]=\mathbb{E}\bigg[g\bigg(\frac{\mathbf{\Theta}_{\mathbf{t} }}{|\mathbf{\Theta}_{\mathbf{s}}|}\bigg)|\mathbf{\Theta}_{\mathbf{s}}|^{\alpha}  \bigg].
	\end{equation}
	We call $(\mathbf{\Theta}_{\mathbf{t}})_{\mathbf{t}\in\mathbb{Z}^{k}}$ the \textnormal{spectral field} of $(\mathbf{X}_{\mathbf{t}})_{\mathbf{t}\in\mathbb{Z}^{k}}$.
\end{theorem}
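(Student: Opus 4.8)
The plan is to reduce everything to a single computation built on stationarity and the defining convergence of the tail field. Since both sides of \eqref{timechangeY} and \eqref{timechangeTheta} are determined by finitely many coordinates, a monotone-class argument lets us assume that $g\ge 0$ is bounded, continuous, and depends only on the coordinates in a fixed finite window $J_0\subset\mathbb Z^k$; the general bounded measurable case follows afterwards by a $\pi$--$\lambda$/dominated-convergence extension, the underlying measures being finite, which falls out of the argument. Moreover it suffices to establish \eqref{timechangeY}: identity \eqref{timechangeTheta} is recovered by applying \eqref{timechangeY} to $\tilde g(z)=g(z/|z_{\mathbf s}|)\mathbf 1(z_{\mathbf s}\neq\mathbf 0)$, using $|\mathbf Y_{\mathbf 0}|\ge 1$ a.s.\ so that $\mathbf Y_{\mathbf t-\mathbf s}/|\mathbf Y_{\mathbf 0}|=\mathbf\Theta_{\mathbf t-\mathbf s}$, together with $\int_0^\infty\mathbf 1(r|\mathbf\Theta_{\mathbf s}|>1)\,d(-r^{-\alpha})=|\mathbf\Theta_{\mathbf s}|^{\alpha}$. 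The heart of the proof is the following \emph{re-anchoring identity}: for all but countably many $u>0$,
\begin{equation}\label{uform}
\mathbb E\big[g(\mathbf Y_{\mathbf t-\mathbf s})\,\mathbf 1(|\mathbf Y_{-\mathbf s}|>u)\big]\;=\;u^{-\alpha}\,\mathbb E\big[g(u\mathbf Y_{\mathbf t})\,\mathbf 1(u|\mathbf Y_{\mathbf s}|>1)\big].
\end{equation}

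To prove \eqref{uform} I would write its left-hand side via Theorem 2.1 in \cite{SW} and the continuous-mapping theorem -- the functional $z\mapsto g(z_{\mathbf t-\mathbf s})\mathbf 1(|z_{-\mathbf s}|>u)$ is bounded and a.s.\ continuous for $\mathcal L(\mathbf Y)$ whenever $\mathbb P(|\mathbf Y_{-\mathbf s}|=u)=0$ -- as
\[
\lim_{x\to\infty}\ \mathbb P(|\mathbf X_{\mathbf 0}|>x)^{-1}\,\mathbb E\big[g(x^{-1}\mathbf X_{\mathbf t-\mathbf s})\,\mathbf 1(|\mathbf X_{-\mathbf s}|>ux)\,\mathbf 1(|\mathbf X_{\mathbf 0}|>x)\big].
\]
Now stationarity of $(\mathbf X_{\mathbf t})$, applied as the relabelling $\mathbf t\mapsto\mathbf t+\mathbf s$, turns $(\mathbf X_{\mathbf t-\mathbf s})_{\mathbf t}$ into $(\mathbf X_{\mathbf t})_{\mathbf t}$, the event $\{|\mathbf X_{-\mathbf s}|>ux\}$ into $\{|\mathbf X_{\mathbf 0}|>ux\}$, and $\{|\mathbf X_{\mathbf 0}|>x\}$ into $\{|\mathbf X_{\mathbf s}|>x\}$. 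Writing the resulting ratio as $\tfrac{\mathbb P(|\mathbf X_{\mathbf 0}|>ux)}{\mathbb P(|\mathbf X_{\mathbf 0}|>x)}\cdot\mathbb P(|\mathbf X_{\mathbf 0}|>ux)^{-1}\mathbb E[\,\cdots\,]$, the first factor tends to $u^{-\alpha}$ by regular variation of $|\mathbf X_{\mathbf 0}|$, while in the second one substitutes $y=ux\to\infty$ and applies Theorem 2.1 again (the discontinuity set being $\{|\mathbf Y_{\mathbf s}|=1/u\}$, which is null for a.e.\ $u$) to obtain $\mathbb E[g(u\mathbf Y_{\mathbf t})\mathbf 1(u|\mathbf Y_{\mathbf s}|>1)]$. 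This is \eqref{uform}.

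It remains to pass from \eqref{uform} to \eqref{timechangeY}. Taking $\mathbf s=\mathbf 0$, $u\le 1$ and $g(z)=h(z/|z_{\mathbf 0}|)$ in \eqref{uform} gives $\mathbb E[h(\mathbf\Theta)\mathbf 1(|\mathbf Y_{\mathbf 0}|>v)]=v^{-\alpha}\mathbb E[h(\mathbf\Theta)]=\mathbb P(|\mathbf Y_{\mathbf 0}|>v)\,\mathbb E[h(\mathbf\Theta)]$ for all $v\ge 1$, i.e.\ $R:=|\mathbf Y_{\mathbf 0}|$ is independent of $\mathbf\Theta$ and is Pareto($\alpha$) by Theorem 2.1, whence $\mathbf Y\stackrel{d}{=}R\,\mathbf\Theta$. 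Substituting this into the right-hand side of \eqref{uform} and changing variables $r=uR$ rewrites it as $\int_u^\infty\mathbb E[g(r\mathbf\Theta_{\mathbf t})\mathbf 1(r|\mathbf\Theta_{\mathbf s}|>1)]\,d(-r^{-\alpha})$; letting $u\downarrow 0$ through values for which \eqref{uform} holds, monotone convergence on both sides -- the left-hand side increasing to $\mathbb E[g(\mathbf Y_{\mathbf t-\mathbf s})\mathbf 1(\mathbf Y_{-\mathbf s}\neq\mathbf 0)]$ -- yields \eqref{timechangeY}. The main obstacle I anticipate is the two invocations of the continuous-mapping theorem: upgrading the finite-dimensional convergence of Theorem 2.1 to convergence of the expectations in \eqref{uform} forces care at the boundaries $\{|z_{-\mathbf s}|=u\}$ and $\{|z_{\mathbf s}|=1/u\}$, which is why \eqref{uform} is only asserted for a.e.\ $u$ and why the exceptional values, as well as general bounded measurable $g$, must be recovered by a separate approximation step.
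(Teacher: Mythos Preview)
The paper does not itself prove this theorem---it is quoted from \cite{SW} as a preliminary result---but it does prove the $\Upsilon$-generalisation, Proposition~\ref{pro-New-2}, and that proof is the natural comparison point. Your argument is correct and is essentially the same proof, written in a slightly different register: you work directly with the defining weak convergence of the tail field and the continuous-mapping theorem to obtain the re-anchoring identity \eqref{uform}, then extract the independence of $|\mathbf Y_{\mathbf 0}|$ and $\mathbf\Theta$ as a special case, and finally let the truncation level go to zero by monotone convergence. The paper's proof of Proposition~\ref{pro-New-2} instead phrases everything through the tail measure $\mu_{\mathbf i,\mathbf j}$ and its polar decomposition $T$: the homogeneity relation \eqref{Psi-New} replaces your direct use of regular variation of $|\mathbf X_{\mathbf 0}|$, and the passage to the limit is an $\varepsilon\downarrow 0$ in the integral \eqref{mu} rather than your $u\downarrow 0$. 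The two routes are interchangeable---the tail-measure language packages the homogeneity and the independence of radius and angle once and for all, while your approach derives both as consequences of \eqref{uform}; neither gains anything substantive over the other, and your handling of the null-set issues at the boundaries $\{|z_{-\mathbf s}|=u\}$, $\{|z_{\mathbf s}|=1/u\}$ and the subsequent $\pi$--$\lambda$ extension matches what the paper does by invoking ``the monotone convergence arguments of the proof of Theorem~3.2 in \cite{SW}''.
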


Denote by $\leq$ the component-wise order on $\mathbb{Z}^{k}$, thus for $\mathbf{i} = (i_{1}, . . . , i_{k})$, $\mathbf{j} = (j_{1}, . . . , j_{k})$ in $\mathbb{Z}^{k}$, $i\leq j$ if $i_{l}\leq j_{l}$ for all $l = 1, . . . , k$.

We consider a complete order $\prec$ on $\mathbb{Z}^{k}$ that is invariant: if $\mathbf{s} \prec \mathbf{t}$ for $\mathbf{s}, \mathbf{t} \in \mathbb{Z}^{k}$ implies that $\mathbf{s} + \mathbf{i} \prec \mathbf{t} + \mathbf{i}$ for any $\mathbf{i}\in\mathbb{Z}^{k}$. An example of an invariant order is the lexicographic (or dictionary) order: for $\mathbf{s}, \mathbf{t} \in \mathbb{Z}^{k}$, we say that $\mathbf{s} \prec \mathbf{t}$ if either (1) $s_{1}< t_{1}$, or (2) there exists $2 \leq j \leq k$ such that $s_{i} = t_{i}$ for all $i = 1, . . . , j - 1$, and $s_{j} < t_{j}$.

\subsection{Condition ($\mathcal{D}^{\Lambda} $) on the index set}

Consider the following simple point random field:
	\begin{equation*}
	N^{\Lambda}_{n}:=\sum_{\mathbf{t}\in \Lambda_{n}}\varepsilon_{{a_{n}^\Lambda}^{-1}\mathbf{X}_{\mathbf{t}}},
	\end{equation*}
	where the sequence $(a_{n}^\Lambda)$ satisfies $\lim\limits_{n\to\infty}|\Lambda_n|\mathbb{P}(|\mathbf{X}_{\mathbf{0}}|>a_{n}^\Lambda)=1$ and $\Lambda_{n}$ is any subset of $\mathbb{Z}^{k}$ such that $|\Lambda_{n}| \to \infty$ as $n\to\infty$. For any set $\Upsilon\subset\mathbb{Z}^{k}$, $c>0$, and $\mathbf{t}\in\mathbb{Z}^{k}$, let $(\Upsilon)^+:=\{\mathbf{u}\in\Upsilon:\mathbf{u}\succ \mathbf{0}\}$, $(\Upsilon)_{-\mathbf{t}}:=\{\mathbf{u}\in\mathbb{Z}^{k}:\mathbf{u}=\mathbf{s}-\mathbf{t},\mathbf{s}\in\Upsilon\}$ and   $\Upsilon^{(\mathbf{t},c)}:=((\Upsilon)_{-\mathbf{t}}\cap K_{c})^+$ where the hypercube $K_c$ is defined as 
	$
	K_c=[-c,c]^k\cap \mathbb{Z}^{k}
	$, $c\ge 0$.
	Through the paper we assume that $\Lambda_n$ satisfies the following condition.\\
\textbf{Condition} ($\mathcal{D}^{\Lambda} $): \textit{There exist (possibly countably many) different subsets of $\{\mathbf{t}\in\mathbb{Z}^{k}:\mathbf{t}\succ\mathbf{0}\}$, which we denote by $\mathcal{D}_{1},\mathcal{D}_{2},...$, s.t.~
$$
\lim\limits_{n\to\infty}\dfrac{|\{\mathbf{t}\in\Lambda_{n}:\Lambda_{n}^{(\mathbf{t},p)}=\mathcal{D}_{i}\cap K_{p}\}|}{|\Lambda_n|}=:\lambda_{i,p}\to \lambda_{i}\,, \qquad p\to\infty\,,
$$ 
with $\lambda_{i}>0$ and $\sum_{i=1}^{q}\lambda_{i}=1$, where $q\in\mathbb{N}\cup\{\infty\}$ is the number of these $\mathcal{D}$s.}\\
	
	Condition ($\mathcal{D}^{\Lambda}$) says the following. Consider a point $\mathbf{t}$ in $\Lambda_{n}$. Translate the set $\Lambda_{n}$ by $-\mathbf{t}$ so that $\mathbf{t}$ is now at $\mathbf{0}$. Take an hypercube around $\mathbf{0}$ of side $2p$, for $p$ large enough, and intersect it with the positive points according to $\succ$ and with the translated set. Thus, we have obtained $\Lambda_{n}^{(\mathbf{t},p)}$. Now, it might happen that the same set $\Lambda_{n}^{(\mathbf{t},p)}$ is exactly the same for other points in $\Lambda_{n}$, and also for other points in $\Lambda_{m}$ with $m>n$. Condition ($\mathcal{D}^{\Lambda}$) imposes that there are different sets (denoted $\mathcal{D}_{i}\cap K_{p}$, $i\in\{1,...,q\}$) such that the number of points $\mathbf{t}\in\Lambda_{n}$ for which $\Lambda_{n}^{(\mathbf{t},p)}$ is equal to one of these sets, divided by $|\Lambda_n|$, has a limit and these limits form a weighted sum. 
	
	This condition provides a minimum requirement to have (at least asymptotically) a structure for studying the long-time clustering behaviour of extremes.

\begin{example}\label{Example-1}
    Imagine observing precipitations over a specific geographical area $\cal C$. There are stations spread throughout the geographical region that measure the precipitation. Let $\Lambda_n$ lying in $\mathbb{Z}^d$ where $d$ is the sum of the time dimension and the space dimension (thus $d=3$ or $d=4$ depending on whether we consider the geographical region $\cal C$ to lie in $\mathbb{Z}^2$ or $\mathbb{Z}^3$, respectively). In the time direction, each point is the number of rain rained in a certain amount of time, while the space direction indicates the location where this is measured. Thus, $\mathbf{X}_\mathbf{t}$ where $\mathbf{t}\in\Lambda_n$ corresponds to the amount of rain measured in a certain period in a specific location. Assume that the measurements over $\cal C$ repeat in a constant frequency (\textit{e.g.}~at every week). This assumption corresponds to condition ($\mathcal{D}^\Lambda$), where we take the order $\succ$ to be increasing with successive observations. In particular, imagine measuring over $\cal C$ infinitely many times. Denote this set by ${\cal C}_\infty$. Then, each $\mathcal{D}$ is  ${\cal C}_\infty $ centered at $\mathbf{0}$ (that is translated version of ${\cal C}_\infty$ by minus one of its points) and consider the points successive to $\mathbf{0}$. Notice that only  $q=|\cal C|$ distinct $\cal D$. \cite{BK} already considered similar index sets.
    %As mentioned, there are cases where the station is working intermittently or it works only on certain hours (for example, daily hours) and/or on certain days -- this is typical of non-automatic weather stations. Assume that asymptotically these observations form a pattern (namely a set in $\mathbb{Z}^d$), which we call $A$, 
\end{example}
\begin{example}
    The framework of \cite{Stehr,Aarhus2} is a particular specification of our framework. Indeed, consider Assumption 1 in \cite{Stehr} (which is Assumption 3 in \cite{Aarhus2}): The sequence $(C_n)_{n\in\mathbb{N}}$ consists of $p$–convex bodies (\textit{i.e.} connected sets which are also unions of $p$ convex sets), where $C_n=\cup_{i=1}^p C_{n,i}$ and $|C_n|\to\infty$ as $n\to\infty$, and $\frac{\sum_{i=1}^p V_j(C_{n,i})}{|C_n|^{j/d}}$ is bounded for each $j=1,...,d-1$, where $V_j(C_{n,i})$ indicates the intrinsic volumes of the convex body $C_{n,i}$. Consider the two dimensional case, so $d=2$ -- similar arguments apply to other dimensions. For any convex body $C$, we have that $V_0(C)=1$ and $V_1(C)$ is equal to the perimeter of $C$ divided by $\pi$. Then, Assumption 1 in \cite{Stehr} states that the sum of the perimeters of the $C_{n,i}$s must not grow faster than the square root of the volume of $C_n$. There are cases where this is not true, like when one of the $C_{n,i}$s is a rectangle with edges increasing with different speed. In general, this assumption ensures that the $C_{n,i}$s must grow in all directions, implying that the number of points in $C_n$ away from the boundary divided by the number of points in $C_n$ tends to $1$ as $n\to\infty$. Formally it implies that, for any $r\in\mathbb{N}$,  $\frac{|\{\mathbf{t}\in C_{n}:C_{n}^{(\mathbf{t},r)}=\mathcal{D}\cap K_{r}\}|}{|C_n|}\to 1$ as $n\to\infty$, where $\mathcal{D}$ is simply given by $\{\mathbf{t}\in\mathbb{Z}^{k}:\mathbf{t}\succ\mathbf{0}\}$. Therefore, Assumption 1 in \cite{Stehr} is strictly stronger than condition ($\mathcal{D}^{\Lambda}$).
    
    It is important to explicitly look at the differences of our framework with the one of \cite{Stehr,Aarhus2}. First, Condition ($\mathcal{D}^{\Lambda}$) is only an asymptotic condition, thus the set $\Lambda_n$ (or $C_n$) does not need to satisfy any constraint for finite $n$. The lack of a non-asymptotic structure for $\Lambda_n$ is a challenge, and in particular for the proof of Theorem \ref{t1-L-E-Pro}. We overcome this by imposing structures that will be satisfied asymptotically. Another feature of our setting also exacerbates this issue: the possibility of having countably many different asymptotic sets (denoted by $\mathcal{D}$s). Indeed, having countably many sets does not allow distinguishing the points in $\Lambda_n$ that will eventually form an asymptotic set from other points in $\Lambda_n$, because this distinction happens only asymptotically. We overcome this by using that only finitely many of these sets have weights (denoted by $\lambda$s) greater than $\varepsilon$, for any $\varepsilon>0$. The third difference is the structure of the asymptotic sets. While in \cite{Stehr,Aarhus2} the only allowed asymptotic set is $\{\mathbf{t}\in\mathbb{Z}^{k}:\mathbf{t}\succ\mathbf{0}\}$ as just shown, in our framework any possible subset of $\{\mathbf{t}\in\mathbb{Z}^{k}:\mathbf{t}\succ\mathbf{0}\}$ is allowed. For example, we might have that $\Lambda_n$ is a rectangle where only one side increases.
\end{example}
We conclude this section by pointing out that condition ($\mathcal{D}^{\Lambda}$) comes from the proof of the main asymptotic results of the paper, and it is the most refined (\textit{i.e.}~weakest) condition we could attain. This condition is satisfied in all the previous settings (see \cite{DHs,SW,Stehr,Aarhus2,BK}).

\subsection{Mixing and anti-clustering conditions}
Following the seminal work of Davis and Hsing \cite{DHs} on stationary time series, we assume two complementary conditions. The anti-clustering condition avoids too strong clustering effects. The mixing condition approximates the Laplace functional of the point random field $N^{\Lambda}$ over $\Lambda_n$  in terms of products of Laplace functionals of copies of the point random field over a smaller index set. Such conditions were extended to random fields by Samorodnistky and Wu \cite{SW} for the specific index set $C_n=[1,n]^k$. Some care is required when considering the general index set $\Lambda_n$.

Take a sequence of positive integers $(r_{n})$ such that $\lim\limits_{n\to\infty}r_{n}=|\Lambda_n|/|\Lambda_{r_{n}}|=\infty$ and  let $k_{n}= \lfloor |\Lambda_n|/|\Lambda_{r_{n}}|\rfloor$. Let $R_{l,\Lambda_{n}}:=\big(\bigcup_{\mathbf{t}\in\Lambda_{n}}(\Lambda_{n})_{-\mathbf{t}}\big)^+\setminus K_{l}$ and let $\hat{M}^{\Lambda,|\mathbf{X}|}_{l,n}:=\max_{\mathbf{i}\in R_{l,\Lambda_{n}}}|\mathbf{X}_{\mathbf{i}}|$ and 
consider the following anti-clustering
\\
\textbf{Condition} (AC$^{\Lambda}_{\succeq}$): \textit{The $\mathbb{R}^{d}$-valued stationary regularly varying random field $(\mathbf{X}_{\mathbf{t}}:\mathbf{t}\in\mathbb{Z}^{k})$ satisfies the \textnormal{(AC$^{\Lambda}_{\succeq}$)} condition if there exists an integer sequence $r_{n}\to\infty$ and $k_{n}=|\Lambda_n|/|\Lambda_{r_n}|\to\infty$ such that}
\begin{equation*}
\lim\limits_{l\to\infty}\limsup_{n\to\infty}\mathbb{P}\Big(\hat{M}^{\Lambda,|\mathbf{X}|}_{l,r_{n}}>a_{n}^\Lambda x\,\big||\mathbf{X}_{\mathbf{0}}|>a_{n}^\Lambda x\Big)=0.
\end{equation*}
Let $d_{n}:=\max_{x,y\in\Lambda_{r_{n}}}\max_{j=1,..,k}|x^{(j)}-y^{(j)}|$, namely $d_{n}$ be the maximum distance between the points of $\Lambda_{r_{n}}$. Observe that
\begin{multline*}
\lim\limits_{l\to\infty}\limsup_{n\to\infty}\mathbb{P}\Big(\hat{M}^{|\mathbf{X}|}_{l,d_{n};\succeq}>a_{n}^\Lambda x\,\big||\mathbf{X}_{\mathbf{0}}|>a_{n}^\Lambda x\Big)=0\\
\Rightarrow\lim\limits_{l\to\infty}\limsup_{n\to\infty}\mathbb{P}\Big(\hat{M}^{\Lambda,|\mathbf{X}|}_{l,r_{n}}>a_{n}^\Lambda x\,\big||\mathbf{X}_{\mathbf{0}}|>a_{n}^\Lambda x\Big)=0
\end{multline*}
where $\hat{M}^{|\mathbf{X}|}_{a,b;\succeq}=\max_{a\leq|\mathbf{i}|\leq b,\, \mathbf{i}\succeq\mathbf{0} }|\mathbf{X}_{\mathbf{i}}|$ for $a,b\in\mathbb{Z}$ and $|\mathbf{i}|:=\max(|i_{1}|,..., |i_{k}|)$. This sufficient condition is often easier to check in practice and is implied by the anti-clustering condition considered in Samorodnistky and Wu \cite{SW} that required a stronger condition on the maxima over indices  $a\leq|\mathbf{i}|\leq b$ in any directions.

For the mixing condition, we require extra classical notation, namely 
		\begin{equation*}
		\tilde{N}^{\Lambda}_{r_{n}}:=\sum_{\mathbf{t}\in \Lambda_{r_{n}}}\varepsilon_{{a_{n}^\Lambda}^{-1}\mathbf{X}_{\mathbf{t}}}\,,
		\end{equation*} 
	and for any	$E\subset\mathbb{R}^{d}$,  $\mathbb{C}^{+}_{K}(E)$ the class of continuous non-negative functions $g$ on $E$. Further, let the Laplace functional of a point random field $\xi$ with points $(\mathbf{Y}_{\mathbf{i}})$ in the space $E\subset\mathbb{R}^{d}$ be denoted by
\begin{equation*}
\Psi_{\xi}(g):=\mathbb{E}\bigg[\exp\bigg(-\int_{E}gd\xi \bigg) \bigg]=\mathbb{E}\bigg[\exp\bigg(-\sum_{\mathbf{i}}g(\mathbf{Y}_{\mathbf{i}}) \bigg) \bigg],\quad g\in \mathbb{C}^{+}_{K}(E).
\end{equation*}
We adopt the notation $\mathbb{C}^{+}_{K}:=\mathbb{C}^{+}_{K}(\mathbb{R}^{d}\setminus\{\mathbf{0}\})$. 
\\ \textbf{Condition} $\mathcal{A}^{\Lambda}(a_{n}^\Lambda)$: \textit{Choose the integer sequences $r_{n}\to\infty$ and $k_{n}= |\Lambda_n|/|\Lambda_{r_n}|\to\infty$ from condition (AC$^{\Lambda}_{\succeq}$). The $\mathbb{R}^{d}$-valued stationary regularly varying random field $(\mathbf{X}_{\mathbf{t}}:\mathbf{t}\in\mathbb{Z}^{k})$ satisfies the condition $\mathcal{A}^{\Lambda}(a_{n}^\Lambda)$ if }
\begin{equation*}
\Psi_{N^{\Lambda}_{n}}(g)-(\Psi_{\tilde{N}^{\Lambda}_{r_{n}}}(g))^{k_{n}}\to0,\quad n\to\infty,\quad g\in \mathbb{C}^{+}_{K}.
\end{equation*}

\section{Lattice properties}\label{sec:lattice}
Before giving the main results, we need to investigate further the lattice properties of the index sets $\mathcal{D}_{j}$ appearing in Condition ($\mathcal{D}^{\Lambda}$) on $\Lambda_n$. We will distinguish two settings, lattice properties on the upper orthant and the whole index set. Condition ($\mathcal{D}^{\Lambda}$) implicitly involves the upper orthant and it would have been possible to focus on the whole index set by adapting Condition ($\mathcal{D}^{\Lambda}$) accordingly. This approach would have been entirely equivalent to ours. But notice that the two settings are crucial for our main results, and one cannot make the economy of one of them.

\subsection{Lattice properties on the upper orthant}
Recall that $\mathcal{D}_{1},\mathcal{D}_{2},...$ are the subsets of the upper-orthant $\big(\mathbb{Z}^{k}\big)^+$ that appear in Condition ($\mathcal{D}^{\Lambda} $).
\begin{proposition}\label{lem-AC1-L-2}
Let $\Lambda_{n}$ satisfy $|\Lambda_n|\to \infty$ as $n\to \infty$ together with Condition ($\mathcal{D}^{\Lambda}$).
\\\textnormal{(I)} For every $\mathcal{D}_{j}$ and $\mathcal{D}_{i}$ with $j\neq i$ there exists a $p$ large enough s.t.~$\mathcal{D}_{j}\cap K_{p}\neq \mathcal{D}_{i}\cap K_{p}$. Further, for every $\mathcal{D}_{j}$ and every $p\in\mathbb{N}$ we have the identity 
$$
\lim\limits_{n\to\infty}\dfrac{|\{\mathbf{t}\in\Lambda_{n}:\Lambda_{n}^{(\mathbf{t},p)}=\mathcal{D}_{j}\cap K_{p}\}|}{|\Lambda_n|}=\lambda_{j,p}= \sum_{i\in I_{p}^{(j)}}\lambda_{i},
$$ 
where $I^{(j)}_{p}:=\{i\in\{1,...,q\}:\mathcal{D}_{i}\cap K_{p}=\mathcal{D}_{j}\cap K_{p}\}$.
\\\textnormal{(II)} The empty set is a possible $\mathcal{D}$. 
\\\textnormal{(III)} For every $\mathcal{D}_{j}$, there exist $b_{j}$ many different $\mathcal{D}$s, where $b_{j}\in\mathbb{N}$ s.t.~$b_{j}\leq\lfloor1/\lambda_{j}\rfloor-1$, which we denote by $\mathcal{D}_{l_{1}},...,\mathcal{D}_{l_{b_{j}}}$ such that, for every $ \mathbf{z}\in \mathcal D_j$, $\mathcal D_{l_i}=((\mathcal D_j)_{-\mathbf{z}})^+$ for some $i=1,...,b_{j}$ and  then $\lambda_{i}\geq\lambda_{j}$.
\end{proposition}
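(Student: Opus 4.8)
My plan is to prove (I), (II), (III) in turn; the real content is a ``refinement in the window size $p$'' argument for (I), which then feeds into (III).

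\emph{Item (I).} The first assertion is immediate: $\mathcal{D}_j\neq\mathcal{D}_i$ as subsets of $\{\mathbf t\succ\mathbf 0\}$, so picking $\mathbf w$ in the symmetric difference and any $p\geq|\mathbf w|$, the point $\mathbf w$ lies in $K_p$ and in exactly one of $\mathcal{D}_j\cap K_p$, $\mathcal{D}_i\cap K_p$. For the identity, fix $j$ and $p$; write $f^{(p')}_n(S):=|\Lambda_n|^{-1}|\{\mathbf t\in\Lambda_n:\Lambda_n^{(\mathbf t,p')}=S\}|$ for $S\subset K^+_{p'}$, and let $\mathcal{S}_{p'}:=\{\mathcal{D}_i\cap K_{p'}:i\geq1\}$, a finite set on which $\lambda_{S,p'}:=\lim_n f^{(p')}_n(S)$ is well defined by Condition $(\mathcal{D}^{\Lambda})$. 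Since $\Lambda_n^{(\mathbf t,p)}=\Lambda_n^{(\mathbf t,p')}\cap K_p$ for $p'\geq p$, splitting the points with window-$p$ configuration $\mathcal{D}_j\cap K_p$ by their window-$p'$ configuration gives $f^{(p)}_n(\mathcal{D}_j\cap K_p)=\sum_{S\cap K_p=\mathcal{D}_j\cap K_p}f^{(p')}_n(S)$; the terms with $S\in\mathcal{S}_{p'}$ are precisely those with $S=\mathcal{D}_i\cap K_{p'}$, $i\in I^{(j)}_p$. The crux is that the other configurations carry vanishing mass: as any $N$ of the $\mathcal{D}_i$'s are pairwise separated on $K_{p'}$ for $p'$ large (first assertion), $\sum_{S\in\mathcal{S}_{p'}}\lambda_{S,p'}\geq\sum_{i\leq N}\lambda_{i,p'}\to\sum_{i\leq N}\lambda_i$ as $p'\to\infty$, which is $>1-\varepsilon$ by the choice of $N$ (using $\sum_i\lambda_i=1$), whereas $\sum_{S\in\mathcal{S}_{p'}}\lambda_{S,p'}\leq1$ always; hence this sum tends to $1$. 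Letting $n\to\infty$ in the splitting — valid termwise on $\mathcal{S}_{p'}$, which forces convergence of the nonnegative remainder — yields $\lambda_{j,p}=\mu^{(p')}_j+g^{(p')}$ with $\mu^{(p')}_j:=\sum_{S\in\mathcal{S}_{p'},\,S\cap K_p=\mathcal{D}_j\cap K_p}\lambda_{S,p'}$ and $0\leq g^{(p')}\leq 1-\sum_{S\in\mathcal{S}_{p'}}\lambda_{S,p'}\to0$. Summing over the finitely many distinct window-$p$ configurations, $\sum_j\mu^{(p')}_j\to1=\sum_j\sum_{i\in I^{(j)}_p}\lambda_i$ (the $I^{(j)}_p$ partition the index set), while $\liminf_{p'}\mu^{(p')}_j\geq\sum_{i\in I^{(j)}_p}\lambda_i$ (retaining only finitely many separated $\mathcal{D}_i$'s); for a finite family of nonnegative sequences this forces $\mu^{(p')}_j\to\sum_{i\in I^{(j)}_p}\lambda_i$, hence $\lambda_{j,p}=\sum_{i\in I^{(j)}_p}\lambda_i$.

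\emph{Items (II) and (III).} For (II) it suffices to exhibit one admissible index set, e.g.\ $\Lambda_n=\{(jM_n,0,\dots,0):0\leq j<n\}$ with $M_n\to\infty$: for fixed $p$ and $n$ with $M_n>p$ every $\mathbf t\in\Lambda_n$ has $\Lambda_n^{(\mathbf t,p)}=\emptyset$, so Condition $(\mathcal{D}^{\Lambda})$ holds with the single set $\mathcal{D}_1=\emptyset$, $\lambda_1=1$. For (III), fix $j$, $\mathbf z\in\mathcal{D}_j$ and set $E:=((\mathcal{D}_j)_{-\mathbf z})^+$. The key is a deterministic identity: for all $p'\geq1$ and $P\geq p'+|\mathbf z|$, if $\Lambda_n^{(\mathbf t,P)}=\mathcal{D}_j\cap K_P$ then $\mathbf t+\mathbf z\in\Lambda_n$ and $\Lambda_n^{(\mathbf t+\mathbf z,p')}=E\cap K_{p'}$; the verification uses $\mathbf z\in\mathcal{D}_j\cap K_P$, that $\mathbf v\succ\mathbf 0$ implies $\mathbf v+\mathbf z\succ\mathbf z\succ\mathbf 0$ by invariance, and $|\mathbf v+\mathbf z|\leq p'+|\mathbf z|\leq P$ for $\mathbf v\in K_{p'}$, so the positive part of $(\Lambda_n)_{-\mathbf t-\mathbf z}$ inside $K_{p'}$ is read off from the positive part of $(\Lambda_n)_{-\mathbf t}$ inside $K_P$, namely $\mathcal{D}_j\cap K_P$. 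Since $\mathbf t\mapsto\mathbf t+\mathbf z$ is injective, this gives $|\{\mathbf t:\Lambda_n^{(\mathbf t,P)}=\mathcal{D}_j\cap K_P\}|\leq|\{\mathbf t':\Lambda_n^{(\mathbf t',p')}=E\cap K_{p'}\}|$; dividing by $|\Lambda_n|$, letting $n\to\infty$ then $P\to\infty$ gives $\lambda_j\leq\liminf_n|\Lambda_n|^{-1}|\{\mathbf t':\Lambda_n^{(\mathbf t',p')}=E\cap K_{p'}\}|$ for every $p'\geq|\mathbf z|$. If $E\cap K_{p'}\notin\mathcal{S}_{p'}$ this liminf is bounded by $1-\sum_{S\in\mathcal{S}_{p'}}\lambda_{S,p'}\to0$, contradicting $\lambda_j>0$ for large $p'$; so for large $p'$, $E\cap K_{p'}=\mathcal{D}_i\cap K_{p'}$, and by (I), $\sum\{\lambda_i:\mathcal{D}_i\cap K_{p'}=E\cap K_{p'}\}\geq\lambda_j$. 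This sum is nonincreasing in $p'$ with limit $\sum\{\lambda_i:\mathcal{D}_i=E\}\geq\lambda_j>0$, so there is (uniquely, by (I)) an $i$ with $\mathcal{D}_i=E$, and $\lambda_i\geq\lambda_j$ — in particular, when $E=\emptyset$ this shows $\emptyset$ is one of the $\mathcal{D}$'s, consistent with (II). Collecting the distinct sets so obtained that differ from $\mathcal{D}_j$ as $\mathcal{D}_{l_1},\dots,\mathcal{D}_{l_{b_j}}$, the $b_j+1$ sets $\mathcal{D}_j,\mathcal{D}_{l_1},\dots,\mathcal{D}_{l_{b_j}}$ are distinct with weights $\geq\lambda_j$, whence $(b_j+1)\lambda_j\leq\sum_i\lambda_i=1$, i.e.\ $b_j\leq\lfloor1/\lambda_j\rfloor-1$.

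\emph{Main obstacle.} In (I), the configurations not coming from the $\mathcal{D}_i$'s need not have convergent frequencies, so one cannot take the limit in $n$ term by term; the remedy is the global mass bound $\sum_{S\in\mathcal{S}_{p'}}\lambda_{S,p'}\to1$ (forced by $\sum_i\lambda_i=1$) together with a finite liminf-versus-total-sum squeeze. In (III) the delicate parts are the translation identity (the nested windows $K_{p'}\subset K_P$ and the $\succ$-sign bookkeeping) and the passage from ``$E\cap K_{p'}$ is realised by some $\mathcal{D}_i$ for each large $p'$'' to ``$E$ coincides with a single $\mathcal{D}_i$'', which uses monotonicity in $p'$ and item (I).
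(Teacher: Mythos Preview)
Your proof is correct and follows essentially the same approach as the paper. In (I) both arguments exploit the constraint $\sum_i\lambda_i=1$ to control the mass of non-$\mathcal D$ configurations; the paper shows $\lambda_{j,p}\ge\sum_{i\in I_p^{(j)}}\lambda_i$ and then derives a contradiction from strict inequality via the partition $\mathfrak P_p$, whereas you pass through a finer window $p'$ and run a liminf-versus-total-mass squeeze, but the substance is identical. For (III) you reproduce exactly the paper's key lemma (your translation identity and injection $\mathbf t\mapsto\mathbf t+\mathbf z$ are precisely the content of their Lemma~\ref{lem:transcopy}), and your counting $(b_j+1)\lambda_j\le 1$ --- reading the $\mathcal D_{l_i}$ as distinct from $\mathcal D_j$, consistent with the paper's later convention $l_0=j$ --- gives the stated bound $b_j\le\lfloor 1/\lambda_j\rfloor-1$, which is in fact slightly sharper than what the paper's own proof writes out.
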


Point (III) of Proposition \ref{lem-AC1-L-2} suggests that $\mathcal D_j$ contains shifted versions of potentially different $\mathcal D_\ell$. In order to exhibit the lattice property of $\mathcal D_j$, we define $\mathcal{G}_j$ as the set of the shifts that yields the same $\mathcal D_j$, namely
\begin{equation}\label{eq:gj}
\mathcal{G}_j :=\{\mathbf{z}\in \mathcal{D}_j\cup \{\mathbf{0}\}:  (\mathcal{D}_{j})_{-\mathbf{z}}^+ = \mathcal{D}_j \}\quad\text{and}\quad \mathcal{L}_j:=\mathcal{G}_j\cup-\mathcal{G}_j\,,\qquad 1\le j\le q\,.
\end{equation}
For every $1\le j\le q$, one can partition the set $\mathcal D_j$ using the lattice sets $\mathcal{L}_{l_i}$, $i=1,...,b_{j}$:
\begin{proposition}\label{prop:lattice1}
Let $\Lambda_{n}$ satisfy $|\Lambda_n|\to \infty$ as $n\to \infty$ together with Condition ($\mathcal{D}^{\Lambda}$). Fix $1\le j\le q$, then the set  $\mathcal{L}_{j}$ is a  lattice on $\mathbb{Z}^{k}$. For $i=1,...,b_j$, denoting $\mathbf{z}_{l_i}$ any point in $\mathcal{D}_j$ such that $\mathcal{D}_{l_i}=((\mathcal{D}_j)_{- \mathbf{z}_{l_i}})^+$ we have the partition
\begin{equation}\label{partition}
\mathcal{D}_{j} =\mathcal{L}_j^+\cup\bigcup_{i=1}^{b_j}((\mathcal L_{l_i})_{\mathbf{z}_{l_i}})^+.
\end{equation}
Further, for every $\mathcal{D}_{j}$, we have that $\mathcal{L}_{l_{i}}\supseteq\mathcal{L}_{j}$, and $\mathcal{L}_{l_{i}}$ and $\mathcal{L}_{j}$ have the same rank  for $i=1,...,b_{j}$. In particular, $\mathcal{D}_{j}$ is bounded if and only if $\mathcal{L}_{j}=\{\mathbf{0}\}$ and in this case $\mathcal{D}_{j} =\bigcup_{i=1}^{b_j}\{\mathbf{z}_{l_i}\}$.
\end{proposition}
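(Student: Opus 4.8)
\textbf{Proof plan for Proposition \ref{prop:lattice1}.}

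The plan is to establish the four claims in turn: (a) $\mathcal{L}_j$ is a lattice; (b) the partition \eqref{partition} holds; (c) the inclusion $\mathcal{L}_{l_i}\supseteq\mathcal{L}_j$ and the equal-rank property; (d) the boundedness characterization. First I would show $\mathcal{G}_j$ is closed under addition: if $\mathbf{z}_1,\mathbf{z}_2\in\mathcal{G}_j$, then applying the defining identity $(\mathcal{D}_j)_{-\mathbf{z}}^+=\mathcal{D}_j$ twice (and using invariance of $\prec$, so that shifting commutes appropriately with taking the positive part) gives $(\mathcal{D}_j)_{-(\mathbf{z}_1+\mathbf{z}_2)}^+=\mathcal{D}_j$, hence $\mathbf{z}_1+\mathbf{z}_2\in\mathcal{G}_j\cup\{\mathbf{0}\}$; one must also check $\mathbf{z}_1+\mathbf{z}_2\succ\mathbf{0}$ or handle the degenerate case, but since $\mathcal{G}_j\subseteq\mathcal{D}_j\cup\{\mathbf{0}\}$ and $\mathcal{D}_j\subseteq(\mathbb{Z}^k)^+$ this is bookkeeping. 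Symmetrizing, $\mathcal{L}_j=\mathcal{G}_j\cup(-\mathcal{G}_j)$ is then a subgroup of $\mathbb{Z}^k$, i.e.\ a lattice. A small subtlety: one needs $\mathbf{0}\in\mathcal{G}_j$ (immediate) and that additive closure on the ``positive cone'' generators really does generate a genuine subgroup — the cleanest route is to observe that $\mathbf{z}\in\mathcal{G}_j$ forces, by Proposition \ref{lem-AC1-L-2}(III) applied with $\mathcal{D}_{l_i}=\mathcal{D}_j$, that $\lambda$-weights match, and iterate the shift identity.

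For the partition \eqref{partition}: by Proposition \ref{lem-AC1-L-2}(III), for each $\mathbf{z}\in\mathcal{D}_j$ the set $((\mathcal{D}_j)_{-\mathbf{z}})^+$ equals one of finitely many $\mathcal{D}_{l_i}$, $i=1,\dots,b_j$, so $\mathcal{D}_j$ decomposes according to which $\mathcal{D}_{l_i}$ is produced. I claim the points $\mathbf{z}$ yielding a fixed $\mathcal{D}_{l_i}$ form exactly the coset $((\mathcal{L}_{l_i})_{\mathbf{z}_{l_i}})^+$: if $((\mathcal{D}_j)_{-\mathbf{z}})^+ = ((\mathcal{D}_j)_{-\mathbf{z}_{l_i}})^+ = \mathcal{D}_{l_i}$, then shifting by $\mathbf{z}_{l_i}-\mathbf{z}$ shows $\mathbf{z}-\mathbf{z}_{l_i}\in\mathcal{L}_{l_i}$ (using that $\mathcal{D}_{l_i}$ is invariant under $\mathcal{L}_{l_i}$-shifts in the sense of \eqref{eq:gj}), and conversely. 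The case $\mathbf{z}_{l_i}=\mathbf{0}$ (i.e.\ $\mathcal{D}_{l_i}=\mathcal{D}_j$ itself) yields the piece $\mathcal{L}_j^+$. Disjointness is automatic because the $\mathcal{D}_{l_i}$ are pairwise distinct, and exhaustiveness is exactly Proposition \ref{lem-AC1-L-2}(III).

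For (c), $\mathcal{L}_{l_i}\supseteq\mathcal{L}_j$: if $\mathbf{w}\in\mathcal{G}_j$, then shifting the identity $((\mathcal{D}_j)_{-\mathbf{z}_{l_i}})^+=\mathcal{D}_{l_i}$ along $\mathbf{w}$ and using $(\mathcal{D}_j)_{-\mathbf{w}}^+=\mathcal{D}_j$ shows $\mathbf{w}$ (suitably interpreted) also stabilizes $\mathcal{D}_{l_i}$, giving $\mathbf{w}\in\mathcal{G}_{l_i}$; symmetrize. Equal rank: clearly $\text{rank}\,\mathcal{L}_{l_i}\ge\text{rank}\,\mathcal{L}_j$ by the inclusion; for the reverse, note $\mathcal{D}_{l_i}=((\mathcal{D}_j)_{-\mathbf{z}_{l_i}})^+$ is contained in the translate $(\mathcal{D}_j)_{-\mathbf{z}_{l_i}}$, and the lattice of stabilizers of a subset of $\mathbb{Z}^k$ that is itself (up to the positive-part truncation) contained in a shift of $\mathcal{D}_j$ cannot have strictly larger rank — more robustly, one shows $\mathcal{L}_{l_i}\otimes\mathbb{Q}$ and $\mathcal{L}_j\otimes\mathbb{Q}$ span the same rational subspace because any $\mathbf{v}\in\mathcal{L}_{l_i}$ has a multiple $m\mathbf{v}$ that, by finiteness of $b_j$ and a pigeonhole on the finitely many shapes $\{((\mathcal{D}_j)_{-\mathbf{z}})^+\}$, must lie in $\mathcal{L}_j$. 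Finally (d): if $\mathcal{L}_j=\{\mathbf{0}\}$ then \eqref{partition} reduces to $\mathcal{D}_j=\bigcup_{i=1}^{b_j}(\{\mathbf{0}\}_{\mathbf{z}_{l_i}})^+ = \bigcup_{i=1}^{b_j}\{\mathbf{z}_{l_i}\}$, a finite set; conversely if $\mathcal{D}_j$ is finite, any nonzero $\mathbf{z}\in\mathcal{G}_j$ would generate infinitely many distinct points $\mathbf{z}, 2\mathbf{z}, \dots$ inside $\mathcal{D}_j$ (after checking they stay $\succ\mathbf{0}$ and in $\mathcal{D}_j$ via iterating the stabilizer identity), a contradiction, so $\mathcal{L}_j=\{\mathbf{0}\}$.

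The main obstacle I anticipate is the equal-rank claim in (c): the inclusion $\mathcal{L}_{l_i}\supseteq\mathcal{L}_j$ is the easy direction, but ruling out a strictly larger rank for $\mathcal{L}_{l_i}$ requires carefully exploiting that $\mathcal{D}_{l_i}$ arises as a one-point-deletion-type derived set of $\mathcal{D}_j$, together with the finiteness bound $b_j\le\lfloor 1/\lambda_j\rfloor-1$ from Proposition \ref{lem-AC1-L-2}(III), to force a pigeonhole/periodicity argument; the interplay between the positive-part operator $(\cdot)^+$ (which is not a group operation) and the genuine lattice shifts is the delicate point throughout, and I would isolate a lemma stating that $((\mathcal{D}_j)_{-\mathbf{z}})^+ = \mathcal{D}_j$ is equivalent to $(\mathcal{D}_j)_{-\mathbf{z}} = \mathcal{D}_j$ on the relevant cone, to make the shift manipulations rigorous.
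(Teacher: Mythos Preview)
Your plan for (a), (b), (d), and the inclusion $\mathcal{L}_{l_i}\supseteq\mathcal{L}_j$ in (c) lines up with the paper's argument essentially step for step, and you have correctly located the real difficulty in the equal-rank claim.

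The gap is precisely there. Your pigeonhole argument---that some multiple $m\mathbf{v}$ of any $\mathbf{v}\in\mathcal{L}_{l_i}$ must land in $\mathcal{L}_j$ because there are only finitely many shapes $((\mathcal{D}_j)_{-\mathbf{z}})^+$---does not go through. For $\mathbf{v}\in\mathcal{G}_{l_i}$ the natural points to pigeonhole over are $\mathbf{z}_{l_i}+k\mathbf{v}\in\mathcal{D}_j$, but one checks $((\mathcal{D}_j)_{-(\mathbf{z}_{l_i}+k\mathbf{v})})^+=((\mathcal{D}_{l_i})_{-k\mathbf{v}})^+=\mathcal{D}_{l_i}$ for every $k\ge0$, so the sequence of shapes is constant and pigeonhole yields nothing. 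More decisively, the equal-rank statement is \emph{false} for abstract sets satisfying only the finitely-many-shapes property: in $\mathbb{Z}^3$ with the lexicographic order, take $\mathcal{D}=\{(0,0,m):m\ge1\}\cup\{(1,a,b):a,b\in\mathbb{Z}\}$. Then $\mathcal{G}=\{(0,0,m):m\ge0\}$ so $\mathcal{L}$ has rank $1$, while the single derived shape $\mathcal{D}'=((\mathcal{D})_{-(1,0,0)})^+=\{(0,a,b):(a,b)\succ(0,0)\}$ has $\mathcal{L}'=\{0\}\times\mathbb{Z}^2$ of rank $2$. Here $b_j=1$, so finiteness of $b_j$ alone cannot force equal rank, and no purely combinatorial pigeonhole on shapes will work.

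What the paper does instead is a \emph{counting argument} that uses Condition~($\mathcal{D}^\Lambda$) directly, not merely its corollary $b_j<\infty$. Assuming $\mathrm{rank}\,\mathcal{G}_{l_i}>\mathrm{rank}\,\mathcal{G}_j$, one shows that the extra independent directions in $\mathcal{G}_{l_i}$ force
\[
\Big|(\mathcal{D}_j\cap K_q)\setminus\bigcup_{\mathbf{i}\in\mathcal{G}_j\setminus\{\mathbf{0}\}}(\mathcal{D}_j\cap K_q)_{\mathbf{i}}\Big|\longrightarrow\infty\qquad(q\to\infty),
\]
so for some $q^*$ this cardinality exceeds $1/\lambda_j$. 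Since asymptotically $\lambda_{j,q^*}|\Lambda_n|\ge\lambda_j|\Lambda_n|$ points $\mathbf{t}\in\Lambda_n$ satisfy $\Lambda_n^{(\mathbf{t},q^*)}=\mathcal{D}_j\cap K_{q^*}$, and the corresponding ``fundamental-domain'' pieces around them are disjoint, one obtains more than $|\Lambda_n|$ points in $\Lambda_n$, a contradiction. This density input is exactly what your sketch is missing. Note also that your argument for (d) in the direction $\mathcal{L}_j=\{\mathbf{0}\}\Rightarrow\mathcal{D}_j$ bounded tacitly uses equal rank (to get $\mathcal{L}_{l_i}=\{\mathbf{0}\}$), so the gap propagates there as well.
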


Building on partition \eqref{partition} we want to exhibit some translation invariant properties of $\mathcal{D}_j$. Fix any $j=1,\ldots,q$ and denote $l_0=j$ for convenience, then any $i\in\{0,1,...,b_j\}$   satisfies the Translation Invariance Property \textnormal{(TIP$_j$)} if it has the following property:\\

\noindent \textbf{Translation Invariance Property \textnormal{(TIP$_j$)}:} {\it The index $i\in\{0,1,...,b_j\}$ satisfies \textnormal{(TIP$_j$)} if there is a point $\mathbf{x}\in((\mathcal{L}_{l_i})_{\mathbf{z}_{l_i}})^+$ such that $\mathbf{x}\prec\mathbf{y}$ for some $\mathbf{y}\in\mathcal{G}_j$.}\\

Further, we let $W_j$ denote the subset of $\{0,...,b_j\}$ satisfying  \textnormal{(TIP$_j$)}  and let $
\hat{\mathcal{D}}_{j}:=\bigcup_{h\in W_j}\mathcal{D}_{l_{h}}$.
\begin{proposition}\label{prop:lattice2}
	 Let $\Lambda_{n}$ satisfy $|\Lambda_n|\to \infty$ as $n\to \infty$ together with Condition ($\mathcal{D}^{\Lambda}$). Fix any $j=1,\ldots,q$. If $i\in\{0,1,...,b_j\}$ satisfies  \textnormal{(TIP$_j$)}   then $\mathcal{L}_{l_{i}}=\mathcal{L}_{j}$ and $\lambda_{l_{i}}=\lambda_{j}$. In particular, when $\mathcal L_j$ is a full rank lattice the \textnormal{(TIP$_j$)} condition is satisfied for all $i=0,...,b_j$ and when $\mathcal{D}_j$ is bounded the \textnormal{(TIP$_j$)} condition is never satisfied.
	
	Further, for every $i\in W_j$ we have  $\hat{\mathcal{D}}_{l_{i}}=\hat{\mathcal{D}}_{j}$ and $\hat{\mathcal{D}}_j\cup\{\mathbf{0}\}\cup-\hat{\mathcal{D}}_j$ is translation invariant for every point in $\mathcal{L}_{j}$.
\end{proposition}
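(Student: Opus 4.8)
The plan is to work through the statement in the same order it is phrased, treating the earlier Propositions \ref{lem-AC1-L-2} and \ref{prop:lattice1} as available. First I would prove the core implication: if $i\in\{0,1,\dots,b_j\}$ satisfies \textnormal{(TIP$_j$)}, then $\mathcal{L}_{l_i}=\mathcal{L}_j$ and $\lambda_{l_i}=\lambda_j$. By Proposition \ref{prop:lattice1} we already know $\mathcal{L}_{l_i}\supseteq\mathcal{L}_j$ with the same rank, so the equality $\mathcal{L}_{l_i}=\mathcal{L}_j$ is the substantive part. The idea is to use the witness point $\mathbf{x}\in((\mathcal{L}_{l_i})_{\mathbf{z}_{l_i}})^+$ with $\mathbf{x}\prec\mathbf{y}$ for some $\mathbf{y}\in\mathcal{G}_j$ to ``transport'' a generator of $\mathcal{L}_{l_i}$ back into $\mathcal{L}_j$: take any $\mathbf{w}\in\mathcal{L}_{l_i}$; then $\mathbf{x}$ and $\mathbf{x}+\mathbf{w}$ both lie in $((\mathcal{L}_{l_i})_{\mathbf{z}_{l_i}})^+\subseteq\mathcal{D}_j$ (using the partition \eqref{partition}), and because $\mathbf{y}\in\mathcal{G}_j$ we have $(\mathcal{D}_j)^+_{-\mathbf{y}}=\mathcal{D}_j$, so shifting by $-\mathbf{y}$ keeps the relevant points inside $\mathcal{D}_j$; combined with $\mathbf{x}\prec\mathbf{y}$, this forces the shift $\mathbf w$ to be realizable as an element of $\mathcal{G}_j$ (one checks $\mathbf w = (\mathbf x+\mathbf w)-\mathbf x$ is a shift that stabilizes $\mathcal D_j$ after the intermediate translation by $-\mathbf y$). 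Hence $\mathbf{w}\in\mathcal{L}_j$, giving $\mathcal{L}_{l_i}\subseteq\mathcal{L}_j$ and therefore equality. Once $\mathcal{L}_{l_i}=\mathcal{L}_j$, the equality $\lambda_{l_i}=\lambda_j$ follows from Proposition \ref{lem-AC1-L-2}(III): there we know $\lambda_{l_i}\ge\lambda_j$, and applying the same bound in the reverse direction (since $\mathcal{D}_j$ is also obtained from $\mathcal{D}_{l_i}$ by a shift by a point of $\mathcal{G}_{l_i}=\mathcal{G}_j$-coset, as the lattices agree) gives $\lambda_j\ge\lambda_{l_i}$.

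Next I would dispatch the two ``in particular'' claims. When $\mathcal{L}_j$ is a full-rank lattice, every point of $\mathcal{D}_j$ — in particular every point of each $((\mathcal{L}_{l_i})_{\mathbf{z}_{l_i}})^+$ — has points of $\mathcal{G}_j$ both $\prec$ and $\succ$ it, since a full-rank sublattice of $\mathbb{Z}^k$ is $\prec$-cofinal and co-initial in $\mathcal D_j$'s ``tail'' directions; so the existence of the required $\mathbf{y}\succ\mathbf{x}$ in $\mathcal G_j$ is automatic and \textnormal{(TIP$_j$)} holds for all $i$. When $\mathcal{D}_j$ is bounded, Proposition \ref{prop:lattice1} gives $\mathcal{L}_j=\{\mathbf 0\}$, hence $\mathcal{G}_j\subseteq\{\mathbf 0\}$, and there is no $\mathbf{y}\in\mathcal{G}_j$ with $\mathbf x\prec\mathbf y$ because $\mathbf x\succ\mathbf 0$ by definition of $(\cdot)^+$; so \textnormal{(TIP$_j$)} never holds.

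Finally, for the last sentence I would show $\hat{\mathcal D}_{l_i}=\hat{\mathcal D}_j$ for $i\in W_j$ and the translation invariance of $\hat{\mathcal D}_j\cup\{\mathbf 0\}\cup-\hat{\mathcal D}_j$. For the first equality the point is that, by the implication just proved, all $l_i$ with $i\in W_j$ share the same lattice $\mathcal L_j$ and the same weight $\lambda_j$; one checks that the collection $\{\mathcal D_{l_h}:h\in W_j\}$ is the same whether generated from $j$ or from any $l_i$, $i\in W_j$, because the defining partition \eqref{partition} and the (TIP$_j$) selection only depend on $\mathcal L_j$ and the coset structure, which are common. For translation invariance: given $\mathbf v\in\mathcal L_j$, shifting $\hat{\mathcal D}_j=\bigcup_{h\in W_j}\mathcal D_{l_h}$ by $\mathbf v$ permutes the pieces $((\mathcal L_{l_h})_{\mathbf z_{l_h}})^+$ among themselves up to the symmetrization by $\{\mathbf 0\}$ and $-\hat{\mathcal D}_j$ (since each $\mathcal L_{l_h}=\mathcal L_j$ contains $\mathbf v$, each piece is mapped to another coset-piece already present, after accounting for points that cross the $\succ\mathbf 0$ boundary, which land in the $-\hat{\mathcal D}_j$ part).

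The main obstacle I expect is the core implication — precisely controlling how the $\succ$-order interacts with the shift operations. The delicate point is that $\mathcal D_j$ lives in the upper orthant $(\mathbb Z^k)^+$, so shifting by a lattice vector $\mathbf w$ can push points out of the ``$\succ\mathbf 0$'' region, and the whole force of \textnormal{(TIP$_j$)} is that the witness $\mathbf y\in\mathcal G_j$ with $\mathbf x\prec\mathbf y$ provides just enough ``room'' to absorb such a shift without leaving $\mathcal D_j$; making this bookkeeping rigorous — especially keeping track of which translated points remain $\succ\mathbf 0$ after composing the shift by $\mathbf w$ with the shift by $-\mathbf y$ — is where the real care is needed, and it is also what makes the full-rank and bounded cases fall out as clean corollaries.
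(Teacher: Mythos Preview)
Your overall scaffolding matches the paper's: reduce $\mathcal L_{l_i}=\mathcal L_j$ to showing $\mathcal G_{l_i}\subseteq\mathcal G_j$ (since $\mathcal G_j\subseteq\mathcal G_{l_i}$ is already in Proposition~\ref{prop:lattice1}), then get $\lambda_{l_i}=\lambda_j$ by a reverse application of Lemma~\ref{lem:transcopy}; the full-rank, bounded, $\hat{\mathcal D}$, and translation-invariance parts are handled essentially as in the paper. The gap is in your core step. Your element-by-element argument (``take $\mathbf w\in\mathcal L_{l_i}$, note $\mathbf x,\mathbf x+\mathbf w\in\mathcal D_j$, shift by $-\mathbf y$, conclude $\mathbf w\in\mathcal G_j$'') does not actually force $((\mathcal D_j)_{-\mathbf w})^+=\mathcal D_j$: knowing that two points of a single coset lie in $\mathcal D_j$, and that $\mathbf y\in\mathcal G_j$ sits above $\mathbf x$, gives no direct control over what $(\mathcal D_j)_{-\mathbf w}$ looks like on \emph{every} coset of the partition~\eqref{partition}. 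Likewise your reverse inequality $\lambda_j\ge\lambda_{l_i}$ presupposes that $\mathcal D_j$ arises as $((\mathcal D_{l_i})_{-\mathbf z})^+$ for some $\mathbf z\in\mathcal D_{l_i}$, which you have not established; equality of the lattices alone does not yield this.

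The paper's fix is a single clean observation that resolves both issues at once. First one checks (straight from the partition~\eqref{partition}) that for \emph{any} $\mathbf x\in((\mathcal L_{l_i})_{\mathbf z_{l_i}})^+$ one has $((\mathcal D_j)_{-\mathbf x})^+=\mathcal D_{l_i}$. Then the TIP witness $\mathbf y\in\mathcal G_j$ with $\mathbf x\prec\mathbf y$ produces the point $\mathbf y-\mathbf x\succ\mathbf 0$, which lies in $\mathcal D_{l_i}$ (as $\mathbf y\in\mathcal D_j$) and satisfies
\[
((\mathcal D_{l_i})_{-(\mathbf y-\mathbf x)})^+=(((\mathcal D_j)_{-\mathbf x})^+_{-(\mathbf y-\mathbf x)})^+=((\mathcal D_j)_{-\mathbf y})^+=\mathcal D_j,
\]
using $\mathbf y\in\mathcal G_j$ for the last equality. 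This exhibits $\mathcal D_j$ as a shift of $\mathcal D_{l_i}$, so the inclusion argument from Proposition~\ref{prop:lattice1} runs verbatim with the roles of $j$ and $l_i$ swapped to give $\mathcal G_{l_i}\subseteq\mathcal G_j$, and Lemma~\ref{lem:transcopy} applied to $\mathcal D_{l_i}$ gives $\lambda_j\ge\lambda_{l_i}$. The same ``role reversal'' is what drives the paper's proof that $\hat{\mathcal D}_{l_i}=\hat{\mathcal D}_j$: since $\mathcal D_{l_i}$ contains a shifted copy of $\mathcal D_j$, the list of $\mathcal D_{l_h}$'s arising from either is identical. So the missing ingredient in your plan is precisely this explicit point $\mathbf y-\mathbf x\in\mathcal D_{l_i}$; once you have it, everything else you wrote goes through.
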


\begin{remark}
	The case of (TIP$_j$) not holding for some $l=l_1,...,l_{b_j}$ is equivalent to the case of $\mathcal{G}_j$ lying on the hyperplane determined by the order $\succ$. For example, this is the case when we are in $\mathbb{R}^2$, the order goes along the horizontal lines (informally $(0,0)\prec(0,1)\prec(0,2)\prec...\prec(0,\infty)\prec(1,-\infty)\prec...\prec(1,0)\prec...$), and $\Lambda_n$ draws two lines which are parallel to the horizontal axis, see Figure \ref{fig:TIPfail} for an illustration. It is possible to see that in this case one $\hat{\mathcal{D}}$ (say $\hat{\mathcal{D}}_1$) is is simply given by $x$-axis, while for the other ($\hat{\mathcal{D}}_2$) we have the set provided in Figure \ref{fig:TIPfail-2}. These sets are translation invariant with respect to the points in the respective $\mathcal{L}_i$ and in this example $\mathcal{L}_1$ and $\mathcal{L}_2$ are both equal to $x$-axis.
	\begin{figure}[h]
\centerline{
\begin{tikzpicture}[line cap=round,line join=round,>=triangle 45,x=0.4091370558375635cm,y=0.9589041095890412cm]
\draw[->,color=black] (-1.3,0) -- (20,0);
\draw[->,color=black] (0,-0.7) -- (0,2);
\draw[color=black] (19.7,-0.7) node [anchor=south west] {x};
\draw[color=black] (-1,1.3) node [anchor=south west] {y};
\draw[color=black] (-0.3,-0.3) node {0};
\draw[color=red] (1,0.05) --  (22,0.05);
\draw[color=red] (-2,1) --  (22,1);
\foreach \x in {-1,1,2,3,4,5,6,7,8,9,10,11,12,13,14,15,16,17,18,19}
\draw[shift={(\x,0)},color=black]  (0,-0.1) --  (0,0.1);
\foreach \x in {-1,1,2,3,4,5,6,7,8,9,10,11,12,13,14,15,16,17,18,19}
\draw[shift={(\x,1)},color=black]  (0,-0.1) --  (0,0.1);
\draw[color=red] (22,-0.4) node {${\cal D}_1$};
\draw[color=red] (22,0.6) node {${\cal D}_2$};
\pattern[pattern=north west lines, pattern color=red] (1,-0.1)--(1,0.1)--(22,0.1)--(22,-0.1)--cycle;
\filldraw[black,color=red] (0,0) circle (2pt);
\draw[densely dotted,color=blue] (0,0) -- (1,1);
\draw[densely dotted,color=blue] (0,0) -- (-1,1);
\draw[densely dotted,color=blue] (0,0) -- (2,1);
\draw[densely dotted,color=blue] (0,0) -- (3,1);
\draw[densely dotted,color=blue] (0,0) -- (0,1);
\draw[densely dotted,color=blue] (0,0) -- (4,1);
\draw[color=blue] (3,0.3) node {${\cal E}_1$};
\end{tikzpicture}}
\caption{\label{fig:TIPfail} We consider an order that increases along the horizontal axis and then upward. Red chopped half-line starting at $(1,0)$ is ${\cal D}_1$, ${\cal D}_1$ plus the red line above is ${\cal D}_2$ also in red. Both ${\cal D}$s have the same ${\cal G}={\cal D}_1\cup \{(0,0)\}$ thus the same ${\cal L}$ which coincides with the x-axis. We check that ${\cal D}_2$ is partitioned into ${\cal L}_+$ and ${\cal L_{\mathbf z}}$ where $\mathbf z$ is any point with $1$ as the second coordinate. However, none of the points of ${\cal L_{\mathbf z}}$ precedes any point in ${\cal G}$, and the TIP property fails. Notice ${\cal E}_1$ is any couple of points $\{(0,0),{\mathbf z}\}$ in blue.}
\end{figure}
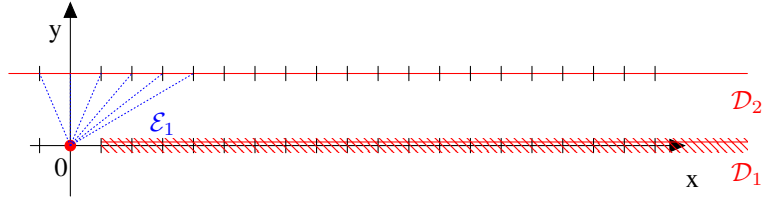
\end{remark}

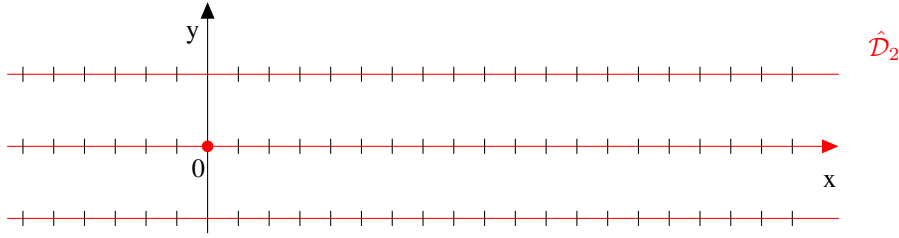
\begin{figure}[h]
\centerline{
\begin{tikzpicture}[line cap=round,line join=round,>=triangle 45,x=0.4091370558375635cm,y=0.9589041095890412cm]
\draw[->,color=red] (-6.5,0) -- (20.5,0);
\draw[->,color=black] (0,-1.2) -- (0,2);
\draw[color=black] (19.7,-0.7) node [anchor=south west] {x};
\draw[color=black] (-1,1.3) node [anchor=south west] {y};
\draw[color=black] (-0.3,-0.3) node {0};
\draw[color=red] (-6.5,1) --  (20.5,1);
\draw[color=red] (-6.5,-1) --  (20.5,-1);
\foreach \x in {-6,-5,-4,-3,-2,-1,1,2,3,4,5,6,7,8,9,10,11,12,13,14,15,16,17,18,19}
\draw[shift={(\x,0)},color=black]  (0,-0.1) --  (0,0.1);
\foreach \x in {-6,-5,-4,-3,-2,-1,1,2,3,4,5,6,7,8,9,10,11,12,13,14,15,16,17,18,19}
\draw[shift={(\x,1)},color=black]  (0,-0.1) --  (0,0.1);
\foreach \x in {-6,-5,-4,-3,-2,-1,1,2,3,4,5,6,7,8,9,10,11,12,13,14,15,16,17,18,19}
\draw[shift={(\x,-1)},color=black]  (0,-0.1) --  (0,0.1);
\draw[color=red] (22,1.4) node {${\hat{\cal D}}_2$};
\filldraw[black,color=red] (0,0) circle (2pt);
\end{tikzpicture}}
\caption{\label{fig:TIPfail-2} Representation of $\hat{\mathcal{D}}_2$. It is possible to see that $\hat{\mathcal{D}}_2$ is translation invariant for the points in $\mathcal{L}_2$ which in this case is given by the $x$-axis.}
\end{figure}

\subsection{Lattice properties on the whole index set}
In this subsection we consider subsets $\Xi_j$ of the whole index set $\mathbb{Z}^k$ that are the equivalent of the subsets $\mathcal{D}_{j}$ of the upper-orthant. As Condition ($\mathcal{D}^{\Lambda} $) defined only the $\mathcal{D}_{j}$, the existence of the $\Xi_j$, shown in the next result, is deduced from it.
\begin{proposition}\label{lem-Xi1} Let $\Lambda_{n}$ satisfy $|\Lambda_n|\to \infty$ as $n\to \infty$ together with Condition ($\mathcal{D}^{\Lambda}$). For any $p\in\mathbb{N}$ and any $\Xi$ subset of $\mathbb{Z}^k$ with $\mathbf{0}\in\Xi$ we have that the limits
\begin{multline*}
\lim\limits_{n\to\infty}\dfrac{|\{\mathbf{t}\in\Lambda_{n}:(\Lambda_{n})_{-\mathbf{t}}\cap K_{p}=\Xi\cap K_{p}\}|}{|\Lambda_n|},\quad\text{and}\\
\quad \lim\limits_{p\to\infty}\lim\limits_{n\to\infty}\dfrac{|\{\mathbf{t}\in\Lambda_{n}:(\Lambda_{n})_{-\mathbf{t}}\cap K_{p}=\Xi\cap K_{p}\}|}{|\Lambda_n|}
\end{multline*}
exist. Moreover, any such $\Xi$ such that
\begin{equation}\label{>0}
\lim\limits_{p\to\infty}\lim\limits_{n\to\infty}\dfrac{|\{\mathbf{t}\in\Lambda_{n}:(\Lambda_{n})_{-\mathbf{t}}\cap K_{p}=\Xi\cap K_{p}\}|}{|\Lambda_n|}>0
\end{equation}
satisfies $\Xi^+=\mathcal{D}_j$ for some $j=1,...,q$. 
\end{proposition}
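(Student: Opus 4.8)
The plan is to mirror the argument behind Condition ($\mathcal{D}^{\Lambda}$) and Proposition \ref{lem-AC1-L-2}, but now tracking the \emph{whole} translated set $(\Lambda_n)_{-\mathbf{t}}\cap K_p$ rather than only its positive part $\Lambda_n^{(\mathbf{t},p)}$. First I would observe that for each fixed $p$ there are only finitely many subsets of $K_p$ (since $|K_p|<\infty$), so the index set $\Lambda_n$ is partitioned, for each $n$, according to the finitely many possible values of $(\Lambda_n)_{-\mathbf{t}}\cap K_p$. Grouping the points $\mathbf{t}\in\Lambda_n$ by the value of $(\Lambda_n)_{-\mathbf{t}}\cap K_p\cap(\mathbb{Z}^k)^+ = \Lambda_n^{(\mathbf{t},p)}$, Condition ($\mathcal{D}^{\Lambda}$) already gives that the proportion of points in each such $\succ$-positive class converges. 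It therefore suffices to show that, \emph{within} a positive class, the refinement by the value of $(\Lambda_n)_{-\mathbf{t}}\cap K_p\cap\{\mathbf{u}\preceq\mathbf{0}\}$ also stabilizes in proportion as $n\to\infty$. This is where I would invoke stationarity of the combinatorial structure: by the invariance of the order $\prec$, for $\mathbf{s}\in\Lambda_n$ one has $\mathbf{u}\in(\Lambda_n)_{-\mathbf{s}}$ with $\mathbf{u}\preceq\mathbf{0}$ iff $\mathbf{s}\in(\Lambda_n)_{-\mathbf{u}}$ with $-\mathbf{u}\succeq\mathbf{0}$, i.e. the ``negative neighbours seen from $\mathbf{s}$'' are exactly the ``positive neighbours seeing $\mathbf{s}$''. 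A double-counting / Fubini argument over pairs $(\mathbf{s},\mathbf{t})\in\Lambda_n\times\Lambda_n$ with $\mathbf{t}-\mathbf{s}\in K_p$ then expresses the count of points with a prescribed negative pattern in terms of counts of points with prescribed positive patterns, which converge by Condition ($\mathcal{D}^{\Lambda}$). Taking the limit in $n$ first, and then (using monotonicity in $p$ of the nested classes, exactly as in part (I) of Proposition \ref{lem-AC1-L-2}, since enlarging $p$ only refines the partition) the limit in $p$, yields existence of both displayed limits.

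For the last assertion, suppose $\Xi$ satisfies \eqref{>0}. Since $(\Lambda_n)_{-\mathbf{t}}\cap K_p=\Xi\cap K_p$ forces $\Lambda_n^{(\mathbf{t},p)}=\Xi^+\cap K_p$, the proportion of points realizing the positive pattern $\Xi^+\cap K_p$ is at least the proportion realizing the full pattern $\Xi\cap K_p$; letting $n\to\infty$ and then $p\to\infty$, the right-hand quantity in \eqref{>0} is a lower bound for $\lim_p\lambda_{\cdot,p}$ associated with the positive pattern generated by $\Xi^+$. Hence that positive pattern has strictly positive asymptotic weight, so by Condition ($\mathcal{D}^{\Lambda}$) together with part (I) of Proposition \ref{lem-AC1-L-2} (which identifies the patterns of strictly positive weight with the $\mathcal{D}_j\cap K_p$), we must have $\Xi^+\cap K_p=\mathcal{D}_j\cap K_p$ for all large $p$ and some $j$, i.e. $\Xi^+=\mathcal{D}_j$.

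The main obstacle I anticipate is the double-counting step: making precise that the negative-neighbour pattern of $\mathbf{s}$ is determined, in an asymptotically density-preserving way, by the positive-neighbour patterns of the nearby points $\mathbf{t}\succ\mathbf{s}$ with $\mathbf{t}-\mathbf{s}\in K_p$. One has to be careful that boundary effects — points $\mathbf{s}$ near the ``edge'' of $\Lambda_n$ whose $K_p$-neighbourhood is not yet stabilized — contribute a vanishing fraction of $|\Lambda_n|$; this is handled exactly as in the proofs collected in Section \ref{sec:proof}, using $|\Lambda_n|\to\infty$ and the fact that for each fixed $p$ only $O(1)$ patterns occur, so the total discrepancy is $o(|\Lambda_n|)$. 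The interchange of the limits in $n$ and $p$ is then justified because the inner limit in $n$ exists for every $p$ and is monotone (non-increasing as $p$ grows, being a sum of fewer-or-equal weights after refinement), hence convergent as $p\to\infty$.
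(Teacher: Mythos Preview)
Your overall direction---reduce counts of full $K_p$-patterns to counts of positive patterns, then invoke Condition ($\mathcal{D}^{\Lambda}$) and Point (I) of Proposition~\ref{lem-AC1-L-2}---is the same as the paper's, and your arguments for the monotone $p$-limit and for the final assertion $\Xi^+=\mathcal{D}_j$ are correct and match the paper's essentially verbatim.

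The gap is in the first limit, and it is precisely where you flag the ``main obstacle''. Your proposed mechanism (a Fubini/double-count over pairs $(\mathbf{s},\mathbf{t})$ with $\mathbf{t}-\mathbf{s}\in K_p$, using that negative neighbours of $\mathbf{s}$ are positive neighbours of nearby points) only yields that \emph{sums} of full-pattern counts, grouped by their positive part in $K_p$, converge; it does not show that each individual full-pattern count converges. Knowing the marginal proportions of positive $K_p$-patterns does not determine the joint information needed to pin down the negative part, and boundary effects are not the issue here. The paper closes this gap with a concrete combinatorial identity you are missing: let $-\mathbf{v}$ be the $\prec$-lowest point of $\Xi\cap K_p$; the map $\mathbf{t}\mapsto\mathbf{t}-\mathbf{v}$ is a bijection from $\{\mathbf{t}\in\Lambda_n:(\Lambda_n)_{-\mathbf{t}}\cap K_p=\Xi\cap K_p\}$ onto $\{\mathbf{t}'\in\Lambda_n:(\Lambda_n)_{-\mathbf{t}'}\cap(K_p)_{\mathbf{v}}=(\Xi\cap K_p)_{\mathbf{v}}\}$, and the shifted pattern $(\Xi\cap K_p)_{\mathbf{v}}$ has $\mathbf{0}$ as its lowest point. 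One then writes this last count as the count of $\mathbf{t}'$ whose positive pattern in the shifted box $((K_p)_{\mathbf{v}})^+$ matches, \emph{minus} the (finitely many, disjoint) counts where an additional point below $\mathbf{0}$ in $(K_p)_{\mathbf{v}}$ is present; each of these in turn is a finite sum of counts of the form $|\{\mathbf{t}':\Lambda_n^{(\mathbf{t}',2p)}=\Phi\}|$ for subsets $\Phi\subset K_{2p}^+$, since $(K_p)_{\mathbf{v}}\subset K_{2p}$. These last counts all converge by Condition ($\mathcal{D}^{\Lambda}$) and the consequence of Point (I) that, for any fixed box, the proportion of any positive pattern not of the form $\mathcal{D}_j\cap K_{2p}$ tends to zero. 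The crucial point you did not isolate is that passing to a \emph{larger} box $K_{2p}$ is what turns a full pattern into a purely positive one; working only within $K_p$ this reduction cannot be made.
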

%%%%%%%%%%

Define by $\Xi_1,...,\Xi_{q'}$ the sets satisfying \eqref{>0} with  $q'\in\mathbb{N}\cup\{\infty\}$. For each $m=1,...,q'$ and $p\in\mathbb{N}$ define
\begin{align*}
\gamma_m&:=\lim\limits_{p\to\infty}\lim\limits_{n\to\infty}|\{\mathbf{t}\in\Lambda_{n}:(\Lambda_{n})_{-\mathbf{t}}\cap K_{p}=\Xi_m\cap K_{p}\}|/|\Lambda_n|,\\
\gamma_{p,m}&:=\lim\limits_{n\to\infty}|\{\mathbf{t}\in\Lambda_{n}:(\Lambda_{n})_{-\mathbf{t}}\cap K_{p}=\Xi_m\cap K_{p}\}|/|\Lambda_n|,\\
F^{(m)}_p&:=\{j\in\{1,...,q'\}:\Xi_j\cap K_p=\Xi_m\cap K_p\}.
\end{align*}
Let $l_0:=j$ and $\mathbf{z}_{j}:=\mathbf{0}$. From Proposition \ref{lem-AC1-L-2} recall that $\mathcal{D}_{j} =\mathcal{G}_j^+\cup\bigcup_{i=1}^{b_j}((\mathcal L_{l_i})_{\mathbf{z}_{l_i}})^+$, which we can rewrite as $\mathcal{D}_{j} =\bigcup_{i=0}^{b_j}((\mathcal L_{l_i})_{\mathbf{z}_{l_i}})^+$.
\begin{proposition}\label{lem-Xi1.5}
Let $\Lambda_{n}$ satisfy $|\Lambda_n|\to \infty$ as $n\to \infty$ together with Condition ($\mathcal{D}^{\Lambda}$). Every $\Xi_m$, $m=1,...,q'$, is a translation of $\bigcup_{i=0}^{b_j}(\mathcal L_{l_i})_{\mathbf{z}_{l_i}}$ for some $j=1,\ldots, q$. Moreover, $\sum_{m=1}^{q'}\gamma_m=1$, and $\gamma_{p,m}=\sum_{m'\in F^{(m)}_p}\gamma_{m'}$, for every $m=1,...,q'$.
\end{proposition}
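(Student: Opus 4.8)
The plan is to relate the sets $\Xi_m$ on the whole index set $\mathbb{Z}^k$ to the already-understood sets $\mathcal{D}_j$ on the upper orthant, using the fact (Proposition \ref{lem-Xi1}) that any $\Xi_m$ with positive mass satisfies $\Xi_m^+=\mathcal{D}_j$ for some $j$. First I would fix $\Xi_m$ and let $j$ be the index with $\Xi_m^+=\mathcal{D}_j$. The key observation is that the lower part $\Xi_m\cap\big(\mathbb{Z}^k\big)^-$ is forced by a symmetry/time-change-type argument: if $\mathbf{t}\in\Lambda_n$ realizes $(\Lambda_n)_{-\mathbf{t}}\cap K_p=\Xi_m\cap K_p$, then for any $\mathbf{z}\in\Xi_m$ with $\mathbf{z}\prec\mathbf{0}$ (equivalently $-\mathbf{z}\succ\mathbf{0}$), the shifted point $\mathbf{t}+\mathbf{z}\in\Lambda_n$ sees the origin at relative position $-\mathbf{z}\succ\mathbf{0}$, so $-\mathbf{z}\in\Lambda_n^{(\mathbf{t}+\mathbf{z},p')}$ for appropriate $p'$. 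Running the classification of Proposition \ref{lem-AC1-L-2}(III) and the partition \eqref{partition} of Proposition \ref{prop:lattice1}, every such relative shift $\mathbf{z}_{l_i}$ appearing in $\mathcal{D}_j=\bigcup_{i=0}^{b_j}((\mathcal L_{l_i})_{\mathbf{z}_{l_i}})^+$ must have its negative $-\mathbf{z}_{l_i}$ realized in the lower part of $\Xi_m$, and conversely. This should pin down $\Xi_m$ as a translate of $\bigcup_{i=0}^{b_j}(\mathcal L_{l_i})_{\mathbf{z}_{l_i}}$ (without the positivity truncation), which is the first claim.

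For $\sum_{m=1}^{q'}\gamma_m=1$: the quantities $\gamma_{p,m}=\lim_n|\{\mathbf{t}\in\Lambda_n:(\Lambda_n)_{-\mathbf{t}}\cap K_p=\Xi_m\cap K_p\}|/|\Lambda_n|$ are, for fixed $p$, a partition of the indices $\mathbf{t}\in\Lambda_n$ according to the finitely-many possible values of $(\Lambda_n)_{-\mathbf{t}}\cap K_p$ (there are finitely many subsets of the finite set $K_p$), so $\sum_{\text{all shapes on }K_p}(\text{limit frequency})=1$ exactly. The point is that shapes not of the form $\Xi_m\cap K_p$ for some $m$ contribute mass tending to $0$ as $p\to\infty$: this follows from Proposition \ref{lem-Xi1}, since such a shape's frequency limit is either already captured by some $\Xi_m$ at level $p$ or its $p\to\infty$ limit is $0$ by \eqref{>0} failing. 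Combining this with the monotone-in-$p$ structure (a shape on $K_{p+1}$ refines one on $K_p$) and dominated/monotone convergence over the countable index set gives $\sum_m\gamma_m=1$; the relation $\gamma_{p,m}=\sum_{m'\in F_p^{(m)}}\gamma_{m'}$ is the exact analogue of Proposition \ref{lem-AC1-L-2}(I) and is proved the same way, by noting that $\{\mathbf{t}:(\Lambda_n)_{-\mathbf{t}}\cap K_p=\Xi_m\cap K_p\}$ is the disjoint union over $m'\in F_p^{(m)}$ of $\{\mathbf{t}:(\Lambda_n)_{-\mathbf{t}}\cap K_{p'}=\Xi_{m'}\cap K_{p'}\}$ for $p'$ large, then taking $n\to\infty$ then $p'\to\infty$.

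The main obstacle I anticipate is the first claim: rigorously showing that the lower part of $\Xi_m$ is a union of full lattice translates $(\mathcal L_{l_i})_{\mathbf{z}_{l_i}}$ and not something larger or smaller. One has to argue carefully that the \emph{same} index set $\mathcal L_{l_i}$ (not just a coset of a sublattice) appears, which requires using that $\mathcal L_{l_i}$ and $\mathcal L_j$ have the same rank (Proposition \ref{prop:lattice1}) together with the translation-invariance statements of Proposition \ref{prop:lattice2}; and one must handle the bookkeeping between the strict order $\succ$ truncation implicit in $\mathcal{D}_j$ and the symmetric (un-truncated) description of $\Xi_m$. A clean way is to first prove the set identity $\Xi_m\cup\{\mathbf{0}\}\cup(-\Xi_m)$ is translation invariant under $\mathcal L_j$ — which follows from Proposition \ref{prop:lattice2}'s statement that $\hat{\mathcal D}_j\cup\{\mathbf 0\}\cup-\hat{\mathcal D}_j$ is — and then read off the translate structure. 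The remaining steps are routine counting/limit arguments of the kind already used in the proof of Proposition \ref{lem-AC1-L-2}.
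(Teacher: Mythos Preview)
Your treatment of the second statement (the identities $\sum_m\gamma_m=1$ and $\gamma_{p,m}=\sum_{m'\in F_p^{(m)}}\gamma_{m'}$) is essentially what the paper does: one partitions $\Lambda_n$ at level $p$ by the finitely many possible shapes $(\Lambda_n)_{-\mathbf t}\cap K_p$, relates the resulting frequencies to the $\lambda_{p,j}$ via the inclusion $\Xi\cap K_p^+=\mathcal D_j\cap K_p$, applies Fatou to get $\sum_m\gamma_m\le 1$, and then reuses the argument of Proposition~\ref{lem-AC1-L-2}(I) verbatim for the reverse inequality and the refinement formula. No issue there.

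For the first statement, your shift observation is the right starting point and is exactly what the paper exploits, but your proposed way of closing the argument has a gap. You suggest deducing the structure of $\Xi_m$ from the translation invariance of $\hat{\mathcal D}_j\cup\{\mathbf 0\}\cup-\hat{\mathcal D}_j$ (Proposition~\ref{prop:lattice2}). This does not work in general: $\hat{\mathcal D}_j=\bigcup_{h\in W_j}\mathcal D_{l_h}$ is only the part of $\mathcal D_j$ coming from indices satisfying (TIP$_j$), and when (TIP$_j$) fails for some $i$ one has $\hat{\mathcal D}_j\subsetneq\mathcal D_j$ (cf.\ Figure~\ref{fig:TIPfail}). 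So knowing translation invariance of $\hat{\mathcal D}_j\cup\{\mathbf 0\}\cup-\hat{\mathcal D}_j$ says nothing about the cosets $(\mathcal L_{l_i})_{\mathbf z_{l_i}}$ with $i\notin W_j$, and in particular does not pin down the lower part of $\Xi_m$. Moreover, even if translation invariance under $\mathcal L_j$ held for $\Xi_m$, that alone only says $\Xi_m$ is a union of $\mathcal L_j$-cosets; it does not identify which cosets, nor that they are finitely many.

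The paper closes this differently. It applies your shift argument not only to points $\mathbf z\prec\mathbf 0$ but to \emph{every} $\mathbf s\in\Xi_m$: for each such $\mathbf s$ one gets
\[
\liminf_{p\to\infty}\lim_{n\to\infty}\frac{|\{\mathbf t\in\Lambda_n:\Lambda_n^{(\mathbf t,p)}=((\Xi_m)_{-\mathbf s})^+\cap K_p\}|}{|\Lambda_n|}\ge \gamma_m>0,
\]
forcing $((\Xi_m)_{-\mathbf s})^+$ to equal some $\mathcal D_k$. Proposition~\ref{prop:lattice1} then gives, at each point of $\Xi_m$, a local lattice structure; positivity of $\gamma_m$ forces the number of distinct lattice cosets to be finite. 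In the bounded case one simply shifts to the lowest point and reads off $\Xi_m=(\{\mathbf 0\}\cup\mathcal D_j)_{-\mathbf z}$. In the unbounded case the paper picks a point $\mathbf r$ on the \emph{most preceding} lattice coset of $\Xi_m$; after shifting by $-\mathbf r$ the positive part is some $\mathcal D_j$ and, because no other coset precedes the one through $\mathbf 0$, the whole set equals $\bigcup_{i=0}^{b_j}(\mathcal L_{l_i})_{\mathbf z_{l_i}}$. Note that the $j$ obtained this way need not be the index with $\Xi_m^+=\mathcal D_j$ that you fixed at the outset; the statement only asserts existence of \emph{some} $j$.
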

It is important to notice that the translations of $\bigcup_{i=0}^{b_j}(\mathcal L_{l_i})_{\mathbf{z}_{l_i}}$ can coincide. A careful analysis is done in order to describe the distinct translations. Recall that $l_0=j$ and that  $\mathcal L_{j}=\mathcal L_{l_0}\subset\mathcal L_{l_i}$ for every $i=1,...,b_j$, are same rank lattices by Proposition \ref{prop:lattice1}. Thus there exists a finite number of translations of $\mathcal L_{j}$ covering any $\mathcal L_{l_i}$, $i=1,...,b_j$. Denote $n_j\ge 1$ this number. Moreover, let $\mathbf{x}_1^{(j)},...,\mathbf{x}_{n_j}^{(j)}$ be the points in $\bigcup_{i=0}^{b_j}(\mathcal L_{l_i})_{\mathbf{z}_{l_i}}$ such that $\mathbf{x}_k^{(j)}\succeq\mathbf{0}$ and that $\bigcup_{h=1}^{n_j}(\mathcal L_{j})_{\mathbf{x}_{h}^{(j)} }=\bigcup_{i=0}^{b_j}(\mathcal L_{l_i})_{\mathbf{z}_{l_i}}$. Finally, let $\mathcal{E}_j=\{\mathbf{x}_1^{(j)},...,\mathbf{x}_{n_j}^{(j)}\}$ where, for the sake of clarity, we include by convention $\{\mathbf{0}\}$ in $\mathcal{E}_j$ so that $\mathbf{0}$ is always the lowest (according to $\succ$) point in $\mathcal{E}_j$. Notice that a certain arbitrary choice is still possible when choosing $\mathcal{E}_j$, see Figure \ref{fig:TIPfail} for an example.

Any $\Xi_m$ contains $\{\mathbf{0}\}$ by definition. Thus, the different $\Xi_m$s correspond to the different translated versions of $\bigcup_{i=0}^{b_j}(\mathcal L_{l_i})_{\mathbf{z}_{l_i}}$ containing  $\{\mathbf{0}\}$. Having in mind the identity $\bigcup_{\mathbf{s} \in \mathcal{E}_j}(\mathcal L_{j})_{\mathbf{s}}=\bigcup_{i=0}^{b_j}(\mathcal L_{l_i})_{\mathbf{z}_{l_i}}$, the number of different translations  is thus $n_j$ and the shifts are the elements of $\mathcal{E}_j$ (and thus $n_j=|\mathcal{E}_j|$).
Denote $I^*$ the set of the indices $j=1,\ldots,q$ satisfying
\begin{equation}\label{*}
\lim\limits_{p\to\infty}\lim\limits_{n\to\infty}\Big|\Big\{\mathbf{t}\in\Lambda_{n}:(\Lambda_{n})_{-\mathbf{t}}\cap K_{p}=\bigcup_{\mathbf{s} \in \mathcal{E}_j}(\mathcal L_{j})_{\mathbf{s}}\cap K_{p}\Big\}\Big|/|\Lambda_n|>0.
\end{equation}
For every $j\in I^*$, let $\Xi^{*}_j:=\bigcup_{\mathbf{s} \in \mathcal{E}_j}(\mathcal L_{j})_{\mathbf{s}}$ and $\gamma^{*}_j$ be the positive limit in  \eqref{>0} associated to $\Xi^{*}_j$. For  every $j\in I^*$ we have $(\Xi^{*}_j)^+=\mathcal D_j$, that there exists an $m=1,\ldots,q'$ such that $\Xi^{*}_j=\Xi_m$ and that any $\Xi_m$, $m=1\ldots,q'$, are  translated versions of $\Xi^{*}_j$, $j\in I^*$. It essentially means that the $\Xi^{*}_j$ for $j\in I^*$ are the only relevant structures in the asymptotic of $(\Lambda_n)$ as the other ones are translated versions of them:
\begin{proposition}\label{lem-Xi2}
Let $\Lambda_{n}$ satisfy $|\Lambda_n|\to \infty$ as $n\to \infty$ together with Condition ($\mathcal{D}^{\Lambda}$).	We have the identity $\sum_{j\in I^*}\gamma^*_jn_j=\sum_{j\in I^*}\gamma^*_j|\mathcal{E}_j|=1$.
\end{proposition}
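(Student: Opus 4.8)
The plan is to deduce Proposition~\ref{lem-Xi2} from the two previously established ``partition of unity'' identities: the identity $\sum_{i=1}^{q}\lambda_i=1$ from Condition~($\mathcal{D}^{\Lambda}$) (equivalently $\sum_{m=1}^{q'}\gamma_m=1$ from Proposition~\ref{lem-Xi1.5}), together with the combinatorial bookkeeping relating the $\Xi_m$ to the $\Xi^*_j$, $j\in I^*$. The starting point is the observation, stated just before the proposition, that every $\Xi_m$ ($m=1,\ldots,q'$) is a translated version of some $\Xi^*_j$ with $j\in I^*$, and conversely each $\Xi^*_j$ equals $\Xi_m$ for exactly one $m$. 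So the index set $\{1,\ldots,q'\}$ is partitioned according to which $\Xi^*_j$ a given $\Xi_m$ is a translate of; write $\{1,\ldots,q'\}=\bigsqcup_{j\in I^*}M_j$ where $M_j=\{m: \Xi_m \text{ is a translate of } \Xi^*_j\}$.

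First I would show that for each fixed $j\in I^*$, $\sum_{m\in M_j}\gamma_m=\gamma^*_j\,|\mathcal{E}_j|$. The idea is that a point $\mathbf{t}\in\Lambda_n$ contributes (asymptotically, after taking $p\to\infty$) to the count defining some $\gamma_m$ with $m\in M_j$ precisely when $(\Lambda_n)_{-\mathbf{t}}$ is asymptotically equal to a translate of $\Xi^*_j$ that contains $\mathbf{0}$; by the discussion preceding the proposition there are exactly $n_j=|\mathcal{E}_j|$ such translates, indexed by the shifts $\mathbf{s}\in\mathcal{E}_j$, namely $(\Xi^*_j)_{-\mathbf{s}}$ for $\mathbf{s}\in\mathcal{E}_j$. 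Crucially, since $\mathcal{L}_j$ is translation-invariant for its own points and $\Xi^*_j=\bigcup_{\mathbf{s}\in\mathcal{E}_j}(\mathcal L_j)_{\mathbf s}$, translating $\Lambda_n$ by $-\mathbf{t}$ versus by $-(\mathbf{t}+\mathbf{v})$ for $\mathbf{v}\in\mathcal{L}_j$ produces the same asymptotic local set $\Xi^*_j$, so the set of $\mathbf{t}$ giving $\Xi^*_j$ itself and the sets of $\mathbf{t}$ giving each of the other $|\mathcal{E}_j|-1$ admissible translates have the same asymptotic density $\gamma^*_j$ (this is the content of why $\Xi^*_j$ and its translates by $\mathcal{E}_j$ all carry the ``same'' weight, via stationarity of the density of $\Lambda_n$ along the lattice directions). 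Summing over the $|\mathcal{E}_j|$ translates gives $\sum_{m\in M_j}\gamma_m=|\mathcal{E}_j|\,\gamma^*_j=n_j\gamma^*_j$.

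Then I would simply sum over $j\in I^*$:
\begin{equation*}
\sum_{j\in I^*}\gamma^*_j n_j=\sum_{j\in I^*}\sum_{m\in M_j}\gamma_m=\sum_{m=1}^{q'}\gamma_m=1,
\end{equation*}
where the last equality is Proposition~\ref{lem-Xi1.5}. Since $n_j=|\mathcal{E}_j|$ by construction, this is exactly the claimed identity.

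The main obstacle is the middle step: rigorously justifying that each of the $|\mathcal{E}_j|$ distinct translates of $\Xi^*_j$ containing $\mathbf{0}$ is realized by a set of points $\mathbf{t}\in\Lambda_n$ of the same asymptotic density $\gamma^*_j$, and that these sets of $\mathbf{t}$ are asymptotically disjoint (so there is no double counting) and exhaust all $\mathbf{t}$ whose local set is a translate of $\Xi^*_j$. The disjointness is clear because the $\Xi_m$ are distinct sets, and exhaustion follows from Proposition~\ref{lem-Xi1}; the equality of densities is the delicate part and I expect it to follow from a change-of-variables argument on $\mathbf{t}\mapsto \mathbf{t}+\mathbf{s}$ together with the lattice translation-invariance of $\mathcal{L}_j$ established in Propositions~\ref{prop:lattice1}--\ref{prop:lattice2}, interchanging the $n\to\infty$ and $p\to\infty$ limits using the monotonicity already exploited in Proposition~\ref{lem-Xi1}. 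One must also handle the case $q'=\infty$ (resp.\ $I^*$ infinite) by the same $\varepsilon$-truncation argument used earlier in the paper, noting only finitely many $\gamma^*_j$ exceed any $\varepsilon>0$.
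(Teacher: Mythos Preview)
Your proposal is correct and follows essentially the same approach as the paper: partition $\{1,\ldots,q'\}$ into the classes $M_j$ of translates of each $\Xi^*_j$, show each translate carries the same weight $\gamma^*_j$, then sum and invoke $\sum_m\gamma_m=1$ from Proposition~\ref{lem-Xi1.5}. The paper makes the ``equal weights'' step concrete not via lattice invariance per se but by a two-sided inclusion: for $\mathbf{x}\in\mathcal{E}_j$ and any $p$, choosing $g_p$ with $K_{g_p}\supset(K_p)_{\mathbf{x}}$ gives $\{\mathbf{t}:(\Lambda_n)_{-\mathbf{t}}\cap K_{g_p}=\Xi^*_j\cap K_{g_p}\}\subset\{\mathbf{t}':(\Lambda_n)_{-\mathbf{t}'}\cap K_p=(\Xi^*_j)_{-\mathbf{x}}\cap K_p\}$ via $\mathbf{t}'=\mathbf{t}+\mathbf{x}$, yielding $\gamma_b\ge\gamma^*_j$, and the reverse inclusion with the roles swapped gives $\gamma^*_j\ge\gamma_b$; this is precisely the change-of-variables plus monotonicity-in-$p$ you anticipated.
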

By definition we have
$$\Xi^{*}_j=\bigcup_{i=0}^{b_j}(\mathcal L_{l_i})_{\mathbf{z}_{l_i}}=\bigcup_{\mathbf{s}\in\mathcal{E}_j}(\mathcal{L}_j)_{\mathbf{s}}=\bigcup_{\mathbf{s}\in\mathcal{L}_j}(\mathcal{E}_j)_{\mathbf{s}}$$ for any $j\in I^*$.

In the following statement, we link the asymptotic behaviour of $(\Lambda_n)$ with specific non-asymptotic properties of some of its subsets. In particular, we extract from $(\Lambda_n)$ specific disjoint subsets, which have helpful non-asymptotic properties(for the proof of Theorem \ref{t1-L-E-Pro}) and show that these subsets asymptotically describe the whole  $(\Lambda_n)$ satisfying Condition $(\mathcal D^\Lambda)$.  

We introduce the following notation. Let $l\in\mathbb{N}$. Consider the maximum of the $m\in\mathbb{N}\cup\{0\}$ such that $\mathcal{D}_i\cap K_l\neq\mathcal{D}_j\cap K_l$ for every $i,j\in I^*$ with $i,j<m$. Denote this maximum by $\tilde{m}_{l,1}$. Consider the maximum of the $m\in\mathbb{N}\cup\{0\}$ such that $\mathcal{D}_w\cap K_{\lfloor l/2\rfloor}\neq\mathcal{D}_{l_h}\cap K_{\lfloor l/2\rfloor}$ for every $w,v\in I^*$ with $w,v<m$ and where $\mathcal{D}_w$ is bounded,  $\mathcal{D}_v$ is unbounded and $l_h$ is any index $l_{1},...,l_{b_v}$. Denote this maximum by $\tilde{m}_{l,2}$. Consider the maximum of the $m\in\mathbb{N}\cup\{0\}$ such that $\mathcal{E}_s\subset K_{\lfloor l/4\rfloor}$ for every $s\in I^*$ with $s<m$. Denote this maximum by $\tilde{m}_{l,3}$. Then, we define $m_l$ as follows $m_l:=\min(\tilde{m}_{l,1},\tilde{m}_{l,2},\tilde{m}_{l,3})$. Notice that such $m_l$ exists because $l$ is finite and $b_j$ is finite for every unbounded $\mathcal{D}_j$.

Further, let $S_{i,l}:=\{\mathbf{t}\in\Lambda_{n}:(\Lambda_{n})_{-\mathbf{t}}\cap K_{l}=\Xi^{*}_i\cap K_{l}\}$ for every $i\in I^*$ and $l,n\in\mathbb{N}$. Notice that $S_{i,l}$ depends on $n$, but we omit the dependency to lighten the notation.
	\begin{proposition}\label{prop:latlambd}
	Let $\Lambda_{n}$ satisfy $|\Lambda_n|\to \infty$ as $n\to \infty$ together with Condition ($\mathcal{D}^{\Lambda}$). Then for every $n\in\mathbb{N}$, $j,i\in I^*$ with $j,i<m_{4l}$ and $i\neq j$, $\mathbf{t}\in S_{j,4l}$ and $\mathbf{s}\in S_{i,4l}$, we have that $(\mathcal{D}_j\cap K_{2l})_{\mathbf{t}}\cap (\mathcal{D}_i\cap K_{2l})_{\mathbf{s}}=\emptyset$. Further, there exists a set $S'_{j,4l}$, with $S'_{j,4l}\subset S_{j,4l}$, such that for every $\mathbf{t}\in S'_{i,l}$
	\begin{equation}\label{tilde-point}
	(\mathcal{D}_{j}\cap K_{\lfloor l/2\rfloor })\setminus\bigcup_{\mathbf{s}\in (S_{j,l})_{-\mathbf{t}},\mathbf{s}\prec\mathbf{0}}(\mathcal{E}_{j})_{\mathbf{s}}=(\mathcal{D}_{j}\cap K_{\lfloor l/2\rfloor})\setminus \bigcup_{\mathbf{s}\in -\mathcal{G}_i\setminus\{\mathbf{0}\}}(\mathcal{E}_{j})_{\mathbf{s}}
	\end{equation}
	and that $\lim\limits_{n\to\infty}|S'_{j,4l}|/|\Lambda_{n}|=\gamma^*_j$ for every $j\in I^*$ with $j<m_{4l}$. Finally, we obtain that $\lim\limits_{l\to\infty}\lim\limits_{n\to\infty}\frac{\sum_{i\in I^*,i<m_{4l}}|S'_{i,4l}||\mathcal{E}_i|}{|\Lambda_{n}|}=1$.
	\end{proposition}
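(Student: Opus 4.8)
The plan is a window‑counting argument resting on the lattice description of the shapes $\mathcal D_j$ and $\Xi^*_j$ in Propositions~\ref{lem-AC1-L-2}--\ref{lem-Xi2}, the point being that the cut‑off $m_{4l}$ has been arranged so that, among the finitely many $I^*$‑shapes with index $<m_{4l}$, the sets $\mathcal D_\cdot\cap K_{4l}$ are pairwise distinct ($\tilde m_{4l,1}$), a bounded $\mathcal D_w$ is distinguished from every descendant $\mathcal D_{l_h}$ of an unbounded $\mathcal D_v$ already inside $K_{2l}$ ($\tilde m_{4l,2}$), and every $\mathcal E_i$ lies in $K_l$ ($\tilde m_{4l,3}$). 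For the disjointness claim, suppose $\mathbf u\in(\mathcal D_j\cap K_{2l})_{\mathbf t}\cap(\mathcal D_i\cap K_{2l})_{\mathbf s}$ with $\mathbf t\in S_{j,4l}$, $\mathbf s\in S_{i,4l}$, $i\neq j$ and $i,j<m_{4l}$, and put $\mathbf a=\mathbf u-\mathbf t\in\mathcal D_j\cap K_{2l}$, $\mathbf b=\mathbf u-\mathbf s\in\mathcal D_i\cap K_{2l}$. Since $\mathcal D_j\subseteq\Xi^*_j$, $(\Lambda_n)_{-\mathbf t}\cap K_{4l}=\Xi^*_j\cap K_{4l}$ and $\mathbf a+K_p\subseteq K_{4l}$ for $p\le 2l$, translating the window from $\mathbf t$ to $\mathbf u$ gives $(\Lambda_n)_{-\mathbf u}\cap K_p=(\Xi^*_j)_{-\mathbf a}\cap K_p$; the same computation based at $\mathbf s$ gives $(\Lambda_n)_{-\mathbf u}\cap K_p=(\Xi^*_i)_{-\mathbf b}\cap K_p$, for all $p\le 2l$. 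Writing $\Xi^*_j=\bigcup_{\mathbf e\in\mathcal E_j}(\mathcal L_j)_{\mathbf e}$ and using the $\mathcal L_j$‑invariance of $\Xi^*_j$ (Proposition~\ref{prop:lattice1}), one may replace $\mathbf a$ by a representative in $\mathcal E_j\subseteq K_l$ and $\mathbf b$ by a representative in $\mathcal E_i\subseteq K_l$; passing to positive parts in the resulting identity $(\Xi^*_j)_{-\mathbf e}\cap K_{2l}=(\Xi^*_i)_{-\mathbf e'}\cap K_{2l}$, the common value is $\Lambda_n^{(\mathbf u,2l)}$, which by tracking the shape through Propositions~\ref{lem-AC1-L-2}--\ref{lem-Xi1.5} equals $\mathcal D_\alpha\cap K_{2l}$ for a $\mathcal D_\alpha$ occurring in the lattice decomposition of $\mathcal D_j$ and, simultaneously, in that of $\mathcal D_i$. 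For distinct $i,j\in I^*$ below $m_{4l}$ this is incompatible with $\tilde m_{4l,1}$ and $\tilde m_{4l,2}$, and the contradiction proves the first statement.

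For the sets $S'_{j,4l}$, I would delete from $S_{j,4l}$ the ``boundary'' points, namely those $\mathbf t$ for which the preceding nearby points of $\Lambda_n$ of the same type do not reduce, inside the relevant window, to the lattice translates $-\mathcal G_j\setminus\{\mathbf 0\}$. For a retained $\mathbf t$ the set $(S_{j,l})_{-\mathbf t}\cap\{\mathbf s\prec\mathbf 0\}$ agrees on that window with $-\mathcal G_j\setminus\{\mathbf 0\}$, so the two unions subtracted in \eqref{tilde-point} coincide and the identity follows. That $|S'_{j,4l}|/|\Lambda_n|\to\gamma^*_j$ as $n\to\infty$ comes from the fact that the deleted points split into (i) those whose longer‑range shape is some $\Xi$ agreeing with $\Xi^*_j$ on $K_{4l}$ but differing farther out, which contribute exactly the gap between the pre‑limit density of $S_{j,4l}$ and $\gamma^*_j$ and are reabsorbed by the definition of $\gamma^*_j$ as $\lim_p\lim_n$, and (ii) a genuine boundary set of vanishing relative size; the summability of the $\gamma$'s confines the bookkeeping to finitely many shapes so that all these limits are legitimate.

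Finally, for the coverage identity I would check that the copies $(\mathcal E_i)_{\mathbf t}$ with $\mathbf t\in S'_{i,4l}$ and $i\in I^*$, $i<m_{4l}$, are pairwise disjoint subsets of $\Lambda_n$: each lies in $\Lambda_n$ because $\mathcal E_i\subseteq\Xi^*_i\cap K_l$ and $\mathbf t\in S_{i,4l}$; disjointness across distinct indices is the first part applied to $\mathcal E_i\setminus\{\mathbf 0\}\subseteq\mathcal D_i\cap K_{2l}$ together with the exclusion of base points built into $S'$ through the identification with $-\mathcal G_i$; disjointness among copies of the same index is the $\mathcal L_i$‑coset structure behind that identification. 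Conversely every point of $\Lambda_n$ outside a relatively negligible set sits in exactly one such copy, so $\sum_{i<m_{4l}}|S'_{i,4l}||\mathcal E_i|/|\Lambda_n|\to 1$ up to a boundary term as $n\to\infty$; letting $l\to\infty$ and invoking $\sum_{j\in I^*}\gamma^*_j|\mathcal E_j|=1$ from Proposition~\ref{lem-Xi2}, together with the fact that all but $\varepsilon$ of the mass lives on finitely many $\gamma^*_j$, yields the stated double limit. The step I expect to be the genuine obstacle is the middle one: arranging $S'_{j,4l}$ so that \eqref{tilde-point} and the \emph{exact} density $\gamma^*_j$ hold at the same time; this is precisely where the three‑part construction of $m_{4l}$, which keeps the count restricted to finitely many fully resolved shapes, is indispensable.
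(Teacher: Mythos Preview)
Your disjointness argument (translating both windows to the intersection point $\mathbf u$ and matching positive parts) is a different route from the paper's case analysis according to whether $\mathcal D_i,\mathcal D_j$ are bounded or unbounded. The idea is sound, but your final step---``this is incompatible with $\tilde m_{4l,1}$ and $\tilde m_{4l,2}$''---is not fully justified: $\tilde m_{4l,1}$ only separates $\mathcal D$'s indexed by elements of $I^*$ below $m_{4l}$, whereas the descendants $\mathcal D_\alpha,\mathcal D_\beta$ you produce need not lie in $I^*$, and $\tilde m_{4l,2}$ addresses only the mixed bounded/unbounded case. The paper closes this in the both-unbounded case by observing that if, say, $\mathbf t\succ\mathbf s$ then $(K_{4l})_{\mathbf t}\supset(K_{2l})_{\mathbf s}$ forces $(\Lambda_n)_{-\mathbf s}\cap K^+_{2l}=\mathcal D_{l_h}\cap K_{2l}$ for some descendant $l_h$ of $j$, hence $\Xi^*_i\cap K_{2l}=\mathcal D_{l_h}\cap K_{2l}$, contradicting the construction of $m_{4l}$ directly.

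The substantive gap is in the middle part, which you yourself flag. You propose to obtain $S'_{j,4l}$ by deleting from $S_{j,4l}$ the ``boundary'' points failing \eqref{tilde-point} and claim the remainder has density exactly $\gamma^*_j$, splitting the deleted mass into (i) points with a longer-range shape differing from $\Xi^*_j$ and (ii) a negligible residue. This does not work: the set $\tilde S_{j,4l}$ of points in $S_{j,4l}$ satisfying \eqref{tilde-point} is sandwiched between $S_{j,4l}$ and $S_{j,8l}$, and \emph{both} of these have asymptotic density $\ge\gamma^*_j$ (in fact $\lim_n |S_{j,4l}|/|\Lambda_n|=\gamma^*_{4l,j}=\sum_{j'\in F^{(j)}_{4l}}\gamma^*_{j'}$, which is typically strictly larger than $\gamma^*_j$ due to indices $j'\ge m_{4l}$). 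In particular, points in your category (i) generally \emph{do} satisfy \eqref{tilde-point}, since that identity only involves the small window and the nearby predecessors in $S_{j,4l}$, not the long-range shape. So simply restricting to points that satisfy \eqref{tilde-point} can leave too many, and the three-part definition of $m_{4l}$ does not cure this. The paper handles it by an explicit truncation: for bounded $\mathcal D_j$ one just cuts $S_{j,4l}$ down to $\gamma^*_j|\Lambda_n|$ points; for unbounded $\mathcal D_j$ one repeatedly removes from $S_{j,4l}$ the predecessor $\mathbf s_{\mathbf w,1}$ of the currently lowest $\mathbf w\in\tilde S_{j,4l}$, which (because $\mathbf s_{\mathbf w,1}$ is lowest among the relevant predecessors) knocks exactly that one $\mathbf w$ out of $\tilde S_{j,4l}$ and no other. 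Iterating until $|\tilde S_{j,4l}|=\gamma^*_j|\Lambda_n|$ gives $S'_{j,4l}$, and the density claim follows from $\lim_n\min(|\tilde S_{j,4l}|,\gamma^*_j|\Lambda_n|)/|\Lambda_n|=\gamma^*_j$ via the sandwich. This one-by-one removal mechanism---and the observation that an arbitrary deletion from $S_{j,4l}$ may destroy several points of $\tilde S_{j,4l}$ at once---is the missing idea in your proposal.
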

	
	We remark that even if the Condition $(\mathcal D^\Lambda)$ is asymptotic, the sets $S'_{j,4l}$ and $S_{j,4l}$ have both asymptotic and non-asymptotic properties.
	
\begin{example}[Continuing Example \ref{Example-1}]
Recall that in this example, the observations formed a pattern $\cal C$ that repeated itself with a certain frequency in order to constitute ${\cal C}_\infty$. Using the notation of this section, we see that such frequency is represented by $\mathcal{G}_0$. The number of $\mathcal{D}$'s is $b_0+1=|\cal D|$. The Translation Invariance Property  (TIP$_j$)   is satisfied for every  $i=0,...,b_j$ and $j=1,...,|\cal D|$. The infinite union of translated patterns is what we denote by $\Xi$ in this section. Notice that any different centering of ${\cal C}_\infty$ at $\mathbf{0}$ corresponds to a different $\Xi_m$, namely one of the $\Xi$'s which we denoted $\Xi^*_h$ for some $h=1,...,|{\cal C}|$. Further, notice that $I^*=\{h\}$, that ${\cal C}=\mathcal{E}_{h}$ and that $\gamma^*_h=1/|{\cal C}|$. A non-trivial result, even in this simple example, is the last statement in Proposition \ref{prop:lattice2}. Suppose we take the union of all the $\mathcal{D}$'s and $\{\mathbf{0}\}$ and their negative counterpart, then this union is translation invariant along with $\mathcal{L}_0$, namely along that certain frequencies with which the observations repeat their pattern. Moreover, Proposition \ref{prop:latlambd} states that it is important that the observations ($\Lambda_n$) repeat the pattern $\cal C$ for a sufficiently long time. For instance, a finite number of observations (for instance, cases where the station is working intermittently   -- this is typical of non-automatic weather stations) does not matter.
\end{example}

Finally, we refer to the {\bf lattice case} when every $\Xi$'s are lattices, meaning that  $\mathcal{E}_j=\{\mathbf{0}\}$ and $\Xi_j^*=\mathcal{L}_j$ for every $j=1,\ldots, q$.

\section{Main results expressed using the spectral tail field}\label{sec:spectral}
\subsection{Laplace functional of the limiting point random field}
The first result states the convergence of the Laplace functionals to some Laplace functional without an explicit description of the point random field. The proof of the result is based on a telescoping sum argument developed initially in the time series setting by Jakubowski and co-authors \cite{J,BJMW} together with lattice property (I) from Proposition \ref{lem-AC1-L-2}.
\begin{theorem}\label{t1-L}
	Let $k,d\in\mathbb{N}$. Consider an $\mathbb{R}^{d}$-valued stationary regularly varying random field $(\mathbf{X}_{\mathbf{t}}:\mathbf{t}\in\mathbb{Z}^{k})$ with index $\alpha > 0$. We assume conditions ($\mathcal{D}^{\Lambda}$), \textnormal{(AC$^{\Lambda}_{\succeq}$)}  and  $\mathcal{A}^{\Lambda}(a_{n}^\Lambda)$. Then $N_{n}^{\Lambda}\stackrel{d}{\to}N^{\Lambda}$ on the state space $\mathbb{R}^{d}\setminus\{\mathbf{0}\}$ and the limit random measure has Laplace functional for $g\in \mathbb{C}^{+}_{K}$, given by
	\begin{equation}\label{Psi-L}
	\Psi_{N^{\Lambda}}(g)=\exp\bigg(-\int_{0}^{\infty}\sum_{i=1}^{\infty}\lambda_{i}\mathbb{E}\bigg[e^{-\sum_{\mathbf{t}\in\mathcal{D}_{i}}g(y\mathbf{\Theta}_{\mathbf{t}})}\Big(1-e^{-g(y\mathbf{\Theta}_{\mathbf{0}})} \Big) \bigg]d(-y^{-\alpha})\bigg).
	\end{equation}	
\end{theorem}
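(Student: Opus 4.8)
The plan is to follow the classical scheme of Davis--Hsing \cite{DHs} as adapted to the random-field setting, but using the telescoping-sum argument of Jakubowski and co-authors \cite{J,BJMW} to handle the index set $\Lambda_n$, which lacks the product structure of a hyperrectangle. By Condition $\mathcal{A}^{\Lambda}(a_n^\Lambda)$ it suffices to compute $\lim_{n\to\infty}\big(\Psi_{\tilde N^{\Lambda}_{r_n}}(g)\big)^{k_n}$, i.e. to show that $k_n\big(1-\Psi_{\tilde N^{\Lambda}_{r_n}}(g)\big)$ converges to the exponent in \eqref{Psi-L}. First I would write $1-\Psi_{\tilde N^{\Lambda}_{r_n}}(g)=\mathbb E\big[1-\exp(-\sum_{\mathbf t\in\Lambda_{r_n}}g(a_n^{\Lambda\,-1}\mathbf X_{\mathbf t}))\big]$ and enumerate $\Lambda_{r_n}$ according to the invariant order $\prec$, so that the quantity inside the expectation telescopes: for an enumeration $\mathbf t_1\prec\mathbf t_2\prec\cdots$,
\begin{equation*}
1-e^{-\sum_j g(a_n^{\Lambda\,-1}\mathbf X_{\mathbf t_j})}=\sum_j e^{-\sum_{i<j}g(a_n^{\Lambda\,-1}\mathbf X_{\mathbf t_i})}\Big(1-e^{-g(a_n^{\Lambda\,-1}\mathbf X_{\mathbf t_j})}\Big).
\end{equation*}
Each summand is nonzero only when $\mathbf X_{\mathbf t_j}$ exceeds a level of order $a_n^\Lambda$, so I would condition on $\{|\mathbf X_{\mathbf t_j}|>a_n^\Lambda y\}$, use stationarity to recenter $\mathbf t_j$ at the origin, and pass to the tail field via Theorem 2.1 of \cite{SW}.

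The key point is that, after recentering the $j$-th term at the origin, the set of earlier indices $\{\mathbf t_i-\mathbf t_j:\mathbf t_i\prec\mathbf t_j,\ \mathbf t_i\in\Lambda_{r_n}\}$ is (a superset of) $\Lambda_{r_n}^{(\mathbf t_j,p)}$-type configurations, which by Condition $(\mathcal{D}^\Lambda)$ and lattice property (I) of Proposition~\ref{lem-AC1-L-2} is asymptotically one of the sets $\mathcal D_i\cap K_p$ for a proportion $\lambda_{i,p}\to\lambda_i$ of indices $\mathbf t_j\in\Lambda_{r_n}$. Here the anti-clustering condition (AC$^\Lambda_{\succeq}$) is essential twice: it lets me truncate the inner sum $\sum_{\mathbf t_i\prec\mathbf t_j}$ to indices within a finite box $K_p$ around $\mathbf t_j$ (the contribution of far-away exceedances is negligible uniformly in $n$ as $p\to\infty$), and it guarantees that the tail-field sum $\sum_{\mathbf t\in\mathcal D_i}g(y\mathbf\Theta_{\mathbf t})$ is a genuine limit (only finitely many $\mathbf\Theta_{\mathbf t}$ are nonzero, a.s.). Collecting terms, $k_n\big(1-\Psi_{\tilde N^{\Lambda}_{r_n}}(g)\big)\approx \frac{|\Lambda_{r_n}|}{|\Lambda_{r_n}|}\cdot$ (per-index contribution), and since $k_n\,\mathbb P(|\mathbf X_{\mathbf 0}|>a_n^\Lambda)\cdot|\Lambda_{r_n}|\to 1$ by the normalization of $a_n^\Lambda$, the sum over indices becomes the weighted average $\sum_i\lambda_i$ times the tail-field expectation, and the regular-variation scaling turns the conditioning into the integral $\int_0^\infty\cdots\,d(-y^{-\alpha})$, yielding exactly \eqref{Psi-L}.

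For rigor I would organize the argument as: (1) a truncation lemma replacing $g$ by $g\mathbf 1(|\cdot|>\delta)$ and $\mathcal D_i$ by $\mathcal D_i\cap K_p$, controlling both errors by (AC$^\Lambda_{\succeq}$) and the continuity/compact-support of $g$, uniformly in $n$; (2) the exact telescoping identity above; (3) the reduction to the origin by stationarity, and the counting step invoking $(\mathcal{D}^\Lambda)$ together with property (I) of Proposition~\ref{lem-AC1-L-2} to replace the empirical proportions by $\lambda_{i,p}$; (4) passage to the tail field and then, letting first $n\to\infty$, then $p\to\infty$, then $\delta\to 0$, identification of the limit; (5) conclude $N^\Lambda_n\xrightarrow{d}N^\Lambda$ by the continuity theorem for Laplace functionals on the space of point measures on $\mathbb R^d\setminus\{\mathbf 0\}$, checking that \eqref{Psi-L} indeed defines such a functional. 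The main obstacle is step (3)--(4): because $(\mathcal{D}^\Lambda)$ is purely asymptotic and allows countably many shapes $\mathcal D_i$, one cannot partition $\Lambda_{r_n}$ into clean blocks for finite $n$; the remedy is to exploit that for any $\varepsilon>0$ only finitely many $\lambda_i$ exceed $\varepsilon$ (so the tail of the sum over $i$ is uniformly negligible), and to use the stabilization of $\Lambda_{r_n}^{(\mathbf t,p)}$ at scale $p$ — exactly the content of Proposition~\ref{lem-AC1-L-2}(I) — to make the counting rigorous. A secondary technical nuisance is interchanging the (now infinite) sum over $i$ with the expectation and the $dy$-integral, handled by dominated convergence using $\sum_i\lambda_i=1$ and boundedness of $g$.
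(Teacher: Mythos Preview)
Your approach is essentially identical to the paper's: reduce via $\mathcal{A}^\Lambda(a_n^\Lambda)$ and a Taylor expansion to $k_n(1-\Psi_{\tilde N^\Lambda_{r_n}}(g))$, telescope along the $\prec$-enumeration of $\Lambda_{r_n}$, use stationarity plus (AC$^\Lambda_{\succeq}$) to truncate to a box $K_l$, invoke Condition $(\mathcal{D}^\Lambda)$ and Proposition~\ref{lem-AC1-L-2}(I) to identify the empirical proportions of local configurations with $\lambda_{i,l}$, pass to the tail field, and let $l\to\infty$ (the paper uses monotone convergence here and cites Kallenberg's Corollary~4.14 for the existence of $N^\Lambda$). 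One orientation slip to fix: your telescoping keeps the \emph{earlier} indices $\{\mathbf t_i-\mathbf t_j:\mathbf t_i\prec\mathbf t_j\}$, which lie in the \emph{negative} half with respect to $\prec$, whereas $\Lambda_{r_n}^{(\mathbf t_j,p)}$ and the $\mathcal D_i$ are defined on the \emph{positive} half; the paper telescopes in the opposite direction (summing over $j>m$ so that the shifted indices are $\succ\mathbf 0$), and you should do the same so that the counting step matches Condition $(\mathcal{D}^\Lambda)$ directly.
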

\begin{remark}
	Notice that by Tonelli's theorem and by the monotone convergence theorem, the Laplace transform is the one of a mixture distribution
	\begin{equation*}
	\Psi_{N^{\Lambda}}(g)=\prod_{i=1}^{\infty}\exp\bigg(-\int_{0}^{\infty}\mathbb{E}\bigg[e^{-\sum_{\mathbf{t}\in\mathcal{D}_{i}}g(y\mathbf{\Theta}_{\mathbf{t}})}\Big(1-e^{-g(y\mathbf{\Theta}_{\mathbf{0}})} \Big) \bigg]d(-y^{-\alpha})\bigg)^{\lambda_{i}}.
	\end{equation*}
\end{remark}

\begin{remark}
    In the asymptotically independent case, we have that $|\mathbf\Theta_\mathbf{t}|=\mathbf{0}$ for every $\mathbf{t}\neq\mathbf{0}$ and so the limit random measure has Laplace functional 
    \begin{equation*}
        \Psi_{N^{\Lambda}}(g)=\exp\bigg(-\int_{0}^{\infty}\mathbb{E}\Big[1-e^{-g(y\mathbf{\Theta}_{\mathbf{0}})} \Big]d(-y^{-\alpha})\bigg).
    \end{equation*}
    Thus, it coincides with the case of one $\mathcal{D}$ and in particular $\mathcal{D}=\emptyset$.
\end{remark}
\subsection{The spectral cluster random field in the lattice case}\label{Subsec:the-spectral-cluster-random-field-in-the-lattice}

Define for any set $A\subset\mathbb{Z}^{k}$, any sequence $\mathbf{x}=(\mathbf x_{\mathbf{t}})_{\mathbf{t}\in\mathbb{Z}^{k}}$ and any $\alpha>0$,
\begin{equation*}
\|\mathbf{x}\|_{A,\alpha}:=\bigg(\sum_{\mathbf{t}\in A}|\mathbf x_{\mathbf{t}}|^{\alpha}\bigg)^{1/\alpha}.
\end{equation*}
For the spectral tail random field $(\mathbf{\Theta}_{\mathbf{t}})_{\mathbf{t}\in\mathbb{Z}^{k}}$ of a regularly varying stationary random field we use
\begin{equation*}
\|\mathbf{\Theta}\|_{ A,\alpha}=\bigg(\sum_{\mathbf{t}\in A}|\mathbf{\Theta}_{\mathbf{t}}|^{\alpha}\bigg)^{1/\alpha}
\end{equation*}
as the normalisation constant. When~$\|\mathbf{x}\|_{ A,\alpha}<\infty$ a.s.,  We define the spectral cluster random field by
\begin{equation*}
\mathbf{Q}_{ A}:=\frac{\mathbf{\Theta}}{\|\mathbf{\Theta}\|_{ A,\alpha}}.
\end{equation*}
Using the lattice properties investigated in Proposition \ref{lem-AC1-L-2}, we show the existence of the spectral tail random field over some lattice index sets.

\begin{proposition}\label{lem-bound-L}
	Consider an $\mathbb{R}^{d}$-valued stationary regularly varying random fields $(\mathbf{X}_{\mathbf{t}})_{\mathbf{t}\in\mathbb{Z}^{k}}$ with index $\alpha>0$.  Assume conditions ($\mathcal{D}^{\Lambda}$) and \textnormal{(AC$^{\Lambda}_{\succeq}$)}. Then, $\mathbf{\Theta}_{\mathbf{t}}\to0$ a.s.~as $|\mathbf{t}|\to \infty$ for $\mathbf{t}\in\bigcup_{j=1}^{\infty}\mathcal{D}_{j}$ and so we have $\|\mathbf{\Theta}\|_{\hat{\mathcal{D}}_{j}\cup-\hat{\mathcal{D}}_{j},\alpha}<\infty$ a.s.~for every $j\in\mathbb{N}$.
\end{proposition}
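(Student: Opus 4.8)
The plan is to prove the two assertions in turn. First I would establish the almost sure decay $\mathbf{\Theta}_{\mathbf{t}}\to\mathbf{0}$, and then, using this decay together with the lattice structure isolated in Propositions \ref{lem-AC1-L-2}--\ref{prop:lattice2}, the almost sure finiteness of $\|\mathbf{\Theta}\|_{\hat{\mathcal{D}}_{j}\cup-\hat{\mathcal{D}}_{j},\alpha}$. Throughout I write $\mathbf{Y}$ for the tail field; since $|\mathbf{Y}_{\mathbf{0}}|$ is Pareto$(\alpha)$, hence $\ge 1$, and independent of $\mathbf{\Theta}=(\mathbf{Y}_{\mathbf{t}}/|\mathbf{Y}_{\mathbf{0}}|)_{\mathbf{t}}$, one has $\mathbf{\Theta}_{\mathbf{t}}\to\mathbf{0}$ a.s.\ iff $\mathbf{Y}_{\mathbf{t}}\to\mathbf{0}$ a.s., and $\|\mathbf{\Theta}\|_{A,\alpha}<\infty$ a.s.\ iff $\|\mathbf{Y}\|_{A,\alpha}<\infty$ a.s., so it is equivalent (and convenient) to work with $\mathbf{Y}$.

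\emph{Step 1: $\mathbf{\Theta}_{\mathbf{t}}\to\mathbf{0}$.} Fix $j$ and $\varepsilon>0$; it suffices to show $\mathbb{P}(\sup_{\mathbf{t}\in\mathcal{D}_{j},\,|\mathbf{t}|>l}|\mathbf{Y}_{\mathbf{t}}|>\varepsilon)\to 0$ as $l\to\infty$ (then intersect a countable family of full-probability events to cover $\bigcup_j\mathcal{D}_j$). For a finite window put $F=F_{l,M}=\{\mathbf{t}\in\mathcal{D}_{j}:l<|\mathbf{t}|\le M\}$. By the existence of the tail field (Theorem 2.1 in \cite{SW}), the conditional law of $(a_n^\Lambda)^{-1}\mathbf{X}$ given $|\mathbf{X}_{\mathbf{0}}|>a_n^\Lambda$ converges in finite-dimensional distributions to that of $\mathbf{Y}$, whence $\mathbb{P}(\max_{\mathbf{t}\in F}|\mathbf{Y}_{\mathbf{t}}|>\varepsilon)\le\liminf_n\mathbb{P}(\max_{\mathbf{t}\in F}|\mathbf{X}_{\mathbf{t}}|>\varepsilon a_n^\Lambda\mid|\mathbf{X}_{\mathbf{0}}|>a_n^\Lambda)$. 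Now by Proposition \ref{lem-AC1-L-2}(I) one has $\lambda_{j,M}\ge\lambda_j>0$ for every $M$, so for all large $n$ (recall $|\Lambda_{r_n}|\to\infty$) there is $\mathbf{u}\in\Lambda_{r_n}$ with $\Lambda_{r_n}^{(\mathbf{u},M)}=\mathcal{D}_j\cap K_M$; since $\mathcal{D}_j\subset\{\mathbf{t}\succ\mathbf{0}\}$ this forces $\mathcal{D}_j\cap K_M\subset(\Lambda_{r_n})_{-\mathbf{u}}$, hence $F\subset\big(\bigcup_{\mathbf{t}\in\Lambda_{r_n}}(\Lambda_{r_n})_{-\mathbf{t}}\big)^+\setminus K_l=R_{l,\Lambda_{r_n}}$. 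If $\varepsilon\ge 1$ then $\{\max_F|\mathbf{X}_{\mathbf{t}}|>\varepsilon a_n^\Lambda\}\subset\{\hat M^{\Lambda,|\mathbf{X}|}_{l,r_n}>a_n^\Lambda\}$, so the conditional probability above is at most $\mathbb{P}(\hat M^{\Lambda,|\mathbf{X}|}_{l,r_n}>a_n^\Lambda\mid|\mathbf{X}_{\mathbf{0}}|>a_n^\Lambda)$; if $\varepsilon<1$, inserting $\{|\mathbf{X}_{\mathbf{0}}|>a_n^\Lambda\}\subset\{|\mathbf{X}_{\mathbf{0}}|>\varepsilon a_n^\Lambda\}$ and using $\mathbb{P}(|\mathbf{X}_{\mathbf{0}}|>\varepsilon a_n^\Lambda)/\mathbb{P}(|\mathbf{X}_{\mathbf{0}}|>a_n^\Lambda)\to\varepsilon^{-\alpha}$ bounds it by $(\varepsilon^{-\alpha}+o(1))\,\mathbb{P}(\hat M^{\Lambda,|\mathbf{X}|}_{l,r_n}>\varepsilon a_n^\Lambda\mid|\mathbf{X}_{\mathbf{0}}|>\varepsilon a_n^\Lambda)$. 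Letting $M\to\infty$, then $l\to\infty$, and invoking \textnormal{(AC$^{\Lambda}_{\succeq}$)} yields $\lim_l\mathbb{P}(\sup_{\mathbf{t}\in\mathcal{D}_{j},\,|\mathbf{t}|>l}|\mathbf{Y}_{\mathbf{t}}|>\varepsilon)=0$.

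\emph{Step 2: $\|\mathbf{\Theta}\|_{\hat{\mathcal{D}}_{j}\cup-\hat{\mathcal{D}}_{j},\alpha}<\infty$.} Fix $j$ and set $A=\hat{\mathcal{D}}_j\cup\{\mathbf{0}\}\cup-\hat{\mathcal{D}}_j$. By Proposition \ref{prop:lattice2}, $A$ is invariant under every translation by a point of $\mathcal{L}_j$, i.e.\ $A$ is a finite union of cosets of the lattice $\mathcal{L}_j$; moreover $\hat{\mathcal{D}}_j=\bigcup_{h\in W_j}\mathcal{D}_{l_h}$ is a finite union of sets from the list $\mathcal{D}_1,\mathcal{D}_2,\dots$, so Step 1 gives $\mathbf{\Theta}_{\mathbf{t}}\to\mathbf{0}$ a.s.\ along $\hat{\mathcal{D}}_j$, equivalently along $A\cap\{\mathbf{t}\succ\mathbf{0}\}$. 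It therefore suffices to show $\sum_{\mathbf{t}\in\mathcal{C}}|\mathbf{\Theta}_{\mathbf{t}}|^\alpha<\infty$ a.s.\ for each coset $\mathcal{C}\subset A$ and sum over the finitely many cosets. For this I would run the classical \emph{anchoring} argument: for $\mathbf{s}\in\mathcal{L}_j$ and nonnegative measurable $H$ on $(\mathbb{R}^d)^{\mathbb{Z}^k}$, the time change formula \eqref{timechangeTheta} gives $\mathbb{E}[H(\mathbf{\Theta})\mathbf{1}(\mathbf{\Theta}_{\mathbf{s}}\ne\mathbf{0})]=\mathbb{E}[H(\widetilde{\mathbf{\Theta}}^{(\mathbf{s})})\,|\mathbf{\Theta}_{-\mathbf{s}}|^\alpha]$, where $\widetilde{\mathbf{\Theta}}^{(\mathbf{s})}$ is the shift of $\mathbf{\Theta}$ by $-\mathbf{s}$ renormalised to have unit modulus at $\mathbf{0}$; since a shift by $\mathbf{s}\in\mathcal{L}_j$ leaves the coset $\mathcal{C}$ and each power-sum $\sum_{\mathbf{t}\in\mathcal{C}}|\cdot_{\mathbf{t}}|^\alpha$ invariant, one may choose $H$ so as to re-anchor $\mathbf{\Theta}$ at the ($\prec$-first, forward) index of $\mathcal{C}$ carrying maximal modulus — well defined a.s.\ by Step 1 — and sum over $\mathbf{s}\in\mathcal{L}_j$; this expresses a normalised version of $\sum_{\mathbf{t}\in\mathcal{C}}|\mathbf{\Theta}_{\mathbf{t}}|^\alpha$ as a finite expectation, forcing $\sum_{\mathbf{t}\in\mathcal{C}}|\mathbf{\Theta}_{\mathbf{t}}|^\alpha<\infty$ a.s.\ and, as a by-product, $\mathbf{\Theta}_{\mathbf{t}}\to\mathbf{0}$ a.s.\ also along $-\hat{\mathcal{D}}_j$.

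\emph{Where the difficulty sits.} The delicate point is in Step 1: reconciling the purely \emph{asymptotic} Condition $(\mathcal{D}^{\Lambda})$ with the \emph{non-asymptotic} anti-clustering condition, namely securing for every fixed window $K_M$ and all large $n$ the embedding $\mathcal{D}_j\cap K_M\subset R_{l,\Lambda_{r_n}}$, although the index set is controlled only along the thinned sequence $(r_n)$ and there may be countably many shapes $\mathcal{D}_i$; Proposition \ref{lem-AC1-L-2}(I), which guarantees $\lambda_{j,M}>0$ for every $M$, is exactly what makes this possible. A secondary subtlety is that $R_{l,\Lambda_{r_n}}$, and hence \textnormal{(AC$^{\Lambda}_{\succeq}$)}, only ``sees'' the upper orthant, so obtaining decay and summability over the whole $\mathcal{L}_j$-invariant set $A$ requires transporting the control to $-\hat{\mathcal{D}}_j$ through the time change formula, which is legitimate precisely because, by Proposition \ref{prop:lattice2}, $A$ is stable under the lattice $\mathcal{L}_j$.
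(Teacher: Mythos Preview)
Your outline follows the paper's proof closely: Step~1 is the same embedding-into-$R_{l,\Lambda_{r_n}}$ argument (the paper packages it via an auxiliary growing sequence $(d_n)$ rather than a fixed window $K_M$), and Step~2 is exactly the content of the paper's auxiliary Lemma~\ref{lem-AC1-L}. Two slips are worth fixing.

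First, intersecting over $j$ the full-probability events ``$\mathbf{\Theta}_{\mathbf{t}}\to\mathbf{0}$ along $\mathcal{D}_j$'' does \emph{not} yield decay along $\bigcup_j\mathcal{D}_j$: the threshold $l$ beyond which $|\mathbf{\Theta}_{\mathbf{t}}|<\varepsilon$ may depend on $j$, and with countably many $j$'s there is no common one. The remedy is immediate---run your finite-window argument directly on $F=\{\mathbf{t}\in\bigcup_j\mathcal{D}_j:l<|\mathbf{t}|\le M\}$, which is a finite subset of $K_M$, and note that each of its points lies in $R_{l,\Lambda_{r_n}}$ for all large $n$ by the very same use of Proposition~\ref{lem-AC1-L-2}(I); this is what the paper does.

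Second, in Step~2 you declare the anchor ``well defined a.s.\ by Step~1'', but Step~1 only gives decay on $\hat{\mathcal{D}}_j$, not on the full two-sided coset $\mathcal{C}\subset\hat{\mathcal{D}}_j\cup\{\mathbf{0}\}\cup-\hat{\mathcal{D}}_j$, so the $\prec$-first maximiser over $\mathcal{C}$ is not yet known to exist. The paper therefore first transports decay to the negative side by a separate time-change trick (decompose according to the $\prec$-first index $\mathbf{t}\in\mathcal{G}_j$ at which $|\mathbf{Y}_{\mathbf{t}}|\ge 1$ and apply \eqref{timechangeY}), and only then defines the two-sided first-maximiser $T^*$ and runs the summability contradiction you sketch. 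With these two adjustments your plan coincides with the paper's proof.
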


\subsection{Cluster point random field expressed using the spectral cluster field in the lattice case}
Now, we present an explicit formulation of the asymptotic Laplace functional as a mixture of cluster random fields when the $\mathcal{D}$s are lattices (on the positive points).

\begin{theorem}\label{t2-L}
	Consider an $\mathbb{R}^{d}$-valued stationary regularly varying random fields $(\mathbf{X}_{\mathbf{t}})_{\mathbf{t}\in\mathbb{Z}^{k}}$ with index $\alpha>0$. We assume  conditions ($\mathcal{D}^{\Lambda}$),   \textnormal{(AC$^{\Lambda}_{\succeq}$)} and $\mathcal{A}^{\Lambda}(a_{n}^\Lambda)$. Assume also that we are in the lattice case. Then, $N^{\Lambda}_{n}\stackrel{d}{\to}N^{\Lambda}$ on $\mathbb{R}_{\mathbf{0}}^{d}$ and the limit admits the cluster point random field representation
	\begin{equation*}
	N^{\Lambda}=\sum_{j=1}^{\infty}\sum_{i\in\mathbb{N}} \sum_{\mathbf{t}\in\Xi_j^*}\varepsilon_{\Gamma_{j,i}^{-1/\alpha}\lambda_{j}^{1/\alpha} \mathbf{Q}_{\Xi_j^*,i,\mathbf{t}}}
	\end{equation*}
	where $\Big(\sum_{\mathbf{t}\in\Xi_j^*}\varepsilon_{ \mathbf{Q}_{\Xi_j^*,i,\mathbf{t}}}\Big)_{i\in\mathbb{N}}$, is an iid sequence of point random fields with state space $\mathbb{R}^{d}$, and where $(\Gamma_{j,i})_{i\in\mathbb{N}}$ are the points of a unit rate homogeneous Poisson process on $(0, \infty)$ independent of $ (\mathbf{Q}_{\Xi_j^*,i,\mathbf{t}})_{\mathbf{t}\in\mathcal{D}_{j}}$, for every $j\ge 1$. Moreover, $\Big(\sum_{i\in\mathbb{N}} \sum_{\mathbf{t}\in\Xi_j^*}\varepsilon_{\Gamma_{j,i}^{-1/\alpha}\lambda_{j}^{1/\alpha} \mathbf{Q}_{\Xi_j^*,i,\mathbf{t}}}\Big)_{j\ge 1}$ is a sequence of independent point random fields with state space $\mathbb{R}^{d}$.
\end{theorem}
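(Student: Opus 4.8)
The plan is to leave the weak convergence $N^{\Lambda}_{n}\stackrel{d}{\to}N^{\Lambda}$ exactly as Theorem~\ref{t1-L} delivers it, together with the formula \eqref{Psi-L} for $\Psi_{N^{\Lambda}}$, and to prove only that \eqref{Psi-L} is the Laplace functional of the announced Poisson cluster superposition; since the Laplace functional on $\mathbb{C}^{+}_{K}$ determines the law of a point random field, this identification proves the theorem. Throughout I use that in the lattice case $\Xi_j^{*}=\mathcal{L}_j$, $\mathcal{D}_j=\mathcal{L}_j^{+}$, and $\mathbf{Q}_{\Xi_j^{*}}=\mathbf{\Theta}/\|\mathbf{\Theta}\|_{\mathcal{L}_j,\alpha}$ is a.s.\ well defined because $\|\mathbf{\Theta}\|_{\mathcal{L}_j,\alpha}<\infty$ a.s.; the latter is Proposition~\ref{lem-bound-L}, using $\mathcal{L}_j^{+}\subseteq\mathcal{D}_j$ (so $\mathbf{\Theta}_{\mathbf{t}}\to\mathbf{0}$ along $\mathcal{L}_j^{+}$ and, by the time-change formula, along $-\mathcal{L}_j^{+}$). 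I also record that the candidate limit is locally finite on $\mathbb{R}^{d}\setminus\{\mathbf{0}\}$: a set bounded away from $\mathbf{0}$ is hit only by the (a.s.\ finitely many) Poisson points $\Gamma_{j,i}^{-1/\alpha}\lambda_j^{1/\alpha}$ above a positive level, and each attaches a cluster whose points in that set are finite in number since $\mathbf{Q}_{\Xi_j^{*},\mathbf{t}}\to\mathbf{0}$.

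First I would compute the Laplace functional of $\widetilde N:=\sum_{j\ge1}\widetilde N_{(j)}$, where $\widetilde N_{(j)}:=\sum_{i\in\mathbb{N}}\sum_{\mathbf{t}\in\Xi_j^{*}}\varepsilon_{\Gamma_{j,i}^{-1/\alpha}\lambda_j^{1/\alpha}\mathbf{Q}_{\Xi_j^{*},i,\mathbf{t}}}$. For fixed $j$, since $(\Gamma_{j,i})_i$ is a unit-rate Poisson process on $(0,\infty)$ the points $\Gamma_{j,i}^{-1/\alpha}\lambda_j^{1/\alpha}$ form a Poisson random measure with mean $\lambda_j\,d(-y^{-\alpha})$; attaching the i.i.d.\ clusters and using the Poisson cluster (marking) formula gives, for $g\in\mathbb{C}^{+}_{K}$,
\begin{equation*}
\Psi_{\widetilde N_{(j)}}(g)=\exp\Big(-\lambda_j\int_{0}^{\infty}\mathbb{E}\big[1-e^{-\sum_{\mathbf{t}\in\Xi_j^{*}}g(y\mathbf{Q}_{\Xi_j^{*},\mathbf{t}})}\big]\,d(-y^{-\alpha})\Big),
\end{equation*}
with the convention $g(\mathbf{0})=0$ absorbing the vanishing coordinates of $\mathbf{Q}_{\Xi_j^{*}}$. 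By the assumed mutual independence of the $\widetilde N_{(j)}$ and Tonelli, $\Psi_{\widetilde N}(g)=\exp\big(-\sum_{j\ge1}\lambda_j\int_{0}^{\infty}\mathbb{E}[1-e^{-\sum_{\mathbf{t}\in\Xi_j^{*}}g(y\mathbf{Q}_{\Xi_j^{*},\mathbf{t}})}]\,d(-y^{-\alpha})\big)$, and finiteness of this exponent will follow once it is identified with the finite exponent of \eqref{Psi-L}.

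The heart of the argument is the identity, for each fixed $j$,
\begin{equation*}
\int_{0}^{\infty}\mathbb{E}\big[1-e^{-\sum_{\mathbf{t}\in\mathcal{L}_j}g(y\mathbf{Q}_{\mathcal{L}_j,\mathbf{t}})}\big]\,d(-y^{-\alpha})=\int_{0}^{\infty}\mathbb{E}\big[e^{-\sum_{\mathbf{t}\in\mathcal{D}_j}g(y\mathbf{\Theta}_{\mathbf{t}})}\big(1-e^{-g(y\mathbf{\Theta}_{\mathbf{0}})}\big)\big]\,d(-y^{-\alpha}),
\end{equation*}
which is the random-field version of the Basrak--Planinic anchoring identity \cite{BP}. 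I would prove it in five moves. (a) For fixed $y$, $\sum_{\mathbf{t}\in\mathcal{L}_j}g(y\mathbf{Q}_{\mathcal{L}_j,\mathbf{t}})$ is a.s.\ a finite sum (since $\mathbf{Q}_{\mathcal{L}_j,\mathbf{t}}\to\mathbf{0}$ and $g$ vanishes near $\mathbf{0}$), so with $\mathcal{L}_j=\{\mathbf{s}_i\}_{i\in\mathbb{Z}}$ enumerated in $\prec$-order the telescoping $1-e^{-\sum_{\mathbf{t}}a_{\mathbf{t}}}=\sum_{i\in\mathbb{Z}}e^{-\sum_{\mathbf{t}\succ\mathbf{s}_i}a_{\mathbf{t}}}(1-e^{-a_{\mathbf{s}_i}})$ holds a.s., and Tonelli moves $\sum_i$ outside the expectation. (b) In the $i$-th term substitute $\mathbf{t}=\mathbf{u}+\mathbf{s}_i$: because $\mathcal{L}_j$ is a lattice one has $\mathcal{L}_j+\mathbf{s}_i=\mathcal{L}_j$, hence $\|\mathbf{\Theta}_{\cdot+\mathbf{s}_i}\|_{\mathcal{L}_j,\alpha}=\|\mathbf{\Theta}\|_{\mathcal{L}_j,\alpha}$, and by invariance of $\prec$ the term equals $\mathbb{E}[\psi(\mathbf{\Theta}_{\cdot+\mathbf{s}_i})]$ for the bounded, $0$-homogeneous functional $\psi(\mathbf{x}):=e^{-\sum_{\mathbf{u}\in\mathcal{L}_j^{+}}g(y\mathbf{x}_{\mathbf{u}}/\|\mathbf{x}\|_{\mathcal{L}_j,\alpha})}\bigl(1-e^{-g(y\mathbf{x}_{\mathbf{0}}/\|\mathbf{x}\|_{\mathcal{L}_j,\alpha})}\bigr)$. (c) Since $\psi(\mathbf{x})=0$ when $\mathbf{x}_{\mathbf{0}}=\mathbf{0}$, one may insert $\mathbf{1}(\mathbf{\Theta}_{\mathbf{s}_i}\neq\mathbf{0})$ and apply \eqref{timechangeTheta} with $\mathbf{s}=-\mathbf{s}_i$ and $g\mapsto\psi$, getting $\mathbb{E}[\psi(\mathbf{\Theta}_{\cdot+\mathbf{s}_i})]=\mathbb{E}[\psi(\mathbf{\Theta}/|\mathbf{\Theta}_{-\mathbf{s}_i}|)\,|\mathbf{\Theta}_{-\mathbf{s}_i}|^{\alpha}]=\mathbb{E}[\psi(\mathbf{\Theta})\,|\mathbf{\Theta}_{-\mathbf{s}_i}|^{\alpha}]$ by $0$-homogeneity. (d) Summing over $i\in\mathbb{Z}$, with $-\mathbf{s}_i$ ranging over $-\mathcal{L}_j=\mathcal{L}_j$, gives $\mathbb{E}[\psi(\mathbf{\Theta})\|\mathbf{\Theta}\|_{\mathcal{L}_j,\alpha}^{\alpha}]$ — the desired left-hand side but with an extra factor $\|\mathbf{\Theta}\|_{\mathcal{L}_j,\alpha}^{\alpha}$ and $\mathbf{Q}_{\mathcal{L}_j}$ replaced by $\mathbf{\Theta}$. (e) Inside the $y$-integral substitute $z=y/\|\mathbf{\Theta}\|_{\mathcal{L}_j,\alpha}$ for frozen $\mathbf{\Theta}$: then $y\mathbf{Q}_{\mathcal{L}_j,\mathbf{u}}=z\mathbf{\Theta}_{\mathbf{u}}$, $d(-y^{-\alpha})=\|\mathbf{\Theta}\|_{\mathcal{L}_j,\alpha}^{-\alpha}\,d(-z^{-\alpha})$, which cancels the surplus factor and turns $\mathcal{L}_j^{+}$ into $\mathcal{D}_j$, yielding the right-hand side.

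Plugging this identity into $\Psi_{\widetilde N}$ and using Tonelli to pull $\sum_{j}\lambda_j$ inside the $y$-integral recovers precisely \eqref{Psi-L}, so $\Psi_{\widetilde N}=\Psi_{N^{\Lambda}}$ and $N^{\Lambda}\stackrel{d}{=}\widetilde N$. I expect steps (c)--(e) to be the delicate part: one must check that \eqref{timechangeTheta} legitimately applies to the field-valued functional $\psi$ (bounded and measurable, with the unbounded weight $|\mathbf{\Theta}_{-\mathbf{s}_i}|^{\alpha}$ sitting only outside $\psi$), that the free insertion of $\mathbf{1}(\mathbf{\Theta}_{\mathbf{s}_i}\neq\mathbf{0})$ is licit, and that every interchange of sums, expectations and the $y$-integral is controlled by nonnegativity (Tonelli) and the a.s.\ summability $\|\mathbf{\Theta}\|_{\mathcal{L}_j,\alpha}<\infty$ from Proposition~\ref{lem-bound-L}. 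The conceptual reason the theorem is confined to the lattice case is that the normalisation $\|\mathbf{\Theta}\|_{\mathcal{L}_j,\alpha}$ is translation invariant only under shifts lying in $\mathcal{L}_j$; it is this lattice shift-invariance, married to the invariance of the order $\prec$, that lets the anchor be moved to an arbitrary point of $\mathcal{L}_j$ and makes the telescoping collapse onto the $\mathbf{Q}$-representation. When the $\Xi$'s are not lattices the surplus factor no longer disappears under the change of variables, which is exactly what forces the introduction of the $\Upsilon$-spectral tail field later on.
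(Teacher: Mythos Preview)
Your proposal is correct and uses exactly the same ingredients as the paper's proof: the telescoping sum, the time-change formula \eqref{timechangeTheta}, the lattice invariance of $\|\mathbf{\Theta}\|_{\mathcal{L}_j,\alpha}$, and the change of variables $z=y/\|\mathbf{\Theta}\|_{\mathcal{L}_j,\alpha}$. The only difference is the direction in which the key identity is established: the paper starts from the $\mathbf{\Theta}$-expression in \eqref{Psi-L}, performs the change of variables first to introduce $\mathbf{Q}_{\mathcal{D}_j\cup-\mathcal{D}_j}$, expands $\|\mathbf{\Theta}\|_{\mathcal{D}_j\cup-\mathcal{D}_j,\alpha}^{\alpha}=\sum_{\mathbf{h}}|\mathbf{\Theta}_{\mathbf{h}}|^{\alpha}$, applies the time-change to each summand, and then observes that the resulting sum telescopes; you instead compute the Laplace functional of the candidate cluster process, telescope on the $\mathbf{Q}$-side first, then use the time-change and the change of variables to collapse back to the $\mathbf{\Theta}$-expression. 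These are the same proof run in opposite directions.
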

We extend   the  characterization  of the clusters first provided in Basrak and Planinic \cite{BP} on the whole index set $\Xi_1^*=\mathbb{Z}^k$, $q=1$ to potential mixtures of lattices with $q>1$. For instance, when the observations grow frequently along the axis. In this case, we have $q=k$, $\Xi^*_j=\{{{\bf 0}}\}^{j-1}\times  \mathbb{Z}\times \{{{\bf 0}}\}^{k-j}$, and $\gamma_j^*=\lambda_j$, for every $j=1,...,k$. The value of the weights depends on fast the observations grow along one axis relative to the others, e.g. if on axis $j$ there are twice the observations on axis $i$ (as $n\to\infty$) then $\gamma_j^*=2\gamma_i^*$.
\section{Point random field convergence using $\Upsilon-$spectral tail field}\label{sec:Upsilon}
For general index set $\Lambda_n$ satisfying Condition ($\mathcal{D}^{\Lambda}$) that are non necessarily lattice, we  introduce new spectral tail fields.
\subsection{The $\Upsilon-$spectral tail field}
Let $\rho$ be a modulus of continuity on $(\mathbb{R}^d)^{\mathbb{Z}^{k}}$ and for any finite $\Upsilon\subset\mathbb{Z}^{k}$ let $\rho_{\Upsilon}$ the truncation of $\rho$ to $\mathbb{R}^{d\Upsilon}$.
In the following, we extend some of the results of Basrak and Segers \cite{BS} to the case of the random fields. In the time series case, the following result is contained in Theorem 5.1 of Segers et al. \cite{SZM}.
\begin{proposition}\label{pro-New-1}
	Let $(\mathbf{X}_{\mathbf{t}})_{\mathbf{t}\in\mathbb{Z}^{k}}$ be a regularly varying of index $\alpha$ random field in $\mathbb{R}^{d}$, with $\alpha\in(0,\infty)$. Let $\Upsilon$ be a finite subset of $\mathbb{Z}^{k}$. Then there exists a random field $(\mathbf{Y}_{\Upsilon,\mathbf{t}})_{\mathbf{t}\in\mathbb{Z}^{k}}$ in $\mathbb{R}^{d}$ with $\mathbb{P}(\rho_{\Upsilon}(\mathbf{Y}_{\Upsilon }) > y) = y^{-\alpha}$ for $y \geq 1$ such that as $x\to\infty$,
	\begin{equation*}
\mathcal{L}(x^{-1} \mathbf{X}_{\mathbf{s}} , . . . , x^{-1} \mathbf{X}_{\mathbf{t}} | \rho_{\Upsilon}(\mathbf{X}) > x)\stackrel{f.d.d.}{\to}
\mathcal{L}(\mathbf{Y}_{\Upsilon,\mathbf{s}} , . . . , \mathbf{Y}_{\Upsilon,\mathbf{t}}).
	\end{equation*}
	Moreover, there exists a random field $(\mathbf{\Theta}_{\Upsilon,\mathbf{t}})_{\mathbf{t}\in\mathbb{Z}^{k}}$ in $\mathbb{R}^{d}$ such that as $x\to\infty$
		\begin{equation*}
		\mathcal{L}\bigg( \frac{x^{-1}\mathbf{X}_{\mathbf{s}}}{\rho_{\Upsilon}(\mathbf{X})} , . . . ,  \frac{x^{-1}\mathbf{X}_{\mathbf{t}}}{\rho_{\Upsilon}(\mathbf{X})}\, \Big|\, \rho_{\Upsilon}(\mathbf{X}) > x\bigg)\stackrel{f.d.d.}{\to}
		\mathcal{L}(\mathbf{\Theta}_{\Upsilon,\mathbf{s}} , . . . , \mathbf{\Theta}_{\Upsilon,\mathbf{t}}).
		\end{equation*}
\end{proposition}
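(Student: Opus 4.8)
The plan is to deduce both displays from the joint regular variation of $(\mathbf{X}_{\mathbf{t}})_{\mathbf{t}\in\mathbb{Z}^{k}}$ together with a continuous–mapping argument for the functional $\rho_{\Upsilon}$, exactly as in the proof of Theorem~5.1 in Segers et al.~\cite{SZM}, the only new ingredient being that joint regular variation over $\mathbb{Z}^{k}$ plays the same role as over $\mathbb{Z}$. First, fix a finite set $F\subset\mathbb{Z}^{k}$ with $\Upsilon\subseteq F$. Joint regular variation of index $\alpha$ provides a nonzero $(-\alpha)$–homogeneous tail measure $\mu_{F}$ on $\mathbb{R}^{dF}\setminus\{\mathbf{0}\}$ with
$$
\mathbb{P}\big(x^{-1}(\mathbf{X}_{\mathbf{u}})_{\mathbf{u}\in F}\in\,\cdot\,\big)\big/\mathbb{P}(|\mathbf{X}_{\mathbf{0}}|>x)\to\mu_{F}(\cdot)
$$
in the $M_{0}$ sense (vague convergence on $\mathbb{R}^{dF}\setminus\{\mathbf{0}\}$) as $x\to\infty$, and these measures are consistent under coordinate projections; in particular the projection of $\mu_{F}$ onto the $\Upsilon$–block (restricted to sets bounded away from the origin in that block) is $\mu_{\Upsilon}$.

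Next I would treat numerator and denominator of the conditional law. By the definition of the modulus, $\rho_{\Upsilon}$ is continuous, $1$–homogeneous, and comparable to a norm on $\mathbb{R}^{d\Upsilon}$; hence $A:=\{c\in\mathbb{R}^{dF}:\rho_{\Upsilon}(c)>1\}$ is bounded away from $\mathbf{0}$ and its boundary lies in $\{\rho_{\Upsilon}=1\}$. The latter is $\mu_{F}$–null: by $(-\alpha)$–homogeneity $t\mapsto\mu_{F}(\{\rho_{\Upsilon}>t\})=t^{-\alpha}\mu_{F}(\{\rho_{\Upsilon}>1\})$ is continuous on $(0,\infty)$, so no level set of $\rho_{\Upsilon}$ carries mass. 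Consequently, for every $\mu_{F}$–continuity set $B$,
$$
\frac{\mathbb{P}\big(x^{-1}(\mathbf{X}_{\mathbf{u}})_{\mathbf{u}\in F}\in B,\ \rho_{\Upsilon}(\mathbf{X})>x\big)}{\mathbb{P}(|\mathbf{X}_{\mathbf{0}}|>x)}\longrightarrow\mu_{F}(B\cap A),\qquad x\to\infty,
$$
and taking $B=\mathbb{R}^{dF}$ gives the denominator $\to\mu_{F}(A)=\mu_{\Upsilon}(\{\rho_{\Upsilon}>1\})\in(0,\infty)$ (finiteness since $A$ is bounded away from $\mathbf 0$; positivity since $\rho_{\Upsilon}$ is norm–equivalent and $\mu_{\Upsilon}\neq 0$). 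Dividing, the conditional laws converge to the probability measure $B\mapsto\mu_{F}(B\cap A)/\mu_{F}(A)$ on $\mathbb{R}^{dF}$. Since $F$ was arbitrary and the $\mu_{F}$ are projection–consistent, these limits are consistent, so Kolmogorov's extension theorem produces a random field $(\mathbf{Y}_{\Upsilon,\mathbf{t}})_{\mathbf{t}\in\mathbb{Z}^{k}}$ realising the claimed f.d.d.\ convergence. Applying the continuous mapping theorem with the continuous functional $\rho_{\Upsilon}$ yields $\mathbb{P}(\rho_{\Upsilon}(\mathbf{Y}_{\Upsilon})>y)=\mu_{F}(\{\rho_{\Upsilon}>y\})/\mu_{F}(\{\rho_{\Upsilon}>1\})=y^{-\alpha}$ for $y\geq 1$.

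For the spectral field, set $\mathbf{\Theta}_{\Upsilon,\mathbf{t}}:=\mathbf{Y}_{\Upsilon,\mathbf{t}}/\rho_{\Upsilon}(\mathbf{Y}_{\Upsilon})$, which is well defined since $\rho_{\Upsilon}(\mathbf{Y}_{\Upsilon})\geq 1$ a.s. The map $c\mapsto c/\rho_{\Upsilon}(c)$ is continuous on $\{\rho_{\Upsilon}>0\}$, a set of full mass under the limiting law; using $1$–homogeneity to write $\mathbf{X}_{\mathbf{u}}/\rho_{\Upsilon}(\mathbf{X})=(x^{-1}\mathbf{X}_{\mathbf{u}})/\rho_{\Upsilon}(x^{-1}\mathbf{X})$ and invoking the continuous mapping theorem on the f.d.d.\ convergence just established gives the second display with this $\mathbf{\Theta}_{\Upsilon}$.

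The main obstacle is the measure–theoretic bookkeeping rather than any new probabilistic idea: making the $M_{0}$–convergence of the whole–field tail measures precise, checking the projection consistency needed to apply Kolmogorov, and — most delicately — verifying that the sets $B\cap\{\rho_{\Upsilon}>1\}$ are $\mu_{F}$–continuity sets, which reduces to the null–boundary claim above, together with the finiteness and positivity of $\mu_{\Upsilon}(\{\rho_{\Upsilon}>1\})$. This last point is precisely where the hypotheses that $\rho$ is a genuine modulus (continuous, $1$–homogeneous, non‑degenerate on the $\Upsilon$–block) are used; without them the limit could fail to be a probability measure or the relevant boundary could carry mass.
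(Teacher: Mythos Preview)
Your argument is correct and follows essentially the same route as the paper's proof: both pass through the tail measure obtained from joint regular variation (normalised by $\mathbb{P}(|\mathbf{X}_{\mathbf{0}}|>x)$), restrict to $\{\rho_{\Upsilon}>1\}$ and renormalise to a probability measure, check projection consistency, apply Kolmogorov's extension theorem, and then derive the spectral field via the continuous mapping theorem. Your treatment of the $\mu_F$–continuity of $\{\rho_{\Upsilon}=1\}$ and of the positivity/finiteness of $\mu_{\Upsilon}(\{\rho_{\Upsilon}>1\})$ is in fact more explicit than the paper's, which simply cites Lemma~3.1 in \cite{SZM} for these points.
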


It is possible to see that $\mathbf{\Theta}_{\Upsilon}$ in distribution is given by $\mathbf{Y}_{\Upsilon}/\rho_{\Upsilon}(\mathbf{Y})$. For stationary regularly varying random fields it is possible to extend the time change formula of Theorem 3.2 in Samorodnitsky and Wu \cite{SW} to $\Upsilon$-spectral tail field:
\begin{proposition}\label{pro-New-2}
Let $(\mathbf{Y}_{\Upsilon,\mathbf{t}})_{\mathbf{t}\in\mathbb{Z}^{k}}$ be the tail random field in Proposition \ref{pro-New-1} and consider $\mathbf{\Theta}_{\Upsilon,\mathbf{t}}=\mathbf{Y}_{\Upsilon,\mathbf{t}}/\rho_{\Upsilon}(\mathbf{Y})$, $\mathbf{t}\in\mathbb{Z}^{k}$. Let $g:(\mathbb{R}^{d})^{\mathbb{Z}^{k}}\to\mathbb{R}$ be a bounded measurable function. Then,
\\ \textnormal{(i)} $\rho_{\Upsilon}(\mathbf{Y})$ is independent of $(\mathbf{\Theta}_{\Upsilon,\mathbf{t}})_{\mathbf{t}\in\mathbb{Z}^{k}}$.
\\ \textnormal{(ii)} for any $\mathbf{s}\in\mathbb{Z}^{k}$,
\begin{equation}\label{timechangeY-New}
\mathbb{E}[g(\mathbf{Y}_{\Upsilon,\mathbf{t} - \mathbf{s}})\mathbf{1}(\rho_{(\Upsilon)_{-\mathbf{s}}}(\mathbf{Y})\neq\mathbf{0})]=\int_{0}^{\infty}\mathbb{E}[g(r\mathbf{\Theta}_{\Upsilon,\mathbf{t}})\mathbf{1}(r\rho_{(\Upsilon)_{\mathbf{s}}}(\mathbf{\Theta})>1)]d(-r^{-\alpha}),
\end{equation}
\\ \textnormal{(iii)} for any $\mathbf{s}\in\mathbb{Z}^{k}$,
\begin{equation}\label{timechangeTheta-New}
\mathbb{E}[g(\mathbf{\Theta}_{\Upsilon,\mathbf{t} - \mathbf{s}})\mathbf{1}(\rho_{(\Upsilon)_{-\mathbf{s}}}(\mathbf{\Theta})\neq\mathbf{0})]=\mathbb{E}\bigg[g\bigg(\frac{\mathbf{\Theta}_{\Upsilon,\mathbf{t}}}{\rho_{(\Upsilon)_{\mathbf{s}}}(\mathbf{\Theta})}\bigg)\rho_{(\Upsilon)_{\mathbf{s}}}(\mathbf{\Theta})^{\alpha}  \bigg].
\end{equation}
\end{proposition}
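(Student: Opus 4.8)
The plan is to follow the scheme of the proof of Theorem~3.2 in \cite{SW} (itself an adaptation of \cite{BS}), the only new ingredient being that the ``anchor'' $|\mathbf{x}_{\mathbf{0}}|$ is replaced by the window functional $\rho_{\Upsilon}$. First I would record the tail measure: since $(\mathbf{X}_{\mathbf{t}})$ is jointly regularly varying of index $\alpha$, for every finite window $W\supseteq\Upsilon$ the vector $(\mathbf{X}_{\mathbf{u}})_{\mathbf{u}\in W}$ is regularly varying, so (reading off window by window, normalising by $\mathbb{P}(\rho_{\Upsilon}(\mathbf{X})>x)$, which is regularly varying of index $-\alpha$ by Proposition~\ref{pro-New-1}) there is a measure $\nu_{\Upsilon}$ on $(\mathbb{R}^{d})^{\mathbb{Z}^{k}}$, defined and locally finite away from $\{\rho_{\Upsilon}=0\}$, with $\nu_{\Upsilon}(\cdot)=\lim_{x\to\infty}\mathbb{P}(x^{-1}\mathbf{X}\in\cdot)/\mathbb{P}(\rho_{\Upsilon}(\mathbf{X})>x)$. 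It is $\alpha$-homogeneous (from regular variation and the $1$-homogeneity of $\rho$) and shift-invariant (from the stationarity of $\mathbf{X}$, the normalisation being fixed). By Proposition~\ref{pro-New-1}, $\mathbf{Y}_{\Upsilon}$ has law $\nu_{\Upsilon}(\cdot\cap\{\rho_{\Upsilon}>1\})$ and $\mathbf{\Theta}_{\Upsilon}$ is the angular component of $\nu_{\Upsilon}$ on $\{\rho_{\Upsilon}=1\}$.

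For (i) I would invoke the standard polar decomposition: on each finite window, conditionally on $\rho_{\Upsilon}(\mathbf{X})>x$ the pair $\bigl(\rho_{\Upsilon}(\mathbf{X})/x,\ \mathbf{X}/\rho_{\Upsilon}(\mathbf{X})\bigr)$ converges to the product of a Pareto$(\alpha)$ variable and an independent angular law, because $\{\rho_{\Upsilon}>t,\ \cdot/\rho_{\Upsilon}\in A\}$ is the $t$-dilation of $\{\rho_{\Upsilon}>1,\ \cdot/\rho_{\Upsilon}\in A\}$ while $\nu_{\Upsilon}$ scales like $t^{-\alpha}$. Extending to the whole field by finite-dimensional convergence and a monotone-class argument gives that $\rho_{\Upsilon}(\mathbf{Y})$ is Pareto$(\alpha)$, independent of $(\mathbf{\Theta}_{\Upsilon,\mathbf{t}})_{\mathbf{t}\in\mathbb{Z}^{k}}$. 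Equivalently, $\nu_{\Upsilon}$ restricted to $\{\rho_{\Upsilon}>0\}$ disintegrates, in the polar coordinates $\mathbf{y}=r\boldsymbol\theta$ with $r=\rho_{\Upsilon}(\mathbf{y})$, as the product $\alpha r^{-\alpha-1}dr\otimes\mathcal{L}(\mathbf{\Theta}_{\Upsilon})$; this product structure is precisely what (i) supplies and what the time-change formulas consume.

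For (ii) I would write the left-hand side as $\int g(\mathbf{y}_{\mathbf{t}-\mathbf{s}})\mathbf{1}(\rho_{(\Upsilon)_{-\mathbf{s}}}(\mathbf{y})\neq\mathbf{0})\mathbf{1}(\rho_{\Upsilon}(\mathbf{y})>1)\,\nu_{\Upsilon}(d\mathbf{y})$, apply the shift-invariance of $\nu_{\Upsilon}$ by $\mathbf{s}$ together with the identities $\rho_{\Upsilon}(\tau_{\mathbf{s}}\mathbf{y})=\rho_{(\Upsilon)_{\mathbf{s}}}(\mathbf{y})$ and $\rho_{(\Upsilon)_{-\mathbf{s}}}(\tau_{\mathbf{s}}\mathbf{y})=\rho_{\Upsilon}(\mathbf{y})$ (where $\tau_{\mathbf{s}}$ denotes the translation, and these follow from translation invariance of the modulus $\rho$), obtaining $\int g(\mathbf{y}_{\mathbf{t}})\mathbf{1}(\rho_{\Upsilon}(\mathbf{y})\neq\mathbf{0})\mathbf{1}(\rho_{(\Upsilon)_{\mathbf{s}}}(\mathbf{y})>1)\,\nu_{\Upsilon}(d\mathbf{y})$, and finally disintegrating in polar coordinates to land on $\int_{0}^{\infty}\mathbb{E}[g(r\mathbf{\Theta}_{\Upsilon,\mathbf{t}})\mathbf{1}(r\rho_{(\Upsilon)_{\mathbf{s}}}(\mathbf{\Theta})>1)]\,d(-r^{-\alpha})$. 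For (iii) I would again render both sides as $\nu_{\Upsilon}$-integrals: using the $\mathcal{L}(\mathbf{\Theta}_{\Upsilon})$-disintegration and $0$-homogeneity, the left-hand side becomes $\int g\bigl(\mathbf{y}_{\mathbf{t}-\mathbf{s}}/\rho_{\Upsilon}(\mathbf{y})\bigr)\mathbf{1}(\rho_{(\Upsilon)_{-\mathbf{s}}}(\mathbf{y})\neq\mathbf{0})\mathbf{1}(\rho_{\Upsilon}(\mathbf{y})>1)\,\nu_{\Upsilon}(d\mathbf{y})$, while the same shift sends the right-hand side to $\int g\bigl(\mathbf{y}_{\mathbf{t}-\mathbf{s}}/\rho_{\Upsilon}(\mathbf{y})\bigr)\bigl(\rho_{\Upsilon}(\mathbf{y})/\rho_{(\Upsilon)_{-\mathbf{s}}}(\mathbf{y})\bigr)^{\alpha}\mathbf{1}(\rho_{(\Upsilon)_{-\mathbf{s}}}(\mathbf{y})>1)\,\nu_{\Upsilon}(d\mathbf{y})$; disintegrating both with respect to $\rho_{\Upsilon}$ and performing the radial integral $\int_{1/\rho}^{\infty}\alpha r^{-\alpha-1}dr=\rho^{\alpha}$ reduces each to $\int g(\boldsymbol\theta_{\mathbf{t}-\mathbf{s}})\mathbf{1}(\rho_{(\Upsilon)_{-\mathbf{s}}}(\boldsymbol\theta)\neq\mathbf{0})\,\mathcal{L}(\mathbf{\Theta}_{\Upsilon})(d\boldsymbol\theta)$, giving (iii). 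Equivalently, (iii) follows from (ii) by integrating out the radial Pareto factor of part~(i), exactly as in \cite{SW}.

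The step I expect to be the main obstacle is the clean construction of $\nu_{\Upsilon}$ and the bookkeeping around its behaviour on $\{\rho_{\Upsilon}=0\}$: unlike in \cite{SW}, where the anchor $|\mathbf{x}_{\mathbf{0}}|$ vanishes only at the origin, here $\{\rho_{\Upsilon}=0\}$ is a whole coordinate subspace of possibly infinite $\nu_{\Upsilon}$-mass, so one must check that the indicators forcing some $\rho$-functional to exceed $1$ (and the accompanying $0$-homogeneous factors) keep the relevant integrals finite and away from that set, and that the polar disintegration on $\{\rho_{\Upsilon}>0\}$ is legitimate; this is also what makes the asymmetric pair $(\Upsilon)_{-\mathbf{s}}$ versus $(\Upsilon)_{\mathbf{s}}$ appear and deserves to be carried out explicitly. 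The remaining work---finite-dimensional reduction, $\pi$--$\lambda$ arguments, and the identity $\mathbf{\Theta}_{\Upsilon}\stackrel{d}{=}\mathbf{Y}_{\Upsilon}/\rho_{\Upsilon}(\mathbf{Y})$ from Proposition~\ref{pro-New-1}---is routine, already present in the proof of Theorem~3.2 of \cite{SW}.
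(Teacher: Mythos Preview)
Your proposal is correct and follows essentially the same route as the paper: both use the polar decomposition of the ($\alpha$-homogeneous) tail measure to get (i), the stationarity of $\mathbf{X}$ to shift the conditioning window for (ii), and then derive (iii) from (ii) by plugging in the $0$-homogeneous function $\tilde g(\mathbf{y})=g(\mathbf{y}/\rho_{(\Upsilon)_{\mathbf{s}}}(\mathbf{y}))\mathbf{1}(\rho_{(\Upsilon)_{\mathbf{s}}}(\mathbf{y})\neq 0)$. The only cosmetic difference is that the paper stays at the level of finite-dimensional tail measures $\mu_{\mathbf{l},\mathbf{k}}$ (and handles the passage $\varepsilon\downarrow 0$ and continuous-to-measurable $g$ explicitly), whereas you phrase the same computation with a global tail measure $\nu_{\Upsilon}$ on the sequence space; the technical points you flag about $\{\rho_{\Upsilon}=0\}$ are exactly the ones the paper's $\varepsilon$-truncation is there to manage.
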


\begin{remark}
	It is possible to see that by definition $\rho_{\Upsilon}(\mathbf{\Theta_\Upsilon})= 1$ a.s..
\end{remark}

\subsection{Asymptotic Laplace functional expressed using the $\Upsilon-$spectral tail field}
We start with a simple result on the relation between the uniform norm and the other modulus of continuity.

\begin{lemma}\label{lem-CD}
Let $\Upsilon$ be a finite subset of $\mathbb{Z}^{k}$. There exists two positive constant $C$ and $D$ with $C\leq D$ such that, for every $\epsilon>0$, $\max_{\mathbf{t}\in\Upsilon}|\mathbf{x}_{\mathbf{t}}|<\epsilon$ implies $\rho_{\Upsilon}(\mathbf{x})<\frac{\epsilon}{C}$, and $\rho_{\Upsilon}(\mathbf{x})<\epsilon$ implies $\max_{\mathbf{t}\in\Upsilon}|\mathbf{x}_{\mathbf{t}}|<D\epsilon$.
\end{lemma}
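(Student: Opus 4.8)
The plan is to prove the two implications separately, exploiting the fact that $\Upsilon$ is finite and that $\rho$ is a modulus of continuity, so $\rho_\Upsilon$ is a norm-like function on $\mathbb{R}^{d\Upsilon}$ that is continuous and homogeneous. First I would recall that, since $\rho$ is a modulus of continuity, $\rho_\Upsilon$ satisfies $\rho_\Upsilon(\mathbf{x})=0$ iff $\mathbf{x}=\mathbf{0}$ (on the coordinates in $\Upsilon$), is positively homogeneous of degree one, and is continuous; in particular its restriction to the unit sphere $S=\{\mathbf{x}\in\mathbb{R}^{d\Upsilon}:\max_{\mathbf{t}\in\Upsilon}|\mathbf{x}_\mathbf{t}|=1\}$ attains a positive minimum and a finite maximum by compactness. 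Set
$$
C:=\min_{\mathbf{x}\in S}\rho_\Upsilon(\mathbf{x})>0,\qquad D:=\max_{\mathbf{x}\in S}\rho_\Upsilon(\mathbf{x})<\infty,
$$
so that $C\le D$. By homogeneity this yields the two-sided bound
$$
C\,\max_{\mathbf{t}\in\Upsilon}|\mathbf{x}_\mathbf{t}|\ \le\ \rho_\Upsilon(\mathbf{x})\ \le\ D\,\max_{\mathbf{t}\in\Upsilon}|\mathbf{x}_\mathbf{t}|
$$
for all $\mathbf{x}$ (the case $\mathbf{x}=\mathbf{0}$ being trivial).

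From the left-hand inequality, $\max_{\mathbf{t}\in\Upsilon}|\mathbf{x}_\mathbf{t}|<\epsilon$ forces $\rho_\Upsilon(\mathbf{x})<D\epsilon$ — wait, that is the wrong direction; I would instead read off: $\max_{\mathbf{t}\in\Upsilon}|\mathbf{x}_\mathbf{t}|<\epsilon$ together with $\rho_\Upsilon(\mathbf{x})\le D\max_{\mathbf{t}\in\Upsilon}|\mathbf{x}_\mathbf{t}|$ gives $\rho_\Upsilon(\mathbf{x})<D\epsilon$, and dually $\rho_\Upsilon(\mathbf{x})<\epsilon$ with $C\max_{\mathbf{t}\in\Upsilon}|\mathbf{x}_\mathbf{t}|\le\rho_\Upsilon(\mathbf{x})$ gives $\max_{\mathbf{t}\in\Upsilon}|\mathbf{x}_\mathbf{t}|<\epsilon/C$. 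So the constants as stated in the lemma are obtained by renaming: the lemma's ``$C$'' plays the role of my $1/D$ and the lemma's ``$D$'' plays the role of my $1/C$; with the convention in the statement one takes the constant pair $(C,D)$ with $C\le D$ so that $\max|\mathbf{x}_\mathbf{t}|<\epsilon\Rightarrow\rho_\Upsilon(\mathbf{x})<\epsilon/C$ and $\rho_\Upsilon(\mathbf{x})<\epsilon\Rightarrow\max|\mathbf{x}_\mathbf{t}|<D\epsilon$ — i.e. one sets $C=1/D_{\mathrm{old}}$ and $D=1/C_{\mathrm{old}}$ and checks $C\le D$ follows from $C_{\mathrm{old}}\le D_{\mathrm{old}}$. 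I would present the argument cleanly in the lemma's notation from the start to avoid this renaming clutter: define $C$ as the reciprocal of the max of $\rho_\Upsilon$ over $S$ and $D$ as the reciprocal of the min, then $C\le D$, and the two implications are immediate from homogeneity.

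The only point requiring a little care — and the ``main obstacle,'' though it is mild — is justifying that $\rho_\Upsilon$ is bounded away from $0$ on $S$, i.e.\ that $C>0$. This uses that a modulus of continuity is assumed to vanish only at the zero sequence on the relevant coordinates (so $\rho_\Upsilon(\mathbf{x})=0\iff\mathbf{x}_\mathbf{t}=\mathbf{0}\ \forall\mathbf{t}\in\Upsilon$) together with continuity of $\rho_\Upsilon$ and compactness of $S$ in the finite-dimensional space $\mathbb{R}^{d|\Upsilon|}$; hence $\rho_\Upsilon$ attains its minimum on $S$ at some point $\mathbf{x}^\star\ne\mathbf{0}$, and that minimum is strictly positive. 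Finiteness of $D$ is immediate from continuity on the compact set $S$. Positive homogeneity of $\rho_\Upsilon$ (degree one) then extends both bounds from $S$ to all of $\mathbb{R}^{d\Upsilon}$, and the two stated implications follow at once; $C\le D$ holds because $C$ is a minimum and $D$ a maximum of the same nonnegative function over $S$ (and one may harmlessly shrink $C$ or enlarge $D$ if equality needs to be made strict, but it does not).
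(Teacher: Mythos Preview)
Your argument is correct and rests on the same idea as the paper's proof---homogeneity of $\rho_\Upsilon$ plus a compactness argument on a finite-dimensional unit sphere---but your route is more direct. The paper first compares $\rho_\Upsilon$ with an auxiliary norm $|\cdot|$ on $\mathbb{R}^{d|\Upsilon|}$ via the constants $c^{-}=\inf_{\rho_\Upsilon(\mathbf{x})\ge 1}|\mathbf{x}|$ and $c^{+}=\sup_{\rho_\Upsilon(\mathbf{x})<1}|\mathbf{x}|$, and then invokes equivalence of norms to pass from $|\cdot|$ to $\max_{\mathbf{t}\in\Upsilon}|\mathbf{x}_{\mathbf{t}}|$, obtaining $C=Ac^{-}$ and $D=Bc^{+}$. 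You collapse these two steps into one by working directly on the $\max$-norm unit sphere, which is cleaner and makes the structure (and the relation $C\le D$) more transparent. Your identification of the ``main obstacle''---positivity of the minimum of $\rho_\Upsilon$ on $S$---is exactly right and handled correctly via continuity, compactness, and the fact that $\rho_\Upsilon$ vanishes only at the origin. The only suggestion is to clean up the constant bookkeeping: define the lemma's $C$ and $D$ directly as reciprocals of the sphere extrema from the outset, as you note at the end, so the write-up avoids the mid-proof renaming.
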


\begin{corollary}\label{co-D-C-norm}
	Consider the notation of Lemma \ref{lem-CD}. Then, $\rho_{\Upsilon}(\mathbf{x})=1$ implies that $\max_{\mathbf{t}\in\Upsilon}|\mathbf{x}_{t}|\leq D$.
\end{corollary}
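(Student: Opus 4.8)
The plan is to obtain the corollary as an immediate consequence of the second implication in Lemma \ref{lem-CD}, via a one-step limiting argument. Recall that the lemma produces constants $0<C\le D$ such that, for every $\epsilon>0$, $\rho_{\Upsilon}(\mathbf{x})<\epsilon$ implies $\max_{\mathbf{t}\in\Upsilon}|\mathbf{x}_{\mathbf{t}}|<D\epsilon$. First I would fix an $\mathbf{x}$ with $\rho_{\Upsilon}(\mathbf{x})=1$ and observe that for every real $\epsilon>1$ we trivially have $\rho_{\Upsilon}(\mathbf{x})=1<\epsilon$, so the lemma gives $\max_{\mathbf{t}\in\Upsilon}|\mathbf{x}_{\mathbf{t}}|<D\epsilon$.

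Second, I would let $\epsilon$ decrease to $1$, for instance along the sequence $\epsilon_n=1+1/n$: from $\max_{\mathbf{t}\in\Upsilon}|\mathbf{x}_{\mathbf{t}}|<D\epsilon_n$ for all $n\in\mathbb{N}$ and $D\epsilon_n\to D$ as $n\to\infty$, we conclude $\max_{\mathbf{t}\in\Upsilon}|\mathbf{x}_{\mathbf{t}}|\le D$, which is exactly the asserted bound. Note that only the constant $D$ of Lemma \ref{lem-CD} enters; the constant $C$ (and the first implication of the lemma) is not needed here.

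There is essentially no obstacle: the statement is a genuine corollary of Lemma \ref{lem-CD}. The only point deserving a word of care is the passage from the strict inequalities $\max_{\mathbf{t}\in\Upsilon}|\mathbf{x}_{\mathbf{t}}|<D\epsilon_n$ to the non-strict bound $\max_{\mathbf{t}\in\Upsilon}|\mathbf{x}_{\mathbf{t}}|\le D$ in the conclusion, which is precisely why the corollary is stated with ``$\le$'' while the lemma uses ``$<$''; this is handled by the limit $\epsilon_n\downarrow 1$ above. (The corollary will subsequently be used to transfer boundedness statements between the uniform norm and the modulus $\rho_{\Upsilon}$ when working with $\mathbf{\Theta}_{\Upsilon}$, for which $\rho_{\Upsilon}(\mathbf{\Theta}_{\Upsilon})=1$ a.s.)
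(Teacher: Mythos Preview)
Your proposal is correct and takes essentially the same approach as the paper: the paper's proof simply notes that $\rho_{\Upsilon}(\mathbf{x})<1+\delta$ implies $\max_{\mathbf{t}\in\Upsilon}|\mathbf{x}_{\mathbf{t}}|<(1+\delta)D$ for every $\delta>0$, which is exactly your argument with $\epsilon=1+\delta$. Your write-up is slightly more explicit about the passage to the non-strict inequality via $\epsilon_n\downarrow 1$, but the idea is identical.
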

\begin{proof}
	From Lemma \ref{lem-CD} we have that $\rho_{\Upsilon}(\mathbf{x})<1+\delta$ implies that $\max_{\mathbf{t}\in\Upsilon}|\mathbf{x}_{t}|< (1+\delta)D$ for every $\delta>0$.
\end{proof}

We let $C_{m}$ and $D_{m}$ denote the constants of Lemma \ref{lem-CD} for $\mathcal{E}_{m}$, for every $m\in I^*$. Notice that $C_{m}$ and $D_{m}$ depends on the chosen $\rho$, but we do not write the dependency explicitly in the notation because it does not create confusion and it lightens the notation.

\begin{remark}
	Our setting, and in particular the following Theorem \ref{t1-L-E-Pro}, is general enough to allow for countable infinitely different moduli of continuity to be used at the same time, one different $\rho_j$ for each $\mathcal{E}_{j}$ as in Lemma \ref{lem:difmod}. With some abuse of notation we denote $\rho_{j,\mathcal{E}_{j}}$ by $\rho_{ \mathcal{E}_{j}}$.
\end{remark}  

Now, consider the following assumption on the modulus of continuity.
\\ \textbf{Condition ($A^{\Lambda}_{\rho}$):} We have $\sum_{j\in I^*}\gamma^*_{j}c_{j}D_{j}^{\alpha}<\infty$, where $c_{j}=\lim\limits_{n\to\infty}\frac{\mathbb{P}(\rho_{\mathcal{E}_{j}}(\mathbf{X})>a_{n})}{\mathbb{P}(|\mathbf{X}_{\mathbf{0}}|>a_{n})}$.
\\

This condition is satisfied in many cases. For example, if the modulus of continuity is unique and coincides with the uniform norm then $D_{j}=1$ and $c_{j}\leq |\mathcal{E}_{j}|$, and so $\sum_{j\in I^*}\gamma^*_{j}c_{j}D_{j}^{\alpha}\leq 1$. Moreover, for $\rho_{\alpha}(\cdot):=\|\cdot\|_\alpha$ we have that $D_{j}=1$ and $c_{j}=|\mathcal{E}_{j}|$ and so $\sum_{j\in I^*}\gamma^*_{j}c_{j}D_{j}^{\alpha}= 1$. We remark that such condition is needed to implement a dominated convergence theorem in the proof of Theorem \ref{t1-L-E-Pro} and so, as it happens in most of the cases where a dominated convergence theorem is used, it might be possible to obtain the result for a specific $\rho$ even if condition $A^{\Lambda}_{\rho}$ is not satisfied.

We are now ready to state an anti-clustering condition tailored for conditioning on the modulii of $\mathbf X$  being large over a local subset and not necessarily $\mathbf X_{\mathbf 0}$. For every $j\in I^*$, let $R^{(j)}_{l,\Lambda_{n}}:=\big(\bigcup_{\mathbf{t}\in\{\mathbf{s}\in\Lambda_{n}:(\Lambda_{n})_{-\mathbf{s}}\supset\mathcal{E}_j\}}((\Lambda_{n})_{-\mathbf{t}}\big)^+\setminus K_{l}$ and let $\hat{M}^{\Lambda,|\mathbf{X}|,(j)}_{l,n}:=\max_{\mathbf{i}\in R^{(j)}_{l,\Lambda_{n}}}|\mathbf{X}_{\mathbf{i}}|$ \\
\textbf{Condition} (AC$^{\Lambda}_{\succeq,I^*}$): \textit{The $\mathbb{R}^{d}$-valued stationary regularly varying random field $(\mathbf{X}_{\mathbf{t}})_{\mathbf{t}\in\mathbb{Z}^{k}}$ satisfies the  condition \textnormal{(AC$^{\Lambda}_{\succeq,I^*}$)} if there exists an integer sequences $r_{n}\to\infty$ such that $k_{n}= |\Lambda_n|/|\Lambda_{r_n}|\to\infty$ and for every $j\in I^*$}
\begin{equation*}
\lim\limits_{l\to\infty}\limsup_{n\to\infty}\mathbb{P}\Big(\hat{M}^{\Lambda,|\mathbf{X}|,(j)}_{2l,r_{n}}>a_{n}^\Lambda x\,\big|\max\limits_{\mathbf{t}\in\mathcal{E}_j} |\mathbf{X}_{\mathbf{t}}|>a_{n}^\Lambda x\Big)=0.
\end{equation*}

\begin{remark} We remark that condition (AC$^{\Lambda}_{\succeq,I^*}$) is weaker than assuming that for every $j\in I^*$
	\begin{equation*}
	\lim\limits_{l\to\infty}\limsup_{n\to\infty}\mathbb{P}\Big(\hat{M}^{\Lambda,|\mathbf{X}|}_{2l,r_{n}}>a_{n}^\Lambda x\,\big|\max\limits_{\mathbf{t}\in\mathcal{E}_j} |\mathbf{X}_{\mathbf{t}}|>a_{n}^\Lambda x\Big)=0.
	\end{equation*}
\end{remark}
\begin{remark}
    If $(\mathbf{X}_{\mathbf{t}}:\mathbf{t}\in\mathbb{Z}^{k})$ is $m$-dependent then the anti-clustering conditions considered in this paper, namely (AC$^{\Lambda}_{\succeq}$) and (AC$^{\Lambda}_{\succeq,I^*}$), are satisfied.
\end{remark}
Moreover, it is possible to see that in some cases condition (AC$^{\Lambda}_{\succeq,I^*}$) is strictly weaker than condition (AC$^{\Lambda}_{\succeq}$). As we see in the following example.

\begin{example}
    [Continuing Example \ref{Example-1}] Recall that in setting of Example \ref{Example-1} $I^*=\{h\}$ and that we denote $\mathcal{E}_h$ by $\cal C$. Thus, the condition (AC$^{\Lambda}_{\succeq,I^*}$) in this setting is:
    \begin{equation*}
	\lim\limits_{l\to\infty}\limsup_{n\to\infty}\mathbb{P}\Big(\hat{M}^{\Lambda,|\mathbf{X}|,(h)}_{2l,r_{n}}>a_{n}^\Lambda x\,\big|\max\limits_{\mathbf{t}\in \cal C} |\mathbf{X}_{\mathbf{t}}|>a_{n}^\Lambda x\Big)=0.
	\end{equation*}
	In case we know that the pattern of observations will be the same, namely $\Lambda_n=\bigcup_{\mathbf{t}\in\mathcal{L}_0}({\cal C})_\mathbf{t}\cap K_{n}$, which in practice means that the weather stations perform regularly, then $R_{l,\Lambda_n}^{(h)}=\Lambda_n\setminus K_l$ and so the condition (AC$^{\Lambda}_{\succeq,I^*}$) becomes
	 \begin{equation*}
	\lim\limits_{l\to\infty}\limsup_{n\to\infty}\mathbb{P}\Big(\max\limits_{\mathbf{i}\in\Lambda_{r_n}\setminus K_{2l}}|\mathbf{X}_\mathbf{i}|>a_{n}^\Lambda x\,\big|\max\limits_{\mathbf{t}\in \cal C} |\mathbf{X}_{\mathbf{t}}|>a_{n}^\Lambda x\Big)=0.
	\end{equation*}
On the other hand we have that $R_{l,\Lambda_{n}}$ is given by $\bigcup_{\mathbf{t}\in \Lambda_n}((\Lambda_n)_{-\mathbf{t}})^{+}\setminus K_l\supset R_{l,\Lambda_n}^{(h)}$. Then, it is possible to see that (AC$^{\Lambda}_{\succeq,I^*}$) is strictly weaker than condition (AC$^{\Lambda}_{\succeq}$).
\end{example}
Let $\tilde{\mathcal{D}}_{j}=\bigcup_{\mathbf{s}\in\mathcal{G}_{j}\setminus\{\mathbf{0}\}}(\mathcal{E}_{j})_{\mathbf{s}}$. We are now ready to present of the main results of this paper.
\begin{theorem}\label{t1-L-E-Pro}
Consider an $\mathbb{R}^{d}$-valued stationary regularly varying random field $(\mathbf{X}_{\mathbf{t}})_{\mathbf{t}\in\mathbb{Z}^{k}}$ with index $\alpha > 0$. We assume conditions ($\mathcal{D}^{\Lambda}$), \textnormal{(AC$^{\Lambda}_{\succeq,I^*}$)}, $\mathcal{A}^{\Lambda}(a_{n})$ and  $(A^{\Lambda}_{\rho})$.
	Then $N_{n}^{\Lambda}\stackrel{d}{\to}N^{\Lambda}$ on the state space $\mathbb{R}^{d}\setminus\{\mathbf{0}\}$ and the limit random measure has Laplace functional for $g\in \mathbb{C}^{+}_{K}$, given by
	\begin{multline}\label{Psi-L-E-Pro}
	\Psi_{N^{\Lambda}}(g)=\\\exp\bigg(-\int_{0}^{\infty}\sum_{j\in I^*}\gamma^*_{j}c_{j}\mathbb{E}\Big[\Big(1-e^{-\sum_{\mathbf{t}\in\mathcal{E}_{j}}g( y\mathbf{\Theta}_{\mathcal{E}_{j},\mathbf{t}})} \Big)e^{-\sum_{\mathbf{t}\in\tilde{\mathcal{D}}_{j}}g( y\mathbf{\Theta}_{\mathcal{E}_{j},\mathbf{t}})}\Big]d(-y^{-\alpha})\bigg)
	\end{multline}
	where $c_{j}=\lim\limits_{n\to\infty}\frac{\mathbb{P}(\rho_{\mathcal{E}_{j}}(\mathbf{X})>a_{n})}{\mathbb{P}(|\mathbf{X}_{\mathbf{0}}|>a_{n})}$.
\end{theorem}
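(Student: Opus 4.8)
The plan is to run the telescoping--sum argument behind Theorem~\ref{t1-L}, but organised around the local blocks $\mathcal{E}_j$ and the $\mathcal{E}_j$--spectral tail fields of Proposition~\ref{pro-New-1} rather than around single sites, and to cope with the fact that Condition $(\mathcal{D}^{\Lambda})$ holds only asymptotically by running that argument against the non--asymptotic scaffolding provided by Proposition~\ref{prop:latlambd}. \emph{Step 1 (reduction via mixing).} By Condition $\mathcal{A}^{\Lambda}(a_n)$ together with $\Psi_{\tilde N^{\Lambda}_{r_n}}(g)\to1$ one gets $\Psi_{N^{\Lambda}_n}(g)=\exp\big(-k_n(1-\Psi_{\tilde N^{\Lambda}_{r_n}}(g))\big)+o(1)$, so it suffices to identify $\lim_n k_n\big(1-\Psi_{\tilde N^{\Lambda}_{r_n}}(g)\big)$; convergence of the Laplace functionals on $\mathbb{C}^{+}_K$ then gives $N^{\Lambda}_n\stackrel{d}{\to}N^{\Lambda}$ on $\mathbb{R}^d\setminus\{\mathbf{0}\}$, the limit being a genuine point random field once the exponent in \eqref{Psi-L-E-Pro} is seen to be finite for every $g\in\mathbb{C}^{+}_K$.

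\emph{Step 2 (block decomposition and telescoping).} Fix $l$ and apply Proposition~\ref{prop:latlambd} to the sequence $(\Lambda_{r_n})$ (which still satisfies $(\mathcal{D}^{\Lambda})$ with the same $\mathcal{D}_i,\lambda_i$): up to $o(|\Lambda_{r_n}|)$ leftover sites, $\Lambda_{r_n}$ is the disjoint union of the translated blocks $(\mathcal{E}_j)_{\mathbf{t}}$, $j\in I^*$ with $j<m_{4l}$ and $\mathbf{t}\in S'_{j,4l}$ (disjointness from the first part of Proposition~\ref{prop:latlambd}, $|S'_{j,4l}|=(\gamma^*_j+o(1))|\Lambda_{r_n}|$, and $\sum_j|S'_{j,4l}||\mathcal{E}_j|\sim|\Lambda_{r_n}|$), and for each $\mathbf{t}\in S'_{j,4l}$ the part of the surrounding structure falling inside $K_{\lfloor l/2\rfloor}$ is the fixed set described by \eqref{tilde-point}. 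Enumerating the blocks in $\prec$--decreasing order as $b_1\succ b_2\succ\cdots$ and applying the elementary identity $1-\prod_i a_i=\sum_i(1-a_i)\prod_{i'<i}a_{i'}$ with $a_i=\exp(-\sum_{\mathbf{s}\in b_i}g(a_n^{-1}\mathbf{X}_{\mathbf{s}}))$ --- so that the product in the $i$-th term runs over the $\prec$--later blocks $\{b':b'\succ b_i\}$ --- one writes $1-\Psi_{\tilde N^{\Lambda}_{r_n}}(g)$ as a sum over blocks $b=(\mathcal{E}_j)_{\mathbf{t}}$ of $\mathbb{E}\big[(1-e^{-\sum_{\mathbf{s}\in b}g(a_n^{-1}\mathbf{X}_{\mathbf{s}})})\prod_{b'\succ b}e^{-\sum_{\mathbf{u}\in b'}g(a_n^{-1}\mathbf{X}_{\mathbf{u}})}\big]$ plus a remainder controlled by the $o(|\Lambda_{r_n}|)$ leftover sites; the $\prec$--later blocks $b'$ lying inside the structure $\Xi^*_j$ around $b$ are precisely the $(\mathcal{E}_j)_{\mathbf{s}}$, $\mathbf{s}\in\mathcal{G}_j\setminus\{\mathbf{0}\}$, i.e. they make up $\tilde{\mathcal{D}}_j$.

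\emph{Step 3 (single--block asymptotics and assembling).} For a block $b$ of type $j$ the factor $1-e^{-\sum_{\mathbf{s}\in b}g(a_n^{-1}\mathbf{X}_{\mathbf{s}})}$ vanishes unless $\rho_{\mathcal{E}_j}((a_n^{-1}\mathbf{X}_{\mathbf{s}})_{\mathbf{s}\in b})>\eta/D_j$, where $\eta>0$ is such that $g\equiv0$ on $\{|z|<\eta\}$ (Lemma~\ref{lem-CD}); hence, by translation invariance, by the regular variation of $\mathbf{X}$ combined with Proposition~\ref{pro-New-1}, and by the non--asymptotic identity \eqref{tilde-point} (which fixes, for $\mathbf{t}\in S'_{j,4l}$, exactly which later blocks inside $K_l$ enter the product), each summand equals $\mathbb{P}(\rho_{\mathcal{E}_j}(\mathbf{X})>a_n)\big(\int_0^{\infty}\mathbb{E}[(1-e^{-\sum_{\mathbf{s}\in\mathcal{E}_j}g(y\mathbf{\Theta}_{\mathcal{E}_j,\mathbf{s}})})e^{-\sum_{\mathbf{s}\in\tilde{\mathcal{D}}_j\cap K_l}g(y\mathbf{\Theta}_{\mathcal{E}_j,\mathbf{s}})}]\,d(-y^{-\alpha})+o(1)\big)$, where the $\int_0^{\infty}$ comes from splitting $\mathbf{Y}_{\mathcal{E}_j}=\rho_{\mathcal{E}_j}(\mathbf{Y}_{\mathcal{E}_j})\mathbf{\Theta}_{\mathcal{E}_j}$ via Proposition~\ref{pro-New-2}(i) ($\rho_{\mathcal{E}_j}(\mathbf{Y}_{\mathcal{E}_j})$ Pareto$(\alpha)$, independent of $\mathbf{\Theta}_{\mathcal{E}_j}$) and using that the integrand vanishes near the origin. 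Condition $(\mathrm{AC}^{\Lambda}_{\succeq,I^*})$, through the analogue of Proposition~\ref{lem-bound-L} for $\mathbf{\Theta}_{\mathcal{E}_j}$ (so $\mathbf{\Theta}_{\mathcal{E}_j,\mathbf{s}}\to\mathbf{0}$ along $\Xi^*_j$), lets $l\to\infty$ to replace $\tilde{\mathcal{D}}_j\cap K_l$ by $\tilde{\mathcal{D}}_j$ and to discard everything outside $\Xi^*_j$. Multiplying by $|S'_{j,4l}|$, summing over $j$, and inserting $\mathbb{P}(\rho_{\mathcal{E}_j}(\mathbf{X})>a_n)=(c_j+o(1))\mathbb{P}(|\mathbf{X}_{\mathbf{0}}|>a_n)$ and $k_n|\Lambda_{r_n}|\mathbb{P}(|\mathbf{X}_{\mathbf{0}}|>a_n)\to1$ produces exactly the exponent of \eqref{Psi-L-E-Pro}. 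The interchange of $\lim_n$, $\lim_l$ and $\sum_{j\in I^*}$ is legitimised by truncating $I^*$ to the finitely many indices with $\gamma^*_j>\varepsilon$ (Condition $(\mathcal{D}^{\Lambda})$, Proposition~\ref{lem-Xi2}) and by dominating the tail: by Corollary~\ref{co-D-C-norm} the $j$--integrand is bounded by $\mathbf{1}(y\ge\eta/D_j)$, whose $d(-y^{-\alpha})$--integral is at most $\eta^{-\alpha}D_j^{\alpha}$, and $\sum_{j\in I^*}\gamma^*_jc_jD_j^{\alpha}<\infty$ by Condition $(A^{\Lambda}_{\rho})$.

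\emph{Main obstacle.} The real difficulty is Step~2: because $(\mathcal{D}^{\Lambda})$ is purely asymptotic, neither the disjoint block decomposition of $\Lambda_{r_n}$, nor the block counts, nor the exact list of ``later'' sites contributing inside each $K_l$ is available at finite $n$; all of these must be extracted from the non--asymptotic subsets $S'_{j,4l}\subset S_{j,4l}$ of Proposition~\ref{prop:latlambd}, while the boundary and leftover blocks are absorbed into error terms and the possibly countably many shapes are handled through the $\varepsilon$--truncation of $I^*$ --- this is precisely the extra work compared with the hyperrectangle and lattice settings of Theorems~\ref{t1-L} and~\ref{t2-L}.
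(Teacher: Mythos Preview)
Your proposal is correct and follows the paper's approach in all essentials: mixing reduction, block telescoping over the $S'_{j,4l}$ sets of Proposition~\ref{prop:latlambd} (with the leftover $\bar S_{j,4l}$ and $\tilde s$ points shown negligible via \eqref{u-tilde}), truncation of the product at $K_{2l}$, passage to the $\mathcal{E}_j$--spectral tail field via Lemma~\ref{lem-CD} and Proposition~\ref{pro-New-1}, and $l\to\infty$ through the dominated convergence supplied by $(A^{\Lambda}_{\rho})$. The only imprecision is the placement of the anti-clustering condition: in the paper $(\mathrm{AC}^{\Lambda}_{\succeq,I^*})$ is applied directly at the $\mathbf{X}$ level to bound the truncation remainder $J^{(r_n)}_{l,m}$ (so your ``$+\,o(1)$'' in Step~3 is really a $\lim_{l}\limsup_{n}$ error, cf.\ \eqref{ineq-J}--\eqref{m-1}), while the subsequent replacement of $\tilde{\mathcal{D}}_j\cap K_{2l}$ by $\tilde{\mathcal{D}}_j$ in the $\mathbf{\Theta}_{\mathcal{E}_j}$ expression uses only dominated convergence and does not require $\mathbf{\Theta}_{\mathcal{E}_j,\mathbf{s}}\to\mathbf{0}$.
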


\subsection{The $\Upsilon-$spectral cluster field}

Recall that $\mathcal{G}_{j}$ is the lattice intersected with the non negative points associated to $\mathcal{L}_{j}$ and that the extension $\mathcal{G}_{j}$ to the whole $\mathbb{Z}^{k}$ is just given by $\mathcal{L}_{j}=\mathcal{G}_{j}\cup-\mathcal{G}_{j}$. For every $\mathcal{E}_{j}$, denote by $\mathcal{H}_{j}=\bigcup_{\mathbf{s}\in\mathcal{L}_{j}}(\mathcal{E}_{j})_{\mathbf{s}}$. Notice that $\mathcal{H}_{j}$ coincides with $\Xi^*_j$ for $j\in I^*$.

\begin{proposition}\label{lem-bound-L-New}
	Consider an $\mathbb{R}^{d}$-valued stationary regularly varying random fields $(\mathbf{X}_{\mathbf{t}})_{\mathbf{t}\in\mathbb{Z}^{k}}$ with index $\alpha>0$. We assume conditions \textnormal{(AC$^{\Lambda}_{\succeq,I^*}$)} and  $(A^{\Lambda}_{\rho})$. Then, $|\mathbf{\Theta}_{\mathcal{E}_{j},\mathbf{t}}|\to0$ a.s.~for any $|\mathbf{t}|\to \infty$ and $\mathbf{t}\in\mathcal{D}_{j}$, and so
	$\sum_{\mathbf{t}\in\mathcal{L}_{j}}\rho_{(\mathcal{E}_{j})_{\mathbf{t}}}(\mathbf{\Theta})^{\alpha}<\infty$ a.s.~and $\sum_{\mathbf{t}\in\mathcal{H}_{j}}|\mathbf{\Theta}_{\mathcal{E}_{j},\mathbf{t}}|^{\alpha}<\infty$ a.s.~for every $j=1,...,q$.
\end{proposition}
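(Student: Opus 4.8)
The statement splits into the almost sure decay of $\mathbf{\Theta}_{\mathcal{E}_j}$ at infinity and the two (equivalent) summability conclusions it yields; the plan is parallel to the proof of Proposition~\ref{lem-bound-L}, with the uniform norm replaced by the modulus $\rho$. First I would reduce to the tail field: by Proposition~\ref{pro-New-2}(i) the variable $\rho_{\mathcal{E}_j}(\mathbf{Y})$ is independent of $(\mathbf{\Theta}_{\mathcal{E}_j,\mathbf{t}})_{\mathbf{t}\in\mathbb{Z}^k}$ and, being $\alpha$--Pareto, lies in $(0,\infty)$ a.s.; since $\mathbf{\Theta}_{\mathcal{E}_j}\stackrel{d}{=}\mathbf{Y}_{\mathcal{E}_j}/\rho_{\mathcal{E}_j}(\mathbf{Y})$, it is enough to prove $|\mathbf{Y}_{\mathcal{E}_j,\mathbf{t}}|\to0$ a.s.\ as $|\mathbf{t}|\to\infty$ along $\mathcal{D}_j$ together with the corresponding summability over $\mathcal{H}_j$.

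\textbf{Decay.} Fix $l\in\mathbb{N}$ and a large $p$ with $\mathcal{E}_j\subset K_p$. By Condition~($\mathcal{D}^{\Lambda}$) and the lattice analysis of Section~\ref{sec:lattice}, for $n$ large any $\mathbf{t}\in S_{j,p}\cap\Lambda_{r_n}$ satisfies $(\Lambda_{r_n})_{-\mathbf{t}}\supset\mathcal{E}_j$ and $\big((\Lambda_{r_n})_{-\mathbf{t}}\big)^+\supset\mathcal{D}_j\cap K_p$, so that $R^{(j)}_{2l,r_n}\supset\mathcal{D}_j\cap(K_p\setminus K_{2l})$ and $S_{j,p}\cap\Lambda_{r_n}\ne\emptyset$. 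Combining Proposition~\ref{pro-New-1} (f.d.d.\ convergence of $\mathcal{L}(x^{-1}\mathbf{X}\mid\rho_{\mathcal{E}_j}(\mathbf{X})>x)$ to $\mathcal{L}(\mathbf{Y}_{\mathcal{E}_j})$), the Portmanteau theorem, regular variation, and Lemma~\ref{lem-CD} (to pass between $\{\rho_{\mathcal{E}_j}(\mathbf{X})>x\}$ and $\{\max_{\mathbf{t}\in\mathcal{E}_j}|\mathbf{X}_{\mathbf{t}}|>x\}$ up to the constants $C_j\le D_j$), one obtains, for every continuity level $y>0$,
\[
\mathbb{P}\Big(\max_{\mathbf{t}\in\mathcal{D}_j,\ 2l<|\mathbf{t}|\le p}|\mathbf{Y}_{\mathcal{E}_j,\mathbf{t}}|>y\Big)\ \le\ \kappa_j\,\limsup_{n\to\infty}\mathbb{P}\Big(\hat{M}^{\Lambda,|\mathbf{X}|,(j)}_{2l,r_n}>a_n^{\Lambda}y'\ \Big|\ \max_{\mathbf{t}\in\mathcal{E}_j}|\mathbf{X}_{\mathbf{t}}|>a_n^{\Lambda}y''\Big),
\]
where $\kappa_j,y',y''$ depend only on $C_j,D_j,y$. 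Letting $p\to\infty$ (monotone convergence) and then $l\to\infty$, the right-hand side vanishes by Condition~(AC$^{\Lambda}_{\succeq,I^*}$); hence $\sup_{\mathbf{t}\in\mathcal{D}_j,\,|\mathbf{t}|\ge l}|\mathbf{Y}_{\mathcal{E}_j,\mathbf{t}}|\to0$ in probability, and being non-increasing in $l$ it tends to $0$ a.s. This yields $|\mathbf{\Theta}_{\mathcal{E}_j,\mathbf{t}}|\to0$ a.s.\ as $|\mathbf{t}|\to\infty$, $\mathbf{t}\in\mathcal{D}_j$. For $j\notin I^{*}$ one first reduces $\mathcal{D}_j$ to a generator $\mathcal{D}_m$, $m\in I^{*}$, via the translation structure of Propositions~\ref{lem-Xi1.5}--\ref{lem-Xi2} together with the time-change formula \eqref{timechangeTheta-New}.

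\textbf{Summability.} Since $\mathcal{E}_j$ is finite and $\mathcal{H}_j=\bigcup_{\mathbf{s}\in\mathcal{L}_j}(\mathcal{E}_j)_{\mathbf{s}}$, Lemma~\ref{lem-CD} gives constants $0<c\le C$ with $c\,\rho_{(\mathcal{E}_j)_{\mathbf{s}}}(\mathbf{\Theta})^{\alpha}\le\sum_{\mathbf{t}\in(\mathcal{E}_j)_{\mathbf{s}}}|\mathbf{\Theta}_{\mathcal{E}_j,\mathbf{t}}|^{\alpha}\le C\,\rho_{(\mathcal{E}_j)_{\mathbf{s}}}(\mathbf{\Theta})^{\alpha}$, so the two series in the statement converge or diverge together and it suffices to show $\sum_{\mathbf{s}\in\mathcal{L}_j}\rho_{(\mathcal{E}_j)_{\mathbf{s}}}(\mathbf{\Theta})^{\alpha}<\infty$ a.s. Split $\mathcal{L}_j=\mathcal{G}_j\cup(-\mathcal{G}_j)$. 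For $\mathbf{s}\in\mathcal{G}_j$ all but finitely many $(\mathcal{E}_j)_{\mathbf{s}}$ lie in $\mathcal{D}_j$, where the decay just established applies, and the finiteness of $\sum_{\mathbf{s}\in\mathcal{G}_j}\rho_{(\mathcal{E}_j)_{\mathbf{s}}}(\mathbf{\Theta})^{\alpha}$ follows from an argument parallel to that for Proposition~\ref{lem-bound-L}, which extracts summability, not merely pointwise decay, from the anti-clustering condition. For the backward cosets $\mathbf{s}\in-\mathcal{G}_j$ — which (AC$^{\Lambda}_{\succeq,I^*}$) does not control directly, since $R^{(j)}_{l,\Lambda_n}$ sits in the upper orthant — I would invoke the time-change identity \eqref{timechangeTheta-New} with $\Upsilon=\mathcal{E}_j$ (recall $\rho_{\mathcal{E}_j}(\mathbf{\Theta})=1$ a.s.): with $g\equiv1$ it reads $\mathbb{E}[\rho_{(\mathcal{E}_j)_{\mathbf{s}}}(\mathbf{\Theta})^{\alpha}]=\mathbb{P}(\rho_{(\mathcal{E}_j)_{-\mathbf{s}}}(\mathbf{\Theta})\neq\mathbf{0})$, and in functional form it transports expectations of functionals supported on the backward cosets to the forward ones; together with the $\mathcal{L}_j$--translation invariance of $\mathcal{H}_j$ this reduces backward summability to the forward case. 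Finally, Condition~($A^{\Lambda}_{\rho}$), i.e.\ $\sum_{j\in I^{*}}\gamma^{*}_{j}c_{j}D_{j}^{\alpha}<\infty$, supplies the domination needed to run these estimates uniformly in $j$ and, when $q=\infty$, to interchange the relevant limits.

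\textbf{Main obstacle.} The routine part is the decay; the real work is the passage from pointwise decay to summability, and within it the backward lattice directions $-\mathcal{G}_j$, on which the anti-clustering condition gives no direct bound. The time-change formula of Proposition~\ref{pro-New-2} is the right instrument, but it is stated for a single shift, whereas here it must be leveraged over the infinite, $\mathcal{L}_j$--translation-invariant set $\mathcal{H}_j$ and uniformly in $j$ so that $q=\infty$ is admissible; making this rigorous (as already done for the uniform norm in Proposition~\ref{lem-bound-L}) is the crux.
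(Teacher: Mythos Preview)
Your decay argument is essentially the paper's: pass from the $\rho_{\mathcal{E}_j}$--conditioning to the $\max$--conditioning via Lemma~\ref{lem-CD}, use (AC$^{\Lambda}_{\succeq,I^*}$) to get $\max_{\mathbf{t}\in\mathcal{D}_j,\,2l\le|\mathbf{t}|}|\mathbf{Y}_{\mathcal{E}_j,\mathbf{t}}|\to0$ in probability, hence a.s.\ by monotonicity, and divide by $\rho_{\mathcal{E}_j}(\mathbf{Y})$. That part is fine.

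The summability part, however, is not organised as the paper does it and your sketch has a gap. Your proposed backward step via the $g\equiv1$ instance of \eqref{timechangeTheta-New}, i.e.\ $\mathbb{E}[\rho_{(\mathcal{E}_j)_{\mathbf{s}}}(\mathbf{\Theta})^{\alpha}]=\mathbb{P}(\rho_{(\mathcal{E}_j)_{-\mathbf{s}}}(\mathbf{\Theta})\neq0)$, does \emph{not} reduce backward summability to forward summability: summing the right side over $\mathbf{s}\in\mathcal{G}_j$ gives $\sum_{\mathbf{s}\in\mathcal{G}_j}\mathbb{P}(\rho_{(\mathcal{E}_j)_{\mathbf{s}}}(\mathbf{\Theta})\neq0)$, which can be infinite even when $\sum_{\mathbf{s}\in\mathcal{G}_j}\rho_{(\mathcal{E}_j)_{\mathbf{s}}}(\mathbf{\Theta})^{\alpha}<\infty$ a.s. The paper does not split $\mathcal{L}_j$ into $\mathcal{G}_j$ and $-\mathcal{G}_j$ for summability. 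Instead it proves a dedicated Lemma (the $\rho$--analogue of Lemma~\ref{lem-AC1-L}) which (i) first extends the \emph{decay} from $\mathcal{G}_j$ to $-\mathcal{G}_j$ by a contradiction argument using \eqref{timechangeY-New}, and (ii) then introduces the first--exceedance index $\mathbf{T}^*_{\Upsilon,\mathcal{L}_j}\in\mathcal{L}_j$ (the $\prec$--smallest argmax of $\rho_{(\mathcal{E}_j)_{\cdot}}(\mathbf{\Theta})$) and applies \eqref{timechangeTheta-New} to the functional $g_{\mathbf{i}}=\mathbf{1}(\mathbf{T}^*_{\Upsilon,\mathcal{L}_j}=\mathbf{i})$ to obtain
\[
\sum_{\mathbf{t}\in\mathcal{L}_j}\mathbb{E}\big[\rho_{(\mathcal{E}_j)_{\mathbf{t}}}(\mathbf{\Theta})^{\alpha}\,\mathbf{1}(\mathbf{T}^*_{\Upsilon,\mathcal{L}_j}=\mathbf{i})\big]\ \le\ \sum_{\mathbf{t}\in\mathcal{L}_j}\mathbb{P}(\mathbf{T}^*_{\Upsilon,\mathcal{L}_j}=\mathbf{i}-\mathbf{t})=1,
\]
which rules out $\mathbb{P}(\sum_{\mathbf{t}\in\mathcal{L}_j}\rho_{(\mathcal{E}_j)_{\mathbf{t}}}(\mathbf{\Theta})^{\alpha}=\infty)>0$. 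This first--exceedance device is the concrete mechanism behind the ``parallel to Proposition~\ref{lem-bound-L}'' you invoke; your sketch never names it, and the alternative route you propose via the $g\equiv1$ identity does not close the argument.

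One further point: condition $(A^{\Lambda}_{\rho})$ plays no role in the paper's proof of this proposition (the argument is carried out for a fixed $j$ and no uniformity or domination in $j$ is needed here); its presence in the hypotheses appears to be for convenience of later citation rather than for this proof, so your explanation of its role is off.
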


Let $\Upsilon$ be a finite subset of $\mathbb{Z}^{k}$ and $A$ a subset $\mathbb{Z}^{k}$ and $\rho$ a modulus of continuity. Define 
\begin{equation*}
\|\mathbf{\Theta}_{\Upsilon}\|_{\rho, A,\alpha}=\bigg(\sum_{\mathbf{t}\in A}\rho_{(\Upsilon)_{\mathbf{t}}}(\mathbf{\Theta})^{\alpha}\bigg)^{1/\alpha}
\end{equation*}
as the normalisation constant. We define the spectral cluster random field by
\begin{equation*}
\mathbf{Q}_{\Upsilon, A}:=\frac{\mathbf{\Theta}_{\Upsilon}}{\|\mathbf{\Theta}_{\Upsilon}\|_{\rho, A,\alpha}},
\end{equation*}
where the dependence on $\rho$ is implicit. Notice that when the modulus of continuity is $\rho_{\alpha}$, $\Upsilon$ is $\mathcal{E}_{j}$, and $ A$ is $\mathcal{L}_{j}$ then
\begin{multline*}
\|\mathbf{\Theta}_{\mathcal{E}_{j}}\|_{\rho_\alpha,\mathcal{L}_{j},\alpha}=\bigg(\sum_{\mathbf{t}\in\mathcal{H}_{j}}|\mathbf{\Theta}_{\mathcal{E}_{j}}(\mathbf{t})|^{\alpha}\bigg)^{1/\alpha}=\|\mathbf{\Theta}_{\mathcal{E}_{j}}\|_{\mathcal{H}_{j},\alpha}\quad\textnormal{and}\\\quad \|\mathbf{Q}_{\mathcal{E}_{j},\mathcal{L}_{j}}\|_{\rho_\alpha,\mathcal{L}_{j},\alpha}=\|\mathbf{Q}_{\mathcal{E}_{j}}\|_{\mathcal{H}_{j},\alpha}=1.
\end{multline*}
Observe that for bounded $\mathcal{D}_{j}$ we have that $\mathcal{E}_{j}=\{\mathbf{0}\}\cup\mathcal{D}_{j}=\mathcal{H}_{j}$ and that $\mathcal{G}_{j}=\mathcal{L}_{j}=\{\mathbf{0}\}$. We remark that when $\Upsilon=\{\bf 0\}$ we have that $\mathbf{\Theta}_{\{\bf 0\}}$, $\|\mathbf{\Theta}_{\{\bf 0\}}\|_{\rho_\alpha, A,\alpha}$, and $\mathbf{Q}_{\{\bf 0\}, A}$ are simply given by $\mathbf{\Theta}$, $\|\mathbf{\Theta}\|_{ A,\alpha}$, and $\mathbf{Q}_{ A}$ (see Section \ref{Subsec:the-spectral-cluster-random-field-in-the-lattice}).
\subsection{Cluster point random field expressed using the $\Upsilon-$spectral cluster field}
\begin{theorem}\label{t2-L-New}
	Consider an $\mathbb{R}^{d}$-valued stationary regularly varying random fields $(\mathbf{X}_{\mathbf{t}})_{\mathbf{t}\in\mathbb{Z}^{k}}$ with index $\alpha>0$.  We assume conditions ($\mathcal{D}^{\Lambda}$), (AC$^{\Lambda}_{\succeq,I^*}$), $\mathcal{A}^{\Lambda}(a_{n})$ and  $(A^{\Lambda}_{\rho})$. Then, $N^{\Lambda}_{n}\stackrel{d}{\to}N^{\Lambda}$ on $\mathbb{R}_{\mathbf{0}}^{d}$ and the limit has Laplace functional for $g\in \mathbb{C}^{+}_{K}$, with the following expression:
	\begin{equation*}
	\Psi_{N^{\Lambda}}(g)=\exp\bigg(-\sum_{j\in I^*}\gamma^*_{j}c_{j}\int_{0}^{\infty}\mathbb{E}\Big[1
	-e^{-\sum_{\mathbf{t}\in\Xi_{j}^*}g( y\mathbf{Q}_{\mathcal{E}_{j},\mathcal{L}_{j},\mathbf{t}})}\Big]d(-y^{-\alpha})\bigg).
	\end{equation*}
\end{theorem}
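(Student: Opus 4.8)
The plan is to deduce the statement from Theorem~\ref{t1-L-E-Pro}: that theorem already delivers the weak convergence $N^{\Lambda}_{n}\stackrel{d}{\to}N^{\Lambda}$ on $\mathbb{R}_{\mathbf{0}}^{d}$ and the expression \eqref{Psi-L-E-Pro} for $\Psi_{N^{\Lambda}}(g)$, so the whole task reduces to rewriting \eqref{Psi-L-E-Pro} in the announced form. All integrands being nonnegative, Tonelli lets me interchange $\int_{0}^{\infty}$ with $\sum_{j\in I^{*}}$ and factor out $\gamma^{*}_{j}c_{j}$; it therefore suffices to prove, for each fixed $j\in I^{*}$,
\begin{multline*}
\int_{0}^{\infty}\mathbb{E}\Big[\big(1-e^{-\sum_{\mathbf{t}\in\mathcal{E}_{j}}g(y\mathbf{\Theta}_{\mathcal{E}_{j},\mathbf{t}})}\big)e^{-\sum_{\mathbf{t}\in\tilde{\mathcal{D}}_{j}}g(y\mathbf{\Theta}_{\mathcal{E}_{j},\mathbf{t}})}\Big]d(-y^{-\alpha})\\
=\int_{0}^{\infty}\mathbb{E}\Big[1-e^{-\sum_{\mathbf{t}\in\Xi_{j}^{*}}g(y\mathbf{Q}_{\mathcal{E}_{j},\mathcal{L}_{j},\mathbf{t}})}\Big]d(-y^{-\alpha}).
\end{multline*}
Write $\mathbf{\Theta}:=\mathbf{\Theta}_{\mathcal{E}_{j}}$, $N:=\|\mathbf{\Theta}\|_{\rho,\mathcal{L}_{j},\alpha}=\big(\sum_{\mathbf{r}\in\mathcal{L}_{j}}\rho_{(\mathcal{E}_{j})_{\mathbf{r}}}(\mathbf{\Theta})^{\alpha}\big)^{1/\alpha}$ and $\mathbf{Q}:=\mathbf{Q}_{\mathcal{E}_{j},\mathcal{L}_{j}}=\mathbf{\Theta}/N$. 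By Proposition~\ref{lem-bound-L-New}, $\sum_{\mathbf{t}\in\mathcal{L}_{j}}\rho_{(\mathcal{E}_{j})_{\mathbf{t}}}(\mathbf{\Theta})^{\alpha}<\infty$ and $\sum_{\mathbf{t}\in\Xi_{j}^{*}}|\mathbf{\Theta}_{\mathbf{t}}|^{\alpha}<\infty$ a.s.; together with $\rho_{\mathcal{E}_{j}}(\mathbf{\Theta})=1$ a.s.\ this gives $N\in(0,\infty)$ a.s.\ and $\mathbf{\Theta}_{\mathbf{t}}\to\mathbf{0}$ (hence $\mathbf{Q}_{\mathbf{t}}\to\mathbf{0}$) as $|\mathbf{t}|\to\infty$ within $\Xi_{j}^{*}$.

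For the right-hand side I would run a telescoping-plus-change-of-anchor argument in the spirit of Basrak and Planinic~\cite{BP}. By the lattice analysis of Section~\ref{sec:lattice}, the translates $(\mathcal{E}_{j})_{\mathbf{s}}$, $\mathbf{s}\in\mathcal{L}_{j}$, partition $\Xi_{j}^{*}$. For a fixed $y>0$ only finitely many $\mathbf{t}\in\Xi_{j}^{*}$ contribute to $\sum_{\mathbf{t}\in\Xi_{j}^{*}}g(y\mathbf{Q}_{\mathbf{t}})$, since $g\in\mathbb{C}^{+}_{K}$ is supported away from $\mathbf{0}$ and $\mathbf{Q}_{\mathbf{t}}\to\mathbf{0}$. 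Hence the elementary identity $1-e^{-\sum_{k}a_{k}}=\sum_{k}(1-e^{-a_{k}})e^{-\sum_{k'>k}a_{k'}}$ applies copy by copy; because $\prec$ is invariant on the lattice $\mathcal{L}_{j}$ and $\mathcal{G}_{j}=\mathcal{L}_{j}\cap\{\mathbf{t}\succeq\mathbf{0}\}$, one has $\bigcup_{\mathbf{s}'\in\mathcal{L}_{j},\,\mathbf{s}'\succ\mathbf{s}}(\mathcal{E}_{j})_{\mathbf{s}'}=(\tilde{\mathcal{D}}_{j})_{\mathbf{s}}$, and Tonelli yields
\begin{equation*}
\mathrm{RHS}_{j}=\sum_{\mathbf{s}\in\mathcal{L}_{j}}\int_{0}^{\infty}\mathbb{E}\Big[\big(1-e^{-\sum_{\mathbf{t}\in(\mathcal{E}_{j})_{\mathbf{s}}}g(y\mathbf{Q}_{\mathbf{t}})}\big)e^{-\sum_{\mathbf{t}\in(\tilde{\mathcal{D}}_{j})_{\mathbf{s}}}g(y\mathbf{Q}_{\mathbf{t}})}\Big]d(-y^{-\alpha}).
\end{equation*}

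In the $\mathbf{s}$-th term I substitute $y=v\,N/\rho_{(\mathcal{E}_{j})_{\mathbf{s}}}(\mathbf{\Theta})$ (legitimate on $\{\rho_{(\mathcal{E}_{j})_{\mathbf{s}}}(\mathbf{\Theta})>0\}$, the integrand vanishing on the complement by Lemma~\ref{lem-CD}): this replaces $y\mathbf{Q}_{\mathbf{t}}$ by $v\mathbf{\Theta}_{\mathbf{t}}/\rho_{(\mathcal{E}_{j})_{\mathbf{s}}}(\mathbf{\Theta})$ and produces a Jacobian $\big(\rho_{(\mathcal{E}_{j})_{\mathbf{s}}}(\mathbf{\Theta})/N\big)^{\alpha}$. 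Using the translation-covariance of $\rho$ (so that $\rho_{(\mathcal{E}_{j})_{\mathbf{s}}}(\mathbf{\Theta})=\rho_{\mathcal{E}_{j}}((\mathbf{\Theta}_{\cdot+\mathbf{s}}))$ and likewise for $\tilde{\mathcal{D}}_{j}$) and the $\mathcal{L}_{j}$-translation invariance of $N$, the resulting integrand is a bounded measurable functional of the shifted field $(\mathbf{\Theta}_{\mathbf{t}+\mathbf{s}})_{\mathbf{t}}$ that is supported inside $\{\rho_{(\mathcal{E}_{j})_{\mathbf{s}}}(\mathbf{\Theta})\neq\mathbf{0}\}$; I may thus insert that indicator for free and apply the time change formula \eqref{timechangeTheta-New} of Proposition~\ref{pro-New-2} (with $-\mathbf{s}$ in place of $\mathbf{s}$) to re-anchor at $\mathbf{0}$ at the price of the weight $\rho_{(\mathcal{E}_{j})_{-\mathbf{s}}}(\mathbf{\Theta})^{\alpha}$. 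Writing $r=\rho_{(\mathcal{E}_{j})_{-\mathbf{s}}}(\mathbf{\Theta})$ and using $\rho_{\mathcal{E}_{j}}(\mathbf{\Theta})=1$ a.s.\ together with $1$-homogeneity of $\rho$, the rescaled normalisations satisfy $\big(\rho_{\mathcal{E}_{j}}(\mathbf{\Theta}/r)/N(\mathbf{\Theta}/r)\big)^{\alpha}=N^{-\alpha}$ and $v(\mathbf{\Theta}_{\mathbf{t}}/r)/\rho_{\mathcal{E}_{j}}(\mathbf{\Theta}/r)=v\mathbf{\Theta}_{\mathbf{t}}$, so the $\mathbf{s}$-th term becomes $\int_{0}^{\infty}\mathbb{E}\big[N^{-\alpha}\rho_{(\mathcal{E}_{j})_{-\mathbf{s}}}(\mathbf{\Theta})^{\alpha}\big(1-e^{-\sum_{\mathbf{t}\in\mathcal{E}_{j}}g(v\mathbf{\Theta}_{\mathbf{t}})}\big)e^{-\sum_{\mathbf{t}\in\tilde{\mathcal{D}}_{j}}g(v\mathbf{\Theta}_{\mathbf{t}})}\big]d(-v^{-\alpha})$. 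Summing over $\mathbf{s}\in\mathcal{L}_{j}$ and using $\sum_{\mathbf{s}\in\mathcal{L}_{j}}\rho_{(\mathcal{E}_{j})_{-\mathbf{s}}}(\mathbf{\Theta})^{\alpha}=\sum_{\mathbf{u}\in\mathcal{L}_{j}}\rho_{(\mathcal{E}_{j})_{\mathbf{u}}}(\mathbf{\Theta})^{\alpha}=N^{\alpha}$ (because $-\mathcal{L}_{j}=\mathcal{L}_{j}$) collapses $N^{-\alpha}N^{\alpha}=1$, so $\mathrm{RHS}_{j}$ equals the left-hand side. Re-inserting $\gamma^{*}_{j}c_{j}$ and re-summing over $j\in I^{*}$ then identifies the two Laplace functionals, which with the convergence from Theorem~\ref{t1-L-E-Pro} proves the claim.

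The step I expect to be the main obstacle is the change of anchor. One must justify rigorously that the $(\mathcal{E}_{j})_{\mathbf{s}}$, $\mathbf{s}\in\mathcal{L}_{j}$, partition $\Xi_{j}^{*}$ (this is precisely where $\mathcal{E}_{j}$ has to be a system of $\mathcal{L}_{j}$-coset representatives built in Section~\ref{sec:lattice}; should overlaps occur one rather indexes each $\mathbf{t}\in\Xi_{j}^{*}$ by a canonical representative), that the functional fed into \eqref{timechangeTheta-New} is genuinely bounded measurable and supported inside $\{\rho_{(\mathcal{E}_{j})_{\mathbf{s}}}(\mathbf{\Theta})\neq\mathbf{0}\}$, and that the substitution combined with the $1$-homogeneity and translation-covariance of the modulus of continuity makes the normalisation constants cancel exactly as stated. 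Everything else is a sequence of Tonelli interchanges valid by nonnegativity, and the only role of condition $(A^{\Lambda}_{\rho})$ here is to keep the full Laplace exponent finite, which has already been secured in Theorem~\ref{t1-L-E-Pro}.
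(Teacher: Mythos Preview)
Your proposal is correct and uses exactly the ingredients the paper's (one-line) proof invokes: Theorem~\ref{t1-L-E-Pro} for the convergence and the raw Laplace exponent, the time-change formula~\eqref{timechangeTheta-New}, and the summability from Proposition~\ref{lem-bound-L-New}, assembled via a telescoping over the lattice $\mathcal{L}_{j}$ as in the proof of Theorem~\ref{t2-L}. The only cosmetic difference is the direction of the computation: the paper (via the template of Theorem~\ref{t2-L}) starts from the $\mathbf{\Theta}$-expression, pulls out $\|\mathbf{\Theta}\|_{\rho,\mathcal{L}_{j},\alpha}^{\alpha}=\sum_{\mathbf{s}\in\mathcal{L}_{j}}\rho_{(\mathcal{E}_{j})_{\mathbf{s}}}(\mathbf{\Theta})^{\alpha}$, applies~\eqref{timechangeTheta-New} term by term, and collapses the resulting telescoping sum into the $\mathbf{Q}$-expression; you run the same identity backwards, decomposing $1-e^{-\sum_{\Xi_{j}^{*}}}$ block by block and re-anchoring each block at $\mathbf{0}$. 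Both routes hinge on the same three facts you correctly isolate in your last paragraph: that $\{(\mathcal{E}_{j})_{\mathbf{s}}:\mathbf{s}\in\mathcal{L}_{j}\}$ partitions $\Xi_{j}^{*}$, that the functional fed into~\eqref{timechangeTheta-New} is bounded, measurable and $0$-homogeneous (so the rescaling by $\rho_{(\mathcal{E}_{j})_{-\mathbf{s}}}(\mathbf{\Theta})$ disappears from the integrand and survives only as the weight), and that $N$ is $\mathcal{L}_{j}$-shift invariant.
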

\begin{proof}
	It follows from Theorem \ref{t1-L-E-Pro} using the same arguments as in the proof of Theorem \ref{t2-L}, the time change formula (\ref{timechangeTheta-New}) and the fact that from Proposition \ref{lem-bound-L-New} we have that  $\sum_{\mathbf{t}\in\mathcal{L}_j}\rho_{(\mathcal{E}_{j})_{\mathbf{t}}}(\mathbf{\Theta})^{\alpha}<\infty$ a.s., for every $j\in I^*$.
\end{proof}
In the following corollary we consider Theorems \ref{t1-L-E-Pro} and \ref{t2-L-New} when the modulus of continuity is $\rho_{\alpha}$.
\begin{corollary}\label{co-new-Q}
Let the modulus of continuity be $\rho_{\alpha}$. Consider an $\mathbb{R}^{d}$-valued stationary regularly varying random fields $(\mathbf{X}_{\mathbf{t}})_{\mathbf{t}\in\mathbb{Z}^{k}}$ with index $\alpha>0$.  We assume conditions ($\mathcal{D}^{\Lambda}$), \textnormal{(AC$^{\Lambda}_{\succeq,I^*}$)} and $\mathcal{A}^{\Lambda}(a_{n})$. Then, $N^{\Lambda}_{n}\stackrel{d}{\to}N^{\Lambda}$ on $\mathbb{R}_{\mathbf{0}}^{d}$ admitting, for $g\in \mathbb{C}^{+}_{K}$, the Laplace functional:
\begin{equation*}
\Psi_{N^{\Lambda}}(g)=\exp\bigg(-\sum_{j\in I^*}\gamma^*_{j}|\mathcal{E}_{j}|\int_{0}^{\infty}\mathbb{E}\Big[1
-e^{-\sum_{\mathbf{t}\in\Xi^*_{j}}g( y\mathbf{Q}_{\mathcal{E}_{j},\mathcal{L}_{j},\mathbf{t}})}\Big]d(-y^{-\alpha})\bigg).
\end{equation*}
where $\sum_{j\in I^*}\gamma^*_{j}|\mathcal{E}_{j}|=1$.
\end{corollary}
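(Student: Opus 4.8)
Proof proposal. The plan is to obtain this corollary directly from Theorem~\ref{t2-L-New} (and, for the Laplace functional of Theorem~\ref{t1-L-E-Pro}, from that theorem) once we verify that, for the specific choice $\rho=\rho_{\alpha}$, Condition $(A^{\Lambda}_{\rho})$ is automatically in force; the resulting formula is then read off from the normalisation identities recorded just before the statement.

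First I would pin down the constants of Lemma~\ref{lem-CD} for $\rho=\rho_{\alpha}$ and $\Upsilon=\mathcal{E}_{j}$. Since $\rho_{\mathcal{E}_{j}}(\mathbf{x})^{\alpha}=\sum_{\mathbf{t}\in\mathcal{E}_{j}}|\mathbf{x}_{\mathbf{t}}|^{\alpha}$, the bound $\rho_{\mathcal{E}_{j}}(\mathbf{x})<\epsilon$ forces $|\mathbf{x}_{\mathbf{t}}|<\epsilon$ for every $\mathbf{t}\in\mathcal{E}_{j}$, so one may take $D_{j}=1$, while $\max_{\mathbf{t}\in\mathcal{E}_{j}}|\mathbf{x}_{\mathbf{t}}|<\epsilon$ gives $\rho_{\mathcal{E}_{j}}(\mathbf{x})<|\mathcal{E}_{j}|^{1/\alpha}\epsilon$, i.e. $C_{j}=|\mathcal{E}_{j}|^{-1/\alpha}$. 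Next I would evaluate $c_{j}=\lim_{n}\mathbb{P}(\rho_{\mathcal{E}_{j}}(\mathbf{X})>a_{n})/\mathbb{P}(|\mathbf{X}_{\mathbf{0}}|>a_{n})$, which is precisely the quantity asserted before Condition $(A^{\Lambda}_{\rho})$ to equal $|\mathcal{E}_{j}|$; the underlying mechanism is the change-of-norm formula for multivariate regular variation. Indeed $\rho_{\mathcal{E}_{j}}(\mathbf{X})^{\alpha}=\sum_{\mathbf{t}\in\mathcal{E}_{j}}|\mathbf{X}_{\mathbf{t}}|^{\alpha}$, each $|\mathbf{X}_{\mathbf{t}}|^{\alpha}$ is regularly varying of index $1$ with $\mathbb{P}(|\mathbf{X}_{\mathbf{t}}|^{\alpha}>y)=\mathbb{P}(|\mathbf{X}_{\mathbf{0}}|^{\alpha}>y)$ by stationarity, and joint regular variation of $(\mathbf{X}_{\mathbf{t}})_{\mathbf{t}\in\mathcal{E}_{j}}$ of index $\alpha$ makes $(|\mathbf{X}_{\mathbf{t}}|^{\alpha})_{\mathbf{t}\in\mathcal{E}_{j}}$ multivariate regularly varying of index $1$. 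Writing its exponent measure $\mu$ in $\ell^{1}$-polar coordinates $(r,\mathbf{w})$ on the positive orthant with spectral measure $\sigma$ (so $\sum_{\mathbf{t}\in\mathcal{E}_{j}}w_{\mathbf{t}}=1$), one gets $c_{j}=\mu(\{\sum_{\mathbf{t}}v_{\mathbf{t}}>1\})=\mu(\{r>1\})=\|\sigma\|$, while $\mu(\{v_{\mathbf{t}}>1\})=\int w_{\mathbf{t}}\,\sigma(d\mathbf{w})=\lim_{y}\mathbb{P}(|\mathbf{X}_{\mathbf{t}}|^{\alpha}>y)/\mathbb{P}(|\mathbf{X}_{\mathbf{0}}|^{\alpha}>y)=1$ for each $\mathbf{t}$; summing over $\mathbf{t}\in\mathcal{E}_{j}$ yields $\|\sigma\|=\int(\sum_{\mathbf{t}}w_{\mathbf{t}})\,\sigma(d\mathbf{w})=|\mathcal{E}_{j}|$, hence $c_{j}=|\mathcal{E}_{j}|$.

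With $D_{j}=1$ and $c_{j}=|\mathcal{E}_{j}|$, Condition $(A^{\Lambda}_{\rho})$ reads $\sum_{j\in I^{*}}\gamma_{j}^{*}|\mathcal{E}_{j}|<\infty$, which holds because $\sum_{j\in I^{*}}\gamma_{j}^{*}|\mathcal{E}_{j}|=1$ by Proposition~\ref{lem-Xi2}. Thus all hypotheses of Theorem~\ref{t2-L-New} (and of Theorem~\ref{t1-L-E-Pro}) are met, and applying it with $\rho=\rho_{\alpha}$ gives $N^{\Lambda}_{n}\stackrel{d}{\to}N^{\Lambda}$ on $\mathbb{R}_{\mathbf{0}}^{d}$ with the Laplace functional of that theorem; substituting $c_{j}=|\mathcal{E}_{j}|$ and using the identities $\|\mathbf{\Theta}_{\mathcal{E}_{j}}\|_{\rho_{\alpha},\mathcal{L}_{j},\alpha}=\|\mathbf{\Theta}_{\mathcal{E}_{j}}\|_{\mathcal{H}_{j},\alpha}$ and $\|\mathbf{Q}_{\mathcal{E}_{j},\mathcal{L}_{j}}\|_{\rho_{\alpha},\mathcal{L}_{j},\alpha}=1$ already recorded before the statement (so that $\mathbf{Q}_{\mathcal{E}_{j},\mathcal{L}_{j}}$ there coincides with the object here) produces the displayed formula, with $\sum_{j\in I^{*}}\gamma_{j}^{*}|\mathcal{E}_{j}|=1$.

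I expect essentially no genuine obstacle: the only non-formal point is the evaluation $c_{j}=|\mathcal{E}_{j}|$ in the second step, a standard change-of-norm computation that hinges precisely on the $\alpha$-th powers being regularly varying of index $1$. If one prefers to take the assertion made before Condition $(A^{\Lambda}_{\rho})$ as given, the proof collapses to verifying $(A^{\Lambda}_{\rho})$ through Proposition~\ref{lem-Xi2} and quoting Theorem~\ref{t2-L-New}.
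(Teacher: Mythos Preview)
Your proposal is correct and follows essentially the same approach as the paper: verify that for $\rho=\rho_\alpha$ one has $D_j=1$ and $c_j=|\mathcal{E}_j|$, so Condition $(A^{\Lambda}_{\rho})$ holds via Proposition~\ref{lem-Xi2}, then apply Theorems~\ref{t1-L-E-Pro} and~\ref{t2-L-New}. The paper's proof is terser---it simply states $c_j=|\mathcal{E}_j|$ without argument (this fact was already asserted in the discussion preceding Condition $(A^{\Lambda}_{\rho})$)---whereas you supply the standard exponent-measure computation, but the logical structure is identical.
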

\begin{proof}
	The result follows from Theorems \ref{t1-L-E-Pro} and \ref{t2-L-New} and from the fact that when the modulus of continuity is $\rho_{\alpha}$ we have that:
	\begin{equation*}
c_{j}=\lim\limits_{n\to\infty}\frac{\mathbb{P}((\sum_{\mathbf{t}\in\mathcal{E}_{j}}|\mathbf{X}_{\mathbf{t}}|^{\alpha})^{1/\alpha}>a_{n})}{\mathbb{P}(|\mathbf{X}_{\mathbf{0}}|>a_{n})}=|\mathcal{E}_{j}|.
	\end{equation*}
\end{proof}
Moreover, we have the following result on the representation of $N^{\Lambda}$.
\begin{proposition}\label{pro-representation N}
	Consider $N^{\Lambda}$ given in  Theorems \ref{t1-L-E-Pro} and \ref{t2-L-New}, then
	\begin{equation}\label{Rep-1}
	N^{\Lambda}=\sum_{j\in I^*}\sum_{i\in\mathbb{N}}\sum_{\mathbf{t}\in\Xi^*_{j}}\varepsilon_{\Gamma_{j,i}^{-1/\alpha}(\gamma^*_{j}c_{j})^{1/\alpha}\mathbf{Q}_{\mathcal{E}_{j},\mathcal{L}_{j},i,\mathbf{t}}}
	\end{equation}
	where $\Big(\sum_{\mathbf{t}\in\Xi^*_{j}}\varepsilon_{\mathbf{Q}_{\mathcal{E}_{j},\mathcal{L}_{j},i,\mathbf{t}}}\Big)_{i\in\mathbb{N}}$, is an iid sequence of point random fields with state space $\mathbb{R}^{d}$, and where $(\Gamma_{j,i})_{i\in\mathbb{N}}$ are the points of a unit rate homogeneous Poisson process on $(0, \infty)$ independent of $(\mathbf{Q}_{\mathcal{E}_{j},\mathcal{L}_{j},i,\mathbf{t}})_{\mathbf{t}\in\Xi^*_{j}}$, for every $j\in I^*$. Moreover, $\Big(\sum_{i\in\mathbb{N}}\sum_{\mathbf{t}\in\Xi^*_{j}}\varepsilon_{\Gamma_{j,i}^{-1/\alpha}(\gamma^*_{j}c_{j})^{1/\alpha}\mathbf{Q}_{\mathcal{E}_{j},\mathcal{L}_{j},i,\mathbf{t}}}\Big)_{j\in I^*}$ is a sequence of independent point random fields with state space $\mathbb{R}^{d}$.
	
	Finally, in the setting of Corollary \ref{co-new-Q} we have
$ N^{\Lambda}=\sum_{i\in\mathbb{N}}\sum_{l\in\mathbb{N}}\varepsilon_{\Gamma_{i}^{-1/\alpha}\hat{\mathbf{Q}}_{i,l}}$ where $\Big(\sum_{\mathbf{l}\in\mathbb{N}}\varepsilon_{\hat{\mathbf{Q}}_{i,l}}\Big)_{i\in\mathbb{N}}$, is an iid sequence of point random fields with state space $\mathbb{R}^{d}$ with mixing distribution $\mathcal{L}(\sum_{\mathbf{l}\in\mathbb{N}}\varepsilon_{\hat{\mathbf{Q}}_{i,l}})= \sum_{j\in I^*}\gamma^*_{j}|\mathcal{E}_{j}|\mathcal{L}(\sum_{\mathbf{t}\in\Xi^*_{j}}\varepsilon_{\mathbf{Q}_{\mathcal{E}_{j},\mathcal{L}_{j},\mathbf{t}}}) $ for every $i\in\mathbb{N}$, and where $(\Gamma_{i})_{i\in\mathbb{N}}$ are the points of a unit rate homogeneous Poisson process on $(0, \infty)$ independent of $(\hat{\mathbf{Q}}_{l} )_{l\in\mathbb{N}}$.
\end{proposition}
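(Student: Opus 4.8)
The plan is to pin down the law of $N^{\Lambda}$ through its Laplace functional, already identified in Theorem \ref{t2-L-New}, and to verify that the Poisson cluster process on the right-hand side of \eqref{Rep-1} has exactly that Laplace functional. First I would invoke the exponential formula for marked Poisson processes. Fix $j\in I^*$ and write $c=\gamma^*_jc_j$. By the marking theorem the points $(\Gamma_{j,i})_{i\in\mathbb{N}}$ of a unit rate Poisson process on $(0,\infty)$ together with the iid marks $M_{j,i}:=\sum_{\mathbf{t}\in\Xi^*_j}\varepsilon_{\mathbf{Q}_{\mathcal{E}_j,\mathcal{L}_j,i,\mathbf{t}}}$ form a Poisson process on $(0,\infty)$ times the space of point measures, with intensity $d\gamma$ tensored with the law of $M_{j,\cdot}$. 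Applying the exponential formula with the functional $(\gamma,m)\mapsto\int g(\gamma^{-1/\alpha}c^{1/\alpha}x)\,m(dx)$ gives, for $g\in\mathbb{C}^{+}_{K}$,
\begin{multline*}
\mathbb{E}\Big[\exp\Big(-\sum_{i\in\mathbb{N}}\sum_{\mathbf{t}\in\Xi^*_j}g\big(\Gamma_{j,i}^{-1/\alpha}c^{1/\alpha}\mathbf{Q}_{\mathcal{E}_j,\mathcal{L}_j,i,\mathbf{t}}\big)\Big)\Big]\\
=\exp\Big(-\int_0^\infty\mathbb{E}\Big[1-e^{-\sum_{\mathbf{t}\in\Xi^*_j}g(\gamma^{-1/\alpha}c^{1/\alpha}\mathbf{Q}_{\mathcal{E}_j,\mathcal{L}_j,\mathbf{t}})}\Big]\,d\gamma\Big).
\end{multline*}
Here the inner sum over $\mathbf{t}$ is a.s.\ finite because $g$ is supported away from $\mathbf{0}$ and, by Proposition \ref{lem-bound-L-New}, $|\mathbf{\Theta}_{\mathcal{E}_j,\mathbf{t}}|\to0$, hence $|\mathbf{Q}_{\mathcal{E}_j,\mathcal{L}_j,\mathbf{t}}|\to0$, as $|\mathbf{t}|\to\infty$. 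The substitution $y=\gamma^{-1/\alpha}c^{1/\alpha}$, i.e.\ $\gamma=c\,y^{-\alpha}$, converts $d\gamma$ into $c\,d(-y^{-\alpha})$ and turns the right-hand side into $\exp\big(-\gamma^*_jc_j\int_0^\infty\mathbb{E}[1-e^{-\sum_{\mathbf{t}\in\Xi^*_j}g(y\mathbf{Q}_{\mathcal{E}_j,\mathcal{L}_j,\mathbf{t}})}]\,d(-y^{-\alpha})\big)$.

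Next I would use that the families indexed by $j\in I^*$ are taken mutually independent; hence the Laplace functional of $\sum_{j\in I^*}(\cdots)$ in \eqref{Rep-1} is the product over $j$ of the factors above, namely
\begin{equation*}
\exp\Big(-\sum_{j\in I^*}\gamma^*_jc_j\int_0^\infty\mathbb{E}\Big[1-e^{-\sum_{\mathbf{t}\in\Xi^*_j}g(y\mathbf{Q}_{\mathcal{E}_j,\mathcal{L}_j,\mathbf{t}})}\Big]\,d(-y^{-\alpha})\Big),
\end{equation*}
which is exactly $\Psi_{N^{\Lambda}}(g)$ from Theorem \ref{t2-L-New}. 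Since $\mathbb{C}^{+}_{K}$ separates laws of point random fields on $\mathbb{R}^d\setminus\{\mathbf{0}\}$, equality of Laplace functionals upgrades to equality in distribution, which is \eqref{Rep-1}; the asserted independence across $j$ and the iid property within each $j$ hold by construction.

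For the final assertion, in the setting of Corollary \ref{co-new-Q} (modulus $\rho_\alpha$, so $c_j=|\mathcal{E}_j|$ and $\sum_{j\in I^*}\gamma^*_j|\mathcal{E}_j|=1$), I would merge the Poisson components into one. Rescaling $\Gamma_{j,i}\mapsto\Gamma_{j,i}/(\gamma^*_j|\mathcal{E}_j|)$ makes $(\Gamma_{j,i})_i$ a Poisson process of rate $\gamma^*_j|\mathcal{E}_j|$ and simultaneously absorbs the deterministic factor $(\gamma^*_j|\mathcal{E}_j|)^{1/\alpha}$ into the point; superposing independent Poisson processes whose rates sum to $1$ yields a single unit rate Poisson process $(\Gamma_i)_i$, and the cluster attached to $\Gamma_i$ is, with probability $\gamma^*_j|\mathcal{E}_j|$, a copy of $\sum_{\mathbf{t}\in\Xi^*_j}\varepsilon_{\mathbf{Q}_{\mathcal{E}_j,\mathcal{L}_j,\mathbf{t}}}$. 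This is precisely the mixture $\mathcal{L}(\sum_l\varepsilon_{\hat{\mathbf{Q}}_l})=\sum_{j\in I^*}\gamma^*_j|\mathcal{E}_j|\,\mathcal{L}(\sum_{\mathbf{t}\in\Xi^*_j}\varepsilon_{\mathbf{Q}_{\mathcal{E}_j,\mathcal{L}_j,\mathbf{t}}})$, and a last application of the exponential formula shows the merged process has the Laplace functional of Corollary \ref{co-new-Q}.

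I expect the main difficulty to be bookkeeping of a measure-theoretic nature: specifying the Polish state space on which the (possibly infinite, when $\mathcal{L}_j\ne\{\mathbf{0}\}$) cluster configurations $\sum_{\mathbf{t}\in\Xi^*_j}\varepsilon_{\mathbf{Q}_{\mathcal{E}_j,\mathcal{L}_j,\mathbf{t}}}$ live, so that the marking theorem, the exponential formula, and the superposition property apply in this generality, and justifying the change of variables via Tonelli together with the integrability estimates (finiteness of the $\mathbf{t}$-sums on $\{|\cdot|>\delta\}$ from Proposition \ref{lem-bound-L-New}, and integrability near $y=0$) already used in the proof of Theorem \ref{t2-L-New}, where this same Poisson-cluster identification was carried out in the lattice case via Theorem \ref{t2-L}.
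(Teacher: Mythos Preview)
Your proposal is correct and follows essentially the same approach as the paper: the paper's proof is a single sentence stating that the result follows by identifying the Laplace functional of Theorem \ref{t2-L-New} as that of a cluster Poisson random field, and you have simply written out this identification in detail via the exponential formula, the change of variables $y=\gamma^{-1/\alpha}(\gamma^*_jc_j)^{1/\alpha}$, and (for the second part) the superposition/thinning argument to pass to a single unit-rate Poisson process with mixture marks.
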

\begin{proof}
	The result follows from Theorems \ref{t1-L-E-Pro} and \ref{t2-L-New} identifying the limiting Laplace functionals such as the ones of cluster Poisson random fields.
\end{proof}

\begin{remark}
Notice that the chosen order does not affect the spectral tail random field. This fact is a clear advantage of the $\Upsilon-$spectral cluster field approach compared to the approach of Section \ref{sec:spectral} because it allows the asymptotic representation \eqref{Rep-1} in full generality, not just in the lattice case.
\end{remark}

\begin{example}
    [Continuing Example \ref{Example-1}] Armed with the results of this and the previous section we can present the extreme asymptotic behaviour of $N_n^\Lambda$, where $\Lambda_n$ is described in Example \ref{Example-1}.  Then,
    \begin{equation*}
	\Psi_{N^{\Lambda}}(g)=\exp\bigg(-\int_{0}^{\infty}\mathbb{E}\Big[1
-e^{-\sum_{\mathbf{t}\in\Xi_h^*}g( y\mathbf{Q}_{{\cal C},\mathcal{L}_{h},\mathbf{t}})}\Big]d(-y^{-\alpha})\bigg),
	\end{equation*}
	where we consider $\rho_\alpha$ as the modulus of continuity. We remark that such a clean result is not achievable when the anti-clustering condition (AC$^{\Lambda}_{\succeq,I^*}$) does not hold. Since in this case (AC$^{\Lambda}_{\succeq,I^*}$) is strictly weaker than (AC$^{\Lambda}_{\succeq}$), this representation is equivalent to
	\begin{equation*}
	\Psi_{N^{\Lambda}}(g)=\exp\bigg(-\int_{0}^{\infty}\sum_{i=1}^{|{\cal C}|}\frac{1}{|{\cal C}|}\mathbb{E}\bigg[e^{-\sum_{\mathbf{t}\in\mathcal{D}_{i}}g(y\mathbf{\Theta}_{\mathbf{t}})}\Big(1-e^{-g(y\mathbf{\Theta}_{\mathbf{0}})} \Big) \bigg]d(-y^{-\alpha})\bigg).
	\end{equation*}
	Moreover, we can see that it is not important which reference point $h\in \cal C$ we consider, since our results enjoy certain translation properties. In particular, this representation is equivalent to
	\begin{equation*}
	\Psi_{N^{\Lambda}}(g)=\exp\bigg(-\int_{0}^{\infty}\mathbb{E}\Big[1
-e^{-\sum_{\mathbf{t}\in\Xi_{s}^*}g( y\mathbf{Q}_{{\cal C},\mathcal{L}_{s},\mathbf{t}})}\Big]d(-y^{-\alpha})\bigg),
	\end{equation*}
	where $s$ is any element of $\cal C$ and $\Xi_{s}^*$ is any different centering of $\cal C_\infty$ at $\mathbf{0}$ (in our example $\mathcal{L}_{h}=\mathcal{L}_{s}$ for every $h,s=1,...,|\cal C|$).
\end{example}
\begin{comment}
Indeed, if (AC$^{\Lambda}_{\succeq}$) (and (AC$^{\Lambda}_{\succeq,I^*}$) does not hold) we have that
	\begin{equation*}
	\Psi_{N^{\Lambda}}(g)=\exp\bigg(-\int_{0}^{\infty}\sum_{i=1}^{|{\cal C}|}\frac{1}{|{\cal C}|}\mathbb{E}\bigg[e^{-\sum_{\mathbf{t}\in\mathcal{D}_{i}}g(y\mathbf{\Theta}_{\mathbf{t}})}\Big(1-e^{-g(y\mathbf{\Theta}_{\mathbf{0}})} \Big) \bigg]d(-y^{-\alpha})\bigg).
	\end{equation*}
	Obviously, 
\end{comment}

%%%%%%%%%%%%%%%%%%%%%%%%%%%%%%%%%%%%%%%%%%%%%%%%%%%%%%%

\section{Applications}\label{sec:appl}
\subsection{The extremal index}\label{Sec:Extremal-index}
In this section we investigate properties of the extremal index for random fields; see the work of Hashorva  \cite{Ha} for max-stable random fields. First, let us define it.
\begin{definition}[$\Lambda$-extremal index]
	Consider an $\mathbb{R}^d$-valued stationary random field $(\mathbf{X}_\mathbf{t})_{\mathbf{t}\in\mathbb{Z}^{k}}$. Assume that for each positive $\tau$ there exists a sequence $(u_{n}(\tau))$ such that $\lim\limits_{n\to\infty}|\Lambda_n|\mathbb{P}(|\mathbf{X}_\mathbf{0}|>u_{n}(\tau))=\tau\in[0,\infty]$ holds and the limit $\lim_{n\to\infty} \mathbb{P}(\max_{\mathbf{t}\in\Lambda_{n} }|\mathbf{X}_\mathbf{t}|\leq u_{n}(\tau)) = e^{-\theta^{\Lambda}_{X}\tau}$ exists for some $\theta_{X}^{\Lambda}\in[0, 1]$. Then $\theta_{X}^{\Lambda}$ is the $\Lambda$-extremal index of $(\mathbf{X}_\mathbf{t})$.
\end{definition}
As shown by Samorodnistky and Wu \cite{SW} when $\Lambda_{n}=[1,n]^k$, the extremal index is  connected with the so called \textit{block extremal index}. In particular, let \begin{equation*}
    \theta^{\Lambda}_{n}:=\frac{\mathbb{P}(\max_{\mathbf{t}\in\Lambda_{r_{n}} }|\mathbf{X}_\mathbf{t}|>u_{n}(\tau))}{|\Lambda_{r_{n}}|\mathbb{P}(|\mathbf{X}_\mathbf{0}|>u_{n}(\tau))} \text{ and }\theta^{\Lambda}_{b}:=\lim\limits_{n\to\infty}\theta^{\Lambda}_{n}
\end{equation*}
where by $u_{n}(\tau)$ is such that $\lim\limits_{n\to\infty}|\Lambda_n|\mathbb{P}(|\mathbf{X}_\mathbf{0}|>u_{n}(\tau))=\tau\in[0,\infty]$. 

For the sake of simplicity,  we provide our results for random fields with respect to the modulus $\rho_{\alpha}$ in this section. Thus, $D_{j}=1$ and $\sum_{i\in I^*}\gamma^*_{i}|{\cal E}_{j}|=1$. For every $j\in I^*$, generalizing the approach of Janssen \cite{Janssen} for processes to random fields, let $\mathbf T^{*}_{j}$ be defined as follows: for $\mathbf{t}\in\mathcal{L}_{j}$ define
\begin{multline*}
\{\omega:\mathbf T_{j}^{*}(\omega)=\mathbf{t}\}
=\{\omega:\max_{\mathbf{z}\in(\mathcal{E}_{j})_{\mathbf{t}}}|\mathbf\Theta_{\mathcal{E}_{j},\mathbf{z}}(\omega)|-\sup_{\mathbf{s}\in\mathcal{L}_{j},\mathbf{s}\prec\mathbf{t}}\max_{\mathbf{v}\in(\mathcal{E}_{j})_{\mathbf{s}}}|\mathbf\Theta(\mathbf{v})(\omega)|>0\}\\
\cap \{\omega:\max_{\mathbf{z}\in(\mathcal{E}_{j})_{\mathbf{t}}}|\mathbf\Theta_{\mathcal{E}_{j},\mathbf{z}}(\omega)|-\sup_{\mathbf{s}\in\mathcal{L}_{j},\mathbf{s}\succeq\mathbf{t}}\max_{\mathbf{v}\in(\mathcal{E}_{j})_{\mathbf{s}}}|\mathbf\Theta(\mathbf{v})(\omega)|=0\}
\end{multline*}
and for $\mathbf{t}\in\mathbb{Z}^{k}\setminus\mathcal{L}$ let $\{\omega:\mathbf T_j^{*}(\omega)=\mathbf{t}\}=\emptyset$. If (AC$^\Lambda_{\succ,I^*}$) is satisfied then $\mathbf T_{j}^{*}$ is well defined thanks to the summability proved in Proposition \ref{lem-bound-L-New} (see also the end of the proof of Lemma \ref{lem-AC1-L-New} for the connection between the summability of $\rho$ and the one of the max norm). If (AC$^\Lambda_{\succ}$) is satisfied and all $\mathcal{H}_j$'s are lattices (namely all $\mathcal{D}\cup\{{\bf 0}\}\cup-\mathcal{D}$'s and all the $\Xi$'s are lattices) then $\mathbf T_{j}^{*}$ is also well defined thanks to the summability proved in Proposition \ref{lem-bound-L}. Observe that when $\mathcal{H}_j$ is a lattice then $\mathcal{E}_j=\{\mathbf{0}\}$ and $\mathcal{H}_j=\mathcal{L}_j$.

\begin{theorem}\label{co-u}
	Consider an $\mathbb{R}^d$-valued stationary random field $(\mathbf{X}_\mathbf{t})_{\mathbf{t}\in\mathbb{Z}^{k}}$ with index $\alpha$ and a sequence $(\Lambda_n)$ satisfying the condition ($\mathcal{D}^{\Lambda}$).
	\\\textnormal{(1)} If the anti-clustering condition \textnormal{(AC$^{\Lambda}_{\succ}$)} holds, then the limit $\theta^{\Lambda}_{b}:=\lim\limits_{n\to\infty}\theta^{\Lambda}_{n}$ exists, is positive and has the representations
	\begin{multline}\label{u-index}
	\theta^{\Lambda}_{b}=\sum_{j=1}^{\infty}\lambda_{j}\mathbb{P}(Y\sup_{\mathbf{t}\in \mathcal{D}_{j} }|\mathbf\Theta_\mathbf{t}|\leq 1)=\sum_{j=1}^{\infty}\lambda_{j}\mathbb{E}\Big[\Big(\sup_{\mathbf{t}\in\mathcal{D}_{j}\cup\{\mathbf{0}\}}|\mathbf\Theta_\mathbf{t}|^{\alpha}-\sup_{\mathbf{t}\in\mathcal{D}_{j}}|\mathbf\Theta_\mathbf{t}|^{\alpha}\Big) \Big].
	\end{multline}
	\\\textnormal{(2)} If also all the $\Xi_j^*$'s are lattices then $\theta^{\Lambda}_{b}$ admits the representations 
	\begin{align}\nonumber
	\theta^{\Lambda}_{b}&=\sum_{j=1}^\infty\lambda_{j}\mathbb{E}\bigg[\sup\limits_{\mathbf{t}\in\Xi^*_{j}}|\mathbf{Q}_{\Xi^*_{j},\mathbf{t}}|^{\alpha}\bigg]=\sum_{j=1}^\infty\lambda_{j}\mathbb{E}\bigg[\frac{\sup_{\mathbf{t}\in\Xi^*_{j}}|\mathbf \Theta_{\mathbf{t}}|^{\alpha}}{\sum_{\mathbf{s}\in\Xi^*_{j}}|\mathbf \Theta_{\mathbf{s}}|^{\alpha}} \bigg]\\
	\nonumber&=\sum_{j=1}^\infty\lambda_{j}\mathbb{E}\bigg[|\mathbf\Theta_{\mathbf{0}}|^{\alpha} \mathbf{1}(\mathbf T^{*}_{j}=\mathbf{0})\bigg]=\sum_{j=1}^{\infty}\lambda_{j}\mathbb{E}\Big[\Big(\sup_{\mathbf{t}\in\mathcal{D}_{j}\cup\{\mathbf{0}\}}|\mathbf\Theta_\mathbf{t}|^{\alpha}-\sup_{\mathbf{t}\in\mathcal{D}_{j}}|\mathbf\Theta_\mathbf{t}|^{\alpha}\Big) \Big]\,.\\
	\label{extrindlattice}
	%\\
    %&=\sum_{j=1}^{\infty}\lambda_{j}\mathbb{P}(Y\sup_{\mathbf{t}\in \mathcal{D}_{j} }|\mathbf\Theta_\mathbf{t}|\leq 1)=\sum_{j=1}^\infty\int_{0}^{\infty}\lambda_{j}\mathbb{P}\Big(y|\mathbf\Theta_{\mathbf{0}}|>1,y\sup_{\mathbf{t}\in \mathcal{D}_{j} }|\mathbf\Theta_{\mathbf{t}}|\leq 1\Big)d(-y^{-\alpha}).
    %&=\sum_{j=1}^\infty\lambda_{j}\mathbb{P}\Big(Y_{j}\sup\limits_{\mathbf{t}\in\mathcal{L}_{j}}|\mathbf{Q}_{\mathcal{L}_{j},\mathbf{t}}|>1\Big)\\
	\end{align}
	\noindent\textnormal{(3)} In any case (1) or (2), if also the mixing condition
	\begin{equation}\label{mix-1}
	\mathbb{P}(\max_{\mathbf{t}\in\Lambda_{n} }|\mathbf{X}_\mathbf{t}|\leq a^\Lambda_{n}x)-\mathbb{P}(\max_{\mathbf{t}\in\Lambda_{r_{n}} }|\mathbf{X}_\mathbf{t}|\leq a^\Lambda_{n}x)^{k_{n}}\to0,\quad n\to\infty,
	\end{equation}
	where $(k_{n}$ and $(r_{n})$ are as in the anti-clustering condition \textnormal{(AC$^{\Lambda}_{\succ}$)}, is satisfied, then $\theta^{\Lambda}_{X}$ exists and coincides with $\theta^{\Lambda}_{b}$.
\end{theorem}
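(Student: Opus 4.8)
The plan is to obtain (1) from a last-exceedance decomposition on the block $\Lambda_{r_{n}}$, to refine it to the $\mathbf{Q}$- and $\mathbf{T}^{*}$-representations in the lattice case for (2), and to deduce (3) by the classical blocking argument. For part (1), write $u_{n}=u_{n}(\tau)$; since $|\Lambda_{n}|\mathbb{P}(|\mathbf{X}_{\mathbf{0}}|>u_{n})\to\tau$ and $|\Lambda_{n}|\mathbb{P}(|\mathbf{X}_{\mathbf{0}}|>a_{n}^{\Lambda})\to1$, regular variation gives $u_{n}\sim a_{n}^{\Lambda}\tau^{-1/\alpha}$, so \textnormal{(AC$^{\Lambda}_{\succeq}$)}, which holds for every $x>0$, applies at the threshold $u_{n}$. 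I would split $\{\max_{\mathbf{t}\in\Lambda_{r_{n}}}|\mathbf{X}_{\mathbf{t}}|>u_{n}\}$ according to the $\prec$-largest index at which $|\mathbf{X}|$ exceeds $u_{n}$; by stationarity this yields $\theta^{\Lambda}_{n}=|\Lambda_{r_{n}}|^{-1}\sum_{\mathbf{t}\in\Lambda_{r_{n}}}\mathbb{P}(\max_{\mathbf{u}\in((\Lambda_{r_{n}})_{-\mathbf{t}})^{+}}|\mathbf{X}_{\mathbf{u}}|\le u_{n}\mid|\mathbf{X}_{\mathbf{0}}|>u_{n})$. Fix $p$ and group the indices $\mathbf{t}$ by the value of $\Lambda_{r_{n}}^{(\mathbf{t},p)}$, of which there are only finitely many; replacing $((\Lambda_{r_{n}})_{-\mathbf{t}})^{+}$ by $\Lambda_{r_{n}}^{(\mathbf{t},p)}$ changes each term by at most $\mathbb{P}(\hat{M}^{\Lambda,|\mathbf{X}|}_{p,r_{n}}>u_{n}\mid|\mathbf{X}_{\mathbf{0}}|>u_{n})$, because $((\Lambda_{r_{n}})_{-\mathbf{t}})^{+}\setminus K_{p}\subset R_{p,\Lambda_{r_{n}}}$. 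Applying Condition $(\mathcal{D}^{\Lambda})$ to the subsequence $(\Lambda_{r_{n}})$ (valid since $r_{n}\to\infty$) together with the convergence of the tail field, the grouped sum converges as $n\to\infty$ to $a_{p}:=\sum_{i\ge1}\lambda_{i}\,\mathbb{P}(\max_{\mathbf{s}\in\mathcal{D}_{i}\cap K_{p}}|\mathbf{Y}_{\mathbf{s}}|\le1)$, the regrouping using $\lambda_{i,p}=\sum_{i'\in I^{(i)}_{p}}\lambda_{i'}$ from Proposition \ref{lem-AC1-L-2}(I). Since $\limsup_{n}\theta^{\Lambda}_{n}-\liminf_{n}\theta^{\Lambda}_{n}\le 2\limsup_{n}\mathbb{P}(\hat{M}^{\Lambda,|\mathbf{X}|}_{p,r_{n}}>u_{n}\mid|\mathbf{X}_{\mathbf{0}}|>u_{n})\to0$ as $p\to\infty$, the limit $\theta^{\Lambda}_{b}=\lim_{n}\theta^{\Lambda}_{n}$ exists and equals $\lim_{p}a_{p}$; and $a_{p}\downarrow\sum_{i\ge1}\lambda_{i}\mathbb{P}(\sup_{\mathbf{s}\in\mathcal{D}_{i}}|\mathbf{Y}_{\mathbf{s}}|\le1)$ by monotone convergence ($0\le$ summand $\le\lambda_{i}$, $\sum_{i}\lambda_{i}=1$). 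Finally $\mathbf{Y}_{\mathbf{s}}=Y\mathbf{\Theta}_{\mathbf{s}}$ with $Y=|\mathbf{Y}_{\mathbf{0}}|$ Pareto$(\alpha)$ independent of $(\mathbf{\Theta}_{\mathbf{t}})$ gives the first representation, and $\mathbb{E}[\mathbf{1}(Y\sup_{\mathbf{t}\in\mathcal{D}_{j}}|\mathbf{\Theta}_{\mathbf{t}}|\le1)\mid\mathbf{\Theta}]=(1-\sup_{\mathbf{t}\in\mathcal{D}_{j}}|\mathbf{\Theta}_{\mathbf{t}}|^{\alpha})^{+}=\sup_{\mathbf{t}\in\mathcal{D}_{j}\cup\{\mathbf{0}\}}|\mathbf{\Theta}_{\mathbf{t}}|^{\alpha}-\sup_{\mathbf{t}\in\mathcal{D}_{j}}|\mathbf{\Theta}_{\mathbf{t}}|^{\alpha}$ (using $|\mathbf{\Theta}_{\mathbf{0}}|=1$) gives the second. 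Positivity: Proposition \ref{lem-bound-L} forces $\mathbf{\Theta}_{\mathbf{t}}\to0$ along each $\mathcal{D}_{j}$, so the suprema are a.s.\ attained maxima, and \textnormal{(AC$^{\Lambda}_{\succeq}$)} excludes $\sup_{\mathbf{t}\in\mathcal{D}_{j}}|\mathbf{\Theta}_{\mathbf{t}}|\ge1$ a.s.\ for every $j$ (which would force all clusters to be a.s.\ unbounded), so $\theta^{\Lambda}_{b}>0$.

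For part (2), in the lattice case $\mathcal{E}_{j}=\{\mathbf{0}\}$, $\Xi_{j}^{*}=\mathcal{H}_{j}=\mathcal{L}_{j}$ and $\mathcal{D}_{j}=\mathcal{L}_{j}^{+}$, so $\mathcal{D}_{j}\cup\{\mathbf{0}\}=\{\mathbf{t}\in\mathcal{L}_{j}:\mathbf{t}\succeq\mathbf{0}\}$. By Proposition \ref{lem-bound-L}, $\|\mathbf{\Theta}\|_{\mathcal{L}_{j},\alpha}<\infty$ a.s., so $\mathbf{Q}_{\Xi_{j}^{*}}=\mathbf{\Theta}/\|\mathbf{\Theta}\|_{\mathcal{L}_{j},\alpha}$ is well defined and $\sup_{\mathbf{t}\in\Xi_{j}^{*}}|\mathbf{Q}_{\Xi_{j}^{*},\mathbf{t}}|^{\alpha}=\sup_{\mathbf{t}\in\mathcal{L}_{j}}|\mathbf{\Theta}_{\mathbf{t}}|^{\alpha}/\sum_{\mathbf{s}\in\mathcal{L}_{j}}|\mathbf{\Theta}_{\mathbf{s}}|^{\alpha}$, which gives the first two expressions in \eqref{extrindlattice}. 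Since $\prec$ is translation invariant and $\mathcal{L}_{j}$ is a lattice, $\prec$ restricted to $\mathcal{L}_{j}$ is $\mathcal{L}_{j}$-translation invariant, so I would run the anchoring computation of Janssen \cite{Janssen} and Basrak--Planinic \cite{BP}: writing $1=\sum_{\mathbf{t}\in\mathcal{L}_{j}}\mathbf{1}(\mathbf{T}_{j}^{*}=\mathbf{t})$ a.s.\ and using the time-change formula \eqref{timechangeTheta} to move the anchor from $\mathbf{0}$ to $\mathbf{t}$ gives $\mathbb{E}[\sup_{\mathbf{t}\in\mathcal{L}_{j}}|\mathbf{\Theta}_{\mathbf{t}}|^{\alpha}/\sum_{\mathbf{s}\in\mathcal{L}_{j}}|\mathbf{\Theta}_{\mathbf{s}}|^{\alpha}]=\mathbb{E}[|\mathbf{\Theta}_{\mathbf{0}}|^{\alpha}\mathbf{1}(\mathbf{T}_{j}^{*}=\mathbf{0})]$; unfolding the definition of $\mathbf{T}_{j}^{*}$ and applying \eqref{timechangeTheta} once more to convert the ``past'' part $\{\mathbf{s}\in\mathcal{L}_{j}:\mathbf{s}\prec\mathbf{0}\}$ into the factor $1-\sup_{\mathbf{t}\in\mathcal{D}_{j}}|\mathbf{\Theta}_{\mathbf{t}}|^{\alpha}$, this equals $\mathbb{E}[\sup_{\mathbf{t}\in\mathcal{D}_{j}\cup\{\mathbf{0}\}}|\mathbf{\Theta}_{\mathbf{t}}|^{\alpha}-\sup_{\mathbf{t}\in\mathcal{D}_{j}}|\mathbf{\Theta}_{\mathbf{t}}|^{\alpha}]$, the last expression in \eqref{extrindlattice}, which coincides with the one obtained in (1). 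Multiplying by $\lambda_{j}$ and summing finishes (2).

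For part (3), take $u_{n}(\tau)\sim a_{n}^{\Lambda}\tau^{-1/\alpha}$, so that \eqref{mix-1} holds with $x=\tau^{-1/\alpha}$. Then $\mathbb{P}(\max_{\mathbf{t}\in\Lambda_{r_{n}}}|\mathbf{X}_{\mathbf{t}}|>u_{n}(\tau))=\theta^{\Lambda}_{n}\,|\Lambda_{r_{n}}|\mathbb{P}(|\mathbf{X}_{\mathbf{0}}|>u_{n}(\tau))$, and since $|\Lambda_{r_{n}}|\mathbb{P}(|\mathbf{X}_{\mathbf{0}}|>u_{n}(\tau))=k_{n}^{-1}|\Lambda_{n}|\mathbb{P}(|\mathbf{X}_{\mathbf{0}}|>u_{n}(\tau))$ with $|\Lambda_{n}|\mathbb{P}(|\mathbf{X}_{\mathbf{0}}|>u_{n}(\tau))\to\tau$, $k_{n}\to\infty$ and $\theta^{\Lambda}_{n}\to\theta^{\Lambda}_{b}$, I obtain $k_{n}\mathbb{P}(\max_{\mathbf{t}\in\Lambda_{r_{n}}}|\mathbf{X}_{\mathbf{t}}|>u_{n}(\tau))\to\theta^{\Lambda}_{b}\tau$, whence $\mathbb{P}(\max_{\mathbf{t}\in\Lambda_{r_{n}}}|\mathbf{X}_{\mathbf{t}}|\le u_{n}(\tau))^{k_{n}}\to e^{-\theta^{\Lambda}_{b}\tau}$. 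By \eqref{mix-1}, $\mathbb{P}(\max_{\mathbf{t}\in\Lambda_{n}}|\mathbf{X}_{\mathbf{t}}|\le u_{n}(\tau))\to e^{-\theta^{\Lambda}_{b}\tau}$, so $\theta^{\Lambda}_{X}$ exists and equals $\theta^{\Lambda}_{b}$.

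The main obstacle is the limit in (1): one must handle the countably many shapes $\mathcal{D}_{i}$ simultaneously while uniformly truncating the possibly unbounded blocks to $K_{p}$. This is circumvented by noting that for each fixed $p$ only finitely many sets $\mathcal{D}_{i}\cap K_{p}$ occur, so the $n\to\infty$ interchange at level $p$ is over a finite sum, and that the truncation error is controlled, uniformly in $n$ and in $\mathbf{t}\in\Lambda_{r_{n}}$, by the single quantity $\mathbb{P}(\hat{M}^{\Lambda,|\mathbf{X}|}_{p,r_{n}}>u_{n}\mid|\mathbf{X}_{\mathbf{0}}|>u_{n})$, which tends to $0$ as $p\to\infty$ by \textnormal{(AC$^{\Lambda}_{\succeq}$)}, the final $p\to\infty$ passage being monotone. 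The other delicate point is the chain of time-change identities in (2), which is the random-field-over-a-lattice analogue of the time-series ``candidate extremal index'' identities; here the translation invariance of $\prec$ along $\mathcal{L}_{j}$ and the a.s.\ summability from Proposition \ref{lem-bound-L} are exactly what make the anchoring argument go through.
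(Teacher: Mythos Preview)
Your approach is essentially the same as the paper's. The paper packages the block computation as an auxiliary result (Theorem~\ref{thm-u}): a telescoping/last-exceedance decomposition of $\mathbb{P}(\max_{\mathbf{t}\in\Lambda_{r_n}}|\mathbf{X}_\mathbf{t}|>u_n)$, grouping by the finitely many local shapes $\Lambda_{r_n}^{(\mathbf{t},l)}$, truncation error controlled by $\mathbb{P}(\hat M^{\Lambda,|\mathbf{X}|}_{l,r_n}>u_n\mid |\mathbf{X}_\mathbf{0}|>u_n)$, then the Pareto computation $\mathbb{P}(Y\sup|\mathbf\Theta_\mathbf{t}|\le 1)=\mathbb{E}[(1-\sup|\mathbf\Theta_\mathbf{t}|^\alpha)_+]$. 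Part~(3) is the same Taylor/blocking step you wrote. For part~(2) the paper is terse here and effectively defers the anchoring/time-change chain to the parallel, more general computation in the proof of Theorem~\ref{co-u-I}; your sketch of the Janssen/Basrak--Planini\'c argument along the lattice $\mathcal L_j$ is exactly that.

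The one genuine gap is your positivity argument. Saying ``$\sup_{\mathbf t\in\mathcal D_j}|\mathbf\Theta_\mathbf{t}|\ge 1$ a.s.\ for every $j$ would force clusters to be unbounded, contradicting \textnormal{(AC$^{\Lambda}_{\succeq}$)}'' is not a proof: Proposition~\ref{lem-bound-L} gives $\mathbf\Theta_\mathbf{t}\to0$ and summability, but neither statement rules out $\sup_{\mathbf t\in\mathcal D_j}|\mathbf\Theta_\mathbf{t}|\ge 1$ a.s.\ (the sup could be attained at a single random index). The paper avoids the spectral representation entirely for this step and gives a direct geometric lower bound: pick $p_n\ge \lfloor |\Lambda_{r_n}|/(2l)^k\rfloor$ points $w_1,\dots,w_{p_n}\in\Lambda_{r_n}$ at mutual sup-distance $\ge 2l$; the events
\[
\Big\{|\mathbf X_{w_m}|>u_n,\ \max_{\mathbf t\succeq w_m,\ \mathbf t\in\Lambda_{r_n}\setminus K_{l-1}(w_m)}|\mathbf X_\mathbf{t}|\le u_n\Big\}
\]
are disjoint subsets of $\{\max_{\Lambda_{r_n}}|\mathbf X|>u_n\}$, and stationarity plus the inclusion of the index set into $R_{l-1,\Lambda_{r_n}}$ give
\[
\theta^{\Lambda}_n\ \ge\ \frac{\lfloor |\Lambda_{r_n}|/(2l)^k\rfloor}{|\Lambda_{r_n}|}\Big[1-\mathbb{P}\big(\hat M^{\Lambda,|\mathbf X|}_{l-1,r_n}>u_n\ \big|\ |\mathbf X_\mathbf{0}|>u_n\big)\Big],
\]
hence $\liminf_n\theta^{\Lambda}_n\ge (2l)^{-k}[1-\limsup_n\mathbb{P}(\cdot)]>0$ for $l$ large by \textnormal{(AC$^{\Lambda}_{\succeq}$)}. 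You should replace your positivity paragraph by this argument.
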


     It is possible to see that Theorem \ref{co-u} (2) only applies to $\mathcal{D}$'s that are lattices. It is natural to ask whether or not a similar result holds for any $\mathcal{D}$. The answer is positive if a different anti-clustering condition is assumed. In particular, we have the equivalent result when the anti-clustering condition \textnormal{(AC$^{\Lambda}_{\succ,I^*}$)} holds.

\begin{theorem}\label{co-u-I}
	Consider an $\mathbb{R}^d$-valued stationary random field $(\mathbf{X}_\mathbf{t})_{\mathbf{t}\in\mathbb{Z}^{k}}$ with index $\alpha$ and a sequence $(\Lambda_n)$ satisfying the condition ($\mathcal{D}^{\Lambda}$).
	\\\textnormal{(1)} If the anti-clustering condition \textnormal{(AC$^{\Lambda}_{\succ,I^*}$)} holds, then the limit $\theta^{\Lambda}_{b}:=\lim\limits_{n\to\infty}\theta^{\Lambda}_{n}$ exists, is positive and has the representations
	\begin{align}
\nonumber
	\theta_{b}^{\Lambda}&=\mathbb{E}\bigg[\sup\limits_{l\in\mathbb{N}}|\hat{\mathbf{Q}}_{l}|^{\alpha} \bigg]=\sum_{j\in I^*}\gamma^*_{j}|{\cal E}_{j}|\mathbb{E}\bigg[\sup\limits_{\mathbf{t}\in\Xi^*_{j}}|\mathbf{Q}_{\mathcal{E}_{j},\mathcal{L}_{j},\mathbf{t}}|^{\alpha}\bigg]\\
	&\nonumber
	=\sum_{j\in I^*}\gamma^*_{j}|{\cal E}_{j}|\mathbb{E}\bigg[\max\limits_{\mathbf{t}\in(\mathcal{E}_{j})_{\mathbf T^{*}_{j}}}|\mathbf{Q}_{\mathcal{E}_{j},\mathcal{L}_{j},\mathbf{t}}|^{\alpha}\bigg]\\
	&\label{index4}=\sum_{j\in I^*}\gamma^*_{j}|{\cal E}_{j}|\mathbb{E}\bigg[\max_{\mathbf{z}\in\mathcal{E}_{j}}|\mathbf\Theta_{\mathcal{E}_{j},\mathbf{z}}|^{\alpha} \mathbf{1}(\mathbf T^{*}_{j}=\mathbf{0})\bigg]\,.
	\end{align}
	\\\textnormal{(2)} If also the mixing condition \eqref{mix-1}
	where $(k_{n})$ and $(r_{n})$ are as in   the anti-clustering condition \textnormal{(AC$^{\Lambda}_{\succ,I^*}$)}, is satisfied, then $\theta^{\Lambda}_{X}$ exists and coincides with $\theta^{\Lambda}_{b}$.
\end{theorem}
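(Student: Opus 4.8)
The plan is to mirror the proof of Theorem~\ref{co-u}, replacing the spectral tail field $(\mathbf{\Theta}_{\mathbf{t}})$ by the $\Upsilon$-spectral tail fields $(\mathbf{\Theta}_{\mathcal{E}_j,\mathbf{t}})$, the sets $\mathcal{D}_j$ by the finite blocks $\mathcal{E}_j$ paired with their forward companions $\tilde{\mathcal{D}}_j=\bigcup_{\mathbf{s}\in\mathcal{G}_j\setminus\{\mathbf{0}\}}(\mathcal{E}_j)_{\mathbf{s}}$, and the time change formula \eqref{timechangeTheta} by its $\Upsilon$-counterpart \eqref{timechangeTheta-New}. Since we work with $\rho_\alpha$ we have $D_j=1$, $c_j=|\mathcal{E}_j|$ and $\sum_{j\in I^*}\gamma^*_j|\mathcal{E}_j|=1$, so $(A^{\Lambda}_{\rho})$ is automatic and part (1) needs nothing beyond $(\mathcal{D}^{\Lambda})$ and \textnormal{(AC$^{\Lambda}_{\succeq,I^*}$)}. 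Crucially, part (1) is proved without the mixing assumption, so it cannot be routed through Theorem~\ref{t1-L-E-Pro}; instead one estimates $\mathbb{P}(\max_{\mathbf{t}\in\Lambda_{r_n}}|\mathbf{X}_{\mathbf{t}}|>u_n(\tau))$ directly. First I would decompose $\Lambda_{r_n}$, up to a negligible remainder, into translated copies of the blocks $\mathcal{E}_j$ using the sets $S'_{j,4l}$ of Proposition~\ref{prop:latlambd}: for $\mathbf{t}\in S'_{j,4l}$ the local picture $(\Lambda_{r_n})_{-\mathbf{t}}\cap K_{2l}$ and its ``$\prec$-forward within $K_{\lfloor l/2\rfloor}$'' part coincide with the \emph{non-asymptotic} sets $\Xi^*_j\cap K_{2l}$ and $\tilde{\mathcal{D}}_j\cap K_{\lfloor l/2\rfloor}$, with $|S'_{j,4l}|/|\Lambda_{r_n}|\to\gamma^*_j$, while the uncovered indices (those not in any $S'_{i,4l}$, $i<m_{4l}$, plus boundary effects) number $o(|\Lambda_{r_n}|)$ and hence contribute $o(1/k_n)$ after dividing by $|\Lambda_{r_n}|\mathbb{P}(|\mathbf{X}_{\mathbf{0}}|>u_n(\tau))=\tau/k_n$. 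Ordering the blocks by $\prec$ and decomposing according to the $\prec$-last block carrying an exceedance, for each fixed $l$ condition \textnormal{(AC$^{\Lambda}_{\succeq,I^*}$)} lets me replace ``no later block exceeds'' by ``no later block inside $K_l$ exceeds'', the aggregate error being $\le\sum_{j\in I^*}\gamma^*_j|\mathcal{E}_j|\,\mathbb{P}\big(\hat{M}^{\Lambda,|\mathbf{X}|,(j)}_{2l,r_n}>u_n(\tau)\,\big|\,\max_{\mathbf{t}\in\mathcal{E}_j}|\mathbf{X}_{\mathbf{t}}|>u_n(\tau)\big)+o(1)$, which vanishes as $l\to\infty$ after $\limsup_n$ by a dominated-convergence argument over $j$.

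For fixed $l$, by stationarity a ``good'' block of type $j$ contributes $\mathbb{P}(\max_{\mathcal{E}_j}|\mathbf{X}|>u_n(\tau),\ \max_{\tilde{\mathcal{D}}_j\cap K_l}|\mathbf{X}|\le u_n(\tau))$; writing $\{\max_{\mathcal{E}_j}|\mathbf{X}|>u_n(\tau)\}\subset\{\rho_{\mathcal{E}_j}(\mathbf{X})>u_n(\tau)\}$ and conditioning on the larger event, Proposition~\ref{pro-New-1} shows this probability divided by $\mathbb{P}(|\mathbf{X}_{\mathbf{0}}|>u_n(\tau))$ tends to $c_j\,\mathbb{P}(\max_{\mathcal{E}_j}|\mathbf{Y}_{\mathcal{E}_j}|>1,\ \max_{\tilde{\mathcal{D}}_j\cap K_l}|\mathbf{Y}_{\mathcal{E}_j}|\le 1)$, which by \eqref{timechangeY-New} (and $\rho_{\mathcal{E}_j}(\mathbf{\Theta}_{\mathcal{E}_j})=1$ a.s.) equals $c_j\,\mathbb{E}\big[(\max_{\mathcal{E}_j}|\mathbf{\Theta}_{\mathcal{E}_j}|^{\alpha}-\max_{\tilde{\mathcal{D}}_j\cap K_l}|\mathbf{\Theta}_{\mathcal{E}_j}|^{\alpha})^{+}\big]$. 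Summing over block types, multiplying by $k_n/\tau$, sending $n\to\infty$ and then $l\to\infty$ (monotone convergence, using $\max_{\tilde{\mathcal{D}}_j\cap K_l}|\mathbf{\Theta}_{\mathcal{E}_j}|\uparrow\max_{\tilde{\mathcal{D}}_j}|\mathbf{\Theta}_{\mathcal{E}_j}|<\infty$ a.s.\ by Proposition~\ref{lem-bound-L-New}), and sandwiching (the upper and lower bounds differ only by the anti-clustering error), I obtain that $\theta^{\Lambda}_b=\lim_n\theta^{\Lambda}_n$ exists and
\[
\theta^{\Lambda}_b=\sum_{j\in I^*}\gamma^*_j|\mathcal{E}_j|\,\mathbb{E}\Big[\big(\max_{\mathbf{z}\in\mathcal{E}_j}|\mathbf{\Theta}_{\mathcal{E}_j,\mathbf{z}}|^{\alpha}-\max_{\mathbf{t}\in\tilde{\mathcal{D}}_j}|\mathbf{\Theta}_{\mathcal{E}_j,\mathbf{t}}|^{\alpha}\big)^{+}\Big]\in(0,1].
\]

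It then remains to match this with the three displayed representations. On $\{\mathbf{T}^*_j=\mathbf{0}\}$ the maximum of $|\mathbf{\Theta}_{\mathcal{E}_j}|$ over $\mathcal{E}_j$ dominates that over $\tilde{\mathcal{D}}_j$, so by the definition of $\mathbf{T}^*_j$ the bracket above rewrites as $\max_{\mathbf{z}\in\mathcal{E}_j}|\mathbf{\Theta}_{\mathcal{E}_j,\mathbf{z}}|^{\alpha}\mathbf{1}(\mathbf{T}^*_j=\mathbf{0})$, which gives the fourth representation; applying \eqref{timechangeTheta-New} exactly as in the classical argument (the normalisation $\|\mathbf{\Theta}_{\mathcal{E}_j}\|_{\rho_\alpha,\mathcal{L}_j,\alpha}$ being $\mathcal{L}_j$-shift invariant) converts $|\mathcal{E}_j|\,\mathbb{E}[\max_{\mathbf{z}\in\mathcal{E}_j}|\mathbf{\Theta}_{\mathcal{E}_j,\mathbf{z}}|^{\alpha}\mathbf{1}(\mathbf{T}^*_j=\mathbf{0})]$ into $|\mathcal{E}_j|\,\mathbb{E}[\max_{\mathbf{t}\in(\mathcal{E}_j)_{\mathbf{T}^*_j}}|\mathbf{Q}_{\mathcal{E}_j,\mathcal{L}_j,\mathbf{t}}|^{\alpha}]=|\mathcal{E}_j|\,\mathbb{E}[\sup_{\mathbf{t}\in\Xi^*_j}|\mathbf{Q}_{\mathcal{E}_j,\mathcal{L}_j,\mathbf{t}}|^{\alpha}]$, and the first representation then follows from the mixture identity for $\mathcal{L}(\sum_l\varepsilon_{\hat{\mathbf{Q}}_l})$ in Proposition~\ref{pro-representation N}; positivity is clear from the first representation since $\sup_l|\hat{\mathbf{Q}}_l|>0$ a.s.\ (the normalised cluster is non-degenerate). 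For part (2), applying the mixing condition \eqref{mix-1} at the threshold $u_n(\tau)$ (for which $|\Lambda_{r_n}|\mathbb{P}(|\mathbf{X}_{\mathbf{0}}|>u_n(\tau))=\tau/k_n$) and using $0\le\theta^{\Lambda}_n\le1$ with $\theta^{\Lambda}_n\to\theta^{\Lambda}_b$ yields $\mathbb{P}(\max_{\Lambda_n}|\mathbf{X}|\le u_n(\tau))=\mathbb{P}(\max_{\Lambda_{r_n}}|\mathbf{X}|\le u_n(\tau))^{k_n}+o(1)=(1-\theta^{\Lambda}_n\tau/k_n)^{k_n}+o(1)\to e^{-\theta^{\Lambda}_b\tau}$, so $\theta^{\Lambda}_X$ exists and equals $\theta^{\Lambda}_b$.

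I expect the main obstacle to be the block bookkeeping of part (1): matching the ``later blocks within range $l$'' of a positive-density family of blocks to the fixed set $\tilde{\mathcal{D}}_j$ despite the structure of $\Lambda_n$ being only asymptotic --- which is precisely what the simultaneously asymptotic and non-asymptotic properties of $S'_{j,4l}$ in Proposition~\ref{prop:latlambd} are designed for --- together with the uniform control of the tail blocks needed to interchange $n\to\infty$ and $l\to\infty$ through \textnormal{(AC$^{\Lambda}_{\succeq,I^*}$)}, and the passage from the $\rho_{\mathcal{E}_j}$-conditioning natural for the $\Upsilon$-spectral field to the $\max_{\mathcal{E}_j}$-events required by the extremal index, where, unlike $|\mathbf{\Theta}_{\mathbf{0}}|$ in the classical case, $\max_{\mathbf{z}\in\mathcal{E}_j}|\mathbf{\Theta}_{\mathcal{E}_j,\mathbf{z}}|$ is not identically equal to $1$.
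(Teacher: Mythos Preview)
Your overall architecture coincides with the paper's: the paper first proves an analogue of Theorem~\ref{thm-u} under \textnormal{(AC$^{\Lambda}_{\succeq,I^*}$)} (Theorem~\ref{thm-u-I}) via exactly the block decomposition you describe (telescoping over the ordered blocks $\hat{\mathcal{E}}_m$, controlling ``far'' blocks by the anti-clustering condition, and reducing negligible blocks through the counts $|S'_{j,4l}|/|\Lambda_{r_n}|\to\gamma^*_j$ of Proposition~\ref{prop:latlambd}), obtaining
\[
\theta^{\Lambda}_b=\sum_{j\in I^*}\gamma^*_j|\mathcal{E}_j|\,\mathbb{E}\Big[\big(\max_{\mathbf{z}\in\mathcal{E}_j}|\mathbf{\Theta}_{\mathcal{E}_j,\mathbf{z}}|^{\alpha}-\sup_{\mathbf{t}\in\tilde{\mathcal{D}}_j}|\mathbf{\Theta}_{\mathcal{E}_j,\mathbf{t}}|^{\alpha}\big)_{+}\Big],
\]
and part~(2) by the standard $(1-\theta^{\Lambda}_n\tau/k_n)^{k_n}$ argument.

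There is, however, a genuine gap in your passage from the $(\,\cdot\,)_+$ formula to the $\mathbf 1(\mathbf{T}^*_j=\mathbf{0})$ representation. The claimed pathwise identity
\[
\big(\max_{\mathbf{z}\in\mathcal{E}_j}|\mathbf{\Theta}_{\mathcal{E}_j,\mathbf{z}}|^{\alpha}-\sup_{\mathbf{t}\in\tilde{\mathcal{D}}_j}|\mathbf{\Theta}_{\mathcal{E}_j,\mathbf{t}}|^{\alpha}\big)_{+}
=\max_{\mathbf{z}\in\mathcal{E}_j}|\mathbf{\Theta}_{\mathcal{E}_j,\mathbf{z}}|^{\alpha}\,\mathbf 1(\mathbf{T}^*_j=\mathbf{0})
\]
is false: on $\{\mathbf{T}^*_j=\mathbf{0}\}$ the left side equals $\max_{\mathcal{E}_j}|\mathbf{\Theta}_{\mathcal{E}_j}|^{\alpha}-\sup_{\tilde{\mathcal{D}}_j}|\mathbf{\Theta}_{\mathcal{E}_j}|^{\alpha}$, not $\max_{\mathcal{E}_j}|\mathbf{\Theta}_{\mathcal{E}_j}|^{\alpha}$, and off that event the left side can be positive (the definition of $\mathbf{T}^*_j$ also constrains the \emph{backward} blocks in $-\mathcal{G}_j$, which do not enter $(\,\cdot\,)_+$). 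The equality holds only in expectation, and only through the time-change formula. The paper therefore proceeds in the opposite order: from the $(\,\cdot\,)_+$ formula it first derives the $\mathbf{Q}$-representation $\sum_{j}\gamma^*_j|\mathcal{E}_j|\,\mathbb{E}[\sup_{\mathbf{t}\in\Xi^*_j}|\mathbf{Q}_{\mathcal{E}_j,\mathcal{L}_j,\mathbf{t}}|^{\alpha}]$ by the normalise--telescope--time-change manoeuvre of the proof of Theorem~\ref{t2-L} along $\mathcal{L}_j$, and only then obtains the $\mathbf 1(\mathbf{T}^*_j=\mathbf{0})$ form by summing $\mathbb{E}\big[\max_{(\mathcal{E}_j)_{\mathbf{t}}}|\mathbf{Q}|^{\alpha}\mathbf 1(\mathbf{T}^*_j=\mathbf{t})\big]$ over $\mathbf{t}\in\mathcal{L}_j$ and shifting each term to $\mathbf{0}$ via \eqref{timechangeTheta-New}. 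Reordering your last two steps along these lines closes the gap.
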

From the two previous results we obtain the following immedaite corollary.
\begin{corollary}\label{co-u-last}
	Assume that $(\mathbf{X}_\mathbf{t})_{\mathbf{t}\in\mathbb{Z}^{k}}$ is an $\mathbb{R}^d$-valued stationary random field with index $\alpha$, a sequence $(\Lambda_n)$ satisfying the condition ($\mathcal{D}^{\Lambda}$) and either \textnormal{(AC$^{\Lambda}_{\succ}$)} or \textnormal{(AC$^{\Lambda}_{\succ,I^*}$)} and (\ref{mix-1}). Then the extremal index $\theta_{X}$ exists, is positive, and
	\begin{equation*}
	\lim\limits_{n\to\infty}\mathbb{P}(\max_{\mathbf{t}\in\Lambda_{n} }a_{n}^\Lambda |\mathbf{X}_{\mathbf{t}}|\leq x)=\Phi_{\alpha}^{\theta_{X}}(x),\quad x>0,
	\end{equation*}
	where $\Phi_{\alpha}(x) = e^{-x^{-\alpha}}$, $x > 0$, is the standard Fr\'{e}chet distribution function and $\theta_{X}$ is given in either (\ref{u-index}) or (\ref{index4}) depending on which anti-clustering and mixing conditions are satisfied, \eqref{extrindlattice} being available only when the $\Xi^*_j$ are lattices.
\end{corollary}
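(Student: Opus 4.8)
The plan is to read the statement off Theorems \ref{co-u} and \ref{co-u-I} after choosing, in the definition of the $\Lambda$-extremal index, the threshold sequence to be of the multiplicative form $a_n^\Lambda x$. So fix $x>0$, set $\tau:=x^{-\alpha}$ and $u_n(\tau):=a_n^\Lambda x$. The first step is to verify that $(u_n(\tau))_n$ is admissible at level $\tau$, i.e.\ that $|\Lambda_n|\mathbb P(|\mathbf X_{\mathbf 0}|>u_n(\tau))\to\tau$. Since $|\Lambda_n|\to\infty$ and $|\Lambda_n|\mathbb P(|\mathbf X_{\mathbf 0}|>a_n^\Lambda)\to1$ force $\mathbb P(|\mathbf X_{\mathbf 0}|>a_n^\Lambda)\to0$, hence $a_n^\Lambda\to\infty$, regular variation of the marginal tail $t\mapsto\mathbb P(|\mathbf X_{\mathbf 0}|>t)$ with index $-\alpha$ gives $\mathbb P(|\mathbf X_{\mathbf 0}|>a_n^\Lambda x)/\mathbb P(|\mathbf X_{\mathbf 0}|>a_n^\Lambda)\to x^{-\alpha}$; multiplying by $|\Lambda_n|\mathbb P(|\mathbf X_{\mathbf 0}|>a_n^\Lambda)\to1$ yields $|\Lambda_n|\mathbb P(|\mathbf X_{\mathbf 0}|>u_n(\tau))\to x^{-\alpha}=\tau$, as required.

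The second step is to apply the relevant theorem according to the anti-clustering hypothesis in force. If \textnormal{(AC$^{\Lambda}_{\succ}$)} holds, Theorem \ref{co-u}(1) gives that $\theta^\Lambda_b:=\lim_n\theta^\Lambda_n$ exists, is positive and equals the expression in \eqref{u-index} (and in \eqref{extrindlattice} when in addition all the $\Xi_j^*$ are lattices, by part (2)); since the mixing hypothesis of part (3) of that theorem is precisely \eqref{mix-1}, part (3) then yields that $\theta^\Lambda_X$ exists and coincides with $\theta^\Lambda_b=:\theta_X$. If instead \textnormal{(AC$^{\Lambda}_{\succ,I^*}$)} holds, Theorem \ref{co-u-I} delivers the same conclusions with $\theta_X$ represented by \eqref{index4}.

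The third step is immediate. Using the admissible sequence $(u_n(\tau))_n$ of the first step in the definition of the $\Lambda$-extremal index,
\[
\lim_{n\to\infty}\mathbb P\Big(\max_{\mathbf t\in\Lambda_n}|\mathbf X_{\mathbf t}|\le a_n^\Lambda x\Big)=\lim_{n\to\infty}\mathbb P\Big(\max_{\mathbf t\in\Lambda_n}|\mathbf X_{\mathbf t}|\le u_n(\tau)\Big)=e^{-\theta_X\tau}=e^{-\theta_X x^{-\alpha}}=\Phi_\alpha(x)^{\theta_X}\,,
\]
which is the claimed convergence. To see that $\Phi_\alpha^{\theta_X}$ is a genuine distribution function one only needs $0<\theta_X\le1$: positivity is part of the two theorems, and $\theta_X\le1$ is read off the representations, in each of which $\theta_X$ is a convex combination (the weights $\lambda_j$, resp.\ $\gamma^*_j|\mathcal E_j|$, sum to $1$) of quantities of the form $\mathbb E[\sup_{\mathbf t}|\mathbf Q_{\mathbf t}|^{\alpha}]$ where the relevant spectral cluster field $\mathbf Q$ has unit $\ell^{\alpha}$-norm over its index set, so each such quantity is $\le1$.

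I do not expect a genuine obstacle here, since the analytic content is entirely contained in Theorems \ref{co-u}--\ref{co-u-I}. The only point that deserves care is the admissibility check of the first paragraph, namely that at a prescribed level $\tau$ the threshold can be taken of the explicit shape $a_n^\Lambda x$ rather than as some implicitly defined sequence; this is exactly what regular variation of the tail of $|\mathbf X_{\mathbf 0}|$ supplies, after which the extremal-index definition returns the $\Phi_\alpha^{\theta_X}$ form automatically.
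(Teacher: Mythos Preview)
Your proposal is correct and follows the same route as the paper, which simply records the corollary as immediate from Theorems~\ref{co-u} and~\ref{co-u-I}; you have merely made explicit the admissibility check $|\Lambda_n|\mathbb P(|\mathbf X_{\mathbf 0}|>a_n^\Lambda x)\to x^{-\alpha}$ via regular variation and then invoked the definition of the $\Lambda$-extremal index with $u_n(\tau)=a_n^\Lambda x$. Your closing remark that $\theta_X\le 1$ can be read off the cluster representations is a nice self-contained addition, though the bound is already supplied by the underlying Theorem~\ref{thm-u}(b) (resp.\ \ref{thm-u-I}(b)) through $\theta_n^\Lambda\le 1$.
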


\subsection{Max-stable random fields}
 Consider a non negative stationary random field $X=(X_\mathbf{t})_{\mathbf{t}\in\mathbb{Z}^{k}}$ (with state space $E=\mathbb R_+$ and $d=1$). A fundamental representation theorem by de Haan \cite{DeHaan} states that any stochastically continuous max-stable (real valued) random field $X$ can be represented (in finite dimensional distributions) as
\begin{equation}\label{eq:maxstable}
X_{\textbf{t}}=\max_{i\in\mathbb{N}}U_{i}V_{i,\textbf{t}},\quad \textbf{t}\in\mathbb{Z}^{k},
\end{equation}
where $(U_{i})_{i\in\mathbb{N}}$ is a decreasing enumeration of the points of a Poisson point process on $(0,+\infty)$ with intensity measure $u^{-2}du$, $(V_{i})_{i\in\mathbb{N}}$ are i.i.d.~copies of a non-negative random field $(V_{\textbf{t}})_{\textbf{t}\in\mathbb{Z}^{k}}$ such that $\mathbb{E}[V_{\textbf{t}}]<+\infty$ for all $\textbf{t}\in\mathbb{Z}^{k}$, the sequences $(U_{i})_{i\in\mathbb{N}}$ and $(V_{i})_{i\in\mathbb{N}}$ are independent. Observe that the above definition implies that the marginal distributions of $X$ are 1-Fr\'{e}chet, that is $\mathbb{P}(X_{\textbf{t}}\leq z)=e^{-\mathbb{E}[V_{\textbf{t}}]/z}$ for all $z>0$, where $\mathbb{E}[V_{\textbf{t}}]>0$ is a scale parameter.

The aim of this Section is to find a necessary an sufficient condition for the anti-clustering condition  (AC$^{\Lambda}_{\succeq,I^*}$) to hold for stationary max-stable random fields. We recall some notation: $\mathcal{H}_{j}=\bigcup_{\mathbf{s}\in\mathcal{L}_{j}}(\mathcal{E}_{j})_{\mathbf{s}}$ where, for every $j\ge 1$,  $\mathcal{E}_j$ are finite subsets of $\mathbb Z^k$ including ${{\bf 0}}$ and $\mathcal{L}_{j}$ are any lattice of $\mathbb Z^k$ (possibly degenerate). 
The following result is an extension of results in Samarodnistky and Wu \cite{SW}. 	Notice that the limit \eqref{Rep-1} motivates the introduction of a mixing distribution on $V_{\bf t}$ as in the second assertion below.
\begin{proposition}\label{pro-BR-1}
	Let $(X_{\textbf{t}})_{\textbf{t}\in\mathbb{Z}^{k}}$ be a stationary max-stable random field with non-negative values. Consider a sequence $\Lambda_n$ of subsets of translated~$\bigcup_{j\ge 1}\mathcal{H}_{j}$ satisfying the condition ($\mathcal{D}^{\Lambda}$) then $(X_{\textbf{t}})_{\textbf{t}\in\mathbb{Z}^{k}}$ satisfies the (AC$^{\Lambda}_{\succeq,I^*}$) condition for any $r_{n}\to\infty$ s.t.~ $\lfloor n/r_{n}\rfloor\to\infty$ if for any $i\ge 1$ and $j\in I^*$ 
	\begin{equation}\label{BR}
	\lim\limits_{|\textbf{t}|\to\infty,\textbf{t}\in (\mathcal{H}_{i})^+}V_{\textbf{t}}\,\mathbf 1\big(\max\limits_{\mathbf{t}\in\mathcal{E}_j} V_{\mathbf{t}}\ne 0\big)=0,\quad a.s.
	\end{equation}
Consider $\mathcal L(V_{\bf t})=\sum_{j\in I^*} \lambda_j \mathcal L(V_{\bf t}^{(j)})$, $\lambda_j>0$, $j\in I^*$, $\sum_{j\in I^*} \lambda_j=1$ such that each component $V_{\bf t}^{(j)}$ is supported by a subset of a translation of a unique ${\cal H}_j$, $j\ge 1$ then the condition  \eqref{BR} simplifies to
\begin{equation}\label{BR2}
\lim\limits_{|\textbf{t}|\to\infty,\textbf{t}\in (\mathcal{H}_{j})^+}V_{\textbf{t}}\,\mathbf 1\big(\max\limits_{\mathbf{t}\in\mathcal{E}_j} V_{\mathbf{t}}\ne 0\big)=0,\quad a.s.
\end{equation}
for any $j\in I^*$. 
\end{proposition}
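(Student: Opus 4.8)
The plan is to exploit de Haan's representation \eqref{eq:maxstable} together with the explicit law of maxima it entails. Conditioning on $(V_i)_{i\ge 1}$, the indices $i$ with $U_iV_{i,\mathbf i}>y$ for some $\mathbf i$ in a finite set $S$ form a thinning of the Poisson process $(U_i)$, and integrating out $(V_i)$ gives, for every finite $S\subset\mathbb{Z}^k$ and $y>0$,
\begin{equation*}
\mathbb{P}\Big(\max_{\mathbf i\in S}X_{\mathbf i}\le y\Big)=\exp\Big(-\mathbb{E}\big[\max_{\mathbf i\in S}V_{\mathbf i}\big]\big/y\Big).
\end{equation*}
Stationarity of $(X_{\mathbf t})$ together with $\mathbb{P}(X_{\mathbf t}\le y)=e^{-\mathbb{E}[V_{\mathbf t}]/y}$ forces $\mathbb{E}[V_{\mathbf t}]=\mu:=\mathbb{E}[V_{\mathbf 0}]>0$ for all $\mathbf t$, hence $a_n^\Lambda\sim\mu|\Lambda_n|$; since $d=1$ and $V,X\ge 0$ the moduli are plain maxima. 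Fixing $j\in I^*$ and $x>0$ I set $W_E:=\max_{\mathbf t\in\mathcal E_j}V_{\mathbf t}$, $m_E:=\mathbb{E}[W_E]\in[\mu,\infty)$, $W_{R,n}:=\max_{\mathbf i\in R^{(j)}_{2l,\Lambda_{r_n}}}V_{\mathbf i}$, $B_n:=\{\max_{\mathbf t\in\mathcal E_j}X_{\mathbf t}>a_n^\Lambda x\}$ and $A_{l,n}:=\{\hat M^{\Lambda,|\mathbf X|,(j)}_{2l,r_n}>a_n^\Lambda x\}$.

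From $\mathbb{P}(A_{l,n}\cap B_n)=\mathbb{P}(A_{l,n})+\mathbb{P}(B_n)-\mathbb{P}(A_{l,n}\cup B_n)$, inserting the three closed-form values (the union event being carried by $R^{(j)}_{2l,\Lambda_{r_n}}\cup\mathcal E_j$, with $\mathbb{E}[W_{R,n}\vee W_E]=\mathbb{E}[W_{R,n}]+m_E-\mathbb{E}[W_{R,n}\wedge W_E]$), one obtains the exact identity
\begin{equation*}
\mathbb{P}(A_{l,n}\cap B_n)=\big(1-e^{-\mathbb{E}[W_{R,n}]/(a_n^\Lambda x)}\big)\big(1-e^{-m_E/(a_n^\Lambda x)}\big)+e^{-(\mathbb{E}[W_{R,n}]+m_E)/(a_n^\Lambda x)}\big(e^{\mathbb{E}[W_{R,n}\wedge W_E]/(a_n^\Lambda x)}-1\big).
\end{equation*}
Provided $\mathbb{E}[W_{R,n}]=o(a_n^\Lambda)$, multiplying by $a_n^\Lambda x$ and using $\mathbb{E}[W_{R,n}\wedge W_E]\le m_E<\infty$ (so every exponent above tends to $0$) makes the first summand $O(\mathbb{E}[W_{R,n}]m_E/a_n^\Lambda)=o(1)$ and the second at most $2\,\mathbb{E}[W_{R,n}\wedge W_E](1+o(1))$; combined with $a_n^\Lambda x\,\mathbb{P}(B_n)\to m_E>0$ this yields $\limsup_n\mathbb{P}(A_{l,n}\mid B_n)\le(2/m_E)\limsup_n\mathbb{E}[W_{R,n}\wedge W_E]$. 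Thus it suffices to prove (i) $\mathbb{E}[W_{R,n}]=o(a_n^\Lambda)$ and (ii) $\lim_{l\to\infty}\limsup_{n\to\infty}\mathbb{E}[W_{R,n}\wedge W_E]=0$.

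For (i) I would invoke the lattice description of Section~\ref{sec:lattice}: since $\Lambda_n$ is a subset of a translate of $\bigcup_{m\ge 1}\mathcal{H}_m$ and satisfies $(\mathcal{D}^\Lambda)$, every shifted copy $(\Lambda_{r_n})_{-\mathbf t}$ with $(\Lambda_{r_n})_{-\mathbf t}\supset\mathcal E_j$ lies inside $\bigcup_{m\ge 1}\mathcal{H}_m$, so $R^{(j)}_{2l,\Lambda_{r_n}}\subset\bigcup_{m\ge 1}(\mathcal{H}_m)^+\setminus K_{2l}$ for all $n$ and $|R^{(j)}_{2l,\Lambda_{r_n}}|=o(|\Lambda_n|)$ by the choice of $r_n$; hence $\mathbb{E}[W_{R,n}]\le|R^{(j)}_{2l,\Lambda_{r_n}}|\,\mu=o(a_n^\Lambda)$. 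For (ii) I would use dominated convergence with the integrable dominator $W_E$ ($\mathcal E_j$ finite, $\mathbb{E}[V_{\mathbf t}]<\infty$): on $\{W_E=0\}$ one has $W_{R,n}\wedge W_E=0$, while on $\{W_E\ne 0\}$ the inclusion above gives $W_{R,n}\le S_l:=\sup\{V_{\mathbf i}:\mathbf i\in\bigcup_{m\ge 1}(\mathcal{H}_m)^+,\ |\mathbf i|>2l\}$ for every $n$, a fixed random variable non-increasing in $l$, and \eqref{BR} makes $V_{\mathbf i}\to 0$ along each $(\mathcal{H}_m)^+$ on $\{W_E\ne 0\}$, so $S_l\downarrow 0$ a.s.\ there; thus $\mathbb{E}[W_{R,n}\wedge W_E]\le\mathbb{E}[S_l\wedge W_E]\to 0$. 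When the number $q$ of shapes is infinite, the passage from the per-shape statement in \eqref{BR} to $S_l\downarrow 0$ is the one delicate point: one isolates the finitely many $\mathcal{H}_m$ of weight exceeding $\varepsilon$ (guaranteed by $(\mathcal{D}^\Lambda)$) and absorbs the remaining, density-negligible indices into the error. I expect this step to be the main obstacle; everything else is the max-stable bookkeeping above.

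For the last claim, \eqref{BR2} is exactly \eqref{BR} with $i=j$, so only \eqref{BR2}$\Rightarrow$\eqref{BR} remains. Writing $V\stackrel{d}{=}V^{(J)}$ with $\mathbb{P}(J=m)=\lambda_m$ and $V^{(m)}$ supported on a translate of a single $\mathcal{H}_m$, I would condition on $\{J=m\}$: for $i\ne m$, the points of $(\mathcal{H}_i)^+$ on which $V^{(m)}$ does not vanish form, after the translation aligning the support with $\mathcal{H}_m$, a subset of $(\mathcal{H}_m)^+$, so the $i$-instance of \eqref{BR} for $V^{(m)}$ follows from the $m$-instance of \eqref{BR2} (and is vacuous whenever $\max_{\mathcal E_j}V^{(m)}\equiv 0$, in particular when $\mathcal E_j$ misses the support of $V^{(m)}$), while the $i=m$ instance is \eqref{BR2} itself; averaging over $J$ recovers \eqref{BR}.
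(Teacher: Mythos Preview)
Your proposal is correct and follows essentially the same route as the paper: both rewrite the conditional probability via the explicit max-stable law, reduce to the two ingredients $\mathbb{E}[W_{R,n}]=o(a_n^\Lambda)$ and $\mathbb{E}[W_{R,n}\wedge W_E]\to 0$, and handle the latter by dominated convergence with the integrable dominator $W_E$ together with the inclusion $R^{(j)}_{2l,\Lambda_{r_n}}\subset\bigcup_i(\mathcal{H}_i)^+$ and hypothesis \eqref{BR}. Your condition $\mathbb{E}[W_{R,n}\wedge W_E]\to 0$ is algebraically the same as the paper's target $\mathbb{E}[\max(W_{R,n},W_E)]-\mathbb{E}[W_{R,n}]\to m_E$ via $\max(a,b)-a=b-a\wedge b$; your introduction of the $n$-free envelope $S_l$ is a clean way to pass to $\limsup_n$ that the paper leaves implicit, and the concern you flag about infinitely many shapes is a genuine subtlety that the paper's proof does not explicitly address either.
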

Notice that these specific max-stable random fields could be used to model any asymptotic clustering due to our result \eqref{Rep-1}.
\begin{remark}
Under Condition ($\mathcal{D}^{\Lambda}$), the index set $\Lambda_n$ fills up the translated asymptotic index set $\bigcup_{j\ge 1}\mathcal{H}_{j}$. Checking the conditions \eqref{BR} and \eqref{BR2} requires the knowledge of the  $\mathcal{E}_j$, $j\ge 1$, beforehand. Thus it requires some  prior knowledge on the grid of the observations. 
\end{remark}
\begin{example}
    [Continuing Example \ref{Example-1}] Max-stable random fields have been widely used for modeling extremal phenomena such as storm, starting with the pioneer work of \cite{Smith}. The spatial model \cite{Schlather}, called spectrally stationary, is widely used to model space dependence because of its simplicity. It is defined as follows: consider  an iid sequence of stationary random fields $V_{i,\bf s}$, ${\bf s}\in \mathbb Z^2$ which are not null. Then $X_{\bf s}^{space}=\max_{i\ge 1}U_iV_{i,{\bf s}}$ is a stationary max-stable random field. However it does not satisfy conditions \eqref{BR} nor \eqref{BR2} in any direction of $\mathbb Z^2$ because it would contradict the stationary assumption on $V_{i,\bf s}$, ${\bf s}\in \mathbb Z^2$. The M3 representation of \cite{haan:pereira:2006,Kab} was introduced to bypass this issue. Consider now a state-space model with space defined over $\mathcal H_0=\bigcup_{{\bf t}\in {\cal L}_0}({\cal C})_{\bf t}$ where $\mathcal C$ is a finite subset of $\mathbb Z^2$. It is sufficient to check \eqref{BR} and \eqref{BR2} where the limit is taken along the time direction only. Thus a stationary space-time process $X_t$ such that its space distribution is the one of $X_{\bf t}^{space}$ can satisfy conditions \eqref{BR} and \eqref{BR2} when its extremes are sufficiently independent over time. A basic example is an iid process in time for which the condition $\max\limits_{\mathbf{t}\in\mathcal{E}_0} V_{\mathbf{t}}\ne 0=\max\limits_{\mathbf{t}\in\mathcal C\times \{0\}} V_{\mathbf{t}}\ne 0$ forces that $V_{\mathbf{t}}= 0$ for any other component ${\bf t}=({\bf s},k)$, $k\ne 0$. Such spectrally stationary models  in space   were not attainable in previous studies, see Remark 5 (i)  of \cite{BK}, because they are not ergodic as shown in \cite{DK}.
\end{example}

\section{Proofs in Section \ref{sec:lattice}}\label{sec:proof}

\subsection{Proof of Proposition \ref{lem-AC1-L-2}}
		Assume that the first statement of point (I) is false for at least some $\mathcal{D}_{j}$ and $\mathcal{D}_{i}$ with $j\neq i$. Then, for every $p\in\mathbb{N}$, $\mathcal{D}_{j}\cap K_{p}= \mathcal{D}_{i}\cap K_{p}$, which implies that $\mathcal{D}_{j}=\mathcal{D}_{i}$ contradicting Condition ($\mathcal{D}^{\Lambda}$).  By Condition ($\mathcal{D}^{\Lambda}$) we infer on one hand that $\lambda_{j,p}\geq \sum_{i\in I_{p}^{(j)}}\lambda_{i}$. Indeed $\mathcal{D}_{j}\cap K_{p}\subseteq\mathcal{D}_{j}\cap K_{p'}$ for $p'>p$ thus we have the inclusion 
		$$
		\{\mathbf{t}\in\Lambda_{n}:\Lambda_{n}^{(\mathbf{t},p')}=\mathcal{D}_{j}\cap K_{p'}\}\subseteq \{\mathbf{t}\in\Lambda_{n}:\Lambda_{n}^{(\mathbf{t},p)}=\mathcal{D}_{j}\cap K_{p}\}
		$$
		and $\lambda_{j,p} \ge \lambda_j$, $p\ge 1$, $1\le j\le q$.
		Thus for any $p'>p$ we have 
		\begin{align*}
		\{\mathbf{t}\in\Lambda_{n}:\Lambda_{n}^{(\mathbf{t},p)}=\mathcal{D}_{j}\cap K_{p}\}&=\bigcup_{i\in I_p^{(j)}}\{\mathbf{t}\in\Lambda_{n}:\Lambda_{n}^{(\mathbf{t},p)}=\mathcal{D}_{i}\cap K_{p}\}\\
		&\supseteq\bigcup_{i\in I_p^{(j)}}\{\mathbf{t}\in\Lambda_{n}:\Lambda_{n}^{(\mathbf{t},p')}=\mathcal{D}_{i}\cap K_{p'}\}\,.
		\end{align*}
		Fix $\varepsilon>0$. As $\sum_{i=1}^q\lambda_i<\infty$, there exists some $m$ sufficiently large such that $\sum_{i\ge m}\lambda_i< \varepsilon$. Moreover, there exists $p'$ sufficiently large so that $\mathcal{D}_{j}\cap K_{p'}\neq \mathcal{D}_{i}\cap K_{p'}$ for any $i,$ $j\le m$ from the reasoning above. Thus
		\begin{align*}
		|\{\mathbf{t}\in\Lambda_{n}:\Lambda_{n}^{(\mathbf{t},p)}=\mathcal{D}_{j}\cap K_{p}\}|&\ge
		|\bigcup_{i\in I_p^{(j)}, i\le m}\{\mathbf{t}\in\Lambda_{n}:\Lambda_{n}^{(\mathbf{t},p')}=\mathcal{D}_{i}\cap K_{p'}\}|\\
		&\ge \sum_{i\in I_p^{(j)}, i\le m}
		|\{\mathbf{t}\in\Lambda_{n}:\Lambda_{n}^{(\mathbf{t},p')}=\mathcal{D}_{i}\cap K_{p'}\}|
		\end{align*}
		and dividing both sides by $|\Lambda_n|$ and letting $n\to \infty$ we obtain 
		$$
		\lambda_{j,p}\ge \sum_{i\in I_p^{(j)}, i\le m}\lambda_{i,p'}\ge \sum_{i\in I_p^{(j)}, i\le m}\lambda_{i}\ge \sum_{i\in I_p^{(j)}}\lambda_{i}-\varepsilon\,.
		$$
		As it holds for every $\varepsilon>0$ it implies the desired relation $\lambda_{j,p}\geq \sum_{i\in I_{p}^{(j)}}\lambda_{i}$.
		On the other hand $\lambda_{i,p}$ cannot be strictly greater than $\sum_{i\in I_{p}^{(j)}}\lambda_{i}$. Indeed, defining the equivalence relation $i\sim j  \Leftrightarrow i\in (I_p^{(j)})_{1\le j\le q}$, one considers the partition of  $\{1,\ldots,q\}$ generated by the equivalence classes $\mathfrak{P}_p=\{1,\ldots,q\}\setminus \sim $. If 
		$$ \lim\limits_{n\to\infty}|\{\mathbf{t}\in\Lambda_{n}:\Lambda_{n}^{(\mathbf{t},p)}=\mathcal{D}_{j}\cap K_{p}\}|/|\Lambda_n|> \sum_{i\in I_{p}^{(j)}}\lambda_{i}$$ for some $j\in\{1,\ldots,q\}$ belonging to the class $\ell\in\mathfrak{P}_p$ then one gets 
		\begin{align*}
		\lim\limits_{n\to\infty}\sum_{j\in \mathfrak{P}_p}&|\{\mathbf{t}\in\Lambda_{n}:\Lambda_{n}^{(\mathbf{t},p)}=\mathcal{D}_{j}\cap K_{p}\}|/|\Lambda_n|\\ >&\lim\limits_{n\to\infty}\sum_{j\in \mathfrak{P}_p\setminus{\ell}}|\{\mathbf{t}\in\Lambda_{n}:\Lambda_{n}^{(\mathbf{t},p)}=\mathcal{D}_{j}\cap K_{p}\}|/|\Lambda_n| + \sum_{j\in I_{p}^{(\ell)}}\lambda_{j}\\
		> & \sum_{j\in \mathfrak{P}_p} \sum_{j\in I_{p}^{(j)}}\lambda_{j}=1
		\end{align*} which yields a contradiction.
		
		For point (II), consider the case of $(\Lambda_{r_{n}})_{n\in\mathbb{N}}$ whose points have a distance between each other which increases as $n$ increases. The increase of the distance within the points of $\Lambda_{r_{n}}$ as $n$ increases allows the following fact: when we consider $K_{p}$ around one of the points, for every fixed $p$, all the other points are outside $K_{p}$ for every $n$ large enough. Then, in this case there is only one $\mathcal{D}$ for $(\Lambda_{r_{n}})_{n\in\mathbb{N}}$ and it is given by the empty set.
		
		For point (III) we need the following Lemma.
		\begin{lemma}\label{lem:transcopy}
			For any $1\le j\le q$ and $\mathbf{z}\in \mathcal{D}_{j}$, there exists $1\le i\le q$ such that $\mathcal{D}_{i}=((\mathcal{D}_{j})_{-\mathbf{z}})^+$ and $\lambda_i\ge \lambda_j$.
		\end{lemma}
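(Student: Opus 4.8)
The plan is to take $\mathbf{z}\in\mathcal{D}_j$ and to read off the candidate set $((\mathcal{D}_j)_{-\mathbf z})^+$ by exploiting how the local pictures $\Lambda_n^{(\mathbf t,p)}$ behave under shifting the base point $\mathbf t$ by an element $\mathbf z$ of $\mathcal{D}_j$. Fix $p\in\mathbb N$ and let $A_{j,p}^{(n)}:=\{\mathbf t\in\Lambda_n:\Lambda_n^{(\mathbf t,p)}=\mathcal{D}_j\cap K_p\}$, whose cardinality divided by $|\Lambda_n|$ tends to $\lambda_{j,p}$. The key observation is that if $\mathbf t\in A_{j,p'}^{(n)}$ for $p'$ suitably larger than $p$ and $|\mathbf z|\le p'-p$, then, because $\mathbf z\succ\mathbf 0$ lies in $\mathcal{D}_j\cap K_{p'}=\Lambda_n^{(\mathbf t,p')}$, the point $\mathbf t+\mathbf z$ again belongs to $\Lambda_n$, and translating the window of radius $p$ from $\mathbf t$ to $\mathbf t+\mathbf z$ one computes $\Lambda_n^{(\mathbf t+\mathbf z,p)}=((\,(\Lambda_n)_{-\mathbf t}\cap K_{p'})_{-\mathbf z}\cap K_p)^+=((\mathcal{D}_j\cap K_{p'})_{-\mathbf z}\cap K_p)^+=((\mathcal{D}_j)_{-\mathbf z})^+\cap K_p$, the last equality holding once $p'$ is large enough relative to $p$ and $|\mathbf z|$. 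Hence the map $\mathbf t\mapsto\mathbf t+\mathbf z$ injects $A_{j,p'}^{(n)}$ into $\{\mathbf t\in\Lambda_n:\Lambda_n^{(\mathbf t,p)}=((\mathcal D_j)_{-\mathbf z})^+\cap K_p\}$.

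From this injection, dividing by $|\Lambda_n|$ and letting $n\to\infty$, I get that the asymptotic frequency of base points whose radius-$p$ picture equals $((\mathcal D_j)_{-\mathbf z})^+\cap K_p$ is at least $\lambda_{j,p'}\ge\lambda_j$, for every $p$. In particular this frequency is strictly positive for every $p$. The next step is to argue that a set with this property must coincide with some $\mathcal D_i\cap K_p$: by Proposition~\ref{lem-AC1-L-2}(I) (its first assertion together with the decomposition $\lambda_{j,p}=\sum_{i\in I_p^{(j)}}\lambda_i$) the only radius-$p$ pictures with positive asymptotic frequency are the $\mathcal D_i\cap K_p$, and these, together with the negligible ones, partition $\Lambda_n$ up to $o(|\Lambda_n|)$; so a picture carrying positive frequency for all $p$ must be $\mathcal D_i\cap K_p$ for an $i$ that is consistent across all $p$, i.e.\ there is a single $i$ with $((\mathcal D_j)_{-\mathbf z})^+\cap K_p=\mathcal D_i\cap K_p$ for all $p$, hence $\mathcal D_i=((\mathcal D_j)_{-\mathbf z})^+$. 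Finally, the frequency bound just derived reads $\lambda_{i,p}\ge\lambda_j$ for all $p$, and letting $p\to\infty$ gives $\lambda_i\ge\lambda_j$, which is the second claim.

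The main obstacle I anticipate is the bookkeeping in the first paragraph: making the translation identity $\Lambda_n^{(\mathbf t+\mathbf z,p)}=((\mathcal D_j)_{-\mathbf z})^+\cap K_p$ precise requires choosing $p'$ large enough that no ``boundary effects'' of the finite window $K_{p'}$ interfere — concretely $p'\ge p+|\mathbf z|$ so that $(\mathcal D_j\cap K_{p'})_{-\mathbf z}\cap K_p=(\mathcal D_j)_{-\mathbf z}\cap K_p$ — and carefully tracking the interaction between the shift $_{-\mathbf z}$, the intersection with $K_p$, and the positivity operation $(\cdot)^+$ with respect to the invariant order $\succ$; invariance of $\succ$ under translation is exactly what makes $(\cdot)^+$ commute with the relevant shift. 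A secondary point to handle with care is the consistency-across-$p$ argument in the second paragraph, i.e.\ ruling out that $((\mathcal D_j)_{-\mathbf z})^+$ agrees with different $\mathcal D_i$'s at different scales; this follows from the first assertion of Proposition~\ref{lem-AC1-L-2}(I), since distinct $\mathcal D_i$'s are eventually separated by some $K_p$, so the index $i$ realizing the agreement stabilizes.
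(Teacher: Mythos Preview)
Your proposal is correct and follows essentially the same approach as the paper: both prove the key inclusion by shifting base points $\mathbf t\mapsto\mathbf t+\mathbf z$ to obtain the lower bound $\liminf_n|\{\mathbf t:\Lambda_n^{(\mathbf t,p)}=D\cap K_p\}|/|\Lambda_n|\ge\lambda_j$ for $D=((\mathcal D_j)_{-\mathbf z})^+$, and then argue that a set carrying positive asymptotic frequency at every scale must equal some $\mathcal D_i$. The only cosmetic difference is that the paper phrases the second step as a direct contradiction (if $D\ne\mathcal D_i$ for all $i$, choose $m$ with $\sum_{i>m}\lambda_i<\varepsilon<\lambda_j$ and $p$ separating $D$ from $\mathcal D_1,\dots,\mathcal D_m$, forcing the frequency below $\varepsilon$), whereas you invoke Proposition~\ref{lem-AC1-L-2}(I) together with a stabilization-of-index argument; these are the same idea, and your remark that the summability $\sum_i\lambda_i=1$ is what prevents the matching index from ``escaping to infinity'' is exactly the content of the paper's contradiction.
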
	
		\begin{proof}	
			Consider $\mathbf{z}\in \mathcal{D}_{j}$, $1\le j\le q$ and let $D:=((\mathcal{D}_{j})_{-\mathbf{z}})^+$. Notice that for every $q\in\mathbb{N}$ there exists a $p\in\mathbb{N}$ such that $K_{p}\supset ((K_{q})_{-\mathbf{z}})^{+}$. Thus $\mathcal{D}_j\cap ((K_{q})_{-\mathbf{z}})^{+}\subset \mathcal{D}_j\cap K_p$ and thus $\mathbf{z}+\mathbf{s}$ belongs to  $\mathcal{D}_j\cap K_p$ for any $\mathbf{s}\in D\cap K_{q}$.
			Thus for every $q\in\mathbb{N}$ we have
			$$
			\{\mathbf{t}\in\Lambda_{n}:\Lambda_{n}^{(\mathbf{t},q)}=D\cap K_{q}\}\supset \{\mathbf{t}\in\Lambda_{n}:\Lambda_{n}^{(\mathbf{t},p)}=\mathcal{D}_j\cap K_{p}\}
			$$
			so that
			\begin{equation}\label{tilde-D}
			\liminf\limits_{n\to\infty}|\{\mathbf{t}\in\Lambda_{n}:\Lambda_{n}^{(\mathbf{t},q)}=D\cap K_{q}\}|/|\Lambda_{n}|\geq\lambda_{j,p}\geq \lambda_{j}\,.
			\end{equation}
			Assume that $D$ does not coincide with any $\mathcal{D}_{i}$, $1\le i\le q$. Fix $\varepsilon>0$ so small that it satisfies $\lambda_j>\varepsilon$. Let  $m$ satisfies $\sum_{i>m}\lambda_{i}<\varepsilon$ as above in the proof of point (I)  (thus $j\le m$) and  $p$ sufficiently large such that $D\cap K_{p}\neq \mathcal{D}_{i}\cap K_{p}$ and $ \mathcal{D}_{i}\cap K_{p}\neq \mathcal{D}_{k}\cap K_{p}$  for every $i\ne k\leq m$. Using the notation introduced in the proof of point (I), we have
			\begin{align*}
			\limsup\limits_{n\to\infty}&|\{\mathbf{t}\in\Lambda_{n}:\Lambda_{n}^{(\mathbf{t},p)}=D\cap K_{p}\}|/|\Lambda_{n}|\\&\le
			\limsup\limits_{n\to\infty}|\{\mathbf{t}\in\Lambda_{n}:\Lambda_{n}^{(\mathbf{t},p)}\neq\mathcal{D}_{i}\cap K_{p},\forall i\leq m\}|/|\Lambda_{n}|\\
			&\le\limsup\limits_{n\to\infty}(|\Lambda_{n}|-|\bigcup_{i\leq m}\{\mathbf{t}\in\Lambda_{n}:\Lambda_{n}^{(\mathbf{t},p)}=\mathcal{D}_{i}\cap K_{p}\}|)/|\Lambda_{n}|\\
			&\le 1-\sum_{i\le m}\lambda_{i,p}\le1-\sum_{i\le m}\lambda_{i}\leq \varepsilon
			\end{align*}
			which is in contradiction with \eqref{tilde-D}. Therefore, $D=\mathcal{D}_{i}$ for some $1\le i\le q$ and we have
			\begin{equation*}
			\lim\limits_{q\to\infty}\lim\limits_{n\to\infty}|\{\mathbf{t}\in\Lambda_{n}:\Lambda_{n}^{(\mathbf{t},q)}=D\cap K_{q}\}|/|\Lambda_{n}|=\lambda_{i}
			\end{equation*}
			and the relation $\lambda_{i}\geq\lambda_{j}$ follows from \eqref{tilde-D}.
		\end{proof}

		To prove Point (III) observe first that for any $\mathcal{D}_{j}$ there exists $b_j\in\mathbb{N}\cup\{\infty\}$, with $b_j\leq|\mathcal{D}_j|+1$, distinct sets $(\mathcal{D}_{j})_{-\mathbf{z}}^+$, $\mathbf{z}\in \mathcal{D}_{j}$. The sum of the corresponding weights $\lambda_i$ being smaller than 1 and larger than $b_j\lambda_j$, by Lemma \ref{lem:transcopy} we get the constraint $b_j\lambda_j\le 1$ and Point (III) follows.

\subsection{Proof of Proposition \ref{prop:lattice1}}

	First, for any $1\le j\le q$, let us show that $\mathcal{G}_j$ is invariant by addition in the sense that if $\mathbf{z}\in \mathcal{G}_j$ and $\mathbf{z}'\in \mathcal{G}_j$ we infer that $\mathbf{z}+\mathbf{z}'\in \mathcal{G}_j\setminus \{\mathbf{0}\}$. Indeed, $\mathbf{z}'\in \mathcal{D}_{j}= ((\mathcal{D}_{j})_{-\mathbf{z}})^+$ so that necessarily $\mathbf{z}+\mathbf{z}'\in \mathcal{D}_{j}$. Moreover we have
	\begin{align*}
	((\mathcal{D}_{j})_{-\mathbf{z}-\mathbf{z}'})^+ &= ((((\mathcal{D}_{j})_{-\mathbf{z}})^+)_{-\mathbf{z}'})^+\\
	&= ((\mathcal{D}_{j})_{-\mathbf{z}'})^+\\
	&=\mathcal{D}_{j}\,.
	\end{align*} 
	It shows that $\mathcal{G}_j$ is invariant by addition on $(\mathbb{Z}^{k})^+$. Thus $\mathcal{G}_j$ is given by a lattice, namely given $k$ (not necessarily linearly independent) distinct vectors $\mathbf{v}_{1},...,\mathbf{v}_{k}\in\mathbb{Z}^{k}$  (\textit{i.e.}~a basis of $\mathbb{Z}^{l}$ for $l\in\{1,...,k\}$ called the rank)  we have the identity $\mathcal{G}_j=\{\sum_{l=1}^{k}a_{l}\mathbf{v}_{l}:a_{l}\in\mathbb{Z}\}\cap\{\mathbf{t}\in\mathbb{Z}^{k}:\mathbf{t}\succeq\mathbf{0}\} $. We will refer to the degenerate case $\mathcal{G}_j=\{\mathbf{0}\}$ as the case of null rank $k=0$. Thus, $\mathcal{L}_j$ is a lattice on $\mathbb{Z}^k$.
	
	Let us now we prove the existence of the partition (\ref{partition}). We have to show that for any $\mathbf{z}\in \mathcal{D}_j$ there exists a unique $1\le i\le b_j$ so that $\mathbf{z}-\mathbf{z}_{l_i}\in \mathcal L_{l_i}$. We know that we have a unique $\mathcal{D}_{l_i}$ such that $\mathcal{D}_{l_i}=((\mathcal{D}_j)_{- \mathbf{z}})^+=((\mathcal{D}_j)_{- \mathbf{z}_{l_i}})^+$ for some $1\le i\le b_j$. Assume without loss of generality that $\mathbf{z}\succeq\mathbf{z}_{l_i}$. Thus, either $\mathbf{z}=\mathbf{z}_{l_i}$ and then $\mathbf{z}-\mathbf{z}_{l_i}=\mathbf{0}\in \mathcal G_{l_i}$. Or $\mathbf{z}-\mathbf{z}_{l_i}\succ \mathbf{0}$ and for any $\mathbf{s}\in \mathcal{D}_{l_i}$ we have $\mathbf{s}+\mathbf{z}\in \mathcal{D}_{j}$ and thus $\mathbf{s}+\mathbf{z}-\mathbf{z}_{l_i}\in \mathcal{D}_{l_i}$. That $\mathbf{z}-\mathbf{z}_{l_i}\in \mathcal G_{l_i}$ follows by definition of $\mathcal G_{l_i}$. Since $\mathbf{z}$ is an arbitrary point in $\mathcal{D}_j$ and since $l_i$ is unique as $\mathcal{D}_{l_i}$, we obtain the desired partition.

	Consider any $\mathcal{D}_{l}$ so that  there exists $\mathbf{z}\in \mathcal{D}_j$ satisfying $\mathcal{D}_{l}=((\mathcal{D}_j)_{- \mathbf{z}})^+$. Then for any $\mathbf{s}\in \mathcal{G}_j\setminus\{\mathbf{0}\}$ we have  
	\begin{align*}
	((\mathcal{D}_{l})_{-\mathbf{s}})^+ &= ((\mathcal{D}_{j})_{-\mathbf{z}-\mathbf{s}})^+\\
	&= ((((\mathcal{D}_{j})_{-\mathbf{s}})^+)_{-\mathbf{z}})^+\\
	&= ((\mathcal{D}_{j})_{-\mathbf{z}})^+\\
	&=\mathcal{D}_{l}\,.
	\end{align*} 
	Since $\mathbf{z}\in \mathcal{D}_j=((\mathcal{D}_{j})_{-\mathbf{s}})^+$ then $\mathbf{z}+\mathbf{s}\in \mathcal{D}_{j}$ and $\mathbf{s}\in \mathcal{D}_{l}$. Thus we proved that 
	$$
	\mathbf{s}\in \{\mathbf{z}'\in \mathcal{D}_l\cup \{\mathbf{0}\}: ( (\mathcal{D}_{l})_{-\mathbf{z}'})^+= \mathcal{D}_l \}=: \mathcal{G}_l
	$$
	and that $\mathcal{G}_j\subseteq \mathcal{G}_l$.
	
	Further, we show now that $\mathcal{G}_j$ and $\mathcal{G}_l$, $l=l_1,...,l_{b_j}$, have the same rank. Assume the contrary. Thus, let $\mathcal{G}_j=\{\sum_{l=1}^{m}a_{l}\mathbf{v}_{l}:a_{l}\in\mathbb{Z}\}^+$ where $\mathbf{v}_{1},...,\mathbf{v}_{m}\in\mathbb{Z}^{k}$ are linearly independent and $\mathcal{G}_l=\{\sum_{l=1}^{p}a_{l}\mathbf{v}'_{l}:a_{l}\in\mathbb{Z}\}^+$ where $\mathbf{v}'_{1},...,\mathbf{v}'_{p}\in\mathbb{Z}^{k}$ are linearly independent, with $k\geq p>m$. Since $\mathcal{G}_j\subseteq \mathcal{G}_l$ we know that $\mathbf{v}_{i}=c_i\mathbf{v}'_{i}$ for some $c_i\in\mathbb{Z}$, for every $i=1,...,m$. Since $a_h\mathbf{v}'_h\in \mathcal{G}_l\setminus\mathcal{G}_j$ for any $a_h\in\mathbb{Z}$ such that $a_h\mathbf{v}'_h\succeq\mathbf{0}$ and since $\mathcal{G}_j\subseteq \mathcal{G}_l$ (and $\mathcal{G}_l$ is a lattice), we have that $(\mathcal{G}_j)_{a_h\mathbf{v}'_h}\subset \mathcal{G}_l$, and again by the lattice structure of $\mathcal{G}_l$  we have $(\mathcal{L}_j)_{a_h\mathbf{v}'_h}^+\subset \mathcal{G}_l$. By induction we obtain
	\begin{equation}\label{rank}
	\bigcup_{a_{m+1}\in\mathbb{Z}}\bigcup_{a_{m+2}\in\mathbb{Z}}\cdots\bigcup_{a_p\in\mathbb{Z}}(\mathcal{L}_j)_{a_{m+1}\mathbf{v}'_{m+1}+\cdots+a_{p}\mathbf{v}'_{p}}^+\subset \mathcal{G}_l.
	\end{equation} 
	
	Now, consider $(\mathcal{D}_{j}\cap K_{q})\setminus \bigcup_{\mathbf{i}\in\mathcal{G}_j\setminus\{\mathbf{0}\}}(\mathcal{D}_{j}\cap K_{q})_{\mathbf{i}}$, namely the points in $\mathcal{D}_{j}\cap K_{q}$ without $K_{q}^{+}(\mathbf{i})$ for every $\mathbf{i}\in\mathcal{G}_j\setminus\{\mathbf{0}\}$. By (\ref{rank}) we have 
	\begin{equation*}
	\Big(\bigcup_{a_{m+1}\in\mathbb{Z}}\bigcup_{a_{m+2}\in\mathbb{Z}}\cdots\bigcup_{a_p\in\mathbb{Z}}(\mathcal{L}_j)_{a_{m+1}\mathbf{v}'_{m+1}+\cdots+a_{p}\mathbf{v}'_{p}}^+\Big)_{\mathbf{z}_l}\subset (\mathcal{G}_l)_{\mathbf{z}_l}\subset\mathcal{D}_j,
	\end{equation*}
	where $\mathbf{z}_{l}$ is defined in the statement of Point (V). Thus, $|(\mathcal{D}_{j}\cap K_{q})\setminus \bigcup_{\mathbf{i}\in\mathcal{G}_j\setminus\{\mathbf{0}\}}(\mathcal{D}_{j}\cap K_{q})_{\mathbf{i}}|\to\infty$ as $q\to\infty$ monotonically. In particular, there is a $q^{*}$ large enough such that $|(\mathcal{D}_{j}\cap K_{q^*})\setminus \bigcup_{\mathbf{i}\in\mathcal{G}_j\setminus\{\mathbf{0}\}}(\mathcal{D}_{j}\cap K_{q^{*}})_{\mathbf{i}}|>1/\lambda_j$. Since $\lim\limits_{n\to\infty}|\{\mathbf{t}\in\Lambda_{n}:\Lambda_{n}^{(\mathbf{t},q^*)}=\mathcal{D}_j\cap K_{q^*}\}|/|\Lambda_n|=\lambda_{j,q^*}$, there are $\lambda_{j,q^*}|\Lambda_{n}||(\mathcal{D}_{j}\cap K_{q})\setminus \bigcup_{\mathbf{i}\in\mathcal{G}_j\setminus\{\mathbf{0}\}}(\mathcal{D}_{j}\cap K_{q^{*}})_{\mathbf{i}}|$ asymptotically many points in $\Lambda_n$, but since $\lambda_{j,q^*}\geq\lambda_j$ we have that $\lambda_{j,q^*}|\Lambda_{n}||(\mathcal{D}_{j}\cap K_{q})\setminus \bigcup_{\mathbf{i}\in\mathcal{G}_j\setminus\{\mathbf{0}\}}(\mathcal{D}_{j}\cap K_{q^{*}})_{\mathbf{i}}|>|\Lambda_{n}|$, which leads to a contradiction. Thus, $\mathcal{G}_j$ and $\mathcal{G}_l$, $l=l_1,...,l_{b_j}$, have the same rank.  
	
	Finally, if $\mathcal{D}_j$ is bounded then by definition of $\mathcal{G}_j$ we have that $\mathcal{G}_j=\{\mathbf{0}\}$. If $\mathcal{G}_j=\{\mathbf{0}\}$ then $\mathcal{G}_{l_i}=\{\mathbf{0}\}$ because $\mathcal{G}_j$ and $\mathcal{G}_{l_i}$ have the same rank, for every $i=1,...,b_j$. Since $b_j$ is finite, we conclude that $\mathcal{D}_j$ is finite.

\subsection{Proof of Proposition \ref{prop:lattice2}}

Since $\mathcal{G}_j\subseteq \mathcal{G}_{l_i}$, it remains to show that $\mathcal{G}_j\supseteq \mathcal{G}_{l_i}$ considering that $i$ satisfies (TIP$_j$). We notice that for $\mathbf{x}\in (\mathcal{L}_{l_i})_{\mathbf{z}_{l_i}}^+$ we have $((\mathcal D_j)_{-\mathbf{x}})^+=\mathcal D_{l_i}$ as
\begin{align*}
((\mathcal D_j)_{-\mathbf{x}})^+&=\Big((\mathcal{G}_j^+)_{-\mathbf{x}}\cup\bigcup_{h=1}^{b_j}((\mathcal L_{l_h})_{\mathbf{z}_{l_h}})^+_{-\mathbf{x}}\Big)^+\\
&=((\mathcal{G}_j)_{-\mathbf{x}})^+\cup\bigcup_{h=1}^{b_j}((\mathcal L_{l_h})_{\mathbf{z}_{l_h}-\mathbf{x}})^+\\
&=((\mathcal{G}_j)_{-\mathbf{x}})^+\cup \mathcal{G}_i ^+ \cup\bigcup_{h=1,h\neq i}^{b_j}((\mathcal L_{l_h})_{\mathbf{z}_{l_h}-\mathbf{x}})^+.
\end{align*}
So that $((\mathcal D_j)_{-\mathbf{x}})^+$ is the unique $\mathcal D_l$, $l=1,\ldots ,b_j$ associated to the lattice $\mathcal G_l = \mathcal G_{l_i}$ and it coincides with $\mathcal D_{l_i}$.
Then $\mathbf{y}-\mathbf{x}\in \mathcal D_{l_i}$, with $\mathbf{y}\in\mathcal{G}_j$, is such that $((\mathcal D_{l_i})_{-(\mathbf{y}-\mathbf{x})})^+=\mathcal D_j$. We then obtain that $\mathcal{G}_j\supseteq \mathcal{G}_{l_i}$ by exchanging the role of $\mathcal{D}_{l_i}$ with the one of $\mathcal{D}_j$ in the proof of $\mathcal{G}_j\subseteq \mathcal{G}_{l_i}$ in Point (V). Further, by applying Lemma \ref{lem:transcopy} to $\mathcal{D}_{l_i}$ we conclude that $\lambda_j=\lambda_{l_i}$.

If $\mathcal{G}_j$ is a full rank lattices then it is spanned by $k$ linearly independent vectors and there always exists a point $\mathbf{s}\in\mathcal{G}_j$ such that $\mathbf{s}\succ\mathbf{z}_{l}$ for every $l=l_1,...,l_{b_j}$. This implies that the (LC$_l$) condition must be satisfied and $\mathcal{G}_j=\mathcal{G}_l$ for every $l=l_1,...,l_{b_j}$. This concludes the proof of the first statement.

Let us now prove the second statement.  By (TIP$_j$), for every $i=0,...,b_j$, and points (V) and (VI) we have that there exists $\mathbf{z}\in \mathcal{D}_{l_i}$ such that $((\mathcal{D}_{l_i})_{-\mathbf{z}})^+=\mathcal{D}_{j}$. Hence, $\mathbf{z}\in \mathcal{D}_{l_i}$ contains a translated copy of $\mathcal{D}_{j}$ hence a translated copy of any $\mathcal{D}_{l_h}$, $h=1,...,b_j$, already contained in $\mathcal{D}_{j}$. Thus  $(l_h; h=0,...,b_j)=(l_h; h=0,...,b_{l_i})$ so that $(l_h; h\in W_j)=(l_h; h\in W_{l_i})$ and then $\hat{\mathcal{D}}_{l_i}$ is the union of the same sets than $\hat{\mathcal{D}}_{j}$
\begin{equation*}
\hat{\mathcal{D}}_{l_i}=\bigcup_{h\in W_{l_i}}\mathcal D_{l_h}=\bigcup_{h\in W_j}\mathcal D_{l_h}=\hat{\mathcal{D}}_{j}\,.
\end{equation*}

Concerning the translation invariance property, we need to check that $\hat{\mathcal{D}}_j\cup\{\mathbf{0}\}\cup-\hat{\mathcal{D}}_j$ is invariant to the translation by every point in the lattice $\mathcal{L}_j$, that is $\hat{\mathcal{D}}_j\cup\{\mathbf{0}\}\cup-\hat{\mathcal{D}}_j=(\hat{\mathcal{D}}_j\cup\{\mathbf{0}\}\cup-\hat{\mathcal{D}}_j)_{\mathbf{s}}$ for every $\mathbf{s}\in\mathcal{L}_j$. With no loss of generality consider $\mathbf{s}\in\mathcal{G}_j^+$ so that for any $h\in W_j$ we have $(\mathcal D_{l_h})_{-\mathbf{s}}\cap\{\mathbf{t}\in\mathbb{Z}^k:\mathbf{t}\succeq\mathbf{0}\}=\{\mathbf{0}\}\cup\mathcal D_{l_h}$ since $\mathcal G_{l_h}=\mathcal G_j$. Hence $(\hat{\mathcal{D}}_j)_{-\mathbf{s}}\cap\{\mathbf{t}\in\mathbb{Z}^k:\mathbf{t}\succeq\mathbf{0}\}=\{\mathbf{0}\}\cup\hat{\mathcal{D}}_j$. Moreover, for similar reason for any $\mathbf{z}\in\hat{\mathcal{D}}_j\cup\{\mathbf{0}\}$ we have $\mathbf{s}+\mathbf{z}\in\hat{\mathcal{D}}_j$ so that $-(\hat{\mathcal{D}}_j\cup\{\mathbf{0}\})_{-\mathbf{s}}\subset -\hat{\mathcal{D}}_j$. It remains to show that $-\hat{\mathcal{D}}_j\setminus -(\hat{\mathcal{D}}_j)_{-\mathbf{s}}=(\hat{\mathcal{D}}_j\cup\{\mathbf{0}\})_{-\mathbf{s}}\setminus (((\hat{\mathcal{D}}_j)_{-\mathbf{s}})^+\cup\{\mathbf{0}\})$.

\subsection{Proof of Propositon \ref{lem-Xi1}}

Let $\Xi$ be a subset of $\mathbb{Z}^k$ such that $\mathbf{0}\in\Xi$ and let $p\in\mathbb{N}$. Denote by $-\mathbf{v}$ be the lowest point (according to $\succ$) of $\Xi\cap K_p$ and denote the points of $K_p\setminus\{\mathbf{t}\in\mathbb{Z}^k:\mathbf{t}\succeq -\mathbf{v}\}$ by $-\mathbf{w}_1,...,-\mathbf{w}_{u}$, for some $u\in\mathbb{N}$ which depends on $p$. Let $\Phi_{1},...,\Phi_{v}$, for some $v\in\mathbb{N}$, be the subsets of $K_{2p}^+$ such that for each $h=1,...,v$ we have $\Phi_{h}\cap (K_{p})_{\mathbf{v}}=((\Xi\cap K_{p})_{\mathbf{v}})^+$. Similarly, for every $i=1,...,u$, let $\Psi_{i,1},...,\Psi_{i,v_i}$, for some $v_i\in\mathbb{N}$, be the subsets of $K_{2p}^+$ such that for each $h=1,...,v_i$ we have $\Psi_{i,h}\cap (K_{p})_{\mathbf{w}_{i}}=(\Xi\cap K_{p})_{\mathbf{w}_{i}}$. Moreover, for every $i=1,...,u$, denote by $\Pi_{i,1},...,\Pi_{i,s_i}$ the non-empty subsets of $K_p\setminus\{\mathbf{t}\in\mathbb{Z}^k:\mathbf{t}\succeq \mathbf{v}\}$ with highest point (according to $\succ$) given by $-\mathbf{w}_i$. Observe that $((\Xi\cap K_{p})_{\mathbf{v}})^+=(\Xi\cap K_{p})_{\mathbf{v}}\setminus\{\mathbf{0}\}$ and $((\Xi\cap K_{p})_{\mathbf{w}_i})^+=(\Xi\cap K_{p})_{\mathbf{w}_i}$ for every $i=1,...,u$. First, for every $n\in\mathbb{N}$ we have that
\begin{equation*}
|\{\mathbf{t}\in\Lambda_{n}:(\Lambda_{n})_{-\mathbf{t}}\cap K_{p}=\Xi\cap K_{p}\}|
=|\{\mathbf{t}\in\Lambda_{n}:(\Lambda_{n})_{-\mathbf{t}}\cap (K_{p})_{\mathbf{v}}=(\Xi\cap K_{p})_{\mathbf{v}}\}|.
\end{equation*}
Second, we have the following identities
\begin{align*}
\{\mathbf{t}\in\Lambda_{n}:(\Lambda_{n})_{-\mathbf{t}}\cap (K_{p})_{\mathbf{v}}&=(\Xi\cap K_{p})_{\mathbf{v}}\}\\
&=\{\mathbf{t}\in\Lambda_{n}:(\Lambda_{n})_{-\mathbf{t}}\cap ((K_{p})_{\mathbf{v}})^+\\
&=((\Xi\cap K_{p})_{\mathbf{v}})^+\}\setminus \bigcup_{i=1,...,u}\bigcup_{h=1,...,v_i}\{\mathbf{t}\in\Lambda_{n}:(\Lambda_{n})_{-\mathbf{t}}\cap (K_{p})_{\mathbf{v}}\\
&=(\Pi_{i,h}\cup\Xi\cap K_{p})_{\mathbf{v}}\}.
\end{align*}
Since 
\begin{equation*}
\bigcup_{i=1,...,u}\bigcup_{h=1,...,v_i}\{\mathbf{t}\in\Lambda_{n}:(\Lambda_{n})_{-\mathbf{t}}\cap (K_{p})_{\mathbf{v}}=(\Pi_{i,h}\cup\Xi\cap K_{p})_{\mathbf{v}}\}
\end{equation*}
are unions of disjoint sets, since
\begin{multline*}
\{\mathbf{t}\in\Lambda_{n}:(\Lambda_{n})_{-\mathbf{t}}\cap ((K_{p})_{\mathbf{v}})^+=((\Xi\cap K_{p})_{\mathbf{v}})^+\}\\
\supset \bigcup_{i=1,...,u}\bigcup_{h=1,...,v_i}\{\mathbf{t}\in\Lambda_{n}:(\Lambda_{n})_{-\mathbf{t}}\cap (K_{p})_{\mathbf{v}}=(\Pi_{i,h}\cup\Xi\cap K_{p})_{\mathbf{v}}\}
\end{multline*}
and since for every $i=1,...,u$
\begin{multline*}
|\bigcup_{h=1,...,v_i}\{\mathbf{t}\in\Lambda_{n}:(\Lambda_{n})_{-\mathbf{t}}\cap (K_{p})_{\mathbf{v}}=(\Pi_{i,h}\cup\Xi\cap K_{p})_{\mathbf{v}}\}|\\
=|\{\mathbf{t}\in\Lambda_{n}:(\Lambda_{n})_{-\mathbf{t}}\cap ((K_{p})_{\mathbf{w}_i})^+=(\Xi\cap K_{p})_{\mathbf{w}_i}\}|
\end{multline*}
we have that
\begin{align*}
&|\{\mathbf{t}\in\Lambda_{n}:(\Lambda_{n})_{-\mathbf{t}}\cap ((K_{p})_{\mathbf{v}})^+=((\Xi\cap K_{p})_{\mathbf{v}})^+\}\\
&\qquad\setminus \bigcup_{i=1,...,u}\bigcup_{h=1,...,v_i}\{\mathbf{t}\in\Lambda_{n}:(\Lambda_{n})_{-\mathbf{t}}\cap (K_{p})_{\mathbf{v}}=(\Pi_{i,h}\cup\Xi\cap K_{p})_{\mathbf{v}}\}|\\
&=|\{\mathbf{t}\in\Lambda_{n}:(\Lambda_{n})_{-\mathbf{t}}\cap ((K_{p})_{\mathbf{v}})^+=((\Xi\cap K_{p})_{\mathbf{v}})^+\}|\\
&\qquad- \sum_{i=1,...,u}\sum_{h=1,...,v_i}|\{\mathbf{t}\in\Lambda_{n}:(\Lambda_{n})_{-\mathbf{t}}\cap (K_{p})_{\mathbf{v}}=(\Pi_{i,h}\cup\Xi\cap K_{p})_{\mathbf{v}}\}|\\
&=|\{\mathbf{t}\in\Lambda_{n}:(\Lambda_{n})_{-\mathbf{t}}\cap ((K_{p})_{\mathbf{v}})^+=((\Xi\cap K_{p})_{\mathbf{v}})^+\}|\\
&\qquad- \sum_{i=1,...,u}|\{\mathbf{t}\in\Lambda_{n}:(\Lambda_{n})_{-\mathbf{t}}\cap ((K_{p})_{\mathbf{w}_i})^+=(\Xi\cap K_{p})_{\mathbf{w}_i}\}|\\
&
=\sum_{l=1,...,v}|\{\mathbf{t}\in\Lambda_{n}:\Lambda_{n}^{(\mathbf{t},2p)}=\Phi_{l}\}|- \sum_{i=1,...,u}\sum_{h=1,...,v_i}|\{\mathbf{t}\in\Lambda_{n}:\Lambda_{n}^{(\mathbf{t},2p)}=\Psi_{i,h}\}|,
\end{align*}
where the last equality follows by the definition of the $\Phi$'s and the $\Psi$'s and the fact that $\Phi_l\neq\Phi_m$ for every $l,m=1,...,v$ with $l\neq m$, and that $\Psi_{i,h}\neq\Psi_{i,k}$ for every $i=1,...,u$ and $h,k=1,...,v_i$ with $h\neq k$. Now, thanks to Point (I) in Proposition \ref{lem-AC1-L-2} we have that
\begin{equation*}
\lim\limits_{n\to\infty}|\{\mathbf{t}\in\Lambda_{n}:\Lambda_{n}^{(\mathbf{t},2p)}=\Phi_{l}\}|/|\Lambda_n|=\begin{cases}
\lambda_{2p,z}=\sum_{x\in I^{(z)}_{2p}}\lambda_x& \textnormal{if $\mathcal{D}_z\cap K_{2p}=\Phi_{l}$ for some $z\in\mathbb{N}$,}\\0&\textnormal{otherwise}.
\end{cases}
\end{equation*}
for $l=1,...,v$, and similarly for $\Psi_{i,h}$ for $i=1,...,u$ and $h=1,...,v_i$. Then, we obtain that the following limit exists
\begin{equation*}
\lim\limits_{n\to\infty}|\{\mathbf{t}\in\Lambda_{n}:(\Lambda_{n})_{-\mathbf{t}}\cap K_{p}=\Xi\cap K_{p}\}|/|\Lambda_n|.
\end{equation*}

Further, observe that for $p'>p$ we have the inclusion 
$$
\{\mathbf{t}\in\Lambda_{n}:(\Lambda_{n})_{-\mathbf{t}}\cap K_{p'}=\Xi\cap K_{p'}\}\subseteq \{\mathbf{t}\in\Lambda_{n}:(\Lambda_{n})_{-\mathbf{t}}\cap K_{p}=\Xi\cap K_{p}\},
$$
thus, the following limit exists
\begin{equation*}
\lim\limits_{p\to\infty}\lim\limits_{n\to\infty}|\{\mathbf{t}\in\Lambda_{n}:(\Lambda_{n})_{-\mathbf{t}}\cap K_{p}=\Xi\cap K_{p}\}|/|\Lambda_n|.
\end{equation*}
This concludes the first part of the statement.

Now, let $\Xi^+\neq\mathcal{D}_j$ for every $1\leq j\leq q$ and fix $\varepsilon>0$. As $\sum_{i=1}^q\lambda_i=1$, there exists some $m$ sufficiently large such that $\sum_{i\ge m}\lambda_i< \varepsilon$ and there exists $p$ sufficiently large so that $\Xi\cap K_{p}^+\neq \mathcal{D}_{i}\cap K_{p}$ for any $i\le m$. Then,
\begin{equation*}
\lim\limits_{n\to\infty}|\{\mathbf{t}\in\Lambda_{n}:(\Lambda_{n})_{-\mathbf{t}}\cap K_{p}=\Xi\cap K_{p}\}|/|\Lambda_n|<\varepsilon
\end{equation*}
Thus, 
\begin{equation*}
\lim\limits_{p\to\infty}\lim\limits_{n\to\infty}|\{\mathbf{t}\in\Lambda_{n}:(\Lambda_{n})_{-\mathbf{t}}\cap K_{p}=\Xi\cap K_{p}\}|/|\Lambda_n|=0,
\end{equation*}
which concludes the proof.

\subsection{Proof of Proposition \ref{lem-Xi1.5}}
Consider first the case of bounded $\Xi_b$. In this case, we have that $|\Xi_b|<1/\gamma_b+1$; otherwise we will have a contradiction because we will asymptotically end up with more points than the ones in $\Lambda_n$. Further, denote by $-\mathbf{z}$ its lowest point according to $\succ$, then we have
\begin{equation*}
\lim\limits_{p\to\infty}\lim\limits_{n\to\infty}|\{\mathbf{t}\in\Lambda_{n}:\Lambda_{n}^{(\mathbf{t},p)}=(\Xi_b)_{\mathbf{z}}\cap K_{p}^+\}|/|\Lambda_n|>0,
\end{equation*}
which implies that $(\Xi_b)_{\mathbf{z}}=\mathcal{D}_j$ for some $j=1,...,q$. Further, since 
\begin{equation*}
\lim\limits_{p\to\infty}\lim\limits_{n\to\infty}|\{\mathbf{t}\in\Lambda_{n}:(\Lambda_{n})_{-\mathbf{t}}\cap K_{p}=\{\mathbf{0}\}\cup\mathcal{D}_j\cap K_{p}\}|/|\Lambda_n|>0,
\end{equation*}
and since by (\ref{partition}) $\mathcal{L}_j\cup\bigcup_{i=1}^{b_j}(\mathcal L_{l_i})_{\mathbf{z}_{l_i}}=\{\mathbf{0}\}\cup\mathcal{D}_j$ we obtain that $\Xi_b=(\{\mathbf{0}\}\cup\mathcal{D}_j)_{-\mathbf{z}}$ and the first statement follows. 

Now, let $\Xi_b$ be unbounded. We show that $\Xi_b$ is a finite union of translated lattices. Consider any point $\mathbf{s}$ in $\Xi_b$. Let $g_p\in\mathbb{N}$ be such that $\Xi_b\cap K_{g_p}\supset\Xi_b\cap (K_p^+)_{-\mathbf{s}}$. Then, for every $n\in\mathbb{N}$
\begin{equation*}
|\{\mathbf{t}\in\Lambda_{n}:\Lambda_{n}^{(\mathbf{t},p)}=(\Xi_b)_{-\mathbf{s}}\cap K_{p}^+\}|\geq |\{\mathbf{t}\in\Lambda_{n}:(\Lambda_{n})_{-\mathbf{t}}\cap K_{g_p}=\Xi_b\cap K_{g_p}\}|
\end{equation*}
and since this holds for every $p$ large enough, we get
\begin{equation*}
\lim\limits_{p\to\infty}\lim\limits_{n\to\infty}|\{\mathbf{t}\in\Lambda_{n}:\Lambda_{n}^{(\mathbf{t},p)}=(\Xi_b)_{-\mathbf{s}}\cap K_{p}^+\}|/|\Lambda_n|\geq\gamma_b.
\end{equation*}
Then, we have that $((\Xi_b)_{-\mathbf{s}})^+=\mathcal{D}_k$ for some $k=1,...,q$. By Proposition \ref{prop:lattice1}, we deduce that $\Xi_b$ is a union of translated lattices. Further, this union is finite because $\gamma_b$ is strictly positive.

Now, consider a point $\mathbf{r}$ on the most preceding lattice of $\Xi_b$. Then, $((\Xi_b)_{-\mathbf{r}})^+=\mathcal{D}_j$, for some $j=1,...,q$, and so $((\Xi_b)_{-\mathbf{r}})=\bigcup_{i=0}^{b_j}(\mathcal L_{l_i})_{\mathbf{z}_{l_i}}$, which concludes the proof of the first statement.

Let us now prove the second statement. Let $p\in\mathbb{N}$. Define the equivalence relation $i\sim j  \Leftrightarrow i\in (F_p^{(j)})_{1\le j\le q'}$, one considers the partition of  $\{1,\ldots,q'\}$ generated by the equivalence classes $\mathfrak{P}'_p=\{1,\ldots,q'\}\setminus \sim $. Recall the definition of $\mathfrak{P}_p$ from the proof of point (I) in Proposition \ref{lem-AC1-L-2}. For every $l\in\mathfrak{P}_p$, let $\mathfrak{P}'_{p,l}\subset\mathfrak{P}'_p$ such that $i\in \mathfrak{P}'_{p,l}$ if $\Xi_i\cap K_p^+=\mathcal{D}_l\cap K_p^+$. Since 
\begin{equation*}
|\{\mathbf{t}\in\Lambda_{n}:\Lambda_{n}^{(\mathbf{t},p)}=\mathcal{D}_{j}\cap K_{p}\}|=\sum_{i=1,...,u}|\{\mathbf{t}\in\Lambda_{n}:(\Lambda_{n})_{-\mathbf{t}}\cap K_{p}=\Pi_{i}\cup\{\mathbf{0}\}\cup\mathcal{D}_{j}\cap K_{p}\}|
\end{equation*}
where $\Pi_1,...,\Pi_u$, $u\in\mathbb{N}$, are the subsets of $K_p^-\setminus\{\mathbf{0}\}$, we obtain that $\lambda_{p,j}=\sum_{i\in \mathfrak{P}'_{p,j}}\gamma_{p,i}$. Thus, we have that $1=\sum_{j\in\mathfrak{P}_p}\lambda_{p,j}=\sum_{j\in\mathfrak{P}_p}\sum_{i\in \mathfrak{P}'_{p,j}}\gamma_{p,i}=\sum_{i\in \mathfrak{P}'_p}\gamma_{p,i}$. By applying Fatou's lemma, we get that
\begin{multline*}
1=\lim\limits_{p\to\infty}\sum_{i\in \mathfrak{P}'_p}\gamma_{p,i}=\liminf\limits_{p\to\infty}\sum_{i\in \mathfrak{P}'_p}\gamma_{p,i}\\\geq\sum_{j=1}^{q'}\liminf\limits_{p\to\infty}\lim\limits_{n\to\infty}\frac{|\{\mathbf{t}\in\Lambda_{n}:(\Lambda_{n})_{-\mathbf{t}}\cap K_{p}=\Xi_j\cap K_{p}\}|}{|\Lambda_n|}=\sum_{j=1}^{q'}\gamma_j.
\end{multline*}
Hence, $1\geq \sum_{j=1}^{q'}\gamma_j$. By applying the same arguments as the ones used in the proof of point (I) in Proposition \ref{lem-AC1-L-2} we have that $\sum_{i\in F_p^{(j)}}\gamma_i\geq\gamma_{j,p}$ for every $j\in\mathfrak{P}'_p$, which implies that $\sum_{i=1}^{q'}\gamma_i=\sum_{j\in\mathfrak{P}'_p}\sum_{i\in F_p^{(j)}}\gamma_i\geq\sum_{j\in\mathfrak{P}'_p}\gamma_{j,p}=1$. Therefore, combining the two results we have that $\sum_{i=1}^{q'}\gamma_i=1$ and $\sum_{i\in F_p^{(j)}}\gamma_i=\gamma_{j,p}$ for every $j\in\mathfrak{P}'_p$.

\subsection{Proof of Proposition \ref{lem-Xi2}}

	Let $j\in I^*$. Consider $(\Xi^{*}_j)_{-\mathbf{x}}$ for every $\mathbf{x}\in\mathcal{E}_j$ and observe that these are the only possible $\Xi$s that can be formed by translations of $\Xi^{*}_j$. The weights of the sets $(\Xi^{*}_j)_{-\mathbf{x}}$, $\mathbf{x}\in\mathcal{E}_j$, are all equal to $\gamma_j^*$. This is because of the following arguments. Let $\gamma_b$ be the weight of $(\Xi^{*}_j)_{-\mathbf{x}}$. Consider a point $\mathbf{x}\in\mathcal{E}_j\setminus\{\mathbf{0}\}$. For any $p\in\mathbb{N}$, let $g_p\in\mathbb{N}$ be such that $\Xi^{*}_j\cap K_{g_p}\supset\Xi^{*}_j\cap (K_p)_{\mathbf{x}}$. Then, for every $n\in\mathbb{N}$
	\begin{equation*}
	|\{\mathbf{t}\in\Lambda_{n}:(\Lambda_{n})_{-\mathbf{t}}\cap K_{p}=(\Xi^{*}_j)_{-\mathbf{x}}\cap K_{p}\}|\geq |\{\mathbf{t}\in\Lambda_{n}:(\Lambda_{n})_{-\mathbf{t}}\cap K_{g_p}=\Xi^{*}_j\cap K_{g_p}\}|
	\end{equation*}
	which implies that $\gamma_b\geq\gamma_j^*$. Conversely, for any $p\in\mathbb{N}$, let $f_p\in\mathbb{N}$ be such that $(\Xi^{*}_j)_{-\mathbf{x}}\cap K_{f_p}\supset(\Xi^{*}_j)_{-\mathbf{x}}\cap (K_p)_{-\mathbf{x}}$. Then, for every $n\in\mathbb{N}$
	\begin{equation*}
	|\{\mathbf{t}\in\Lambda_{n}:(\Lambda_{n})_{-\mathbf{t}}\cap K_{p}=\Xi^{*}_j\cap K_{p}\}|\geq |\{\mathbf{t}\in\Lambda_{n}:(\Lambda_{n})_{-\mathbf{t}}\cap K_{f_p}=(\Xi^{*}_j)_{-\mathbf{x}}\cap K_{f_p}\}|
	\end{equation*}
	which implies that $\gamma_j^*\geq\gamma_b$, hence $\gamma_j^*=\gamma_b$. Thus, for each $j\in I^*$ we have that the sum of the weights of the $\Xi$s composed by the translations of $\Xi^{*}_k$ is $|\mathcal{E}_j|\gamma^*_j$. Since each $\Xi_b$, $b=1,...,q'$, is the translation of a $\Xi^{*}_k$ for some $k\in I^*$, we obtain that $\sum_{j\in I^*}\gamma^*_j|\mathcal{E}_j|=\sum_{i=1}^{q'}\gamma_i=1$, where the last equality comes from Proposition \ref{lem-Xi1.5}.

\subsection{Proof of Proposition \ref{prop:latlambd}}

First, we have that $m_l\to\infty$ as $l\to\infty$ and, since by Lemma \ref{lem-Xi2} we know that $\sum_{i\in I^*}\gamma_i^*|\mathcal{E}_i|=1$, we obtain that $\sum_{i\in I^*,i>m_l}\gamma_i^*|\mathcal{E}_i|\to 0$ as $l\to\infty$.
	
	For every $n\in\mathbb{N}$ and $j\in I^*$ with $j<m_{4l}$, consider the set $S_{j,4l}$. We let the dependency on $n$ be implicit. Let $j,i\in I^*$ with $j,i<m_{4l}$ and $i\neq j$. In the following we show that for every $\mathbf{t}\in S_{j,4l}$ and $\mathbf{s}\in S_{i,4l}$ we have that $(\mathcal{D}_j\cap K_{2l})_{\mathbf{t}}\cap (\mathcal{D}_i\cap K_{2l})_{\mathbf{s}}=\emptyset$. The idea behind the following proof is that by taking points in $\Lambda_{n}$ with certain structure on $K_{4l}$ around them (\textit{i.e.}~$\Xi\cap K_{4l}$ for $i\in I^*$ with $i<m_{4l}$) where $l$ is large enough (see above), we ensure that the sets $K^+_{2l}$ around them do not intersect for different structures (\textit{i.e.}~$(\mathcal{D}_j\cap K_{2l})_{\mathbf{t}}\cap (\mathcal{D}_i\cap K_{2l})_{\mathbf{s}}=\emptyset$, for every $\mathbf{t}\in S_{j,4l}$ and $\mathbf{s}\in S_{i,4l}$ and every $i,j\in I^*$ with $i,j<m_{4l}$ and $i\neq j$).
	
	First, consider the case of $\mathcal{D}_i$ and $\mathcal{D}_j$ bounded. Notice that $\mathbf{t}\neq\mathbf{s}$ because $\mathcal{D}_j\cap K_{4l}\neq \mathcal{D}_i\cap K_{4l}$ and so $\Xi^{*}_j\cap K_{4l}\neq \Xi^{*}_i\cap K_{4l}$. Thus, if $(\mathcal{D}_j\cap K_{2l})_{\mathbf{t}}$ and $(\mathcal{D}_i\cap K_{2l})_{\mathbf{s}}$ have an intersection then one of the two $\Xi^*\cap K_{4l}$'s will have at least one point in $K_{4l}^-\setminus\{\mathbf{0}\}$ (in particular at $\mathbf{s}-\mathbf{t}$ if $\mathbf{t}\succ\mathbf{s}$ or at $\mathbf{t}-\mathbf{s}$ if $\mathbf{s}\succ\mathbf{t}$) which is impossible by definition of bounded $\Xi^*$'s because its lowest point (according to $\succ$) is $\{\mathbf{0}\}$.
	
	Second, consider the case of $\mathcal{D}_i$ bounded and $\mathcal{D}_j$ unbounded. Then, as before $\mathbf{t}\neq\mathbf{s}$. Moreover, if $(\mathcal{D}_j\cap K_{2l})_{\mathbf{t}}$ and $(\mathcal{D}_i\cap K_{2l})_{\mathbf{s}}$ have an intersection and $\mathbf{s}\succ\mathbf{t}$ then $\Xi^{*}_i\cap K_{4l}$ will have at least one point in $K_{4l}^-\setminus\{\mathbf{0}\}$ which is impossible. If they have an intersection and $\mathbf{t}\succ\mathbf{s}$, then we have $\Xi^{*}_i\cap K_{2l}=\mathcal{D}_{l_h}\cap K_{2l}$ for some $l_h=l_1,...,l_{b_j}$, because $(K_{4l})_{\mathbf{t}}\supset(K_{2l})_{\mathbf{s}}$ and so the structure of $(\Xi^{*}_j\cap K_{4l})_{\mathbf{t}}$ implies that $(\Lambda_n)_{-\mathbf{s}}\cap K^+_{2l}=\mathcal{D}_{l_h}\cap K_{2l}$ for some $l_h=l_1,...,l_{b_j}$. However, the equality $\Xi^{*}_i\cap K_{2l}=\mathcal{D}_{l_h}\cap K_{2l}$ is impossible by construction.
	
	Third, consider the case of $\mathcal{D}_i$ and $\mathcal{D}_j$ unbounded. Then, as before $\mathbf{t}\neq\mathbf{s}$. Further, if  $\mathbf{t}\succ\mathbf{s}$, then we have $\Xi^{*}_i\cap K_{2l}=\mathcal{D}_{l_h}\cap K_{2l}$ for some $l_h=l_1,...,l_{b_j}$ as in the previous paragraph, which is impossible by construction. We conclude by observing that the case $\mathbf{s}\succ\mathbf{t}$ is specular to the case $\mathbf{t}\succ\mathbf{s}$.
	
	Now, we bound the number of points in $S_{i,4l}$ for every $i<m_{4l}$ and $i\in I^*$. For every $\mathcal{D}_i$ bounded with $i<m_{4l}$ and $i\in I^*$, let $S'_{i,4l}=S_{i,4l}$ if $|S_{i,4l}|\leq\gamma^{*}_i|\Lambda_{n}|$ and let $S'_{i,4l}$ be a subset of $S_{i,4l}$ with $|S'_{i,4l}|=\gamma^{*}_i|\Lambda_{n}|$ if $|S_{i,4l}|>\gamma^{*}_i|\Lambda_{n}|$.
	
	For the unbounded case we have the following. Consider any $\mathcal{D}_i$ unbounded with $i<m_{4l}$ and $i\in I^*$. Notice that $(\mathcal{E}_{i})_{\mathbf{s}}\cap(\mathcal{E}_{i})_{\mathbf{t}}=\emptyset$ for every $\mathbf{s},\mathbf{t}\in S_{i,4l}$ because $\mathcal{E}_{i}\subset K_l^+\cup\{\mathbf{0}\}$ and because we are considering $\Xi_i\cap K_{4l}$ and thus an intersection would violate the structure of $\Xi_i\cap K_{4l}$. For any $\mathbf{t}\in S_{i,4l}$, consider the set $(\mathcal{D}_{i}\cap K_{2l})\setminus\bigcup_{\mathbf{s}\in (S_{i,4l})_{-\mathbf{t}},\mathbf{s}\prec\mathbf{0}}(\mathcal{E}_{i})_{\mathbf{s}}$. Consider the set of points $\mathbf{t}\in S_{i,4l}$ such that 
	\begin{equation*}
	(\mathcal{D}^*_{i}\cap K_{2l})\setminus\bigcup_{\mathbf{s}\in (S_{i,4l})_{-\mathbf{t}},\mathbf{s}\prec\mathbf{0}}(\mathcal{E}_{i})_{\mathbf{s}}=(\mathcal{D}^*_{i}\cap K_{2l})\setminus \bigcup_{\mathbf{s}\in -\mathcal{G}_i\setminus\{\mathbf{0}\}}(\mathcal{E}_{i})_{\mathbf{s}}
	\end{equation*}
	and denote it by $\tilde{S}_{i,4l}$. We remark that 
	\begin{equation*}
	(\mathcal{D}_{i}\cap K_{2l})\setminus\bigcup_{\mathbf{s}\in (S_{i,4l})_{-\mathbf{t}},\mathbf{s}\prec\mathbf{0}}(\mathcal{E}_{i})_{\mathbf{s}}=(\mathcal{D}_{i}\cap K_{2l})\setminus\bigcup_{\mathbf{s}\in (\cup_{i\in I^*,i<m_{4l}}S_{i,4l})_{-\mathbf{t}},\mathbf{s}\prec\mathbf{0}}(\mathcal{E}_{i})_{\mathbf{s}}
	\end{equation*}
	because by construction for every $\mathbf{t}\in S_{i,4l}$ and $\mathbf{s}\in S_{j,4l}$, where $j\in I^*$ with $j\neq i$ and $j<m_{4l}$, we have that $(\mathcal{D}_i\cap K_{2l})_{\mathbf{t}}\cap (\mathcal{D}_j\cap K_{2l})_{\mathbf{s}}=\emptyset$ and so that $(\mathcal{D}_i\cap K_{2l})_{\mathbf{t}}\cap (\mathcal{E}_j)_{\mathbf{s}}=\emptyset$.
	
	Now, if $|\tilde{S}_{i,4l}|>\gamma^{*}_i|\Lambda_{n}|$ and we arbitrarily take out a point $\mathbf{x}\in S_{i,4l}$ then we might end up taking out more than one point in $\tilde{S}_{i,4l}$ because the points in $\tilde{S}_{i,4l}$ need the existence of certain points in $S_{i,4l}$ around them. Thus, we need to show that it is possible to find a procedure in which by taking out a certain point in $S_{i,4l}$ we only take out one (and only one) point in $\tilde{S}_{i,4l}$.
	
	Let $v:=|\{\mathbf{s}\in-\mathcal{G}_i\setminus\{\mathbf{0}\}:(\mathcal{D}_{i}\cap K_{2l})\cap(\mathcal{E}_{i})_{\mathbf{s}}\neq\emptyset\}|$. For each $\mathbf{t}\in\tilde{S}_{i,4l}$, let $\mathbf{s}_{\mathbf{t},1}\prec...\prec\mathbf{s}_{\mathbf{t},v}\prec\mathbf{0}$ be the points in $(S_{i,4l})_{-\mathbf{t}}$ such that $(\mathcal{D}_{i}\cap K_{2l})\cap(\mathcal{E}_{i})_{\mathbf{s}_{\mathbf{t},h}}\neq\emptyset$, $h=1,...,v$. Consider the lowest point in $\tilde{S}_{i,4l}$ according to $\succ$ and denote it by $\mathbf{w}$. Then, by taking out $\mathbf{s}_{\mathbf{w},1}$ from $S_{i,4l}$ we only take out $\mathbf{w}$ from $\tilde{S}_{i,4l}$ (but not from $S_{i,4l}$). This is because $\mathbf{s}_{\mathbf{w},1}$ is the lowest among $\mathbf{s}_{\mathbf{w},1},...,\mathbf{s}_{\mathbf{w},v}$ and since $\mathbf{s}_{\mathbf{w},1}$ is the lowest point in $\tilde{S}_{i,4l}$, this implies that $\mathbf{s}_{\mathbf{w},1}\neq \mathbf{s}_{\mathbf{t},h}$ for every $\mathbf{t}\in\tilde{S}_{i,4l}\setminus\{\mathbf{w}\}$ and every $h=1,...,v$. Thus, by taking out $\mathbf{s}_{\mathbf{w},1}$ from $S_{i,4l}$ we are not taking out any other point in $\tilde{S}_{i,4l}$ apart from $\mathbf{w}$.
	
	Now, if $|\tilde{S}_{i,4l}|\leq\gamma^{*}_i|\Lambda_{n}|$ let $S'_{i,4l}=\tilde{S}_{i,4l}$, while if $|\tilde{S}_{i,4l}|>\gamma^{*}_i|\Lambda_{n}|$ then, following the above procedure, reduces the points in $S_{i,4l}$ to obtain a set, which we denote $S^{(reduced)}_{i,4l}$, such that $|\tilde{S}^{(reduced)}_{i,4l}|=\gamma^{*}_i|\Lambda_{n}|$ and let $S'_{i,4l}=\tilde{S}^{(reduced)}_{i,4l}$.
	
	Concerning the asymptotic behaviour of $S'_{i,4l}$, in the bounded case, since $\lim\limits_{n\to\infty}|\{\mathbf{t}\in\Lambda_{n}:(\Lambda_{n})_{-\mathbf{t}}\cap K_{4l}=\Xi^{*}_i\cap K_{4l}\}|/|\Lambda_{n}|\geq\gamma^*_i$, by continuity of the minimum function we obtain that 
	\begin{equation*}
	\lim\limits_{n\to\infty}|S'_{i,4l}|/|\Lambda_{n}|=\lim\limits_{n\to\infty}(|\{\mathbf{t}\in\Lambda_{n}:(\Lambda_{n})_{-\mathbf{t}}\cap K_{4l}=\Xi^{*}_i\cap K_{4l}\}|\wedge \gamma^*_i|\Lambda_{n}|)/|\Lambda_{n}|=\gamma^*_i.
	\end{equation*}
	In the unbounded case, notice that $S_{i,4l}\supset\tilde{S}_{i,4l}\supset S_{i,p}$ for every $p\geq8l$ and every $n\in\mathbb{N}$. Since $\lim\limits_{n\to\infty}|S_{i,p}|/|\Lambda_{n}|=\lim\limits_{n\to\infty}|\{\mathbf{t}\in\Lambda_{n}:(\Lambda_{n})_{-\mathbf{t}}\cap K_{p}=\Xi^{*}_i\cap K_{p}\}|/|\Lambda_{n}|\geq \gamma^*_i$ for every $p\in\mathbb{N}$, then we have that
	\begin{equation*}
	\gamma^*_i=\lim\limits_{n\to\infty}(|\{\mathbf{t}\in\Lambda_{n}:(\Lambda_{n})_{-\mathbf{t}}\cap K_{4l}=\Xi^{*}_i\cap K_{4l}\}|\wedge \gamma^*_i|\Lambda_{n}|)/|\Lambda_{n}|\leq\lim\limits_{n\to\infty}(|\tilde{S}_{i,4l}|\wedge \gamma^*_i|\Lambda_{n}|)/|\Lambda_{n}|
	\end{equation*}
	\begin{equation*}
	\leq\lim\limits_{n\to\infty}(|\{\mathbf{t}\in\Lambda_{n}:(\Lambda_{n})_{-\mathbf{t}}\cap K_{8l}=\Xi^{*}_i\cap K_{8l}\}|\wedge \gamma^*_i|\Lambda_{n}|)/|\Lambda_{n}|=\gamma^*_i.
	\end{equation*}
	Since $|\tilde{S}_{i,4l}|\wedge \gamma^*_i|\Lambda_{n}|=|S'_{i,4l}|$ we obtain that $|S'_{i,4l}|/|\Lambda_{n}|\to\gamma^*_i$ as $n\to\infty$.
	
	Finally, since $\sum_{i\in I^*,i<m_{4l}}|S'_{i,4l}||\mathcal{E}_i|\to\sum_{i\in I^*,i<m_{4l}}\gamma^*_i|\mathcal{E}_i|$ as $n\to\infty$ for every fixed $l$ and since $\sum_{j<m_{4l}}\gamma^*_{j}|\mathcal{E}_{j}|\to\sum_{j\in I^*}\gamma^*_{j}|\mathcal{E}_{j}|=1$ monotonically as $l\to\infty$, we conclude that $\lim\limits_{l\to\infty}\lim\limits_{n\to\infty}\frac{|\Lambda_{n}|-\sum_{i\in I^*,i<m_{4l}}|S'_{i,4l}||\mathcal{E}_i|}{|\Lambda_{n}|}=1$.

\section{Proofs in Sections \ref{sec:spectral} and \ref{sec:Upsilon}}\label{sec:proof2}

\subsection{Proof of Theorem \ref{t1-L}}

First, by $\mathcal{A}^{\Lambda}(a_{n}^\Lambda)$ it suffices to show that for any $g\in \mathbb{C}^{+}_{K}$, $(\Psi_{\tilde{N}^{\Lambda}_{r_{n}}}(g))^{k_{n}}$ converges to (\ref{Psi-L}) as $n\to\infty$. Then, by regular variation of $|\mathbf{X}|$ and the definition of $(a_{n} )$
	\begin{equation*}
	1-\Psi_{\tilde{N}^{\Lambda}_{r_{n}}}(g)
	\leq \mathbb{P}(\max_{\mathbf{t}\in \Lambda_{r_{n}}}|\mathbf{X}_{\mathbf{t}}|>\delta a_{n})\leq \frac{|\Lambda_{r_{n}}|}{|\Lambda_n|}[|\Lambda_n|\mathbb{P}(|\mathbf{X}|>\delta a_{n})]=O(1/k_{n})
	\end{equation*}
	as $n\to\infty$.
	So by Taylor expansion it suffices to prove that $k_{n}(1-\Psi_{\tilde{N}^{\Lambda}_{r_{n}}}(g))$ converges to the logarithm of (\ref{Psi-L}) as $n\to\infty$. Denote $t_{|\Lambda_{r_{n}}|}$ the highest element of $\Lambda_{r_{n}}$ according to $\prec$, by $t_{|\Lambda_{r_{n}}|-1}$ the second highest one,..., by $t_{1}$ the lowest one. Let
	\begin{equation*}
	\tilde{\Psi}_{m}(g)=\begin{cases}
	\mathbb{E}\Big[\exp\Big(-\sum_{j=m}^{|\Lambda_{r_{n}}|}g({a_{n}^\Lambda}^{-1}\mathbf{X}_{t_{j}}) \Big) \Big],& 1\leq m\leq |\Lambda_{r_{n}}|,\\1, & m=|\Lambda_{r_{n}}|+1.
	\end{cases}
	\end{equation*}
	Recall that $K_{l}=\{\mathbf{x}\in\mathbb{Z}^{k}:\mathbf{x}\in\{-l,...,l\}^{k}\}$. Using the stationarity of $\mathbf{X}$ and the fact that $\prec$ is shift invariant, we have
	\begin{align*}
	&\tilde{\Psi}_{m+1}(g)-\tilde{\Psi}_{m}(g)\\=&\mathbb{E}\Big[e^{-\sum_{j=m+1}^{|\Lambda_{r_{n}}|}g({a_{n}^\Lambda}^{-1}\mathbf{X}_{t_{j}-t_{m}})}\Big(1-e^{-g({a_{n}^\Lambda}^{-1}\mathbf{X}_{\mathbf{0}})} \Big)  \Big]\\
	=&\mathbb{E}\Big[e^{-\sum_{\mathbf{t}\in\{t_{m+1}-t_{m},...,t_{|\Lambda_{r_{n}}|}-t_{m}\}\cap K_{l}}g({a_{n}^\Lambda}^{-1}\mathbf{X}_{\mathbf{t}})}\Big(1-e^{-g({a_{n}^\Lambda}^{-1}\mathbf{X}_{\mathbf{0}})} \Big) \mathbf{1}(\hat{M}_{l+1,r_{n}}^{\Lambda,|\mathbf{X}|}\leq\delta a_{n}) \Big]\\
	&+\mathbb{E}\Big[e^{-\sum_{j=m+1}^{|\Lambda_{r_{n}}|}g({a_{n}^\Lambda}^{-1}\mathbf{X}_{t_{j}-t_{m}})}\Big(1-e^{-g({a_{n}^\Lambda}^{-1}\mathbf{X}_{\mathbf{0}})} \Big)  \mathbf{1}(\hat{M}_{l+1,r_{n}}^{\Lambda,|\mathbf{X}|}>\delta a_{n}) \Big]\\
	=&\mathbb{E}\Big[e^{-\sum_{\mathbf{t}\in\{t_{m+1}-t_{m},...,t_{|\Lambda_{r_{n}}|}-t_{m}\}\cap K_{l}}g({a_{n}^\Lambda}^{-1}\mathbf{X}_{\mathbf{t}})}\Big(1-e^{-g({a_{n}^\Lambda}^{-1}\mathbf{X}_{\mathbf{0}})} \Big) \Big]\\
	&-\mathbb{E}\Big[e^{-\sum_{\mathbf{t}\in\{t_{m+1}-t_{m},...,t_{|\Lambda_{r_{n}}|}-t_{m}\}\cap K_{l}}g({a_{n}^\Lambda}^{-1}\mathbf{X}_{\mathbf{t}})}\Big(1-e^{-g({a_{n}^\Lambda}^{-1}\mathbf{X}_{\mathbf{0}})} \Big) \mathbf{1}(\hat{M}_{l+1,r_{n}}^{\Lambda,|\mathbf{X}|}>\delta a_{n})\mathbf{1}(|\mathbf{X}_{\mathbf{0}}|>\delta a_{n}) \Big]\\
	&+\mathbb{E}\Big[e^{-\sum_{j=m+1}^{|\Lambda_{r_{n}}|}g({a_{n}^\Lambda}^{-1}\mathbf{X}_{t_{j}-t_{m}})}\Big(1-e^{-g({a_{n}^\Lambda}^{-1}\mathbf{X}_{\mathbf{0}})} \Big)  \mathbf{1}(\hat{M}_{l+1,r_{n}}^{\Lambda,|\mathbf{X}|}>\delta a_{n})\mathbf{1}(|\mathbf{X}_{\mathbf{0}}|>\delta a_{n}) \Big]\\
	=&\mathbb{E}\Big[e^{-\sum_{\mathbf{t}\in\{t_{m+1}-t_{m},...,t_{|\Lambda_{r_{n}}|}-t_{m}\}\cap K_{l}}g({a_{n}^\Lambda}^{-1}\mathbf{X}_{\mathbf{t}})}\Big(1-e^{-g({a_{n}^\Lambda}^{-1}\mathbf{X}_{\mathbf{0}})} \Big) \Big]+J^{(r_{n})}_{l,m}
	\end{align*}
	where $J^{(r_{n})}_{l,m}$ is such that 
	\begin{equation*}
	\lim\limits_{l\to\infty}\limsup_{n\to\infty}k_{n}\sum_{m=1}^{|\Lambda_{r_{n}}|}|J^{(r_{n})}_{l,m}|\leq 2 \lim\limits_{l\to\infty}\limsup_{n\to\infty}\mathbb{P}(\hat{M}^{\Lambda,|\mathbf{X}|}_{l+1,r_{n}}>\delta a_{n}\,\big| |\mathbf{X}_{\mathbf{0}}|>\delta a_{n})|\Lambda_n|\mathbb{P}(|\mathbf{X}|>\delta a_{n})=0.
	\end{equation*}
	Now, for the every point in $\Lambda_{r_{n}}$ we have that
	\begin{align*}
	&\mathbb{E}\Big[e^{-\sum_{\mathbf{t}\in\{t_{m+1}-t_{m},...,t_{|\Lambda_{r_{n}}|}-t_{m}\}\cap K_{l}}g({a_{n}^\Lambda}^{-1}\mathbf{X}_{\mathbf{t}})}\Big(1-e^{-g({a_{n}^\Lambda}^{-1}\mathbf{X}_{\mathbf{0}})} \Big) \Big]\\
	&=\mathbb{E}\Big[e^{-\sum_{\mathbf{t}\in\{t_{m+1}-t_{m},...,t_{|\Lambda_{r_{n}}|}-t_{m}\}\cap K_{l}}g({a_{n}^\Lambda}^{-1}\mathbf{X}_{\mathbf{t}})}\Big(1-e^{-g({a_{n}^\Lambda}^{-1}\mathbf{X}_{\mathbf{0}})} \Big)\Big||\mathbf{X}_{\mathbf{0}}|>\delta a_{n} \Big]\mathbb{P}(|\mathbf{X}|>\delta a_{n})\\
	&\leq\mathbb{P}(|\mathbf{X}|>\delta a_{n})
	\end{align*}
	To lighten the notation assume that $q$ in Condition ($\mathcal{D}^{\Lambda}$) is $\infty$, so that there are infinitely many $\mathcal{D}$s. By point (ii) in the construction of $\Lambda_{r_{n}}$ only the points $\bigcup_{i=1}^{\infty}\{\mathbf{t}\in\Lambda_{r_{n}}:\Lambda_{r_{n}}^{(\mathbf{t},l)}=\mathcal{D}_{i}\cap K_{l}\}$ are asymptotically relevant, because by (i) and (ii) we have that $|\Lambda_{r_{n}}\setminus \bigcup_{i=1}^{\infty}\{\mathbf{t}\in\Lambda_{r_{n}}:\Lambda_{r_{n}}^{(\mathbf{t},l)}=\mathcal{D}_{i}\cap K_{l}\}|\to 0$.
	
	Observe that there are finitely many different subsets of $K_{l}$. We denote their total number by $\tau_{l}$ and denote them by $\Xi_{l}^{(1)},....,\Xi_{l}^{(\tau_{l})}$. Thus, we have
\begin{align*}
k_{n}\sum_{m=1}^{|\Lambda_{r_{n}}|}&\mathbb{E}\Big[\exp\Big(-\sum_{\mathbf{t}\in\{t_{m+1}-t_{m},...,t_{|\Lambda_{r_{n}}|}-t_{m}\}\cap K_{l}}g({a_{n}^\Lambda}^{-1}\mathbf{X}_{\mathbf{t}}) \Big)\Big(1-e^{-g({a_{n}^\Lambda}^{-1}\mathbf{X}_{\mathbf{0}})} \Big) \Big]\\
&=\sum_{j=1}^{\tau_{l}}k_{n}\mu_{r_{n}}^{(j)}\mathbb{E}\Big[\exp\Big(-\sum_{\mathbf{t}\in\Xi_{l}^{(j)}}g({a_{n}^\Lambda}^{-1}\mathbf{X}_{\mathbf{t}}) \Big)\Big(1-e^{-g({a_{n}^\Lambda}^{-1}\mathbf{X}_{\mathbf{0}})} \Big) \Big]
\end{align*}
where $\mu_{r_{n}}^{(j)}:=|\{t_{m},m=1,...,|\Lambda_{r_{n}}|:\{t_{m+1}-t_{m},...,t_{|\Lambda_{r_{n}}|}-t_{m}\}\cap K_{l}=\Xi_{l}^{(j)}\}|$. Recall from the proof of Point (I) in Proposition \ref{lem-AC1-L-2}, that by defining the equivalence relation $i\sim j  \Leftrightarrow i\in (I_p^{(j)})_{1\le j\le q}$, one considers the partition of $\{1,\ldots,q\}$ generated by the equivalence classes $\mathfrak{P}_p=\{1,\ldots,q\}\setminus \sim $. Then, by (i) and (ii) and in particular by point (I) in Proposition \ref{lem-AC1-L-2} we have
\begin{equation*}
\lim\limits_{n\to\infty}\frac{k_{n}\mu_{r_{n}}^{(j)}}{|\Lambda_{r_{n}}|}=\begin{cases}
\lambda_{i,l}, & \textnormal{if $\Xi_{l}^{(j)}= \mathcal{D}_{i}\cap K_{l}$ for some $i\in\mathfrak{P}_l$,}\\ 0, & \textnormal{otherwise.}
\end{cases}
\end{equation*}
Therefore, we have that 
\begin{align*}
\lim\limits_{n\to\infty}&\sum_{j=1}^{\tau_{l}}k_{n}\mu_{r_{n}}^{(j)}\mathbb{E}\Big[\exp\Big(-\sum_{\mathbf{t}\in\Xi_{l}^{(j)}}g({a_{n}^\Lambda}^{-1}\mathbf{X}_{\mathbf{t}}) \Big)\Big(1-e^{-g({a_{n}^\Lambda}^{-1}\mathbf{X}_{\mathbf{0}})} \Big) \Big]\\
&=\delta^{-\alpha}\sum_{i\in\mathfrak{P}_l}\lambda_{i,l}\mathbb{E}\Big[\exp\Big(-\sum_{\mathbf{t}\in\mathcal{D}_{i}\cap K_{l}}g(\delta \mathbf{Y}_{\mathbf{t}}) \Big)\Big(1-e^{-g(\delta \mathbf{Y}_{\mathbf{0}})} \Big) \Big]\\
&=\delta^{-\alpha}\sum_{i=1}^{\infty}\lambda_{i}\mathbb{E}\Big[\exp\Big(-\sum_{\mathbf{t}\in\mathcal{D}_{i}\cap K_{l}}g(\delta \mathbf{Y}_{\mathbf{t}}) \Big)\Big(1-e^{-g(\delta \mathbf{Y}_{\mathbf{0}})} \Big) \Big]\\
&=\int_{0}^{\infty}\sum_{i=1}^{\infty}\lambda_{i}\mathbb{E}\Big[\exp\Big(-\sum_{\mathbf{t}\in\mathcal{D}_{i}\cap K_{l}}g(y\mathbf{\Theta}_{\mathbf{t}}) \Big)\Big(1-e^{-g(y\mathbf{\Theta}_{\mathbf{0}})} \Big) \Big]d(-y^{-\alpha}).
\end{align*}
Notice that the above arguments hold for every $l$ large enough. By monotone convergence theorem we have that
\begin{align*}
\lim\limits_{l\to\infty}&\int_{0}^{\infty}\sum_{i=1}^{\infty}\lambda_{i}\mathbb{E}\Big[\exp\Big(-\sum_{\mathbf{t}\in\mathcal{D}_{i}\cap K_{l}}g(y\mathbf{\Theta}_{\mathbf{t}}) \Big)\Big(1-e^{-g(y\mathbf{\Theta}_{\mathbf{0}})} \Big) \Big]d(-y^{-\alpha})\\
&=\int_{0}^{\infty}\sum_{i=1}^{\infty}\lambda_{i}\mathbb{E}\Big[\exp\Big(-\sum_{\mathbf{t}\in\mathcal{D}_{i}}g(y\mathbf{\Theta}_{\mathbf{t}}) \Big)\Big(1-e^{-g(y\mathbf{\Theta}_{\mathbf{0}})} \Big) \Big]d(-y^{-\alpha}).
\end{align*}
Finally, the existence of the limiting random measure $N^{\Lambda}$ is ensured by Corollary 4.14 in \cite{Kallenberg2}.

\subsection{Proof of Proposition \ref{lem-bound-L}}

In order to prove Proposition \ref{lem-bound-L} we need the following Lemma.
\begin{lemma}\label{lem-AC1-L}
	Let $(\mathbf Y_{\mathbf{t}}:\mathbf{t}\in\mathbb{Z}^{k})$ be an $\mathbb{R}^{d}$-valued random field such that the time change formula (\ref{timechangeY}) is satisfied. Let $\mathbf{\Theta}_{\mathbf{t}}=\mathbf Y_{\mathbf{t}}/|{\mathbf Y}_{\mathbf{0}}|$, $\mathbf{t}\in\mathbb{Z}^{k}$. Let $\Upsilon$ be a subset of $\{\mathbf{t}\in\mathbb{Z}^{k}:\mathbf{t}\succeq\mathbf{0}\}$ containing $\{\mathbf{0}\}$ and assume that $\Upsilon\cup-\Upsilon$ is translation invariant along the points of a (not necessarily full rank) lattice. Then $|\mathbf{\Theta}_{\mathbf{t}}|\to0$ a.s.~as $|\mathbf{t}|\to\infty$ for $\mathbf{t}\in\Upsilon$ implies that $\sum_{\mathbf{t}\in\Upsilon\cup-\Upsilon}|\mathbf{\Theta}_{\mathbf{t}}|^{\alpha}<\infty$ a.s..
\end{lemma}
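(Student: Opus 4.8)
The plan is to follow the Basrak--Segers summability argument for the tail process, whose engine is the time change formula~(\ref{timechangeTheta}) used to re-anchor the spectral field at an arbitrary index, and to organise the bookkeeping around the lattice $\mathcal{L}$ under which $\Sigma:=\Upsilon\cup(-\Upsilon)$ is invariant (of rank $m$, say). Since $\mathbf{0}\in\Sigma$ we have $\mathcal{L}\subseteq\Sigma$, and $\mathcal{L}$-invariance exhibits $\Sigma$ as a union of $\mathcal{L}$-cosets, finitely many in the situations we need (Proposition~\ref{lem-bound-L}), say $\Sigma=\bigcup_{i=1}^{N}(\mathbf{c}_i+\mathcal{L})$ with $\mathbf{c}_1=\mathbf{0}$. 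If $m=0$ then $\Sigma$ is finite and the claim is immediate because $|\mathbf{\Theta}_{\mathbf{t}}|<\infty$ a.s.\ for every $\mathbf{t}$, so assume $m\ge 1$. A preliminary step is to upgrade the hypothesis to $|\mathbf{\Theta}_{\mathbf{t}}|\to 0$ along all of $\Sigma$: the indices of $\Sigma$ that are $\succ\mathbf{0}$ lie in $\Upsilon$, where vanishing is assumed, so only vanishing along $-\Upsilon$ is missing, and this follows from the (random-field) time-reversal symmetry of the spectral tail field, itself a consequence of~(\ref{timechangeTheta}) as in the time series case.

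Next I would reduce to the lattice $\mathcal{L}$ itself. The restriction $(\mathbf{X}_{\mathbf{t}})_{\mathbf{t}\in\mathcal{L}}$ is a stationary jointly regularly varying $\mathbb{Z}^m$-indexed field of index $\alpha$, and conditionally on $|\mathbf{X}_{\mathbf{0}}|>x$ its spectral tail field is exactly $(\mathbf{\Theta}_{\mathbf{t}})_{\mathbf{t}\in\mathcal{L}}$. By the random-field version of the Basrak--Segers summability lemma (which is proved the same way: one fixes a total invariant order $\prec$ on $\mathcal{L}$, substitutes into the time change identity $\mathbb{E}[\sum_{\mathbf{t}\in\mathcal{L}}|\mathbf{\Theta}_{\mathbf{t}}|^{\alpha}\,g(\mathbf{\Theta}/|\mathbf{\Theta}_{\mathbf{t}}|)]=\sum_{\mathbf{t}\in\mathcal{L}}\mathbb{E}[g(\mathbf{\Theta}_{\mathbf{t}})\mathbf{1}(\mathbf{\Theta}_{\mathbf{t}}\ne\mathbf{0})]$ the indicator that $\mathbf{0}$ realises the first $\prec$-running maximum of $|\mathbf{\Theta}_{\cdot}|$ over $\mathcal{L}$, invokes the vanishing from the preliminary step to ensure this running maximum is a.s.\ attained, and iterates the re-anchoring at the successive maxima), one gets $\sum_{\mathbf{t}\in\mathcal{L}}|\mathbf{\Theta}_{\mathbf{t}}|^{\alpha}<\infty$ a.s.

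Finally I would transfer this to the remaining cosets by the time change formula. Fix $i$ and put $A_i:=\{\sum_{\mathbf{u}\in\mathbf{c}_i+\mathcal{L}}|\mathbf{\Theta}_{\mathbf{u}}|^{\alpha}=\infty\}$. For any $\mathbf{w}\in\mathbf{c}_i+\mathcal{L}$, re-anchoring at $\mathbf{w}$ in~(\ref{timechangeTheta}) with the scale-invariant functional $g(\theta)=\mathbf{1}(\sum_{\mathbf{v}\in\mathcal{L}}|\theta_{\mathbf{v}}|^{\alpha}=\infty)$ and using $\mathbf{w}+\mathcal{L}=\mathbf{c}_i+\mathcal{L}$ yields
$$\mathbb{P}\big(A_i\cap\{\mathbf{\Theta}_{\mathbf{w}}\ne\mathbf{0}\}\big)=\mathbb{E}\Big[\mathbf{1}\Big(\sum_{\mathbf{v}\in\mathcal{L}}|\mathbf{\Theta}_{\mathbf{v}}|^{\alpha}=\infty\Big)|\mathbf{\Theta}_{-\mathbf{w}}|^{\alpha}\Big],$$
which vanishes by the previous step. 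Taking the union over the countably many $\mathbf{w}\in\mathbf{c}_i+\mathcal{L}$, and observing that $A_i$ forces $\mathbf{\Theta}_{\mathbf{w}}\ne\mathbf{0}$ for at least one such $\mathbf{w}$, gives $\mathbb{P}(A_i)=0$; summing over the $N$ cosets yields $\sum_{\mathbf{t}\in\Sigma}|\mathbf{\Theta}_{\mathbf{t}}|^{\alpha}<\infty$ a.s., as required.

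The hard part is the second step rather than the first and third: one wants a.s.\ --- not merely $L^1$ --- finiteness, and $\sum_{\mathbf{t}}\mathbb{E}[|\mathbf{\Theta}_{\mathbf{t}}|^{\alpha}]$ is typically infinite, so the argument must genuinely re-anchor at the running maximum and hence depends on the vanishing hypothesis to guarantee it is attained; making this precise (and handling the iteration over successive maxima, or the equivalent dyadic layering of the level sets) is the crux. A secondary nuisance is that $\mathcal{L}$ need not have full rank: re-anchoring returns $\Sigma$, or a coset, to itself only under shifts in $\mathcal{L}$, which is why the decomposition into $\mathcal{L}$-cosets is needed, and when this decomposition is infinite the re-anchoring must be carried out directly on $\Sigma$ in the style of the second step. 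Finally, the time-reversal symmetry used to obtain vanishing along $-\Upsilon$ should be spelled out carefully in the field setting, since the time series proofs do not transcribe entirely verbatim.
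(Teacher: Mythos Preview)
Your two-part skeleton matches the paper's: first extend the vanishing from $\Upsilon$ to $-\Upsilon$, then prove $\alpha$-summability on $\Sigma=\Upsilon\cup(-\Upsilon)$ coset by coset over $\mathcal{L}$. The substantive concern is your preliminary step. The paper does \emph{not} obtain vanishing on $-\Upsilon$ by a generic time-reversal symmetry; it argues by contradiction with the time change formula~(\ref{timechangeY}) for $\mathbf{Y}$, decomposing according to the first $\prec$-time $\mathbf{t}\in\mathcal{G}:=\mathcal{L}\cap\{\mathbf{s}\succeq\mathbf{0}\}$ at which $|\mathbf{Y}_{\mathbf{t}}|\ge 1$ (well-defined since $|\mathbf{Y}_{\mathbf{0}}|\ge1$ and vanishing holds on $\mathcal{G}\subset\Upsilon$), then re-anchoring each summand at $-\mathbf{h}\in\Upsilon$ and bounding by $b\epsilon^{-\alpha}$ where $b=|\Sigma/\mathcal{L}|$ counts the cosets hit by $\mathbf{t}-\mathbf{h}$. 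This genuinely needs the lattice structure of $\mathcal{G}$ and the finiteness of $\Sigma/\mathcal{L}$: the time-series argument does not transcribe because $\Upsilon\ne\mathcal{G}$ in general and the shifts $\mathbf{t}-\mathbf{h}$ land in several cosets, not in $\mathcal{G}$ itself. Your closing caveat is on point --- this is the step that needs real work, not an appeal to symmetry.

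For summability the paper treats every coset $(\mathcal{L})_{\mathbf{x}}$ by the same running-maximum contradiction: condition on $T^{*}_{\mathcal{L}}=\mathbf{i}$, apply~(\ref{timechangeTheta}) with $\mathbf{t}\in(\mathcal{L})_{\mathbf{x}}$, and use that the shifted indicator becomes $\mathbf{1}(T^{*}_{(\mathcal{L})_{-\mathbf{x}}}=\mathbf{i}-\mathbf{t})$, whose sum over $\mathbf{t}$ is at most $1$. Your route --- do only the coset $\mathbf{x}=\mathbf{0}$ this way and then transfer to each $\mathbf{c}_i+\mathcal{L}$ by a single re-anchoring at $\mathbf{w}\in\mathbf{c}_i+\mathcal{L}$ with the scale-invariant $g(\theta)=\mathbf{1}(\sum_{\mathbf{v}\in\mathcal{L}}|\theta_{\mathbf{v}}|^\alpha=\infty)$ --- is correct and a genuine simplification, avoiding the repeated running-maximum argument; the paper's uniform treatment, on the other hand, makes the role of the dual coset $(\mathcal{L})_{\mathbf{r}-\mathbf{x}}$ explicit. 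One technical point: the lemma assumes only that $\mathbf{Y}$ obeys~(\ref{timechangeY}), not that it arises from an underlying $(\mathbf{X}_{\mathbf{t}})$, so your ``restrict $(\mathbf{X}_{\mathbf{t}})_{\mathbf{t}\in\mathcal{L}}$'' should be replaced by the direct argument on $\mathbf{\Theta}$ that you sketch parenthetically.
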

\begin{proof}
	The proof is divided in two parts. In the first part we show that  $|\mathbf{\Theta}_{\mathbf{t}}|\to0$ a.s.~as $|\mathbf{t}|\to\infty$ for $\mathbf{t}\in\Upsilon\cup-\Upsilon$ and then that $\sum_{\mathbf{t}\in\Upsilon\cup-\Upsilon}|\mathbf{\Theta}_{\mathbf{t}}|^{\alpha}<\infty$ a.s.

	Denote by $\mathcal{L}$ the lattice and let $\mathcal{G}:=\mathcal{L}\cap\{\mathbf{t}\in\mathbb{Z}^{k}:\mathbf{t}\succeq\mathbf{0}\}$. We stress that $\{\mathbf{0}\}\in\mathcal{G}$. Let $\epsilon>0$. Suppose that $\mathbb{P}(\sum_{\mathbf{h}\in-\Upsilon}\mathbf{1}(|\mathbf{Y}_{\mathbf{h}}|>\epsilon)=\infty)>0$. Recall that $|\mathbf{Y}_{\mathbf{0}}|$ follows a Pareto($\alpha$) distribution, thus $\mathbb{P}(|\mathbf{Y}_{\mathbf{0}}|\geq 1)=1$, and observe that the sets $\Big\{|\mathbf{Y}_{\mathbf{t}}|\geq C>\sup\limits_{\mathbf{t}\prec\mathbf{s},\mathbf{s}\in\mathcal{G}}|\mathbf{Y}_{\mathbf{s}}|\Big\}$, $\mathbf{t}\in\mathcal{G}$, are disjoint for every $C>0$. Then, we have that for every $0<D\leq1$
\begin{equation*}
\mathbb{P}\bigg(\bigcup_{\mathbf{t}\in\mathcal{G}}\Big\{|\mathbf{Y}_{\mathbf{t}}|\geq D>\sup\limits_{\mathbf{t}\prec\mathbf{s},\mathbf{s}\in\mathcal{G}}|\mathbf{Y}_{\mathbf{s}}|\Big\}\bigg)=\sum_{\mathbf{t}\in\mathcal{G}}\mathbb{P}\Big(|\mathbf{Y}_{\mathbf{t}}|\geq D>\sup\limits_{\mathbf{t}\prec\mathbf{s},\mathbf{s}\in\mathcal{G}}|\mathbf{Y}_{\mathbf{s}}|\Big)= 1,
\end{equation*}
and for  every $D'>1$
\begin{equation*}
\mathbb{P}\bigg(\bigcup_{\mathbf{t}\in\mathcal{G}}\Big\{|\mathbf{Y}_{\mathbf{t}}|\geq D'>\sup\limits_{\mathbf{t}\prec\mathbf{s},\mathbf{s}\in\mathcal{G}}|\mathbf{Y}_{\mathbf{s}}|\Big\}\bigg)=\sum_{\mathbf{t}\in\mathcal{G}}\mathbb{P}\Big(|\mathbf{Y}_{\mathbf{t}}|\geq D'>\sup\limits_{\mathbf{t}\prec\mathbf{s},\mathbf{s}\in\mathcal{G}}|\mathbf{Y}_{\mathbf{s}}|\Big)\leq 1.
\end{equation*}
 we have that $\mathbb{P}(\sum_{\mathbf{h}\in-\Upsilon}\mathbf{1}(|\mathbf{Y}_{\mathbf{h}}|>\epsilon)=\infty)=\sum_{\mathbf{t}\in\mathcal{G}}\mathbb{P}(\sum_{\mathbf{h}\in-\Upsilon}\mathbf{1}(|\mathbf{Y}_{\mathbf{h}}|>\epsilon)=\infty,|\mathbf{Y}_{\mathbf{t}}|\geq1>\sup\limits_{\mathbf{t}\prec\mathbf{s},\mathbf{s}\in\mathcal{G}}|\mathbf{Y}_{\mathbf{s}}|)$. Consider any $\mathbf{t}\in\mathcal{G}$ s.t.~$\mathbb{P}(\sum_{\mathbf{h}\in-\Upsilon}\mathbf{1}(|\mathbf{Y}_{\mathbf{h}}|>\epsilon)=\infty,|\mathbf{Y}_{\mathbf{t}}|\geq1>\sup\limits_{\mathbf{t}\prec\mathbf{s},\mathbf{s}\in\mathcal{G}}|\mathbf{Y}_{\mathbf{s}}|)>0$. By the time change formula (\ref{timechangeY})  we get
	\begin{align*}
	\infty&=\mathbb{E}\bigg[\sum_{\mathbf{h}\in-\Upsilon}\mathbf{1}(|\mathbf{Y}_{\mathbf{h}}|>\epsilon,|\mathbf{Y}_{\mathbf{t}}|\geq1>\sup\limits_{\mathbf{t}\prec\mathbf{s},\mathbf{s}\in\mathcal{G}}|\mathbf{Y}_{\mathbf{s}}|)\bigg]\\
&=\sum_{\mathbf{h}\in-\Upsilon}\mathbb{E}\Big[\mathbf{1}(|\mathbf{Y}_{\mathbf{h}}|>\epsilon,|\mathbf{Y}_{\mathbf{t}}|\geq1>\sup\limits_{\mathbf{t}\prec\mathbf{s},\mathbf{s}\in\mathcal{G}}|\mathbf{Y}_{\mathbf{s}}|)\Big]\\
&=\sum_{\mathbf{h}\in-\Upsilon}\mathbb{P}\Big(|\mathbf{Y}_{\mathbf{h}}|>\epsilon,|\mathbf{Y}_{\mathbf{t}}|\geq1>\sup\limits_{\mathbf{t}\prec\mathbf{s},\mathbf{s}\in\mathcal{G}}|\mathbf{Y}_{\mathbf{s}}|\Big)\\
&=\sum_{\mathbf{h}\in-\Upsilon}\int_{\epsilon}^{\infty}\mathbb{P}\Big(r|\mathbf{\Theta}_{-\mathbf{h}}|>1,r|\mathbf{\Theta}_{\mathbf{t}-\mathbf{h}}|\geq1>r\sup\limits_{\mathbf{t}-\mathbf{h}\prec\mathbf{s},\mathbf{s}\in\mathcal{G}}|\mathbf{\Theta}_{\mathbf{s}}|\Big)d(-r^{-\alpha})\\
&\stackrel{(r=q\epsilon)}{=}\epsilon^{-\alpha}\sum_{\mathbf{h}\in-\Upsilon}\int_{1}^{\infty}\mathbb{P}\Big(q\epsilon|\mathbf{\Theta}_{-\mathbf{h}}|>1,q\epsilon|\mathbf{\Theta}_{\mathbf{t}-\mathbf{h}}|\geq 1>q\epsilon\sup\limits_{\mathbf{t}-\mathbf{h}\prec\mathbf{s},\mathbf{s}\in\mathcal{G}}|\mathbf{\Theta}_{\mathbf{s}}|\Big)d(-q^{-\alpha})
\\
	&\leq\epsilon^{-\alpha}\int_{1}^{\infty}\sum_{\mathbf{h}\in-\Upsilon}\mathbb{P}\Big(|\mathbf{\Theta}_{\mathbf{t}-\mathbf{h}}|\geq\frac{1}{q\epsilon}>\sup\limits_{\mathbf{t}-\mathbf{h}\prec\mathbf{s},\mathbf{s}\in\mathcal{G}_{-\mathbf{h}}}|\mathbf{\Theta}_{\mathbf{s}}|\Big)d(-q^{-\alpha})\\
	&\leq b\epsilon^{-\alpha}\int_{1}^{\infty}d(-q^{-\alpha})=\epsilon^{-\alpha}<\infty,
	\end{align*}
	where $b$ is the number of points of $\Upsilon$ inside the fundamental parallelotope of $\mathcal{L}$. Notice that we used that for every $\mathbf{t}\in\mathcal{G}$ and $\mathbf{h}\in-\Upsilon$ (\textit{i.e.}~$-\mathbf{h}\in\Upsilon$) we have that $\mathbf{t}-\mathbf{h}\in\Upsilon$, that is $\mathbf{t}-\mathbf{h}\in(\mathcal{G})_{\mathbf{x}}$ where $\mathbf{x}$ is one of the $b$ different points in the fundamental parallelotope, which we denote by $\hat{B}$ to be consistent with the notation of the proof of Proposition \ref{lem-AC1-L-2}. Thus, we have a contradiction and so $|\mathbf{\Theta}_{\mathbf{t}}|\to0$ a.s.~as $|\mathbf{t}|\to\infty$ for $\mathbf{t}\in\Upsilon\cup-\Upsilon$.

	Now, suppose that the event $\{|\mathbf{\Theta}_{\mathbf{t}}|\to 0\textnormal{ as }|\mathbf{t}|\to\infty,\mathbf{t}\in\Upsilon\cup-\Upsilon\} $ has probability 1. Denote this event by $E$. Observe that $\sup_{\mathbf{t}\in\Upsilon\cup-\Upsilon}|\mathbf{\Theta}_{\mathbf{t}}|$ is a well defined random variable since it is the supremum of measurable functions over a countable set. Since $|\mathbf{\Theta}_{\mathbf{t}}|\to0$ a.s.~as $|\mathbf{t}|\to\infty$ for $\mathbf{t}\in\Upsilon\cup-\Upsilon$ and since we are in (a subset of) $\mathbb{Z}^{k}$, for every $\omega\in E$ there exist finitely many $t_{1},...,t_{m}\in\Upsilon\cup-\Upsilon$ such that $|\mathbf{\Theta}(t_{1})(\omega)|=...=|\mathbf{\Theta}(t_{m})(\omega)|=\sup_{\mathbf{t}\in\Upsilon\cup-\Upsilon}|\mathbf{\Theta}_{\mathbf{t}}(\omega)|$. For every $\omega\in E$, let $T^{*}(\omega)$ be such that $|\mathbf{\Theta}(T^{*}(\omega))(\omega)|=\sup_{\mathbf{t}\in\Upsilon\cup-\Upsilon}|\mathbf{\Theta}_{\mathbf{t}}(\omega)|$ with $T^{*}(\omega)$ being the smallest of these finitely many points according to $\succ$. That is for every $\mathbf{t}\in\Upsilon\cup-\Upsilon$ we have
	\begin{align*}
	&\{\omega:T^{*}(\omega)=\mathbf{t}\}\\
	 &=\{\omega:\mathbf{\Theta}_{\mathbf{t}}(\omega)-\sup_{\mathbf{s}\in\Upsilon\cup-\Upsilon,\mathbf{s}\prec\mathbf{t}}|\mathbf{\Theta}_{\mathbf{s}}(\omega)|>0\}\cap \{\omega:\mathbf{\Theta}_{\mathbf{t}}(\omega)-\sup_{\mathbf{s}\in\Upsilon\cup-\Upsilon,\mathbf{s}\succeq\mathbf{t}}|\mathbf{\Theta}_{\mathbf{s}}(\omega)|=0\}
	\end{align*}
	and for $\mathbf{t}\in\mathbb{Z}^{k}\setminus(\Upsilon\cup-\Upsilon)$ we have $\{\omega:T^{*}(\omega)=\mathbf{t}\}=\emptyset$.
	
	By construction $|\mathbf{\Theta}_{T^{*}}|$ is a measurable function.	Since the difference of two measurable functions is measurable and the intersection of two measurable sets is also measurable we have that $\{\omega:T^{*}(\omega)=\mathbf{t}\}$ is a measurable set. Further, for any subset $A$ of $\mathbb{Z}^{k}$, since $(T^{*})^{-1}(A)=\cup_{\mathbf{t}\in A}	\{\omega:T^{*}(\omega)=\mathbf{t}\}$ and since the union of measurable sets is measurable we have that $(T^{*})^{-1}(A)$ is measurable. Thus, $T^{*}$ is a well defined random variable. Using the same arguments we can construct $T_{\mathcal{L}}^{*}$, where the supremum is taken over $\mathcal{L}$ instead of $\Upsilon\cup-\Upsilon$. In the same way we can construct $T_{(\mathcal{L})_{\mathbf{x}}}^{*}$ where the supremum is taken over $(\mathcal{L})_{\mathbf{x}}$ where $\mathbf{x}\in\hat{B}$.
	
	Consider any $\mathbf{x}\in\hat{B}$. Assume that $\mathbb{P}(\sum_{\mathbf{t}\in(\mathcal{L})_{\mathbf{x}}}|\mathbf{\Theta}_{\mathbf{t}}|^{\alpha}=\infty)>0$. We have that $\mathbb{P}(\sum_{\mathbf{t}\in(\mathcal{L})_{\mathbf{x}}}|\mathbf{\Theta}_{\mathbf{t}}|^{\alpha}=\infty)=\sum_{\mathbf{i}\in\mathcal{L}}\mathbb{P}(\sum_{\mathbf{t}\in(\mathcal{L})_{\mathbf{x}}}|\mathbf{\Theta}_{\mathbf{t}}|^{\alpha}=\infty,T^{*}_{\mathcal{L}}=\mathbf{i})=\sum_{\mathbf{i}\in H}\mathbb{P}(\sum_{\mathbf{t}\in(\mathcal{L})_{\mathbf{x}}}|\mathbf{\Theta}_{\mathbf{t}}|^{\alpha}=\infty,T^{*}_{\mathcal{L}}=\mathbf{i})$, where $H$ is the subset of $\mathcal{L}$ s.t.~$\mathbb{P}(\sum_{\mathbf{t}\in(\mathcal{L})_{\mathbf{x}}}|\mathbf{\Theta}_{\mathbf{t}}|^{\alpha}=\infty,T^{*}_{\mathcal{L}}=\mathbf{i})>0$ for every $\mathbf{i}\in H$.  Let $\mathbf{i}\in H$, then
	\begin{equation*}
	\infty=\mathbb{E}\bigg[\sum_{\mathbf{t}\in(\mathcal{L})_{\mathbf{x}}}|\mathbf{\Theta}_{\mathbf{t}}|^{\alpha}\mathbf{1}(T^{*}_{\mathcal{L}}=\mathbf{i})\bigg]=\sum_{\mathbf{t}\in(\mathcal{L})_{\mathbf{x}}}\mathbb{E}\bigg[|\mathbf{\Theta}_{\mathbf{t}}|^{\alpha}\mathbf{1}(T_{\mathcal{L}}^{*}=\mathbf{i})\bigg].
	\end{equation*}
	Now, we generalise the arguments adopted in the proof of Lemma 3.3 in \cite{SW}. For each $\mathbf{i}\in\mathcal{L}$ define a function $g_{\mathbf{i}}:(\bar{\mathbb{R}}^{d})^{\mathbb{Z}^{k}}\to\mathbb{R}$ as follows. If $(\mathbf{\Theta}_{\mathbf{s}}, \mathbf{s}\in\mathbb{Z}^{k})$ is such that
	\begin{equation*}
|\mathbf{\Theta}_{\mathbf{j}}|<|\mathbf{\Theta}_{\mathbf{i}}|\quad\textnormal{for $\mathbf{j}\prec\mathbf{i}$ and $\mathbf{j}\in\mathcal{L}$},\quad |\mathbf{\Theta}_{\mathbf{j}}|\leq|\mathbf{\Theta}_{\mathbf{i}}|\quad\textnormal{for $\mathbf{j}\succeq\mathbf{i}$ and $\mathbf{j}\in\mathcal{L}$},
	\end{equation*}
	then set $g_{\mathbf{i}}(\mathbf{\Theta}_{\mathbf{s}}, \mathbf{s}\in\mathbb{Z}^{k})=1$. Otherwise set $g_{\mathbf{i}}(\mathbf{\Theta}_{\mathbf{s}}, \mathbf{s}\in\mathbb{Z}^{k})=0$. Observe that $g_{\mathbf{i}}$ is a
	bounded measurable function and observe that, for any $\mathbf{t}\in\mathbb{Z}^{k}$, $g_{\mathbf{i}}(\cdot-\mathbf{t})=1$ when
	\begin{equation*}
	|\mathbf{\Theta}_{\mathbf{j}-\mathbf{t}}|<|\mathbf{\Theta}_{\mathbf{i}-\mathbf{t}}|\quad\textnormal{for $\mathbf{j}\prec\mathbf{i}$ and $\mathbf{j}\in\mathcal{L}$},\quad |\mathbf{\Theta}_{\mathbf{j}-\mathbf{t}}|\leq|\mathbf{\Theta}_{\mathbf{i}-\mathbf{t}}|\quad\textnormal{for $\mathbf{j}\succeq\mathbf{i}$ and $\mathbf{j}\in\mathcal{L}$},
	\end{equation*}
	\begin{equation*}
	\Leftrightarrow|\mathbf{\Theta}_{\mathbf{j}}|<|\mathbf{\Theta}_{\mathbf{i}-\mathbf{t}}|\quad\textnormal{for $\mathbf{j}\prec\mathbf{i}-\mathbf{t}$ and $\mathbf{j}\in(\mathcal{L})_{-\mathbf{t}}$},\quad |\mathbf{\Theta}_{\mathbf{j}}|\leq|\mathbf{\Theta}_{\mathbf{i}-\mathbf{t}}|\quad\textnormal{for $\mathbf{j}\succeq\mathbf{i}-\mathbf{t}$ and $\mathbf{j}\in(\mathcal{L})_{-\mathbf{t}}$},
	\end{equation*}
	and zero otherwise. Then, by time change formula we have
	\begin{align*}
	\infty&=\sum_{\mathbf{t}\in(\mathcal{L})_{\mathbf{x}}}\mathbb{E}\bigg[|\mathbf{\Theta}_{\mathbf{t}}|^{\alpha}\mathbf{1}(T_{\mathcal{L}}^{*}=\mathbf{i})\bigg]=\sum_{\mathbf{t}\in(\mathcal{L})_{\mathbf{x}}}\mathbb{E}\bigg[|\mathbf{\Theta}_{\mathbf{t}}|^{\alpha}g_{\mathbf{i}}(\mathbf{\Theta}_{\mathbf{s}}, \mathbf{s}\in\mathbb{Z}^{k})\bigg]\\
	&=\sum_{\mathbf{t}\in(\mathcal{L})_{\mathbf{x}}}\mathbb{E}\bigg[g_{\mathbf{i}}(\mathbf{\Theta}(\mathbf{s}-\mathbf{t}), \mathbf{s}\in\mathbb{Z}^{k})\mathbf{1}(\mathbf{\Theta}(-\mathbf{t})\neq\mathbf{0})\bigg]\leq \sum_{\mathbf{t}\in(\mathcal{L})_{\mathbf{x}}}\mathbb{E}\bigg[g_{\mathbf{i}}(\mathbf{\Theta}(\mathbf{s}-\mathbf{t}), \mathbf{s}\in\mathbb{Z}^{k})\bigg]\\
	&=\sum_{\mathbf{t}\in(\mathcal{L})_{\mathbf{x}}}\mathbb{E}\bigg[\mathbf{1}(\mathbf T_{(\mathcal{L})_{-t}}^{*}=\mathbf{i}-\mathbf{t})\bigg]=\sum_{\mathbf{t}\in(\mathcal{L})_{\mathbf{x}}}\mathbb{E}\bigg[\mathbf{1}(\mathbf T_{(\mathcal{L})_{-x}}^{*}=\mathbf{i}-\mathbf{t})\bigg]=\sum_{\mathbf{t}\in(\mathcal{L})_{-\mathbf{x}}}\mathbb{E}\bigg[\mathbf{1}(\mathbf T_{(\mathcal{L})_{-x}}^{*}=\mathbf{i}+\mathbf{t})\bigg]\\
	&=\sum_{\mathbf{t}\in(\mathcal{L})_{\mathbf{r}-\mathbf{x}}}\mathbb{E}\bigg[\mathbf{1}(\mathbf T_{(\mathcal{L})_{\mathbf{r}-x}}^{*}=\mathbf{i}+\mathbf{t})\bigg]=\sum_{\mathbf{t}\in(\mathcal{L})_{\mathbf{r}-\mathbf{x}}}\mathbb{E}\bigg[\mathbf{1}(\mathbf T_{(\mathcal{L})_{\mathbf{r}-x}}^{*}=\mathbf{t})\bigg]=1,
	\end{align*}
	which is a contradiction. Notice that we used the fact that by construction, for every $\mathbf{t}\in(\mathcal{L})_{\mathbf{x}}$, we have $(\mathcal{L})_{-t}=(\mathcal{L})_{-x}$, $-\mathbf{t}\in(\mathcal{L})_{-\mathbf{x}}$, $(\mathcal{L})_{-\mathbf{x}}=(\mathcal{L})_{\mathbf{r}-\mathbf{x}}$ where $\mathbf{r}$ is the highest point in the closure of the fundamental parallelotope (as defined in the proof of Proposition \ref{lem-AC1-L-2}), and that $\mathbf{t}+\mathbf{i}\in (\mathcal{L})_{\mathbf{r}-\mathbf{x}}$ for any $\mathbf{i}\in\mathcal{L}$. 
	
	Thus, we have $\sum_{\mathbf{t}\in(\mathcal{L})_{\mathbf{x}}}|\mathbf{\Theta}_{\mathbf{t}}|^{\alpha}<\infty$ almost surely. The same arguments can be repeated for every $\mathbf{x}\in\hat{B}$ and use the fact proven in the proof of Proposition \ref{lem-AC1-L-2} that for every $\mathbf{x}\in\hat{B}$ we know that $\mathbf{r}-\mathbf{x}\in\hat{B}$. Therefore, since $\hat{B}$ is finite we conclude that $\sum_{\mathbf{t}\in\Upsilon\cup-\Upsilon}|\mathbf{\Theta}_{\mathbf{t}}|^{\alpha}=\sum_{\mathbf{x}\in\hat{B}}\sum_{\mathbf{t}\in(\mathcal{L})_{\mathbf{x}}}|\mathbf{\Theta}_{\mathbf{t}}|^{\alpha}<\infty$ a.s..
\end{proof}

	We first prove that for every $\mathbf{t}\in\bigcup_{j=1}^{\infty}\mathcal{D}_{j}$ there exists a $n_{\mathbf{t}}\in\mathbb{N}$ s.t.~$\mathbf{t}\in R_{0,\Lambda_{m}}$ for every $m>n_{\mathbf{t}}$. First, notice that if $\mathbf{t}\in \bigcup_{j=1}^{\infty}\mathcal{D}_{j}$ then $\mathbf{t}\in \mathcal{D}_{i}$ for some $i\in\mathbb{N}$. Then, by condition point (I) in Proposition \ref{lem-AC1-L-2} for every $i\in\mathbb{N}$ and every $p\in\mathbb{N}$ there exists an $n_{i,p}^{*}\in\mathbb{N}$ such that for every $m>n_{i,p}^{*}$ we have $|\{\mathbf{t}\in\Lambda_{m}:\Lambda_{m}^{(\mathbf{t},p)}=\mathcal{D}_{i}\cap K_{p}\}|\geq 1$. Thus, for every $i,p\in\mathbb{N}$ there exists an $n^{*}_{i,p}$ s.t.~$\mathcal{D}_{i}\cap K_{p}\subset R_{0,\Lambda_{m}}$ for every $m>n^{*}_{i,p}$. Therefore, for every $\mathbf{t}\in\bigcup_{j=1}^{\infty}\mathcal{D}_{j}$ (notice that for each $\mathbf{t}$ we have $\mathbf{t}\in\mathcal{D}_{i}\cap K_{p}$ for some $i,p\in\mathbb{N}$) there exists a $n_{\mathbf{t}}\in\mathbb{N}$ (namely $n^{*}_{i,p}$) s.t.~$\mathbf{t}\in R_{0,\Lambda_{m}}$ for every $m>n_{\mathbf{t}}$. 
	
	Now, choose $(d_{n})_{n\in\mathbb{N}}$ such that $d_{n}$ is the highest integer s.t.~$\max_{|\mathbf{t}|\leq d_{n},\mathbf{t}\in\bigcup_{j=1}^{\infty}\mathcal{D}_{j}}n_{\mathbf{t}}< r_{n}$. Notice that $\{|\mathbf{t}|\leq d_{n},\mathbf{t}\in\bigcup_{j=1}^{\infty}\mathcal{D}_{j}\}\subset R_{0,\Lambda_{r_{n}}}$. It is possible to see that $d_{n}\to\infty$ as $n\to\infty$ and that for every $l<r_{n}$
	\begin{align*}
	\mathbb{P}&\Big(\max_{l\leq |\mathbf{t}|\leq d_{n}\,|\,\mathbf{t}\in \bigcup_{j=1}^{\infty}\mathcal{D}_{j} }|\mathbf{X}_{\mathbf{t}}|>a_{n}^\Lambda x\,\big||\mathbf{X}_{\mathbf{0}}|>a_{n}^\Lambda x\Big)\\
	&=\mathbb{P}\Big(\max_{l\leq |\mathbf{t}|\leq d_{n}\,|\,\mathbf{t}\in \bigcup_{j=1}^{\infty}\mathcal{D}_{j} \,|\,\mathbf{t}\in R_{l,\Lambda_{r_{n}}} }|\mathbf{X}_{\mathbf{t}}|>a_{n}^\Lambda x\,\big||\mathbf{X}_{\mathbf{0}}|>a_{n}^\Lambda x\Big)\\
	&\leq\mathbb{P}\Big(\max_{\mathbf{t}\in R_{l,\Lambda_{r_{n}}} }|\mathbf{X}_{\mathbf{t}}|>a_{n}^\Lambda x\,\big||\mathbf{X}_{\mathbf{0}}|>a_{n}^\Lambda x\Big).
	\end{align*}	
	Therefore, condition (AC$^{\Lambda}_{\succeq}$) implies the following anti-clustering condition:
	\begin{equation}\label{AC-UionionD}
	\lim\limits_{l\to\infty}\limsup_{n\to\infty}\mathbb{P}\Big(\max_{l\leq |\mathbf{t}|\leq d_{n}\,|\,\mathbf{t}\in \bigcup_{j=1}^{\infty}\mathcal{D}_{j} }|\mathbf{X}_{\mathbf{t}}|>a_{n}^\Lambda x\,\big||\mathbf{X}_{\mathbf{0}}|>a_{n}^\Lambda x\Big)=0.
\end{equation}
	
	Now, for any $z>0$, by the regular variation of $|\mathbf{X}_{\mathbf{0}}|$ and by (\ref{AC-UionionD}) we have that 
	\begin{equation*}
	\lim\limits_{l\to\infty}\limsup_{n\to\infty}\mathbb{P}\bigg(\max_{l\leq |\mathbf{t}|\leq d_{n}\,|\,\mathbf{t}\in \bigcup_{j=1}^{\infty}\mathcal{D}_{j} }|\mathbf{X}_{\mathbf{t}}|>za_{n}^\Lambda x\,\big||\mathbf{X}_{\mathbf{0}}|>a_{n}^\Lambda x\bigg)=0.
	\end{equation*}
	In other words, for any $\epsilon>0$ and $z>0$, there exists $l>0$ such that for all $w>l$
	\begin{equation*}
	\mathbb{P}\bigg(\max_{l\leq |\mathbf{t}|\leq w\,|\,\mathbf{t}\in \bigcup_{j=1}^{\infty}\mathcal{D}_{j}  }|\mathbf{Y}_{\mathbf{t}}|>z\bigg)\leq \epsilon.
	\end{equation*}
	This, implies that $\mathbb{P}(\lim\limits_{|\mathbf{t}|\to\infty}|\mathbf{Y}_{\mathbf{t}}|=0)=1$ and so $\mathbb{P}(\lim\limits_{|\mathbf{t}|\to\infty}|\mathbf{\Theta}_{\mathbf{t}}|=0)=1$ for $\mathbf{t}\in \bigcup_{j=1}^{\infty}\mathcal{D}_{j}$. Then, from Lemmas \ref{lem-AC1-L-2} and \ref{lem-AC1-L} we obtain the statement.

\subsection{Proof of Theorem \ref{t2-L}}
By changes of variables we have
	\begin{align*}
	&\int_{0}^{\infty}\mathbb{E}\bigg[e^{-\sum_{\mathbf{t}\in\mathcal{D}_{j}}g(y\mathbf{\Theta}_{\mathbf{t}})}\Big(1-e^{-g(y\mathbf{\Theta}_{\mathbf{0}})} \Big) \bigg]d(-y^{-\alpha})\\
	&=\mathbb{E}\Big[\|\mathbf{\Theta}\|_{\mathcal{D}_{j}\cup-\mathcal{D}_{j},\alpha}^{\alpha}\int_{0}^{\infty}e^{-\sum_{\mathbf{t}\in\mathcal{D}_{j}}g(y\mathbf{\Theta}_{\mathbf{t}}/\|\mathbf{\Theta}\|_{\mathcal{D}_{j}\cup-\mathcal{D}_{j},\alpha})}\Big(1-e^{-g(y\mathbf{\Theta}_{\mathbf{0}}/\|\mathbf{\Theta}\|_{\mathcal{D}_{j}\cup-\mathcal{D}_{j},\alpha})} \Big) d(-y^{-\alpha})\Big]\\
	&=\sum_{\mathbf{h}\in\mathcal{D}_{j}\cup-\mathcal{D}_{j}}\mathbb{E}\Big[|\mathbf{\Theta}_{\mathbf{h}}|^{\alpha}\int_{0}^{\infty}e^{-\sum_{\mathbf{t}\in\mathcal{D}_{j}}g(y\mathbf{\Theta}_{\mathbf{t}}/\|\mathbf{\Theta}\|_{\mathcal{D}_{j}\cup-\mathcal{D}_{j},\alpha})}\Big(1-e^{-g(y\mathbf{\Theta}_{\mathbf{0}}/\|\mathbf{\Theta}\|_{\mathcal{D}_{j}\cup-\mathcal{D}_{j},\alpha})} \Big) d(-y^{-\alpha})\Big]
	\end{align*}
	From the time-change formula, we obtain that for any $\mathbf{h}\in\mathcal{D}_{j}\cup-\mathcal{D}_{j}$,
	\begin{align*}
	\mathbb{E}&\Big[|\mathbf{\Theta}_{\mathbf{h}}|^{\alpha}\int_{0}^{\infty}e^{-\sum_{\mathbf{t}\in\mathcal{D}_{j}}g(y\mathbf{Q}_{\mathcal{D}_{j}\cup-\mathcal{D}_{j},\mathbf{t}}) }- e^{-\sum_{\mathbf{t}\in\mathcal{D}_{j}\cup\{\mathbf{0}\}}g(y\mathbf{Q}_{\mathcal{D}_{j}\cup-\mathcal{D}_{j},\mathbf{t}})} d(-y^{-\alpha})\Big]\\
	&=\mathbb{E}\Big[\int_{0}^{\infty}e^{-\sum_{\mathbf{t}\in(\mathcal{D}_{j})_{-\mathbf{h}}}g(y\mathbf{Q}_{\mathcal{D}_{j}\cup-\mathcal{D}_{j},\mathbf{t}})}- e^{-\sum_{\mathbf{t}\in(\mathcal{D}_{j})_{-\mathbf{h}}\cup\{-\mathbf{h}\}}g(y\mathbf{Q}_{\mathcal{D}_{j}\cup-\mathcal{D}_{j},\mathbf{t}}) }d(-y^{-\alpha})\Big].
	\end{align*}
	Since $\mathbf{h}$ is a lattice point then $(\mathcal{D}_{j})_{\mathbf{-h}}=(\mathcal{D}_{j}\cup-\mathcal{D}_{j})\cap\{\mathbf{t}\in\mathbb{Z}^{k}:\mathbf{t}\succ-\mathbf{h}\}$ and since $-\mathbf{h}$ is the first point of $(\mathcal{D}_{j}\cup-\mathcal{D}_{j})\cap\{\mathbf{t}\in\mathbb{Z}^{k}:\mathbf{t}\succeq-\mathbf{h}\}$.	This leads to a telescoping sum structure for any $\mathbf{k}\in\mathcal{D}_{j}\cup-\mathcal{D}_{j}$
	\begin{align}
	&\sum_{\{-\mathbf{h}\in\mathcal{D}_{j}\cup-\mathcal{D}_{j}:-\mathbf{k}\preceq\mathbf{h}\preceq\mathbf{k}\}}\mathbb{E}\Big[\int_{0}^{\infty}e^{-\sum_{\mathbf{t}\in(\mathcal{D}_{j})_{-\mathbf{h}}}g(y\mathbf{Q}_{\mathcal{D}_{j}\cup-\mathcal{D}_{j},\mathbf{t}})}- e^{-\sum_{\mathbf{t}\in(\mathcal{D}_{j})_{-\mathbf{h}}\cup\{-\mathbf{h}\}}g(y\mathbf{Q}_{\mathcal{D}_{j}\cup-\mathcal{D}_{j},\mathbf{t}})} d(-y^{-\alpha})\Big]\nonumber\\
	&\label{Q-L}
	=\mathbb{E}\Big[\int_{0}^{\infty}e^{-\sum_{\mathbf{t}\in(\mathcal{D}_{j})_{\mathbf{k}}}g(y\mathbf{Q}_{\mathcal{D}_{j}\cup-\mathcal{D}_{j},\mathbf{t}})}- e^{-\sum_{\mathbf{t}\in(\mathcal{D}_{j})_{-\mathbf{k}}\cup\{-\mathbf{k}\}}g(y\mathbf{Q}_{\mathcal{D}_{j}\cup-\mathcal{D}_{j},\mathbf{t}})}d(-y^{-\alpha})\Big].
	\end{align}
	Since any function $g\in\mathbb{C}^{+}_{K}$ vanishes in some neighbourhood of the origin and $\mathbf{\Theta}_{\mathbf{t}}\stackrel{a.s}{\to}0$ and $\mathbf{Q}_{\mathcal{D}_{j}\cup-\mathcal{D}_{j},\mathbf{t}}\stackrel{a.s}{\to}0$ as $\mathbf{t}\to\boldsymbol{\infty}$ for $\mathbf{t}\in\mathcal{D}_{j}\cup-\mathcal{D}_{j}$, we have monotonically
	$
	\sum_{\mathbf{t}\in(\mathcal{D}_{j})_{\mathbf{k}}}g(y\mathbf{Q}_{\mathcal{D}_{j}\cup-\mathcal{D}_{j},\mathbf{t}})\to0,$ and $\sum_{\mathbf{t}\in(\mathcal{D}_{j})_{-\mathbf{k}}\cup\{-\mathbf{k}\}}g(y\mathbf{Q}_{\mathcal{D}_{j}\cup-\mathcal{D}_{j},\mathbf{t}})\to\sum_{\mathbf{t}\in\mathcal{D}_{j}\cup-\mathcal{D}_{j}}g(y\mathbf{Q}_{\mathcal{D}_{j}\cup-\mathcal{D}_{j},\mathbf{t}})
	$
	a.s., as $\mathbf{k}\to\boldsymbol{\infty}$. Thus, by monotone convergence theorem the right-hand side in (\ref{Q-L}) converges, as $\mathbf{k}\to\boldsymbol{\infty}$, to
	\begin{equation*}
	\mathbb{E}\Big[\int_{0}^{\infty}1- \exp\Big(-\sum_{\mathbf{t}\in\mathcal{D}_{j}\cup-\mathcal{D}_{j}}g(y\mathbf{Q}_{\mathcal{D}_{j}\cup-\mathcal{D}_{j},\mathbf{t}}) \Big) d(-y^{-\alpha})\Big].
	\end{equation*}
	One deduces the following expression of the Laplace transform of $N^ \lambda$
		\begin{equation*}
	\Psi_{N^{\Lambda}}(g)=\exp\bigg(-\sum_{j=1}^{\infty}\lambda_{j}\int_{0}^{\infty}\mathbb{E}\Big[1
	-e^{-\sum_{\mathbf{t}\in\hat{\mathcal{D}_{j}}}g( y\mathbf{Q}_{\hat{\mathcal{D}_{j}},\mathbf{t}})}\Big]d(-y^{-\alpha})\bigg).
	\end{equation*}

\subsection{Poof of Proposition \ref{pro-New-1}}

The first statement follows from similar arguments as the ones used in the proof of Theorem 2.1 in \cite{BS} and in Lemma 3.1 in \cite{SZM}. In particular, it is easy to see that for all $\mathbf{s}, \mathbf{t} \in \mathbb{Z}^{k}$ with $\mathbf{s}\leq\mathbf{t}$
\begin{equation*}
\frac{\mathbb{P}((x^{-1} \mathbf{X}_{\mathbf{s}} , . . . , x^{-1} \mathbf{X}_{\mathbf{t}})\in\cdot)}{\mathbb{P}(\rho_{\Upsilon}(\mathbf{X})>x)}=\frac{\mathbb{P}(|\mathbf{X}_{\mathbf{0}}|>x)}{\mathbb{P}(\rho_{\Upsilon}(\mathbf{X})>x)}\frac{\mathbb{P}((x^{-1} \mathbf{X}_{\mathbf{s}}, . . . , x^{-1} \mathbf{X}_{\mathbf{t}})\in\cdot)}{\mathbb{P}(|\mathbf{X}_{\mathbf{0}}|>x)}.
\end{equation*}
Let $\tilde{\mu}$ be the tail measure of $\mathbf{X}$ with auxiliary regularly varying function $\mathbb{P}(|\mathbf{X}_{\mathbf{0}}|>x)$. By the definition of regular variation of $\mathbf{X}$, by homogeneity of the tail measure, and assuming w.l.o.g.~that $\{\mathbf{0}\}\in\Upsilon$ we have (see also Lemma 3.1 in \cite{SZM})
 \begin{equation*}
 \frac{\mathbb{P}(|\mathbf{X}_{\mathbf{0}}|>x)}{\mathbb{P}(\rho_{\Upsilon}(\mathbf{X})>x)}\to\frac{1}{\tilde{\mu}_{\Upsilon}(z\in\mathbb{R}^{|\Upsilon|}:\rho(z)>1)}\in(0,\infty).
 \end{equation*}
Thus, we have 
\begin{equation*}
\frac{\mathbb{P}((x^{-1} \mathbf{X}_{\mathbf{s}} , . . . , x^{-1} \mathbf{X}_{\mathbf{t}})\in\cdot)}{\mathbb{P}(\rho_{\Upsilon}(\mathbf{X})>x)}\to\frac{\tilde{\mu}_{\mathbf{s},\mathbf{t}}(\cdot)}{\tilde{\mu}_{\Upsilon}(z\in\mathbb{R}^{|\Upsilon|}:\rho(z)>1)}=:\mu_{\mathbf{s},\mathbf{t}}(\cdot).
\end{equation*}
Notice that the function $x \mapsto \mathbb{P}(\rho_{\Upsilon}(\mathbf{X})>x)$ is regularly varying of index $-\alpha$ and that $\mu$ restricted to the set $\{y_{\mathbf{s}},....,y_{\mathbf{t}}:\rho(\mathbf Y)>1\}$ is a probability measure, call it $\nu_{\mathbf{s},\mathbf{t}}$. Here we have used that w.l.o.g.~$\Upsilon\subset\{\mathbf{s},...,\mathbf{t}\}$, indeed if some $\mathbf{z}\in\Upsilon$ is not contained in $\{\mathbf{s},...,\mathbf{t}\}$ then we can consider $\tilde{\mu}_{\{\mathbf{s},\mathbf{t}\}\cup\{\mathbf{z}\}}$ because by consistency of the measure $\tilde{\mu}$ we have $\tilde{\mu}_{\{\mathbf{s},\mathbf{t}\}\cup\{\mathbf{z}\}}(\cdot,\mathbb{R})=\tilde{\mu}_{\mathbf{s},\mathbf{t}}(\cdot)$.

It is possible to see that $(\nu_{\mathbf{s},\mathbf{t}})_{\mathbf{s},\mathbf{t}\in\mathbb{Z}^{k}}$ is a family of consistent probability measures and by Kolmogorov extension theorem we obtain the first statement. The second statement follows from the first and the continuous mapping theorem.

\subsection{Proof of Proposition \ref{pro-New-2}}

	This follows from similar arguments used in the proofs of Theorem 3.1 in \cite{BS} and of Theorem 3.2 in \cite{SW}. Consider any $\mathbf{s}\in\mathbb{Z}^{k}$ and any $\mathbf{i},\mathbf{j}\in\mathbb{Z}^{k}$ such that $\Upsilon\subset\{\mathbf{t}\in\mathbb{Z}^{k}:\mathbf{i}\leq\mathbf{t}\leq\mathbf{j}\}$. Then, following the proof of Theorem 3.1 in \cite{BS} we define the spaces:
	\begin{equation*}
	\mathbb{E}_{\mathbf{i},\mathbf{j}}=\{(\mathbf{y}_{\mathbf{i}},...,\mathbf{y}_{\mathbf{j}})\,\,|\,\,0<\rho_{\Upsilon}(\mathbf{y})<\infty\},\qquad
	\mathbb{S}_{\mathbf{i},\mathbf{j}}=\{(\mathbf{y}_{\mathbf{i}},...,\mathbf{y}_{\mathbf{j}})\,\,|\,\,\rho_{\Upsilon}(\mathbf{y})=1\}.
	\end{equation*}
	Define the bijection $T:\mathbb{E}_{\mathbf{i},\mathbf{j}}\to(0,\infty)\times\mathbb{S}_{\mathbf{i},\mathbf{j}}$ by
	\begin{equation*}
	T(\mathbf{y}_{\mathbf{i}},...,\mathbf{y}_{\mathbf{j}})=\bigg(\rho_{\Upsilon}(\mathbf{y}),\frac{\mathbf{y}_{\mathbf{i}}}{\rho_{\Upsilon}(\mathbf{y})},...,\frac{\mathbf{y}_{\mathbf{j}}}{\rho_{\Upsilon}(\mathbf{y})}\bigg).
	\end{equation*}
	Let $\mu_{\mathbf{i},\mathbf{j}}$ be the tail measure of $(\mathbf{X}_{\mathbf{i}},...,\mathbf{X}_{\mathbf{j}})$ with auxiliary regularly varying function $\mathbb{P}(\rho_{\Upsilon}(\mathbf{X})>x)$ (as defined in the proof of Proposition \ref{pro-New-1}). Define the measure $\varPhi_{\mathbf{i},\mathbf{j}}$ on $\mathbb{S}_{\mathbf{i},\mathbf{j}}$ by
	\begin{equation*}
	\varPhi_{\mathbf{i},\mathbf{j}}(B)=\mu_{\mathbf{i},\mathbf{j}}\big(T^{-1}((1,\infty)\times B)\big)
	\end{equation*}
	for Borel-measurable $B\subset\mathbb{S}_{\mathbf{i},\mathbf{j}}$. Since the law of $(\mathbf{Y}_{\Upsilon,\mathbf{i}}, . . . , \mathbf{Y}_{\Upsilon,\mathbf{j}})$ is equal to the restriction of $\mu_{\mathbf{i},\mathbf{j}}$ to
	$T^{-1}((1,\infty)\times \mathbb{S}_{\mathbf{i},\mathbf{j}})$, the measure $\varPhi_{\mathbf{i},\mathbf{j}}$ is in fact equal to the law of $(\mathbf{\Theta}_{\Upsilon,\mathbf{i}}, . . . , \mathbf{\Theta}_{\Upsilon,\mathbf{j}})$. Furthermore, as $\mu_{\mathbf{i},\mathbf{j}}$ is homogeneous of order $-\alpha$, for $u\in(0, \infty)$ and Borel sets $B\subset\mathbb{S}_{\mathbf{i},\mathbf{j}}$
	\begin{equation}\label{Psi-New}
	\mu_{\mathbf{i},\mathbf{j}}\big(T^{-1}((u,\infty)\times B)\big)=\mu_{\mathbf{i},\mathbf{j}}\big(uT^{-1}((1,\infty)\times B)\big)=u^{-\alpha}\varPhi_{\mathbf{i},\mathbf{j}}(B)
	\end{equation}
	For $u\geq1$, the left-hand side is equal to $\mathbb{P}(\rho_{\Upsilon}(\mathbf{Y})>u,(\mathbf{\Theta}_{\Upsilon,\mathbf{i}}, . . . , \mathbf{\Theta}_{\Upsilon,\mathbf{j}})\in B)$, while the right hand side is equal to $\mathbb{P}(\rho_{\Upsilon}(\mathbf{Y})>u)\mathbb{P}((\mathbf{\Theta}_{\Upsilon,\mathbf{i}}, . . . , \mathbf{\Theta}_{\Upsilon,\mathbf{j}})\in B)$. Thus, $\rho_{\Upsilon}(\mathbf{Y})$ and $(\mathbf{\Theta}_{\Upsilon,\mathbf{i}}, . . . , \mathbf{\Theta}_{\Upsilon,\mathbf{j}})$ are independent and so $\rho_{\Upsilon}(\mathbf{Y})$ and $(\mathbf{\Theta}_{\Upsilon,\mathbf{i}})_{\mathbf{i}\in I}$ are independent, where $I$ is any subset of $\{\mathbf{t}\in\mathbb{Z}^{k}:\mathbf{i}\leq\mathbf{t}\leq\mathbf{j}\}$. Since $\mathbf{i}$ and $\mathbf{j}$ were arbitrary, point (i) follows. 
	
	Concerning point (ii), consider any $\mathbf{i},\mathbf{j}\in\mathbb{Z}^{k}$ and let $g:(\mathbb{R}^{d})^{\mathbb{Z}^{k}}\to\mathbb{R}$ be a bounded \textit{and continuous} function. By stationarity 
	\begin{align}
	\mathbb{E}&[g(\mathbf{Y}_{\Upsilon,\mathbf{i}-\mathbf{s}},...,\mathbf{Y}_{\Upsilon,\mathbf{j}-\mathbf{s}})\mathbf{1}(\rho_{(\Upsilon)_{-\mathbf{s}}}(\mathbf{Y})>\varepsilon)]\nonumber\\
&=\lim\limits_{x\to\infty}\frac{\mathbb{E}[g(x^{-1} \mathbf{X}_{\mathbf{i}-\mathbf{s}}, . . . , x^{-1} \mathbf{X}_{\mathbf{j}-\mathbf{s}})\mathbf{1}(\rho_{(\Upsilon)_{-\mathbf{s}}}(\mathbf{X})>x\varepsilon)\mathbf{1}(\rho_{\Upsilon}(\mathbf{X})>x)]}{\mathbb{P}(\rho_{\Upsilon}(\mathbf{X})>x)}\nonumber\\
&=\lim\limits_{x\to\infty}\frac{\mathbb{E}[g(x^{-1} \mathbf{X}_{\mathbf{i}} , . . . , x^{-1} \mathbf{X}_{\mathbf{j}})\mathbf{1}(\rho_{\Upsilon}(\mathbf{X})>x\epsilon)\mathbf{1}(\rho_{(\Upsilon)_{\mathbf{s}}}(\mathbf{X})>x)]}{\mathbb{P}(\rho_{\Upsilon}(\mathbf{X})>x)}\nonumber\\
&\label{mu}
=\int g(\mathbf{y}_{\mathbf{i}},...,\mathbf{y}_{\mathbf{j}})\mathbf{1}(\rho_{\Upsilon}(\mathbf{y})>\varepsilon)\mathbf{1}(\rho_{(\Upsilon)_{\mathbf{s}}}(\mathbf{y})>1)\mu_{\mathbf{l},\mathbf{k}}(d\mathbf{y})
\end{align}
where $\mathbf{l},\mathbf{k}\in\mathbb{Z}^{k}$ are such that $\{\mathbf{t}\in\mathbb{Z}^{k}:\mathbf{l}\leq\mathbf{t}\leq\mathbf{k}\}\supset\{\mathbf{t}\in\mathbb{Z}^{k}:\mathbf{i}\leq\mathbf{t}\leq\mathbf{j}\}\cup\Upsilon\cup(\Upsilon)_{\mathbf{s}}$. The last equality follows from the consistency of the measures $\mu_{\mathbf{l},\mathbf{k}}$, $\mathbf{l},\mathbf{k}\in\mathbb{Z}^{k}$ and the fact that $\mu_{\mathbf{l},\mathbf{k}}$ restricted on the set $\{(\mathbf{y}_{\mathbf{l}},...,\mathbf{y}_{\mathbf{k}}):\rho_{(\Upsilon)_{\mathbf{s}}}(\mathbf{y})>1\}$ is a probability measure, call it $\tilde{\nu}_{\mathbf{s},\mathbf{t}}$, and this holds for any $\mathbf{s}\in\mathbb{Z}^{k}$; indeed by stationarity
\begin{equation*}
\frac{\mathbb{P}((x^{-1} \mathbf{X}_{\mathbf{l}} , . . . , x^{-1} \mathbf{X}_{\mathbf{k}})\in\cdot)}{\mathbb{P}(\rho_{(\Upsilon)_{s}}(\mathbf{X})>x)}=\frac{\mathbb{P}((x^{-1} \mathbf{X}_{\mathbf{l}} , . . . , x^{-1} \mathbf{X}_{\mathbf{k}})\in\cdot)}{\mathbb{P}(\rho_{\Upsilon}(\mathbf{X})>x)}\to\mu_{\mathbf{l},\mathbf{k}}(\cdot)
\end{equation*}
As a side note observe that $\mathbf{Y}_{\Upsilon}$ is not necessarily stationary because different restrictions (\textit{i.e.}~different $\mathbf{s}$) of $\mu$ correspond to potentially different probability measures. That is $\nu_{\mathbf{s},\mathbf{t}}$, which is the probability measure given by $\mu_{\mathbf{l},\mathbf{k}}$ restricted on $\{(\mathbf{y}_{\mathbf{l}},...,\mathbf{y}_{\mathbf{k}}):\rho_{(\Upsilon)_{\mathbf{s}}}(\mathbf{y})>1\}$ (as introduced in the proof of Proposition \ref{pro-New-1}) is potentially a different from $\tilde{\nu}_{\mathbf{s},\mathbf{t}}$.

Now, by $(\ref{Psi-New})$ applied to $\mu_{\mathbf{l},\mathbf{k}}$ we obtain that $(\ref{mu})$ is equal to
\begin{equation*}
\int_{\varepsilon}^{\infty}\mathbb{E}[g(r\mathbf{\Theta}_{\Upsilon,\mathbf{i}},...,r\mathbf{\Theta}_{\Upsilon,\mathbf{j}})\mathbf{1}(r\rho_{(\Upsilon)_{\mathbf{s}}}(\mathbf{\Theta})>1)]d(-r^{-\alpha}).
\end{equation*}
Following the monotone convergence arguments of the proof of Theorem 3.2 in \cite{SW} we send $\varepsilon\to0$ and drop the continuity assumption of $g$. Since $\mathbf{i}$ and $\mathbf{j}$ were arbitrary we obtain the result.

Finally point (iii) follows from $(\ref{timechangeY-New})$ applied to the function $\tilde{g}(\mathbf{y})=g(\mathbf{y}/\rho_{(\Upsilon)_{\mathbf{s}}}(\mathbf{y}))\mathbf{1}(\rho_{(\Upsilon)_{\mathbf{s}}}(\mathbf{y})\neq 0)$.

\subsection{Proof of Lemma \ref{lem-CD}}

	Let $c^{-}:=\inf_{\mathbf{x}:\rho_{\Upsilon}(\mathbf{x})\geq1}|\mathbf{x}|$ and $c^{+}:=\sup_{\mathbf{x}:\rho_{\Upsilon}(\mathbf{x})<1}|\mathbf{x}|$.	By homogeneity we have that $\inf_{\mathbf{x}:\rho_{\Upsilon}(\mathbf{x})\geq\epsilon}|\mathbf{x}|=\epsilon c^{-}$ for every $\epsilon>0$ (and the same holds for $c^{+}$). This implies that if $|\mathbf{x}|<\epsilon$ then $\rho_{\Upsilon}(\mathbf{x})<\epsilon/c^{-}$, and if $\rho_{\Upsilon}(\mathbf{x})<\epsilon$ then $|\mathbf{x}|<c^{+}\epsilon$. From the latter we deduce that for every $b>0$ we have that $|\mathbf{x}|\geq bc^{+}$ implies $\rho_{\Upsilon}(\mathbf{x})\geq b$ (or equivalently by homogeneity that $|\mathbf{x}|\geq b$ implies $\rho_{\Upsilon}(\mathbf{x})\geq b/c^{+}$) and from the former that $\rho_{\Upsilon}(\mathbf{x})\geq b$ implies that $|\mathbf{x}|\geq bc^{-}$.
	
	Furthermore, it is easy to see that $\max_{\mathbf{t}\in\Upsilon}|\mathbf{x}_{\mathbf{t}}|$ is a norm on $\mathbb{R}^{|\Upsilon|k}$ and since on any finite dimensional vector space any norm is equivalent to any other norm, we have that there exists two constants $A$ and $B$ such that $A|\mathbf{x}|\leq \max_{\mathbf{t}\in\hat{\Upsilon}}|\mathbf{x}_{\mathbf{t}}| \leq B|\mathbf{x}|$. Therefore, for every $\epsilon>0$ we have that $\max_{\mathbf{t}\in\Upsilon}|\mathbf{x}_{\mathbf{t}}|<\epsilon$ implies that $A|\mathbf{x}|<\epsilon$ which in turn implies that $\rho_{\Upsilon}(\mathbf{x})<\frac{\epsilon}{Ac^{-}}$. Moreover, for every $\epsilon>0$ we have that $\max_{\mathbf{t}\in\Upsilon}|\mathbf{x}_{\mathbf{t}}|\geq\epsilon$ implies that $B|\mathbf{x}|\geq\epsilon$ which in turn implies that $\rho_{\Upsilon}(\mathbf{x})\geq\frac{\epsilon}{Bc^{+}}$. 
	
	Similarly for the other direction we have that, for every $\epsilon>0$, $\rho_{\Upsilon}(\mathbf{x})<\epsilon$ implies that $|\mathbf{x}|<c^{+}\epsilon$ which implies that $\max_{\mathbf{t}\in\Upsilon}|\mathbf{x}_{\mathbf{t}}|<c^{+}B\epsilon$. Moreover, for every $\epsilon>0$, $\rho_{\Upsilon}(\mathbf{x})\geq\epsilon$ implies that $|\mathbf{x}|\geq c^{-}\epsilon$ which implies that $\max_{\mathbf{t}\in\Upsilon}|\mathbf{x}_{\mathbf{t}}|\geq c^{-}A\epsilon$. Thus by setting $C=Ac^{-}$ and $D=Bc^{+}$ we obtain the result.

\subsection{Proof of Theorem \ref{t1-L-E-Pro}}
Before proving Theorem \ref{t1-L-E-Pro} we present the following result on the connection between tail random fields for different sets $\Upsilon_{1}$ and $\Upsilon_{2}$ and different moduli of continuity $\rho_{1}$ and $\rho_{2}$.

\begin{lemma}\label{lem:difmod}
Let $\Upsilon_{1}$ and $\Upsilon_{2}$ be two finite subset of $\mathbb{Z}^{k}$ and consider $\rho_{1}$ and $\rho_{2}$ be two moduli of continuity on $\mathbb{R}^{d\mathbb{Z}^{k}}$. Let $C_{1}$ and $C_{2}$ be the constants such that, for every $\epsilon>0$, $\rho_{1,\Upsilon_{1}}(x)>\epsilon$ implies $\rho_{2,\Upsilon_{2}}(x)>\frac{\epsilon}{C_{2}}$, and $\rho_{2,\Upsilon_{2}}(x)>\epsilon$ implies $\rho_{1,\Upsilon_{1}}(x)>C_{1}\epsilon$. Then,
\begin{equation*}
\mathbf{Y}_{\Upsilon_{2}}\stackrel{d}{=}C_{1}\mathbf{Y}_{\Upsilon_{1}}\big|\rho_{2,\Upsilon_{2}}(\mathbf{Y}_{\Upsilon_{1}})>\frac{1}{C_{1}}
\end{equation*}
and
\begin{equation*}
\mathbf{Y}_{\Upsilon_{1}}\stackrel{d}{=}\frac{\mathbf{Y}_{\Upsilon_{2}}}{C_{2}}\Big|\rho_{1,\Upsilon_{1}}(\mathbf{Y}_{\Upsilon_{2}})>C_{2}.
\end{equation*}
\end{lemma}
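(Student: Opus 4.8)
The plan is to realise both $\mathbf Y_{\Upsilon_1}$ and $\mathbf Y_{\Upsilon_2}$ from the \emph{same} prelimit family $\mathcal L(x^{-1}\mathbf X\mid\rho(\mathbf X)>x)$ of Proposition \ref{pro-New-1}, exploiting that the two conditioning events differ only by a deterministic dilation. First I would unpack the hypotheses: letting $\epsilon\uparrow\rho_{1,\Upsilon_1}(x)$ in ``$\rho_{1,\Upsilon_1}(x)>\epsilon\Rightarrow\rho_{2,\Upsilon_2}(x)>\epsilon/C_2$'' and $\epsilon\uparrow\rho_{2,\Upsilon_2}(x)$ in ``$\rho_{2,\Upsilon_2}(x)>\epsilon\Rightarrow\rho_{1,\Upsilon_1}(x)>C_1\epsilon$'' produces the two-sided bound
\[
C_1\,\rho_{2,\Upsilon_2}(x)\ \le\ \rho_{1,\Upsilon_1}(x)\ \le\ C_2\,\rho_{2,\Upsilon_2}(x),\qquad x\in(\mathbb R^d)^{\mathbb Z^k}.
\]
In particular $\{\rho_{2,\Upsilon_2}(\mathbf X)>x\}\subseteq\{\rho_{1,\Upsilon_1}(\mathbf X)>C_1x\}$, so conditioning on the left-hand event is the same as conditioning on its intersection with the right-hand one; substituting $y=C_1x$ and using $1$-homogeneity of $\rho_{2,\Upsilon_2}$ gives, for every $x>0$,
\[
\mathcal L\big(x^{-1}\mathbf X\,\big|\,\rho_{2,\Upsilon_2}(\mathbf X)>x\big)=\mathcal L\Big(C_1\,(y^{-1}\mathbf X)\,\Big|\,\rho_{1,\Upsilon_1}(\mathbf X)>y,\ \rho_{2,\Upsilon_2}(y^{-1}\mathbf X)>\tfrac1{C_1}\Big).
\]

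Next I would let $x\to\infty$ (equivalently $y\to\infty$). By Proposition \ref{pro-New-1} the left-hand side converges in finite-dimensional distributions to $\mathcal L(\mathbf Y_{\Upsilon_2})$, while $\mathcal L(y^{-1}\mathbf X\mid\rho_{1,\Upsilon_1}(\mathbf X)>y)\stackrel{fdd}{\to}\mathcal L(\mathbf Y_{\Upsilon_1})$. It then remains to push this last convergence through ``multiply by $C_1$, then condition on $\{\rho_{2,\Upsilon_2}(\cdot)>1/C_1\}$''. Since $\rho_{2,\Upsilon_2}$ is continuous and depends only on the finitely many coordinates indexed by $\Upsilon_2$, the set $\{\rho_{2,\Upsilon_2}>1/C_1\}$ is open with topological boundary contained in $\{\rho_{2,\Upsilon_2}=1/C_1\}$; and by Proposition \ref{pro-New-2}(i) together with $\rho_{1,\Upsilon_1}(\mathbf\Theta_{\Upsilon_1})=1$ a.s., we have $\rho_{2,\Upsilon_2}(\mathbf Y_{\Upsilon_1})=\rho_{1,\Upsilon_1}(\mathbf Y_{\Upsilon_1})\,\rho_{2,\Upsilon_2}(\mathbf\Theta_{\Upsilon_1})$, a product of a $\mathrm{Pareto}(\alpha)$ variable and an independent factor which the two-sided bound (applied to $\mathbf\Theta_{\Upsilon_1}$) confines to the bounded interval $[1/C_2,1/C_1]\subset(0,\infty)$; hence this law is atomless and charges $(1/C_1,\infty)$ with positive probability. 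Therefore $\{\rho_{2,\Upsilon_2}=1/C_1\}$ is $\mathcal L(\mathbf Y_{\Upsilon_1})$-null, and the portmanteau/continuous-mapping statement for conditional laws (if $Z_y\stackrel{d}{\to}Z$, $\mathbb P(Z\in\partial A)=0$, $\mathbb P(Z\in A)>0$, then $\mathcal L(Z_y\mid Z_y\in A)\to\mathcal L(Z\mid Z\in A)$, applied coordinate-block by coordinate-block) yields $\mathbf Y_{\Upsilon_2}\stackrel{d}{=}C_1\mathbf Y_{\Upsilon_1}\mid\rho_{2,\Upsilon_2}(\mathbf Y_{\Upsilon_1})>1/C_1$.

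For the second identity I would run the mirror-image argument: from the two-sided bound $\{\rho_{1,\Upsilon_1}(\mathbf X)>x\}\subseteq\{\rho_{2,\Upsilon_2}(\mathbf X)>x/C_2\}$, and with $y=x/C_2$,
\[
\mathcal L\big(x^{-1}\mathbf X\,\big|\,\rho_{1,\Upsilon_1}(\mathbf X)>x\big)=\mathcal L\Big(C_2^{-1}(y^{-1}\mathbf X)\,\Big|\,\rho_{2,\Upsilon_2}(\mathbf X)>y,\ \rho_{1,\Upsilon_1}(y^{-1}\mathbf X)>C_2\Big);
\]
letting $x\to\infty$ and using $\rho_{1,\Upsilon_1}(\mathbf Y_{\Upsilon_2})=\rho_{2,\Upsilon_2}(\mathbf Y_{\Upsilon_2})\,\rho_{1,\Upsilon_1}(\mathbf\Theta_{\Upsilon_2})$ with $\rho_{1,\Upsilon_1}(\mathbf\Theta_{\Upsilon_2})\ge C_1>0$ a.s.\ (so $\{\rho_{1,\Upsilon_1}=C_2\}$ is $\mathcal L(\mathbf Y_{\Upsilon_2})$-null) gives $\mathbf Y_{\Upsilon_1}\stackrel{d}{=}C_2^{-1}\mathbf Y_{\Upsilon_2}\mid\rho_{1,\Upsilon_1}(\mathbf Y_{\Upsilon_2})>C_2$. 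The one genuinely delicate point, and the one I would write out carefully, is this transfer of finite-dimensional convergence to the conditional laws; everything else is homogeneity bookkeeping, and the null-boundary property it requires is exactly what the spectral decomposition of Proposition \ref{pro-New-2}(i) and the atomlessness of the Pareto law supply.
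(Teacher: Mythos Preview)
Your proof is correct and follows essentially the same route as the paper: both realise $\mathbf Y_{\Upsilon_1}$ and $\mathbf Y_{\Upsilon_2}$ as limits of $\mathcal L(x^{-1}\mathbf X\mid\rho(\mathbf X)>x)$ and exploit the event inclusion $\{\rho_{2,\Upsilon_2}(\mathbf X)>x/C_1\}\subseteq\{\rho_{1,\Upsilon_1}(\mathbf X)>x\}$. The only cosmetic difference is that the paper works with expectations of a bounded continuous test function and splits via $1=\mathbf 1(\rho_{2,\Upsilon_2}(\mathbf X)>x/C_1)+\mathbf 1(\rho_{2,\Upsilon_2}(\mathbf X)\le x/C_1)$, while you phrase the same computation as an identity between conditional laws; in fact you are more careful than the paper on the one genuinely technical point, namely the portmanteau step requiring $\mathbb P(\rho_{2,\Upsilon_2}(\mathbf Y_{\Upsilon_1})=1/C_1)=0$, which the paper passes over silently but which your spectral-decomposition argument via Proposition~\ref{pro-New-2}(i) handles cleanly.
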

\begin{proof}
Let $\Xi$ be a finite subset of $\mathbb{Z}^{k}$ and let $g:\mathbb{R}^{d|\Xi|}\to\mathbb{R}$ be a bounded and continuous function. Then, by homogeneity we have
\begin{align*}
&\mathbb{E}\big[g\big(\big(\mathbf{Y}_{\Upsilon_{1}}(\mathbf{t})\big)_{\mathbf{t}\in\Xi}\big)\big]\\
&=\lim\limits_{x\to\infty}\frac{\mathbb{E}\Big[g\Big(\Big(\frac{\mathbf{X}_{\mathbf{t}}}{x}\Big)_{\mathbf{t}\in\Xi}\Big)\mathbf{1}(\rho_{1,\Upsilon_{1}}(\mathbf{X})>x)\Big]}{\mathbb{P}(\rho_{1,\Upsilon_{1}}(\mathbf{X})>x)}\\
&=\lim\limits_{x\to\infty}\frac{\mathbb{E}\Big[g\Big(\Big(\frac{\mathbf{X}_{\mathbf{t}}}{x}\Big)_{\mathbf{t}\in\Xi}\Big)\mathbf{1}(\rho_{1,\Upsilon_{1}}(\mathbf{X})>x)[\mathbf{1}(\rho_{2,\Upsilon_{2}}(\mathbf{X})>\frac{x}{C_{1}})+\mathbf{1}(\rho_{2,\Upsilon_{2}}(\mathbf{X})\leq\frac{x}{C_{1}})]\Big]}{\mathbb{P}(\rho_{1,\Upsilon_{1}}(\mathbf{X})>x)}\frac{\mathbb{P}(\rho_{2,\Upsilon_{2}}(\mathbf{X})>\frac{x}{C_{1}})}{\mathbb{P}(\rho_{2,\Upsilon_{2}}(\mathbf{X})>\frac{x}{C_{1}})}\\
&=K\mathbb{E}\Big[g\Big(\Big(\dfrac{\mathbf{Y}_{\Upsilon_{2},\mathbf{t}}}{C_{1}}\Big)_{\mathbf{t}\in\Xi}\Big)\Big]+\mathbb{E}\big[g\big(\big(\mathbf{Y}_{\Upsilon_{1},\mathbf{t}}\big)_{\mathbf{t}\in\Xi}\big)\mathbf{1}(\rho_{2,\Upsilon_{2}}(\mathbf{Y}_{\Upsilon_{1}})\leq\frac{1}{C_{1}})\big]
\end{align*}
where
\begin{equation*}
K=\lim\limits_{x\to\infty}\frac{\mathbb{P}(\rho_{2,\Upsilon_{2}}(\mathbf{X})>\frac{x}{C_{1}})}{\mathbb{P}(\rho_{1,\Upsilon_{1}}(\mathbf{X})>x)}=\lim\limits_{x\to\infty}\frac{\mathbb{P}(\rho_{2,\Upsilon_{2}}(\mathbf{X})>\frac{x}{C_{1}},\rho_{1,\Upsilon_{1}}(\mathbf{X})>x)}{\mathbb{P}(\rho_{1,\Upsilon_{1}}(\mathbf{X})>x)}=\mathbb{P}\Big(\rho_{2,\Upsilon_{2}}(\mathbf{Y}_{\Upsilon_{1}})>\frac{1}{C_{1}}\Big).
\end{equation*}
Thus, we have
\begin{equation*}
\mathbb{E}\Big[g\Big(\Big(\dfrac{\mathbf{Y}_{\Upsilon_{2},\mathbf{t}}}{C_{1}}\Big)_{\mathbf{t}\in\Xi}\Big)\Big]=\mathbb{E}\Big[g\big(\big(\mathbf{Y}_{\Upsilon_{1},\mathbf{t}}\big)_{\mathbf{t}\in\Xi}\big)|\rho_{2,\Upsilon_{2}}(\mathbf{Y}_{\Upsilon_{1}})>\frac{1}{C_{1}}\Big].
\end{equation*}
Since $g$ and $\Xi$ were arbitrary we obtain the first stated result. The same arguments apply to the second one.
\end{proof}

	For notational purpose we consider the general case of countably many $\mathcal{D}$s and so of $\mathcal{E}$s. For the first part of this proof we follow similar arguments as the ones used in the proof of Theorem \ref{t1-L}. By $\mathcal{A}^{\Lambda}(a_{n})$ it suffices to show that for any $g\in \mathbb{C}^{+}_{K}$, $(\Psi_{\tilde{N}^{\Lambda}_{r_{n}}}(g))^{k_{n}}$ converges to (\ref{Psi-L-E-Pro}) as $n\to\infty$: It suffices to prove that $k_{n}(1-\Psi_{\tilde{N}^{\Lambda}_{r_{n}}}(g))$ converges to the logarithm of (\ref{Psi-L-E-Pro}) as $n\to\infty$.

	Recall the sets $S_{i,4l}$, $S'_{i,4l}$, and $\tilde{S}_{i,4l}$ from Proposition \ref{prop:latlambd} and its proof. Further, let $\bar{S}_{i,4l}:=S_{i,4l}\setminus S'_{i,4l}$. For notational consistency let $\bar{S}_{i,4l}:=\emptyset$ for $i$ corresponding to $\mathcal{D}_i$ bounded. Now, we apply a telescoping sum argument which generalises the one used in the proof of Theorem \ref{t1-L}. 
	
	Let $u:=|\bigcup_{i\in I^*,i<m_{4l}}S'_{i,4l}|$ (we omit the dependency on $l$ and on $n$ in $u$) and let $s_{1}\prec s_{2}\prec...\prec s_{u}$ denote the points in $\bigcup_{i\in I^*,i<m_{4l}}S'_{i,4l}$. Denote by $\mathcal{E}_{j_{1}},...,\mathcal{E}_{j_{u}}$ the $\mathcal{E}$s associated to $s_{1},...,s_{u}$. Let $\bar{u}:=|\bigcup_{i\in I^*,i<m_{4l}}\bar{S}_{i,4l}|$ and let $\bar{s}_{1}\prec \bar{s}_{2}\prec...\prec \bar{s}_{\bar{u}}$ denote the points in $\bigcup_{i\in I^*,i<m_{4l}}\bar{S}_{i,4l}$. Denote by $\mathcal{E}_{\bar{j}_{1}},...,\mathcal{E}_{\bar{j}_{\bar{u}}}$ the $\mathcal{E}$s associated to $\bar{s}_{1},...,\bar{s}_{\bar{u}}$. Let $\tilde{s}_{1},...,\tilde{s}_{\tilde{u}}$, for some $\tilde{u}\in\mathbb{N}\cup\{\mathbf{0}\}$, be the ordered points in $\Lambda_{r_n}\setminus\Big(\bigcup_{i=1}^{u}(\mathcal{E}_{j_{i}})_{s_{i}}\cup \bigcup_{i=1}^{\bar{u}}(\mathcal{E}_{\bar{j}_{i}})_{\bar{s}_{i}}\Big)$. In this case we associate the set $\{\mathbf{0}\}$ to any point $\tilde{s}_{h}$, $h=1,...,\tilde{u}$. Denote by $\hat{u}:=u+\bar{u}+\tilde{u}$ and by $\hat{s}_{1},...,\hat{s}_{\hat{u}}$ the points $s_{1},...s_{u},\bar{s}_{1},...\bar{s}_{u},\tilde{s}_{1},...,\tilde{s}_{\tilde{u}}$ indexed such that $\hat{s}_{1}\prec\hat{s}_{2}\prec...\prec\hat{s}_{\hat{u}}$, and denote by $\hat{\mathcal{E}}_{1},...,\hat{\mathcal{E}}_{\hat{u}}$, the corresponding sets $\mathcal{E}_{j_{1}},...,\mathcal{E}_{j_{u}},\mathcal{E}_{\bar{j}_{1}},...,\mathcal{E}_{\bar{j}_{\bar{u}}},\underbrace{\{\mathbf{0}\},...,\{\mathbf{0}\}}_{\tilde{u}\textnormal{ times}}$; for example if $\hat{s}_{1}=\bar{s}_{\bar{u}}$ then $\hat{\mathcal{E}}_{1}=\mathcal{E}_{\bar{j}_{\bar{u}}}$.
	
Let
	\begin{equation*}
	\tilde{\Psi}_{l,m}(g)=\begin{cases}
\exp\Big(-\sum_{j=m}^{\hat{u}}\sum_{\mathbf{t}\in(\hat{\mathcal{E}}_{j})_{\hat{s}_{j}}}g(a_{n}^{-1}\mathbf{X}_{\mathbf{t}}) \Big) \Big],& 1\leq m\leq \hat{u},\\1, & m=\hat{u}+1\,,
	\end{cases}
	\end{equation*}
so that for every $l\in\mathbb{N}$ we obtain $
	1-\Psi_{\tilde{N}_{r_{n}}}(g)=\sum_{m=1}^{\hat{u}}\tilde{\Psi}_{l,m+1}(g)-\tilde{\Psi}_{l,m}(g)$.
	 By the stationarity of $\mathbf{X}$, we have
	\begin{align}\label{m}
	\lefteqn{\tilde{\Psi}_{m+1}(g)-\tilde{\Psi}_{m}(g)}\\
\nonumber	=&\mathbb{E}\Big[\Big(1-e^{-\sum_{\mathbf{t}\in\hat{\mathcal{E}}_{m}}g(a_{n}^{-1}\mathbf{X}_{\mathbf{t}})} \Big) \exp\Big(-\sum_{j=m+1}^{\hat{u}}\sum_{\mathbf{t}\in(\hat{\mathcal{E}}_{j})_{\hat{s}_{j}}}g(a_{n}^{-1}\mathbf{X}_{\mathbf{t}-\hat{s}_{m}}) \Big) \Big]\\
\nonumber=&\mathbb{E}\Big[\Big(1-e^{-\sum_{\mathbf{t}\in\hat{\mathcal{E}}_{m}}g(a_{n}^{-1}\mathbf{X}_{\mathbf{t}})} \Big) \exp\Big(-\sum_{\mathbf{t}\in\bigcup_{j=m+1}^{\hat{u}}(\hat{\mathcal{E}}_{j})_{\hat{s}_{j}-\hat{s}_{m}}}g(a_{n}^{-1}\mathbf{X}_{\mathbf{t}}) \Big) \Big]\\
\nonumber	=&\mathbb{E}\Big[\Big(1-e^{-\sum_{\mathbf{t}\in\hat{\mathcal{E}}_{m}}g(a_{n}^{-1}\mathbf{X}_{\mathbf{t}})} \Big)\exp\Big(-\sum_{\mathbf{t}\in \bigcup_{j=m+1}^{\hat{u}}(\hat{\mathcal{E}}_{j})_{\hat{s}_{j}-\hat{s}_{m}}\cap K_{2l}}g(a_{n}^{-1}\mathbf{X}_{\mathbf{t}}) \Big) \mathbf{1}(\max_{\mathbf{t}\in \bigcup_{j=m+1}^{\hat{u}}(\hat{\mathcal{E}}_{j})_{\hat{s}_{j}-\hat{s}_{m}}\setminus K_{2l}}|\mathbf{X}_{\mathbf{t}}|\leq\delta a_{n}) \Big]\\
\nonumber&	+\mathbb{E}\Big[\Big(1-e^{-\sum_{\mathbf{t}\in\hat{\mathcal{E}}_{m}}g(a_{n}^{-1}\mathbf{X}_{\mathbf{t}})} \Big)\exp\Big(-\sum_{\mathbf{t}\in \bigcup_{j=m+1}^{\hat{u}}(\hat{\mathcal{E}}_{j})_{\hat{s}_{j}-\hat{s}_{m}}}g(a_{n}^{-1}\mathbf{X}_{\mathbf{t}}) \Big) 	\mathbf{1}(\max_{\mathbf{t}\in \bigcup_{j=m+1}^{\hat{u}}(\hat{\mathcal{E}}_{j})_{\hat{s}_{j}-\hat{s}_{m}}\setminus K_{2l}}|\mathbf{X}_{\mathbf{t}}|>\delta a_{n}) \Big]\\
\nonumber	=&\mathbb{E}\Big[\Big(1-e^{-\sum_{\mathbf{t}\in\hat{\mathcal{E}}_{m}}g(a_{n}^{-1}\mathbf{X}_{\mathbf{t}})} \Big)\exp\Big(-\sum_{\mathbf{t}\in \bigcup_{j=m+1}^{\hat{u}}(\hat{\mathcal{E}}_{j})_{\hat{s}_{j}-\hat{s}_{m}}\cap K_{2l}}g(a_{n}^{-1}\mathbf{X}_{\mathbf{t}}) \Big) \Big]\\
\nonumber&	-\mathbb{E}\Big[\Big(1-e^{-\sum_{\mathbf{t}\in\hat{\mathcal{E}}_{m}}g(a_{n}^{-1}\mathbf{X}_{\mathbf{t}})} \Big)\exp\Big(-\sum_{\mathbf{t}\in \bigcup_{j=m+1}^{\hat{u}}(\hat{\mathcal{E}}_{j})_{\hat{s}_{j}-\hat{s}_{m}}\cap K_{2l}}g(a_{n}^{-1}\mathbf{X}_{\mathbf{t}}) \Big)\\
\nonumber&\mathbf{1}(\max_{\mathbf{t}\in \bigcup_{j=m+1}^{\hat{u}}(\hat{\mathcal{E}}_{j})_{\hat{s}_{j}-\hat{s}_{m}}\setminus K_{2l}}|\mathbf{X}_{\mathbf{t}}|>\delta a_{n})\mathbf{1}(\max_{\mathbf{t}\in\hat{\mathcal{E}}_{m}}|\mathbf{X}_{\mathbf{t}}|>\delta a_{n})  \Big]\\
\nonumber&+\mathbb{E}\Big[\Big(1-e^{-\sum_{\mathbf{t}\in\hat{\mathcal{E}}_{m}}g(a_{n}^{-1}\mathbf{X}_{\mathbf{t}})} \Big)\exp\Big(-\sum_{\mathbf{t}\in \bigcup_{j=m+1}^{\hat{u}}(\hat{\mathcal{E}}_{j})_{\hat{s}_{j}-\hat{s}_{m}}}g(a_{n}^{-1}\mathbf{X}_{\mathbf{t}}) \Big)\\
\nonumber&
\mathbf{1}(\max_{\mathbf{t}\in \bigcup_{j=m+1}^{\hat{u}}(\hat{\mathcal{E}}_{j})_{\hat{s}_{j}-\hat{s}_{m}}\setminus K_{2l}}|\mathbf{X}_{\mathbf{t}}|>\delta a_{n})\mathbf{1}(\max_{\mathbf{t}\in\hat{\mathcal{E}}_{m}}|\mathbf{X}_{\mathbf{t}}|>\delta a_{n}) \Big]\\
	\nonumber=&\mathbb{E}\Big[\Big(1-e^{-\sum_{\mathbf{t}\in\hat{\mathcal{E}}_{m}}g(a_{n}^{-1}\mathbf{X}_{\mathbf{t}})} \Big)\exp\Big(-\sum_{\mathbf{t}\in \bigcup_{j=m+1}^{\hat{u}}(\hat{\mathcal{E}}_{j})_{\hat{s}_{j}-\hat{s}_{m}}\cap K_{2l}}g(a_{n}^{-1}\mathbf{X}_{\mathbf{t}}) \Big) \Big]+J^{(r_n)}_{l,m}.
	\end{align}
	Consider now only the $J^{(r_n)}_{l,m}$ where $m$ is such that $\hat{s}_{m}=s_i$ for some $i=1,...,u$. We have that
	\begin{multline}
	J^{(r_n)}_{l,m}\leq 2\mathbb{E}\Big[\mathbf{1}(\max_{\mathbf{t}\in \bigcup_{j=m+1}^{\hat{u}}(\hat{\mathcal{E}}_{j})_{\hat{s}_{j}-\hat{s}_{m}}\setminus K_{2l}}|\mathbf{X}_{\mathbf{t}}|>\delta a_{n})\mathbf{1}(\max_{\mathbf{t}\in\mathcal{E}_{i}}|\mathbf{X}_{\mathbf{t}}|>\delta a_{n})\Big]\\
	\label{ineq-J}
	\leq 2\mathbb{P}(\hat{M}_{2l,r_{n}}^{\Lambda,|\mathbf{X}|,(i)}>\delta a_{n},\max_{\mathbf{t}\in\mathcal{E}_{i}}|\mathbf{X}_{\mathbf{t}}|>\delta a_{n}).
	\end{multline}
	By Proposition \ref{prop:latlambd} we have that $|S'_{i,4l}|/|\Lambda_{r_n}|\to\gamma^*_i$ as $n\to\infty$, and so that $\lim\limits_{n\to\infty}\frac{u}{|\Lambda_{r_n}|}=\sum_{j\in I^*,j<m_{4l}}\gamma^*_{j}$. Further, since the inequality $(\ref{ineq-J})$ holds for every $n,l\in\mathbb{N}$ and since $\sum_{j\in I^*}\gamma^*_{j}|\mathcal{E}_{j}|=1$, we obtain that
	\begin{align}
\nonumber\lefteqn{	\lim\limits_{l\to\infty}\limsup_{n\to\infty}k_{n}\sum_{m=1}^{u}|J^{(r_n)}_{l,m}|}
\\\nonumber&=\lim\limits_{l\to\infty}\limsup_{n\to\infty}k_{n}\sum_{i\in I^*,i<m_{4l}}2|S'_{i,4l}|\mathbb{P}(\hat{M}_{2l,r_{n}}^{\Lambda,|\mathbf{X}|,(i)}>\delta a_{n},\max_{\mathbf{t}\in\mathcal{E}_{i}}|\mathbf{X}_{\mathbf{t}}|>\delta a_{n})\\
\nonumber&	\leq2\lim\limits_{l\to\infty}\sum_{i\in I^*,i<m_{4l}}\gamma_{i}^{*}\limsup_{n\to\infty}|\Lambda_{r_{n}}|k_{n}\mathbb{P}(\hat{M}_{2l,r_{n}}^{\Lambda,|\mathbf{X}|,(i)}>\delta a_{n},\max_{\mathbf{t}\in\mathcal{E}_{i}}|\mathbf{X}_{\mathbf{t}}|>\delta a_{n})\\
	\nonumber &=2\lim\limits_{l\to\infty}\sum_{i\in I^*,i<m_{4l}}\gamma_{i}^{*}\limsup_{n\to\infty}|\Lambda_{n}|\mathbb{P}(|\mathbf{X}_{\mathbf{0}}|>\delta a_{n})\mathbb{P}(\hat{M}_{2l,r_{n}}^{\Lambda,|\mathbf{X}|,(i)}>\delta a_{n}|\max_{\mathbf{t}\in\mathcal{E}_{i}}|\mathbf{X}_{\mathbf{t}}|>\delta a_{n})\frac{\mathbb{P}(\max_{\mathbf{t}\in\mathcal{E}_{i}}|\mathbf{X}_{\mathbf{t}}|>\delta a_{n})}{\mathbb{P}(|\mathbf{X}_{\mathbf{0}}|>\delta a_{n})}\\
\nonumber &	=2\lim\limits_{l\to\infty}\sum_{i\in I^*,i<m_{4l}}\gamma_{i}^{*}\limsup_{n\to\infty}\mathbb{P}(\hat{M}_{2l,r_{n}}^{\Lambda,|\mathbf{X}|,(i)}>\delta a_{n}|\max_{\mathbf{t}\in\mathcal{E}_{i}}|\mathbf{X}_{\mathbf{t}}|>\delta a_{n})\frac{\mathbb{P}(\max_{\mathbf{t}\in\mathcal{E}_{i}}|\mathbf{X}_{\mathbf{t}}|>\delta a_{n})}{\mathbb{P}(|\mathbf{X}_{\mathbf{0}}|>\delta a_{n})}\\\label{bound-J}
&	\leq2\lim\limits_{l\to\infty}\sum_{i\in I^*,i<m_{4l}}\gamma_{i}^{*}|\mathcal{E}_{i}|\limsup_{n\to\infty}\mathbb{P}(\hat{M}_{2l,r_{n}}^{\Lambda,|\mathbf{X}|,(i)}>\delta a_{n}|\max_{\mathbf{t}\in\mathcal{E}_{i}}|\mathbf{X}_{\mathbf{t}}|>\delta a_{n})
	\end{align}	
	and since 
	\begin{equation*}
	\sum_{i\in I^*,i<m_{4l}}\gamma_{i}^{*}|\mathcal{E}_{i}|\limsup_{n\to\infty}\mathbb{P}(\hat{M}_{2l,r_{n}}^{\Lambda,|\mathbf{X}|,(i)}>\delta a_{n}|\max_{\mathbf{t}\in\mathcal{E}_{i}}|\mathbf{X}_{\mathbf{t}}|>\delta a_{n})	\leq \sum_{i\in I^*,i<m_{4l}}\gamma_{i}^{*}|\mathcal{E}_{i}|\leq \sum_{i\in I^*}\gamma_{i}^{*}|\mathcal{E}_{i}|=1
	\end{equation*}
	by dominated convergence theorem we have that (\ref{bound-J}) is equal to
	\begin{equation}\label{m-1}
	2 \sum_{i\in I^*}\gamma_{i}^{*}|\mathcal{E}_{i}|\lim\limits_{l\to\infty}\limsup_{n\to\infty}\mathbb{P}(\hat{M}_{2l,r_{n}}^{\Lambda,|\mathbf{X}|,(i)}>\delta a_{n}|\max_{\mathbf{t}\in\mathcal{E}_{i}}|\mathbf{X}_{\mathbf{t}}|>\delta a_{n})=0
	\end{equation}
	where the last equality follows by the anti-clustering condition (AC$^{\Lambda}_{\succeq,I^*}$).
	
	Now, let us focus on (\ref{m}) where $m$ is such that $\hat{s}_{m}=\bar{s}_i$ for some $i=1,...,\bar{u}$ or $\hat{s}_{m}=\tilde{s}_l$ for some $l=1,...,\tilde{u}$. Since by Proposition \ref{prop:latlambd} $\sum_{j<m_{4l}}\gamma^*_{j}|\mathcal{E}_{j}|\to\sum_{j\in I^*}\gamma^*_{j}|\mathcal{E}_{j}|=1$ monotonically as $l\to\infty$, then $\lim\limits_{l\to\infty}\lim\limits_{n\to\infty}\frac{|\Lambda_{r_{n}}|-\sum_{i\in I^*,i<m_{4l}}|S'_{i,4l}||\mathcal{E}_i|}{|\Lambda_{r_{n}}|}=0$. Since $\tilde{u}+\sum_{i\in I^*,i<m_{4l}}|\bar{S}_{i,4l}||\mathcal{E}_i|=|\Lambda_{r_{n}}|-\sum_{i\in I^*,i<m_{4l}}|S'_{i,4l}||\mathcal{E}_i|$, we obtain that
	\begin{equation}\label{u-tilde}
	\lim\limits_{l\to\infty}\lim\limits_{n\to\infty}\frac{\tilde{u}+\sum_{i\in I^*,i<m_{4l}}|\bar{S}_{i,4l}||\mathcal{E}_i|}{|\Lambda_{r_{n}}|}=0.
	\end{equation}
	By combining this with the fact that (\ref{m}) is bounded by
	\begin{equation*}
	\mathbb{E}\Big[\Big(1-e^{-\sum_{\mathbf{t}\in\hat{\mathcal{E}}_{m}}g(a_{n}^{-1}\mathbf{X}_{\mathbf{t}})} \Big) \Big]\leq \mathbb{P}(\max_{\mathbf{t}\in\hat{\mathcal{E}}_{m}}|\mathbf{X}_{\mathbf{t}}|>\delta a_{n})\leq |\hat{\mathcal{E}}_{m}|\mathbb{P}(|\mathbf{X}_{\mathbf{0}}|>\delta a_{n})
	\end{equation*}
	we conclude that
	\begin{multline}
	\lim\limits_{l\to\infty}\limsup_{n\to\infty}k_{n}\bigg(\tilde{u}\mathbb{P}(|\mathbf{X}_{\mathbf{0}}|>\delta a_{n})+\sum_{i\in I^*,i<m_{4l}}|\bar{S}_{i,4l}||\mathcal{E}_{i}|\mathbb{P}(|\mathbf{X}_{\mathbf{0}}|>\delta a_{n})\bigg)\label{m-2}\\
	=\lim\limits_{l\to\infty}\lim\limits_{n\to\infty}\frac{\tilde{u}+\sum_{i\in I^*,i<m_{4l}}|\bar{S}_{i,4l}||\mathcal{E}_i|}{|\Lambda_{r_{n}}|}=0.
	\end{multline}
	Now, by construction (recall (\ref{tilde-point}))  when $\hat{s}_{m}=s_{k}$ for some $k=1,...,u$ we get
	\begin{multline}\label{expectation}
	\mathbb{E}\Big[\Big(1-e^{-\sum_{\mathbf{t}\in\hat{\mathcal{E}}_{m}}g(a_{n}^{-1}\mathbf{X}_{\mathbf{t}})} \Big)\exp\Big(-\sum_{\mathbf{t}\in \bigcup_{j=m+1}^{\hat{u}}(\hat{\mathcal{E}}_{j})_{\hat{s}_{j}-\hat{s}_{m}}\cap K_{2l}}g(a_{n}^{-1}\mathbf{X}_{\mathbf{t}}) \Big) \Big]\\
	=\mathbb{E}\Big[\Big(1-e^{-\sum_{\mathbf{t}\in\mathcal{E}_{j_k}}g(a_{n}^{-1}\mathbf{X}_{\mathbf{t}})} \Big)\exp\Big(-\sum_{\mathbf{t}\in \tilde{\mathcal{D}}_{j_k}\cap K_{2l}}g(a_{n}^{-1}\mathbf{X}_{\mathbf{t}}) \Big) \Big],
	\end{multline}	
	where we used that by definition $(\mathcal{D}^*_{i}\cap K_{2l})\setminus \bigcup_{\mathbf{s}\in\{\mathbf{0}\}\cup -\mathcal{G}_i}(\mathcal{E}_{i})_{\mathbf{s}}=\tilde{\mathcal{D}}_i\cap K_{2l}$.	Therefore, by combining (\ref{m-1}) and (\ref{m-2}) we have that
	\begin{equation*}
	\lim\limits_{l\to\infty}\limsup_{n\to\infty}k_{n}\bigg|\sum_{m=1}^{\hat{u}}\tilde{\Psi}_{l,m+1}(g)-\tilde{\Psi}_{l,m}(g)-\sum_{m=1}^{u}\mathbb{E}\Big[\Big(1-e^{-\sum_{\mathbf{t}\in\mathcal{E}_{j_m}}g(a_{n}^{-1}\mathbf{X}_{\mathbf{t}})} \Big)\exp\Big(-\sum_{\mathbf{t}\in \tilde{\mathcal{D}}_{j_m}\cap K_{2l}}g(a_{n}^{-1}\mathbf{X}_{\mathbf{t}}) \Big) \Big]\bigg|=0.
	\end{equation*}
	
	Thus, for the remaining part of the proof we focus on
	\begin{equation*}
	k_n\sum_{m=1}^{u}\mathbb{E}\Big[\Big(1-e^{-\sum_{\mathbf{t}\in\mathcal{E}_{j_m}}g(a_{n}^{-1}\mathbf{X}_{\mathbf{t}})} \Big)\exp\Big(-\sum_{\mathbf{t}\in \tilde{\mathcal{D}}_{j_m}\cap K_{2l}}g(a_{n}^{-1}\mathbf{X}_{\mathbf{t}}) \Big) \Big].
	\end{equation*}
From Lemma \ref{lem-CD} we have that for every $i\in I^*$
\begin{equation}\label{relation max rho}
\Big\{\max_{\mathbf{t}\in\mathcal{E}_{i}}|\mathbf{X}_{\mathbf{t}}|>\delta a_{n}\Big\}\subseteq\Big\{\rho_{\mathcal{E}_{i}}(\mathbf{X})>\frac{\delta}{D_{i}} a_{n}\Big\},
\end{equation}
and thus we have
	\begin{align*}
	\lefteqn{\lim\limits_{n\to\infty}k_{n}\sum_{m=1}^{u}\mathbb{E}\Big[\Big(1-e^{-\sum_{\mathbf{t}\in\mathcal{E}_{j_m}}g(a_{n}^{-1}\mathbf{X}_{\mathbf{t}})} \Big)\exp\Big(-\sum_{\mathbf{t}\in \tilde{\mathcal{D}}_{j_m}\cap K_{2l}}g(a_{n}^{-1}\mathbf{X}_{\mathbf{t}}) \Big) \Big]}\\
	&=\lim\limits_{n\to\infty}k_{n}\sum_{m=1}^{u}\mathbb{E}\Big[\Big(1-e^{-\sum_{\mathbf{t}\in\mathcal{E}_{j_m}}g(a_{n}^{-1}\mathbf{X}_{\mathbf{t}})} \Big)\exp\Big(-\sum_{\mathbf{t}\in \tilde{\mathcal{D}}_{j_m}\cap K_{2l}}g(a_{n}^{-1}\mathbf{X}_{\mathbf{t}}) \Big)
	\mathbf{1}\Big(\max_{\mathbf{t}\in\mathcal{E}_{j_m}}|\mathbf{X}_{\mathbf{t}}|>\delta a_{n}\Big) \Big]\\
	& =\lim\limits_{n\to\infty}k_{n}\sum_{m=1}^{u}\mathbb{E}\Big[\Big(1-e^{-\sum_{\mathbf{t}\in\mathcal{E}_{j_m}}g(a_{n}^{-1}\mathbf{X}_{\mathbf{t}})} \Big)\exp\Big(-\sum_{\mathbf{t}\in \tilde{\mathcal{D}}_{j_m}\cap K_{2l}}g(a_{n}^{-1}\mathbf{X}_{\mathbf{t}}) \Big)\mathbf{1}\Big(\rho_{\mathcal{E}_{j_m}}(\mathbf{X})>\frac{\delta a_{n}}{D_{j_m}} \Big) \Big]
	\end{align*}
	\begin{align*}
	=&\lim\limits_{n\to\infty}k_{n}\sum_{m=1}^{u}\mathbb{E}\Big[\Big(1-e^{-\sum_{\mathbf{t}\in\mathcal{E}_{j_m}}g(a_{n}^{-1}\mathbf{X}_{\mathbf{t}})} \Big)\exp\Big(-\sum_{\mathbf{t}\in \tilde{\mathcal{D}}_{j_m}\cap K_{2l}}g(a_{n}^{-1}\mathbf{X}_{\mathbf{t}}) \Big)\\
	&\Big|\rho_{\mathcal{E}_{j_m}}(\mathbf{X})>\frac{\delta a_{n}}{D_{j_m}}  \Big]\frac{\mathbb{P}(\rho_{\mathcal{E}_{j_m}}(\mathbf{X})> \frac{\delta a_{n}}{D_{j_m}} )}{\mathbb{P}(\rho_{\mathcal{E}_{j_m}}(\mathbf{X})> a_{n})}\frac{\mathbb{P}(\rho_{\mathcal{E}_{j_m}}(\mathbf{X})> a_{n})}{\mathbb{P}(|\mathbf{X}_{0}|> a_{n})}\mathbb{P}(|\mathbf{X}_{0}|> a_{n})\\
	=&\sum_{j\in I^*,j<m_{4l}}\Big(\frac{\delta}{D_{j}}\Big)^{-\alpha}\gamma^*_{j}c_{j}\mathbb{E}\Big[\Big(1-e^{-\sum_{\mathbf{t}\in\mathcal{E}_{j}}g(\frac{\delta}{D_{j}} Y\mathbf{\Theta}_{\mathcal{E}_{j},\mathbf{t}})} \Big)\exp\Big(-\sum_{\mathbf{t}\in\tilde{\mathcal{D}}_{j}\cap K_{2l}}g(\frac{\delta}{D_{j}} Y\mathbf{\Theta}_{\mathcal{E}_{j},\mathbf{t}}) \Big)\Big]\\
	=&\sum_{j\in I^*,j<m_{4l}}\int_{\delta}^{\infty}\frac{\gamma^*_{j}c_{j}}{(D_{j})^{-\alpha}}\mathbb{E}\Big[\Big(1-e^{-\sum_{\mathbf{t}\in\mathcal{E}_{j}}g( \frac{y}{D_{j}}\mathbf{\Theta}_{\mathcal{E}_{j},\mathbf{t}})} \Big)\exp\Big(-\sum_{\mathbf{t}\in\tilde{\mathcal{D}}_{j}\cap K_{2l}}g( \frac{y}{D_{j}}\mathbf{\Theta}_{\mathcal{E}_{j},\mathbf{t}}) \Big)\Big]d(-y^{-\alpha})\\
	=&\int_{\delta}^{\infty}\sum_{j\in I^*,j<m_{4l}}\frac{\gamma^*_{j}c_{j}}{(D_{j})^{-\alpha}}\mathbb{E}\Big[\Big(1-e^{-\sum_{\mathbf{t}\in\mathcal{E}_{j}}g( \frac{y}{D_{j}}\mathbf{\Theta}_{\mathcal{E}_{j},\mathbf{t}})} \Big)\exp\Big(-\sum_{\mathbf{t}\in\tilde{\mathcal{D}}_{j}\cap K_{2l}}g( \frac{y}{D_{j}}\mathbf{\Theta}_{\mathcal{E}_{j},\mathbf{t}}) \Big)\Big]d(-y^{-\alpha}).
	\end{align*}
		
	By assumption $A^{\Lambda}_{\rho}$ we can apply the dominated convergence theorem (twice) as follows. First since the integrand is bounded by the constant $\sum_{j\in I^*}\gamma^*_{j}c_{j}D_{j}^{\alpha}$ for every $l\in\mathbb{N}$ and since this constant is bounded by condition $A^{\Lambda}_{\rho}$, which makes it an integrable function for the integral $\int_{\delta}^{\infty}d(-y^{-\alpha})$ for any $\delta>0$, we can put the limit of $l$ going to infinity inside the integral. Second, consider the finite counting measure $\sum_{j\in I^*}\gamma^*_{j}c_{j}D_{j}^{\alpha}\varepsilon_{j}(\cdot)$, where $\varepsilon$ is the Dirac delta measure. Since the integrand is bounded by 1 and 1 is an integrable function with respect to this finite counting measure, then we can apply again the dominated convergence theorem. Hence, we obtain
\begin{equation*}
\lim\limits_{l\to\infty}\int_{\delta}^{\infty}\sum_{j\in I^*,j<m_{4l}}\frac{\gamma^*_{j}c_{j}}{(D_{j})^{-\alpha}}\mathbb{E}\Big[\Big(1-e^{-\sum_{\mathbf{t}\in\mathcal{E}_{j}}g( \frac{y}{D_{j}}\mathbf{\Theta}_{\mathcal{E}_{j},\mathbf{t}})} \Big)\exp\Big(-\sum_{\mathbf{t}\in\tilde{\mathcal{D}}_{j}\cap K_{2l}}g( \frac{y}{D_{j}}\mathbf{\Theta}_{\mathcal{E}_{j},\mathbf{t}}) \Big)\Big]d(-y^{-\alpha})
\end{equation*}	
\begin{equation}\label{delta-new}
=\int_{\delta}^{\infty}\sum_{j\in I^*}\frac{\gamma^*_{j}c_{j}}{(D_{j})^{-\alpha}}\mathbb{E}\Big[\Big(1-e^{-\sum_{\mathbf{t}\in\mathcal{E}_{j}}g( \frac{y}{D_{j}}\mathbf{\Theta}_{\mathcal{E}_{j},\mathbf{t}})} \Big)\exp\Big(-\sum_{\mathbf{t}\in\tilde{\mathcal{D}}_{j}}g( \frac{y}{D_{j}}\mathbf{\Theta}_{\mathcal{E}_{j},\mathbf{t}}) \Big)\Big]d(-y^{-\alpha}).
\end{equation}

Since $\rho_{\mathcal{E}_{j}}(\mathbf{\Theta})=1$ a.s., by Corollary \ref{co-D-C-norm} we have that $\max_{\mathbf{s}\in\mathcal{E}_{j}}|\mathbf{\Theta}_{\mathbf{s}}|\leq D_{j}$ a.s.. Further, recall that $g$ has compact support, in particular for any $x\in\mathbb{R}^{d}$ with $|x|<\delta$ we have that $g(x)=0$. Then, we obtain that if $y<\delta$ then $\frac{y}{D_{j}}\mathbf{\Theta}_{\mathcal{E}_{j},\mathbf{t}}<\delta$ a.s., and so
$g(\frac{y}{D_{j}}\mathbf{\Theta}_{\mathcal{E}_{j},\mathbf{t}})=0$ a.s.. Thus, (\ref{delta-new}) is equal to
\begin{equation*}
\int_{0}^{\infty}\sum_{j\in I^*}\frac{\gamma^*_{j}c_{j}}{(D_{j})^{-\alpha}}\mathbb{E}\Big[\Big(1-e^{-\sum_{\mathbf{t}\in\mathcal{E}_{j}}g( \frac{y}{D_{j}}\mathbf{\Theta}_{\mathcal{E}_{j},\mathbf{t}})} \Big)\exp\Big(-\sum_{\mathbf{t}\in\tilde{\mathcal{D}}_{j}}g( \frac{y}{D_{j}}\mathbf{\Theta}_{\mathcal{E}_{j},\mathbf{t}}) \Big)\Big]d(-y^{-\alpha})
\end{equation*}
\begin{equation*}
\stackrel{\textnormal{Tonelli's theorem}}{=}\sum_{j\in I^*}\int_{0}^{\infty}\frac{\gamma^*_{j}c_{j}}{(D_{j})^{-\alpha}}\mathbb{E}\Big[\Big(1-e^{-\sum_{\mathbf{t}\in\mathcal{E}_{j}}g( \frac{y}{D_{j}}\mathbf{\Theta}_{\mathcal{E}_{j},\mathbf{t}})} \Big)\exp\Big(-\sum_{\mathbf{t}\in\tilde{\mathcal{D}}_{j}}g( \frac{y}{D_{j}}\mathbf{\Theta}_{\mathcal{E}_{j},\mathbf{t}}) \Big)\Big]d(-y^{-\alpha})
\end{equation*}
\begin{equation*}
\stackrel{(\textnormal{$\forall j\in I^*$ let }z=\frac{y}{D_{j}})}{=}\sum_{j\in I^*}\int_{0}^{\infty}\gamma^*_{j}c_{j}\mathbb{E}\Big[\Big(1-e^{-\sum_{\mathbf{t}\in\mathcal{E}_{j}}g( z\mathbf{\Theta}_{\mathcal{E}_{j},\mathbf{t}})}\Big)\exp\Big(-\sum_{\mathbf{t}\in\tilde{\mathcal{D}}_{j}}g( z\mathbf{\Theta}_{\mathcal{E}_{j},\mathbf{t}}) \Big) \Big]d(-z^{-\alpha}).
\end{equation*}

Finally, Corollary 4.14 in \cite{Kallenberg2} ensures the existence of the limiting random measure $N^{\Lambda}$ and so $N^{\Lambda}$ has the stated Laplace formulation.

\subsection{Proof of Proposition \ref{lem-bound-L-New}}
We start by showing the following useful Lemma:
\begin{lemma}\label{lem-AC1-L-New} Let $(\mathbf{Y}_{\Upsilon,\mathbf{t}}:\mathbf{t}\in\mathbb{Z}^{k})$ be an $\mathbb{R}^{d}$-valued random field such that the time change formula (\ref{timechangeY-New}) is satisfied. Let $\mathcal{L}$ be a (not necessarily full rank) lattice. Let $\mathbf{\Theta}_{\Upsilon,\mathbf{t}}=\mathbf{Y}_{\Upsilon,\mathbf{t}}/\rho_{\Upsilon}(\mathbf Y)$, $\mathbf{t}\in\mathbb{Z}^{k}$. Let $\mathcal{H}:=\bigcup_{\mathbf{s}\in\mathcal{G}}(\Upsilon)_{\mathbf{s}}$ where $\mathcal{G}=\mathcal{L}\cap\{\mathbf{t}\in\mathbb{Z}^{k}:\mathbf{t}\succeq\mathbf{0}\}$ and such that $(\Upsilon)_{\mathbf{s}}\cap (\Upsilon)_{\mathbf{s}'}=\emptyset$ for every $\mathbf{s},\mathbf{s}'\in\mathcal{G}$ with $\mathbf{s}\neq\mathbf{s}'$. Then $|\mathbf{\Theta}_{\Upsilon,\mathbf{t}}|\to0$ a.s.~as $|\mathbf{t}|\to\infty$ for $\mathbf{t}\in\mathcal{H}$ implies that $\sum_{\mathbf{t}\in\mathcal{L}}\rho_{(\Upsilon)_{\mathbf{t}}}(\mathbf{\Theta})^{\alpha}<\infty$ a.s., and that $\sum_{\mathbf{t}\in\bigcup_{\mathbf{s}\in\mathcal{L}}(\Upsilon)_{\mathbf{s}}}|\mathbf{\Theta}_{\Upsilon,\mathbf{t}}|^{\alpha}<\infty$ a.s.
\end{lemma}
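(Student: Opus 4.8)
The plan is to transcribe the two-part proof of Lemma \ref{lem-AC1-L} to the $\Upsilon$-setting, working with the scalar field $\mathbf{\Psi}_{\mathbf{t}}:=\rho_{(\Upsilon)_{\mathbf{t}}}(\mathbf{\Theta})$ (and, correspondingly, $\rho_{(\Upsilon)_{\mathbf{t}}}(\mathbf{Y})=\mathbf{\Psi}_{\mathbf{t}}\,\rho_{\Upsilon}(\mathbf{Y})$) in the role that $|\mathbf{\Theta}_{\mathbf{t}}|$ (resp.\ $|\mathbf{Y}_{\mathbf{t}}|$) plays there, and replacing the time-change formulas \eqref{timechangeY}--\eqref{timechangeTheta} by their $\Upsilon$-counterparts \eqref{timechangeY-New} and \eqref{timechangeTheta-New} (the latter being a consequence of the former, as in Proposition \ref{pro-New-2}, hence available under the hypothesis). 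Throughout I would use $\rho_{\Upsilon}(\mathbf{\Theta})=\mathbf{\Psi}_{\mathbf{0}}=1$ a.s., $\mathbb{P}(\rho_{\Upsilon}(\mathbf{Y})>y)=y^{-\alpha}$ for $y\ge1$, and the fact that, $\rho$ being translation invariant, Lemma \ref{lem-CD} provides constants $C\le D$ that serve simultaneously for every translate $(\Upsilon)_{\mathbf{s}}$ of $\Upsilon$.

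\emph{Step 1: $\mathbf{\Psi}_{\mathbf{t}}\to0$ a.s.\ as $|\mathbf{t}|\to\infty$ along $\mathcal{L}$.} First I would observe that the hypothesis already gives this along $\mathcal{G}$: for $\mathbf{s}\in\mathcal{G}$ the finite set $(\Upsilon)_{\mathbf{s}}=\mathbf{s}+\Upsilon$ is contained in $\mathcal{H}$ and satisfies $\min_{\mathbf{u}\in(\Upsilon)_{\mathbf{s}}}|\mathbf{u}|\to\infty$ as $|\mathbf{s}|\to\infty$, so $\max_{\mathbf{u}\in(\Upsilon)_{\mathbf{s}}}|\mathbf{\Theta}_{\Upsilon,\mathbf{u}}|\to0$ a.s., whence $\mathbf{\Psi}_{\mathbf{s}}\to0$ a.s.\ by Lemma \ref{lem-CD}; equivalently $\rho_{(\Upsilon)_{\mathbf{s}}}(\mathbf{Y})\to0$ a.s.\ along $\mathcal{G}$, so the supremum of $\rho_{(\Upsilon)_{\cdot}}(\mathbf{Y})$ over $\mathcal{G}$ is a.s.\ attained and is $\ge\rho_{\Upsilon}(\mathbf{Y})\ge1$. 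To push the vanishing to the negative lattice translates I would rerun the counting argument of Part~1 of the proof of Lemma \ref{lem-AC1-L}: supposing $\mathbb{P}\big(\sum_{\mathbf{h}\in-\mathcal{G}}\mathbf{1}(\rho_{(\Upsilon)_{\mathbf{h}}}(\mathbf{Y})>\varepsilon)=\infty\big)>0$ for some $\varepsilon>0$, decompose over the a.s.-attained maximizer $\mathbf{t}\in\mathcal{G}$ of $\rho_{(\Upsilon)_{\cdot}}(\mathbf{Y})$, apply \eqref{timechangeY-New} with shift $\mathbf{h}$, rescale the radial variable by $r=q\varepsilon$, and bound the resulting series by counting how many of the events ``$\mathbf{t}-\mathbf{h}$ is the $\prec$-first maximizer of $\mathbf{\Psi}$ over the relevant translated half-lattice'' can co-occur; this yields a finite bound, contradicting the divergence, so $\mathbf{\Psi}_{\mathbf{t}}\to0$ a.s.\ along $\mathcal{L}$.

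\emph{Step 2: $\sum_{\mathbf{t}\in\mathcal{L}}\mathbf{\Psi}_{\mathbf{t}}^{\alpha}<\infty$ a.s.} By Step~1 and $\mathbf{\Psi}_{\mathbf{0}}=1$ the $\prec$-first maximizer $\mathbf{T}^{*}$ of $\mathbf{\Psi}$ over $\mathcal{L}$ is a well-defined $\mathcal{L}$-valued random variable, and for each $\mathbf{i}\in\mathcal{L}$ the indicator $g_{\mathbf{i}}$ of the event that $\mathbf{i}$ is the $\prec$-first maximizer of the field $(\rho_{(\Upsilon)_{\mathbf{s}}}(\cdot))_{\mathbf{s}\in\mathcal{L}}$ is bounded and measurable (as in the proof of Lemma \ref{lem-AC1-L}), and it is homogeneous of degree zero since $\rho$ is homogeneous of degree one. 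If $\mathbb{P}\big(\sum_{\mathbf{t}\in\mathcal{L}}\mathbf{\Psi}_{\mathbf{t}}^{\alpha}=\infty\big)>0$, decomposing over $\{\mathbf{T}^{*}=\mathbf{i}\}$ produces an $\mathbf{i}\in\mathcal{L}$ with $\infty=\sum_{\mathbf{t}\in\mathcal{L}}\mathbb{E}\big[\rho_{(\Upsilon)_{\mathbf{t}}}(\mathbf{\Theta})^{\alpha}\,g_{\mathbf{i}}(\mathbf{\Theta})\big]$. Applying \eqref{timechangeTheta-New} with the shift $\mathbf{t}$ to $g_{\mathbf{i}}$ (its degree-zero homogeneity absorbs the $\rho_{(\Upsilon)_{\mathbf{t}}}(\mathbf{\Theta})$ normalisation, and translation invariance of $\rho$ together with $(\mathcal{L})_{-\mathbf{t}}=\mathcal{L}$ for $\mathbf{t}\in\mathcal{L}$ turns $g_{\mathbf{i}}$ applied to the $\mathbf{t}$-shifted field into $\mathbf{1}(\mathbf{T}^{*}=\mathbf{i}-\mathbf{t})$) makes each term equal to $\mathbb{E}\big[\mathbf{1}(\mathbf{T}^{*}=\mathbf{i}-\mathbf{t})\,\mathbf{1}(\rho_{(\Upsilon)_{-\mathbf{t}}}(\mathbf{\Theta})\ne\mathbf{0})\big]\le\mathbb{P}(\mathbf{T}^{*}=\mathbf{i}-\mathbf{t})$, so $\infty\le\sum_{\mathbf{u}\in\mathcal{L}}\mathbb{P}(\mathbf{T}^{*}=\mathbf{u})=1$, a contradiction.

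\emph{Step 3 and the main difficulty.} The second assertion follows from the first by comparing norms: Lemma \ref{lem-CD} gives $\max_{\mathbf{u}\in(\Upsilon)_{\mathbf{s}}}|\mathbf{\Theta}_{\Upsilon,\mathbf{u}}|\le D\,\mathbf{\Psi}_{\mathbf{s}}$ for every $\mathbf{s}$, hence $\sum_{\mathbf{u}\in(\Upsilon)_{\mathbf{s}}}|\mathbf{\Theta}_{\Upsilon,\mathbf{u}}|^{\alpha}\le|\Upsilon|D^{\alpha}\mathbf{\Psi}_{\mathbf{s}}^{\alpha}$; since the translates $(\Upsilon)_{\mathbf{s}}$, $\mathbf{s}\in\mathcal{L}$, are pairwise disjoint (the hypothesised disjointness over $\mathcal{G}$ propagates to $\mathcal{L}$, because any overlapping pair can be shifted by a lattice vector back into $\mathcal{G}$), summing over $\mathbf{s}\in\mathcal{L}$ and invoking Step~2 yields $\sum_{\mathbf{t}\in\bigcup_{\mathbf{s}\in\mathcal{L}}(\Upsilon)_{\mathbf{s}}}|\mathbf{\Theta}_{\Upsilon,\mathbf{t}}|^{\alpha}\le|\Upsilon|D^{\alpha}\sum_{\mathbf{s}\in\mathcal{L}}\mathbf{\Psi}_{\mathbf{s}}^{\alpha}<\infty$ a.s. I expect the main obstacle to lie in Step~1: the delicate point is the bookkeeping of how often a single point of a (possibly non-full-rank) lattice can be a $\prec$-first maximizer over translated half-lattices, which is exactly the fundamental-parallelotope argument in the proof of Lemma \ref{lem-AC1-L}; one must also check that the composite functionals $g_{\mathbf{i}}$ of Step~2 are admissible test functions for \eqref{timechangeTheta-New}. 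Once these are in place, the remainder is a routine transcription with $\rho_{(\Upsilon)_{\cdot}}$ replacing $|\cdot|$.
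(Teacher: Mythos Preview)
Your proposal is correct and follows essentially the same route as the paper: reduce to the scalar field $\mathbf{\Psi}_{\mathbf{t}}=\rho_{(\Upsilon)_{\mathbf{t}}}(\mathbf{\Theta})$, run the counting argument of Lemma~\ref{lem-AC1-L} with \eqref{timechangeY-New} in Step~1, the first-maximizer argument with \eqref{timechangeTheta-New} in Step~2, and finish with Lemma~\ref{lem-CD} in Step~3.

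One correction to your expectations: the ``main obstacle'' you flag in Step~1 does not in fact arise. In Lemma~\ref{lem-AC1-L} the parallelotope bookkeeping (the factor $b$) is needed because the field $|\mathbf{\Theta}_{\mathbf{t}}|$ is indexed by $\Upsilon\cup-\Upsilon$, which is only a finite union of translated lattices, so after the time-change the shifted index $\mathbf{t}-\mathbf{h}$ can land in any of $b$ cosets. Here, by contrast, your reduction has already collapsed the $\Upsilon$-block into a single value $\mathbf{\Psi}_{\mathbf{s}}$ per $\mathbf{s}\in\mathcal{L}$, so the field is indexed by the lattice itself. For $\mathbf{t}\in\mathcal{G}$ and $\mathbf{h}\in-\mathcal{G}$ one has $\mathbf{t}-\mathbf{h}\in\mathcal{G}$ directly, the events ``$\mathbf{t}-\mathbf{h}$ is the $\prec$-last point of $\mathcal{G}$ where $\mathbf{\Psi}\ge 1/(q\varepsilon)$'' are pairwise disjoint as $\mathbf{h}$ varies, and the sum is bounded by $1$ (not $b$). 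The paper's proof exploits exactly this simplification; so Step~1 is easier here than in Lemma~\ref{lem-AC1-L}, and Step~2 likewise needs no coset decomposition since $(\mathcal{L})_{-\mathbf{t}}=\mathcal{L}$ for $\mathbf{t}\in\mathcal{L}$.
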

\begin{proof}
	The proof is divided in two parts. In the first part we show that  $|\mathbf{\Theta}_{\Upsilon,\mathbf{t}}|\to0$ a.s.~as $|\mathbf{t}|\to\infty$ for $\mathbf{t}\in\bigcup_{\mathbf{s}\in\mathcal{L}}(\Upsilon)_{\mathbf{s}}$ and then that $\sum_{\mathbf{t}\in\mathcal{L}}\rho_{(\Upsilon)_{\mathbf{t}}}(\mathbf{\Theta})^{\alpha}<\infty$ a.s.
	
	From $|\mathbf{\Theta}_{\Upsilon,\mathbf{t}}|\to0$ a.s.~as $|\mathbf{t}|\to\infty$ for $\mathbf{t}\in\mathcal{H}$ by continuity we obtain that $\rho_{(\Upsilon)_{\mathbf{t}}}(\mathbf{\Theta})\to0$ a.s.~as $|\mathbf{t}|\to\infty$ for $\mathbf{t}\in\mathcal{G}$.	Let $\epsilon>0$. Observe that $\mathcal{L}=\mathcal{G}\cup-\mathcal{G}$ and that for every $0<c\leq1$
	\begin{multline*}
	\mathbb{P}\bigg(\bigcup_{\mathbf{t}\in\mathcal{G}}\Big\{\rho_{(\Upsilon)_{\mathbf{t}}}(\mathbf Y)\geq c>\sup\limits_{\mathbf{t}\prec\mathbf{z},\mathbf{z}\in\mathcal{G}}\rho_{(\Upsilon)_{\mathbf{z}}}(\mathbf Y)\Big\}\bigg)\\
	    =\sum_{\mathbf{t}\in\mathcal{G}}\mathbb{P}\Big(\rho_{(\Upsilon)_{\mathbf{t}}}(\mathbf Y)\geq c>\sup\limits_{\mathbf{t}\prec\mathbf{z},\mathbf{z}\in\mathcal{G}}\rho_{(\Upsilon)_{\mathbf{z}}}(\mathbf Y)\Big)= 1,
	\end{multline*}
	because $\mathbb{P}(\rho_{\Upsilon}(\mathbf Y)\geq1)=1$, and $\Upsilon\in\mathcal{H}$.
	
	Suppose that $\mathbb{P}(\sum_{\mathbf{h}\in-\mathcal{G}}\mathbf{1}(\rho_{(\Upsilon)_{\mathbf{h}}}(\mathbf Y)>\epsilon)=\infty)>0$. We have that
	\begin{multline*}
	\mathbb{P}(\sum_{\mathbf{h}\in-\mathcal{G}}\mathbf{1}(\rho_{(\Upsilon)_{\mathbf{h}}}(\mathbf Y)>\epsilon)=\infty)\\
	    =\sum_{\mathbf{t}\in\mathcal{G}}\mathbb{P}\Big(\sum_{\mathbf{h}\in-\mathcal{G}}\mathbf{1}(\rho_{(\Upsilon)_{\mathbf{h}}}(\mathbf Y)>\epsilon),\rho_{(\Upsilon)_{\mathbf{t}}}(\mathbf Y)\geq 1>\sup\limits_{\mathbf{t}\prec\mathbf{z},\mathbf{z}\in\mathcal{G}}\rho_{(\Upsilon)_{\mathbf{z}}}(\mathbf Y)\Big).
	\end{multline*}
	Consider any $\mathbf{t}\in\mathcal{G}$ s.t.
	\begin{equation*}
	\mathbb{P}\Big(\sum_{\mathbf{h}\in-\mathcal{G}}\mathbf{1}(\rho_{(\Upsilon)_{\mathbf{h}}}(\mathbf Y)>\epsilon),\rho_{(\Upsilon)_{\mathbf{t}}}(\mathbf Y)\geq 1>\sup\limits_{\mathbf{t}\prec\mathbf{z},\mathbf{z}\in\mathcal{G}}\rho_{(\Upsilon)_{\mathbf{z}}}(\mathbf Y)\Big)>0.
	\end{equation*}
	By the time change formula (\ref{timechangeY-New}) we get
	\begin{align*}
	\infty&=\mathbb{E}\bigg[\sum_{\mathbf{h}\in-\mathcal{G}}\mathbf{1}\Big(\rho_{(\Upsilon)_{\mathbf{h}}}(\mathbf Y)>\epsilon,\rho_{(\Upsilon)_{\mathbf{t}}}(\mathbf Y)\geq 1>\sup\limits_{\mathbf{t}\prec\mathbf{z},\mathbf{z}\in\mathcal{G}}\rho_{(\Upsilon)_{\mathbf{z}}}(\mathbf Y)\Big)\bigg]\\
	&	=\sum_{\mathbf{h}\in-\mathcal{G}}\mathbb{P}\bigg(\rho_{(\Upsilon)_{\mathbf{h}}}(\mathbf Y)>\epsilon,\rho_{(\Upsilon)_{\mathbf{t}}}(\mathbf Y)\geq 1>\sup\limits_{\mathbf{t}\prec\mathbf{z},\mathbf{z}\in\mathcal{G}}\rho_{(\Upsilon)_{\mathbf{z}}}(\mathbf Y)\bigg)\\
	&=\sum_{\mathbf{h}\in-\mathcal{G}}\int_{\epsilon}^{\infty}\mathbb{P}\Big(r\rho_{(\Upsilon)_{-\mathbf{h}}}(\mathbf{\Theta})>1,r\rho_{(\Upsilon)_{\mathbf{t}-\mathbf{h}}}(\mathbf{\Theta})\geq 1>r\sup\limits_{\mathbf{t}-\mathbf{h}\prec\mathbf{z},\mathbf{z}\in\mathcal{G}}\rho_{(\Upsilon)_{\mathbf{z}}}(\mathbf{\Theta})\Big)d(-r^{-\alpha})\\
	&	\stackrel{(r=q\epsilon)}{=}\epsilon^{-\alpha}\sum_{\mathbf{h}\in-\mathcal{G}}\int_{1}^{\infty}\mathbb{P}\Big(q\epsilon\rho_{(\Upsilon)_{-\mathbf{h}}}(\mathbf{\Theta})>1,q\epsilon\rho_{(\Upsilon)_{\mathbf{t}-\mathbf{h}}}(\mathbf{\Theta})\geq 1>q\epsilon\sup\limits_{\mathbf{t}-\mathbf{h}\prec\mathbf{z},\mathbf{z}\in\mathcal{G}}\rho_{(\Upsilon)_{\mathbf{z}}}(\mathbf{\Theta})\Big)d(-q^{-\alpha})\\
	&
	\leq\epsilon^{-\alpha}\sum_{\mathbf{h}\in-\mathcal{G}}\int_{1}^{\infty}\mathbb{P}\Big(\rho_{(\Upsilon)_{\mathbf{t}-\mathbf{h}}}(\mathbf{\Theta})\geq \frac{1}{q\epsilon}>\sup\limits_{\mathbf{t}-\mathbf{h}\prec\mathbf{z},\mathbf{z}\in\mathcal{G}}\rho_{(\Upsilon)_{\mathbf{z}}}(\mathbf{\Theta})\Big)d(-q^{-\alpha})\\
	&\leq \epsilon^{-\alpha}\int_{1}^{\infty}d(-q^{-\alpha})=\epsilon^{-\alpha}<\infty,
	\end{align*}
	where we used that for every $\mathbf{t}\in\mathcal{G}$ and $\mathbf{h}\in-\mathcal{G}$ we have that $\mathbf{t}-\mathbf{h}\in\mathcal{G}$. Thus, we have a contradiction and so $\rho_{(\Upsilon)_{\mathbf{t}}}(\mathbf{\Theta})\to0$ a.s.~as $|\mathbf{t}|\to\infty$ for $\mathbf{t}\in-\mathcal{G}$ which by homogeneity and continuity implies that $|\mathbf{\Theta}_{\Upsilon,\mathbf{t}}|\to0$ a.s.~as $|\mathbf{t}|\to\infty$ for $\mathbf{t}\in\bigcup_{\mathbf{s}\in\mathcal{L}}(\Upsilon)_{\mathbf{s}}$.
	\begin{comment}
	Define the random variable $T^{*}_{\Upsilon,\mathcal{L}}$ as follows. For every $\mathbf{t}\in\mathcal{L}$ we have
	\begin{equation*}
	\{\omega:T_{\Upsilon,\mathcal{L}}^{*}(\omega)=\mathbf{t}\}
	\end{equation*}
	\begin{equation*}
	=\{\omega:\mathbf{\Theta}_{\Upsilon,\mathbf{t}}(\omega)-\sup_{\mathbf{s}\in\mathcal{L},\mathbf{s}\prec\mathbf{t}}|\mathbf{\Theta}_{\mathbf{s}}(\omega)|>0\}\cap \{\omega:\mathbf{\Theta}_{\mathbf{t}}(\omega)-\sup_{\mathbf{s}\in\mathcal{L},\mathbf{s}\succeq\mathbf{t}}|\mathbf{\Theta}_{\mathbf{s}}(\omega)|=0\}
	\end{equation*}
	and for $\mathbf{t}\in\mathbb{Z}^{k}\setminus\mathcal{L}$ we have $\{\omega:T^{*}(\omega)=\mathbf{t}\}=\emptyset$.
	
	Since the difference of two measurable functions is measurable and the intersection of two measurable sets is also measurable we have that $\{\omega:T_{\Upsilon,\mathcal{L}}^{*}(\omega)=\mathbf{t}\}$ is a measurable set. Further, for any subset $A$ of $\mathbb{Z}^{k}$, since $(T_{\Upsilon,\mathcal{L}}^{*})^{-1}(A)=\cup_{\mathbf{t}\in A}	\{\omega:T_{\Upsilon,\mathcal{L}}^{*}(\omega)=\mathbf{t}\}$ and since the union of measurable sets is measurable we have that $(T^{*})^{-1}(A)$ is measurable. Thus, $T_{\Upsilon,\mathcal{L}}^{*}$ is a well defined random variable.
	\end{comment}
	Assume that $\mathbb{P}(\sum_{\mathbf{t}\in\mathcal{L}}\rho_{(\Upsilon)_{\mathbf{t}}}(\mathbf{\Theta})^{\alpha}=\infty)>0$. We have that 
	\begin{multline*}
	\mathbb{P}(\sum_{\mathbf{t}\in\mathcal{L}}\rho_{(\Upsilon)_{\mathbf{t}}}(\mathbf{\Theta})^{\alpha}=\infty)=\sum_{\mathbf{i}\in\mathcal{L}}\mathbb{P}(\sum_{\mathbf{t}\in\mathcal{L}}\rho_{(\Upsilon)_{\mathbf{t}}}(\mathbf{\Theta})^{\alpha}=\infty,\mathbf T^{*}_{\Upsilon,\mathcal{L}}=\mathbf{i})\\
	=\sum_{\mathbf{i}\in H}\mathbb{P}(\sum_{\mathbf{t}\in\mathcal{L}}\rho_{(\Upsilon)_{\mathbf{t}}}(\mathbf{\Theta})^{\alpha}=\infty,\mathbf T^{*}_{\Upsilon,\mathcal{L}}=\mathbf{i}),
	\end{multline*}
	where $H$ is the subset of $\mathcal{L}$ s.t.~$\mathbb{P}(\sum_{\mathbf{t}\in\mathcal{L}}\rho_{(\Upsilon)_{\mathbf{t}}}(\mathbf{\Theta})^{\alpha}=\infty,\mathbf T^{*}_{\Upsilon,\mathcal{L}}=\mathbf{i})>0$ for every $\mathbf{i}\in H$.  Let $\mathbf{i}\in H$, then
	\begin{equation*}
	\infty=\mathbb{E}\bigg[\sum_{\mathbf{t}\in\mathcal{L}}\rho_{(\Upsilon)_{\mathbf{t}}}(\mathbf{\Theta})^{\alpha}\mathbf{1}(\mathbf T^{*}_{\Upsilon,\mathcal{L}}=\mathbf{i})\bigg]=\sum_{\mathbf{t}\in\mathcal{L}}\mathbb{E}\bigg[\rho_{(\Upsilon)_{\mathbf{t}}}(\mathbf{\Theta})^{\alpha}\mathbf{1}(\mathbf T^{*}_{\Upsilon,\mathcal{L}}=\mathbf{i})\bigg].
	\end{equation*}
	Now, we generalise the arguments adopted in the proof of Lemma 3.3 in \cite{SW}. For each $\mathbf{i}\in\mathcal{L}$ define a function $g_{\mathbf{i}}:(\bar{\mathbb{R}}^{d})^{\mathbb{Z}^{k}}\to\mathbb{R}$ as follows. If $(\mathbf{\Theta}_{\Upsilon,\mathbf{s}}, \mathbf{s}\in\mathbb{Z}^{k})$ is such that
	\begin{equation*}
	|\mathbf{\Theta}_{\Upsilon,\mathbf{j}}|<|\mathbf{\Theta}_{\Upsilon,\mathbf{i}}|\quad\textnormal{for $\mathbf{j}\prec\mathbf{i}$ and $\mathbf{j}\in\mathcal{L}$},\quad |\mathbf{\Theta}_{\Upsilon,\mathbf{j}}|\leq|\mathbf{\Theta}_{\Upsilon,\mathbf{i}}|\quad\textnormal{for $\mathbf{j}\succeq\mathbf{i}$ and $\mathbf{j}\in\mathcal{L}$},
	\end{equation*}
	then set $g_{\mathbf{i}}(\mathbf{\Theta}_{\Upsilon,\mathbf{z}}, \mathbf{z}\in\mathbb{Z}^{k})=1$. Otherwise set $g_{\mathbf{i}}(\mathbf{\Theta}_{\Upsilon,\mathbf{z}}, \mathbf{z}\in\mathbb{Z}^{k})=0$. Then, by time change formula we have
	\begin{align*}
	\infty&=\sum_{\mathbf{t}\in\mathcal{L}}\mathbb{E}\bigg[\rho_{(\Upsilon)_{\mathbf{t}}}(\mathbf{\Theta})^{\alpha}\mathbf{1}(\mathbf T^{*}_{\Upsilon,\mathcal{L}}=\mathbf{i})\bigg]=\sum_{\mathbf{t}\in\mathcal{L}}\mathbb{E}\bigg[\rho_{(\Upsilon)_{\mathbf{t}}}(\mathbf{\Theta})^{\alpha}g_{\mathbf{i}}(\mathbf{\Theta}_{\Upsilon,\mathbf{s}}, \mathbf{z}\in\mathbb{Z}^{k})\bigg]\\
	& =\sum_{\mathbf{t}\in\mathcal{L}}\mathbb{E}\bigg[\rho_{(\Upsilon)_{\mathbf{t}}}(\mathbf{\Theta})^{\alpha}g_{\mathbf{i}}\bigg(\frac{\mathbf{\Theta}_{\Upsilon,\mathbf{z}}}{\rho_{(\Upsilon)_{\mathbf{t}}}(\mathbf{\Theta})}, \mathbf{z}\in\mathbb{Z}^{k}\bigg)\bigg]\\
	   &=\sum_{\mathbf{t}\in\mathcal{L}}\mathbb{E}\bigg[g_{\mathbf{i}}(\mathbf{\Theta}_{\Upsilon,\mathbf{z}-\mathbf{t}}, \mathbf{z}\in\mathbb{Z}^{k})\mathbf{1}(\rho_{(\Upsilon)_{-\mathbf{t}}}(\mathbf{\Theta})\neq\mathbf{0})\bigg]\\
	   &
	\leq \sum_{\mathbf{t}\in\mathcal{L}}\mathbb{E}\bigg[g_{\mathbf{i}}(\mathbf{\Theta}_{\Upsilon,\mathbf{z}-\mathbf{t}}, \mathbf{z}\in\mathbb{Z}^{k})\bigg]
	=\sum_{\mathbf{t}\in\mathcal{L}}\mathbb{E}\bigg[\mathbf{1}(\mathbf T_{\Upsilon,\mathcal{L}}^{*}=\mathbf{i}-\mathbf{t})\bigg]=1,
	\end{align*}
	which is a contradiction. Notice that we used the fact that by construction, for every $\mathbf{i},\mathbf{t}\in\mathcal{L}$, we have $\mathbf{i}-\mathbf{t}\in\mathcal{L}$.
	
	Thus, we have $\sum_{\mathbf{t}\in\mathcal{L}}\rho_{(\Upsilon)_{\mathbf{t}}}(\mathbf{\Theta})^{\alpha}<\infty$ a.s., and by homogeneity and continuity we have that $\sum_{\mathbf{t}\in\mathcal{L}}\max_{\mathbf{s}\in\Upsilon}|\mathbf{\Theta}_{\Upsilon,\mathbf{t}+\mathbf{s}}|^{\alpha}<\infty$ a.s., and since $\Upsilon$ is finite we obtain that $\sum_{\mathbf{t}\in\bigcup_{\mathbf{s}\in\mathcal{L}}(\Upsilon)_{\mathbf{s}}}|\mathbf{\Theta}_{\Upsilon,\mathbf{t}}|^{\alpha}<\infty$ a.s.
\end{proof}

	Let $j\in\mathbb{N}$. From similar arguments as the ones used in the proof of Proposition \ref{lem-bound-L}, we have that for every $\mathbf{t}\in\mathcal{D}_{j}$ (and so $\mathbf{t}\in\mathcal{D}_{j}\cap K_{p}$ for some $j,p\in\mathbb{N}$) there exists a $n_{\mathbf{t}}\in\mathbb{N}$ s.t.~$\mathbf{t}\in R^{(j)}_{0,\Lambda_{m}}$ for every $m>n_{\mathbf{t}}$. Further, choose $(d_{n})_{n\in\mathbb{N}}$ such that $d_{n}$ is the highest integer s.t.~$\max_{|\mathbf{t}|\leq d_{n},\mathbf{t}\in\mathcal{D}_{j}}n_{\mathbf{t}}< r_{n}$. It is possible to see that $d_{n}\to\infty$ as $n\to\infty$ and that for every $2l<r_{n}$
	\begin{align*}
	\lefteqn{\mathbb{P}\Big(\max_{2l\leq |\mathbf{t}|\leq d_{n}\,|\,\mathbf{t}\in \mathcal{D}_{j} }|\mathbf{X}_{\mathbf{t}}|>a_{n}^\Lambda x\,\big|\max\limits_{\mathbf{t}\in\mathcal{E}_j}|\mathbf{X}_{\mathbf{t}}|>a_{n}^\Lambda x\Big)}\\
	    &=\mathbb{P}\Big(\max_{l\leq |\mathbf{t}|\leq d_{n}\,|\,\mathbf{t}\in \mathcal{D}_{j} \,|\,\mathbf{t}\in R^{(j)}_{2l,\Lambda_{r_{n}}} }|\mathbf{X}_{\mathbf{t}}|>a_{n}^\Lambda x\,\big|\max\limits_{\mathbf{t}\in\mathcal{E}_j}|\mathbf{X}_{\mathbf{t}}|>a_{n}^\Lambda x\Big)\\
	&\leq\mathbb{P}\Big(\max_{\mathbf{t}\in R^{(j)}_{2l,\Lambda_{r_{n}}} }|\mathbf{X}_{\mathbf{t}}|>a_{n}^\Lambda x\,\big|\max\limits_{\mathbf{t}\in\mathcal{E}_j}|\mathbf{X}_{\mathbf{t}}|>a_{n}^\Lambda x\Big).
	\end{align*}	
	Since
	\begin{align*}
	\lefteqn{\mathbb{P}\Big(\max_{2l\leq |\mathbf{t}|\leq d_{n}\,|\,\mathbf{t}\in\mathcal{D}_{j} }|\mathbf{X}_{\mathbf{t}}|>C_{j}a_{n}^\Lambda x\,\big|\,\rho_{\mathcal{E}_{j}}(\mathbf{X})>a_{n}^\Lambda x\Big)}\\
	    =&\frac{\mathbb{P}\Big(\max_{2l\leq |\mathbf{t}|\leq d_{n}\,|\,\mathbf{t}\in\mathcal{D}_{j} }|\mathbf{X}_{\mathbf{t}}|>C_{j}a_{n}^\Lambda x,\rho_{\mathcal{E}_{j}}(\mathbf{X})>a_{n}^\Lambda x\Big)}{\mathbb{P}(\rho_{\mathcal{E}_{j}}(\mathbf{X})>a_{n}^\Lambda x)}\\
	\leq&\frac{\mathbb{P}\Big(\max_{2l\leq |\mathbf{t}|\leq d_{n}\,|\,\mathbf{t}\in\mathcal{D}_{j} }|\mathbf{X}_{\mathbf{t}}|>C_{j}a_{n}^\Lambda x,\max_{\mathbf{s}\in\mathcal{E}_{j}}|\mathbf{X}_{\mathbf{s}}|>C_{j}a_{n}^\Lambda x\Big)}{\mathbb{P}(\rho_{\mathcal{E}_{j}}(\mathbf{X})>a_{n}^\Lambda x)}\\
=&\mathbb{P}\Big(\max_{2l\leq |\mathbf{t}|\leq d_{n}\,|\,\mathbf{t}\in\mathcal{D}_{j} }|\mathbf{X}_{\mathbf{t}}|>C_{j}a_{n}^\Lambda x|\max_{\mathbf{s}\in\mathcal{E}_{j}}|\mathbf{X}_{\mathbf{s}}|>C_{j}a_{n}^\Lambda x\Big)\\
&\frac{\mathbb{P}(\max_{\mathbf{s}\in\mathcal{E}_{j}}|\mathbf{X}_{\mathbf{s}}|>C_{j}a_{n}^\Lambda x)}{\mathbb{P}(\max_{\mathbf{s}\in\mathcal{E}_{j}}|\mathbf{X}_{\mathbf{s}}|>a_{n}^\Lambda x)}\frac{\mathbb{P}(\max_{\mathbf{s}\in\mathcal{E}_{j}}|\mathbf{X}_{\mathbf{s}}|>a_{n}^\Lambda x)}{\mathbb{P}(\rho_{\mathcal{E}_{j}}(\mathbf{X})>a_{n}^\Lambda x)},
	\end{align*}
	by condition (AC$^{\Lambda}_{\succeq,I^*}$) we obtain the following anti-clustering condition:
	\begin{equation}\label{AC-UionionD-New}
	\lim\limits_{l\to\infty}\limsup_{n\to\infty}\mathbb{P}\Big(\max_{2l\leq |\mathbf{t}|\leq d_{n}\,|\,\mathbf{t}\in\mathcal{D}_{j} }|\mathbf{X}_{\mathbf{t}}|>C_{j}a_{n}^\Lambda x\,\big|\rho_{\mathcal{E}_{j}}(\mathbf{X})>a_{n}^\Lambda x\Big)=0.
	\end{equation}
	
	Now, for any $z>0$, by the regular variation of $\rho_{\mathcal{E}_{j}}(\mathbf{X})$ (namely of $\max_{\mathbf{s}\in\mathcal{E}_{j}}|\mathbf{X}_{\mathbf{s}}|$) and by (\ref{AC-UionionD-New}) we have that 
	\begin{equation*}
	\lim\limits_{l\to\infty}\limsup_{n\to\infty}\mathbb{P}\Big(\max_{2l\leq |\mathbf{t}|\leq d_{n}\,|\,\mathbf{t}\in\mathcal{D}_{j} }|\mathbf{X}_{\mathbf{t}}|>za_{n}^\Lambda x\,\big|\rho_{\mathcal{E}_{j}}(\mathbf{X})>a_{n}^\Lambda x\Big)=0.
	\end{equation*}
	In other words, for any $\epsilon>0$ and $z>0$, there exists $l>0$ such that for all $w>l$
	\begin{equation*}
	\mathbb{P}\bigg(\max_{l\leq |\mathbf{t}|\leq w\,|\,\mathbf{t}\in\mathcal{D}_{j}  }|\mathbf{Y}_{\mathcal{E}_{j}}(\mathbf{t})|>z\bigg)\leq \epsilon.
	\end{equation*}
	This, implies that $\mathbb{P}(\lim\limits_{|\mathbf{t}|\to\infty}|\mathbf{Y}_{\mathcal{E}_{j}}(\mathbf{t})|=0)=1$ and so $\mathbb{P}(\lim\limits_{|\mathbf{t}|\to\infty}|\mathbf{\Theta}_{\mathcal{E}_{j}}(\mathbf{t})|=0)=1$ for $\mathbf{t}\in\mathcal{D}_{j}$. The argument holds for every $j\in\mathbb{N}$. Since $\tilde{\mathcal{D}}_{j}\subset\mathcal{D}_{j}$, from Lemma \ref{lem-AC1-L-New} we obtain the statement.

\section{Proofs in Section \ref{sec:appl}}\label{sec:proof3}

\subsection{Proofs in Section \ref{Sec:Extremal-index}}

Since in Section \ref{Sec:Extremal-index} the $\mathbb{R}^d$-valued stationary random field $(\mathbf{X}_\mathbf{t})_{\mathbf{t}\in\mathbb{Z}^{k}}$ is always considered in modulus and since $(|\mathbf{X}_\mathbf{t}|)_{\mathbf{t}\in\mathbb{Z}^{k}}$ is stationary and regularly varying, it is sufficient to prove the results for a non-negative valued stationary random field $(X_\mathbf{t})_{\mathbf{t}\in\mathbb{Z}^{k}}$, as we do for the remaining proofs.

\begin{theorem}\label{thm-u}
	Consider the following conditions:
	\\\textnormal{(I)} $(X_\mathbf{t})_{\mathbf{t}\in\mathbb{Z}^{k}}$ is a real valued stationary random field whose marginal distribution $F$ does not have an atom at the right endpoint $x_{F}$.
	\\\textnormal{(II)} For a sequence $u_{n}\uparrow x_{F}$ and an integer sequence $r_{n}\to\infty$ s.t.~$k_{n}=[|\Lambda_n|/|\Lambda_{r_{n}}|]\to\infty$ the following anti-clustering condition is satisfied:
	\begin{equation}\label{AC-u}
	\lim\limits_{l\to\infty}\limsup\limits_{n\to\infty}\mathbb{P}(\hat{M}^{\Lambda,X}_{l,r_{n}}>u_{n}\,|\, X_\mathbf{0}>u_{n})=0.
	\end{equation}
	\noindent\textnormal{(III)} A mixing condition holds:
	\begin{equation}\label{mixing-u}
	\mathbb{P}\big(\max_{\mathbf{t}\in\Lambda_{n} }X_\mathbf{t}\leq u_{n}\big)-(\mathbb{P}\big(\max_{\mathbf{t}\in\Lambda_{r_{n}} }X_\mathbf{t}\leq u_{n})^{k_{n}}\big)\to0,\quad n\to\infty,
	\end{equation}
	where $(u_{n})$, $(k_{n})$ and $(r_{n})$ are as in \textnormal{(II)}.
	\\\textnormal{(IV)} For any $\tau\geq0$ there exists a sequence $(u_{n})=(u_{n}(\tau))$ s.t.~$\lim\limits_{n\to\infty}|\Lambda_n|\mathbb{P}(X_\mathbf{0}>u_{n}(\tau))=\tau$ and \textnormal{(II)} and \textnormal{(III)} are satisfied for these sequences $(u_{n})$.
	
	Then, the following statements hold:
	\\\textnormal{(a)} If \textnormal{(I)} and \textnormal{(II)} are satisfied then
	\begin{equation}\label{I}
	\lim\limits_{l\to\infty}\limsup\limits_{n\to\infty}\bigg|\theta^{\Lambda}_{n}-\sum_{j=1}^{\infty}\lambda_{j}\mathbb{P}(\max_{\mathbf{t}\in \mathcal{D}_{j}\cap K_{l} }X_\mathbf{t}\leq u_{n}\,|\, X_\mathbf{0}>u_{n})\bigg|=0,
	\end{equation}
	and $\liminf\limits_{n\to\infty}\theta^{\Lambda}_{n}>0$.
	\\\textnormal{(b)} If \textnormal{(I)} and \textnormal{(IV)} are satisfied and $\theta^{\Lambda}_{b}=\lim\limits_{n\to\infty}\theta^{\Lambda}_{n}$ exists, then $\theta^{\Lambda}_{X}\in(0,1]$ exists and $\theta^{\Lambda}_{X}=\theta^{\Lambda}_{b}$.
\end{theorem}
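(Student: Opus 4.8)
The strategy for Theorem~\ref{thm-u} is to adapt the classical block-approximation argument of Davis and Hsing~\cite{DHs} and Samorodnitsky and Wu~\cite{SW} to the non-rectangular index set $\Lambda_n$, exploiting the decomposition of $\Lambda_{r_n}$ provided by Condition~$(\mathcal{D}^{\Lambda})$ and Proposition~\ref{lem-AC1-L-2}. For part~(a), I would start by writing $1-\theta^{\Lambda}_n$ in terms of a telescoping sum over the points of $\Lambda_{r_n}$ ordered by $\prec$: with $t_1\prec t_2\prec\cdots\prec t_{|\Lambda_{r_n}|}$ enumerating $\Lambda_{r_n}$, one has
\begin{equation*}
\mathbb{P}\big(\max_{\mathbf{t}\in\Lambda_{r_n}}X_{\mathbf{t}}>u_n\big)=\sum_{m=1}^{|\Lambda_{r_n}|}\mathbb{P}\big(X_{t_m}>u_n,\;X_{t_j}\le u_n\text{ for all }j>m\big),
\end{equation*}
and then, using stationarity and the shift-invariance of $\prec$ to recenter each term at $\mathbf{0}$, rewrite the $m$-th summand as $\mathbb{P}(X_{\mathbf{0}}>u_n,\;X_{\mathbf{t}}\le u_n\text{ for }\mathbf{t}\in\{t_{m+1}-t_m,\dots\}\cap K_l)$ up to an error controlled by the anti-clustering condition~\eqref{AC-u}. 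The truncation to $K_l$ is what makes the local structure visible: by Proposition~\ref{lem-AC1-L-2}(I), the proportion of indices $\mathbf{t}\in\Lambda_{r_n}$ whose forward $K_l$-neighbourhood equals $\mathcal{D}_j\cap K_l$ converges to $\lambda_{j,l}$, so summing and dividing by $|\Lambda_{r_n}|\mathbb{P}(X_{\mathbf{0}}>u_n)$ yields $\sum_j\lambda_{j}\mathbb{P}(\max_{\mathbf{t}\in\mathcal{D}_j\cap K_l}X_{\mathbf{t}}\le u_n\mid X_{\mathbf{0}}>u_n)$ in the limit, which is~\eqref{I}. The strict positivity $\liminf_n\theta^{\Lambda}_n>0$ follows because each conditional probability is bounded below by $\mathbb{P}(\max_{\mathbf{t}\in\mathcal{D}_j\cap K_l}X_{\mathbf{t}}\le u_n\mid X_{\mathbf{0}}>u_n)$, which by the anti-clustering condition stays bounded away from $0$ uniformly for $l$ large.

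For part~(b), the argument is the standard one: under~\eqref{mixing-u} one has $\mathbb{P}(\max_{\mathbf{t}\in\Lambda_n}X_{\mathbf{t}}\le u_n(\tau))\sim\big(1-\theta^{\Lambda}_n\,|\Lambda_{r_n}|\mathbb{P}(X_{\mathbf{0}}>u_n(\tau))\big)^{k_n}$, and since $|\Lambda_n|\mathbb{P}(X_{\mathbf{0}}>u_n(\tau))\to\tau$ while $k_n=\lfloor|\Lambda_n|/|\Lambda_{r_n}|\rfloor$, the exponent $k_n\theta^{\Lambda}_n|\Lambda_{r_n}|\mathbb{P}(X_{\mathbf{0}}>u_n(\tau))$ converges to $\theta^{\Lambda}_b\tau$; taking logarithms gives $\mathbb{P}(\max_{\mathbf{t}\in\Lambda_n}X_{\mathbf{t}}\le u_n(\tau))\to e^{-\theta^{\Lambda}_b\tau}$, so $\theta^{\Lambda}_X$ exists and equals $\theta^{\Lambda}_b$. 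That $\theta^{\Lambda}_b\le 1$ is immediate from~\eqref{I} since each conditional probability is at most $1$; combined with $\liminf_n\theta^{\Lambda}_n>0$ this places $\theta^{\Lambda}_X$ in $(0,1]$.

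\textbf{Main obstacle.} The delicate point is the interchange of limits in $l$ and $n$ when $q=\infty$, i.e.\ when Condition~$(\mathcal{D}^{\Lambda})$ allows countably many shapes $\mathcal{D}_j$. The telescoping identity produces, for each fixed $l$, a sum over the finitely many distinct subsets of $K_l$, but the passage $l\to\infty$ of the resulting series $\sum_j\lambda_{j,l}\mathbb{P}(\cdots)$ requires a uniform tail bound: one needs that $\sum_{j\ge m}\lambda_j$ is small uniformly in $n$ (available from $\sum_j\lambda_j=1$ and Proposition~\ref{lem-AC1-L-2}(I)) together with the anti-clustering estimate~\eqref{AC-u} to dominate the error terms $J^{(r_n)}_{l,m}$ summed over all $m$, exactly as in the proof of Theorem~\ref{t1-L}. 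I would handle this by first fixing $\varepsilon>0$, choosing $m$ with $\sum_{j>m}\lambda_j<\varepsilon$, choosing $l$ large enough that the $\mathcal{D}_j\cap K_l$ are distinct for $j\le m$ (Proposition~\ref{lem-AC1-L-2}(I)), and only then letting $n\to\infty$; the anti-clustering condition controls both the block-boundary error and the contribution of the $o(|\Lambda_{r_n}|)$ "irregular" points of $\Lambda_{r_n}$ that do not fall into any $\{\mathbf{t}:\Lambda_{r_n}^{(\mathbf{t},l)}=\mathcal{D}_j\cap K_l\}$. The monotone convergence theorem then upgrades~\eqref{I} to the representations in~\eqref{u-index} via the time-change formula~\eqref{timechangeY}, exactly as~\eqref{Q-L} is obtained in the proof of Theorem~\ref{t2-L}.
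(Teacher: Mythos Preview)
Your overall strategy for both parts matches the paper's: the telescoping sum over $\Lambda_{r_n}$ ordered by $\prec$, recentering by stationarity, truncating to $K_l$ with the error controlled by \eqref{AC-u}, and grouping by the finitely many local shapes $\mathcal{D}_j\cap K_l$ via Proposition~\ref{lem-AC1-L-2}(I) is exactly what the paper does for~\eqref{I}; part~(b) via Taylor expansion of $(1-\mathbb{P}(\max_{\Lambda_{r_n}}X>u_n))^{k_n}$ is likewise identical.

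There is, however, a genuine gap in your argument for $\liminf_n\theta^{\Lambda}_n>0$. You claim that $\mathbb{P}(\max_{\mathbf{t}\in\mathcal{D}_j\cap K_l}X_{\mathbf{t}}\le u_n\mid X_{\mathbf{0}}>u_n)$ ``by the anti-clustering condition stays bounded away from $0$'', but \eqref{AC-u} controls the maximum over $R_{l,\Lambda_{r_n}}$, the region \emph{outside} $K_l$; it gives no lower bound on the probability that the maximum over the \emph{local} set $\mathcal{D}_j\cap K_l$ stays below $u_n$. Since Theorem~\ref{thm-u} does not assume regular variation, nothing in the hypotheses prevents these local conditional probabilities from tending to $0$ as $n\to\infty$, and the approximation~\eqref{I} would then be compatible with $\theta^\Lambda_n\to 0$. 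The paper establishes positivity by a separate direct argument that does not pass through~\eqref{I}: one extracts a subset $W_n\subset\Lambda_{r_n}$ of points at mutual sup-distance at least $2l$ (so $|W_n|\ge\lfloor|\Lambda_{r_n}|/(2l)^k\rfloor$), notes that the events $\{X_{w_m}>u_n,\ \max_{\mathbf{t}\succeq w_m,\,\mathbf{t}\in\Lambda_{r_n}\setminus K_{l-1}(w_m)}X_\mathbf{t}\le u_n\}$ for $w_m\in W_n$ are pairwise disjoint subsets of $\{\max_{\Lambda_{r_n}}X>u_n\}$, and then stationarity together with \eqref{AC-u} yield
\[
\theta^{\Lambda}_n\ \ge\ \frac{\lfloor|\Lambda_{r_n}|/(2l)^k\rfloor}{|\Lambda_{r_n}|}\Bigl(1-\mathbb{P}\bigl(\hat M^{\Lambda,X}_{l-1,r_n}>u_n\mid X_{\mathbf 0}>u_n\bigr)\Bigr),
\]
which is bounded away from zero once $l$ is fixed so that the conditional probability is below $1/2$, say. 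Here the anti-clustering condition is used in the direction it actually points: to suppress the far tail, not the near one.
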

\begin{remark}
	Notice that when $u_{n}=a_{n}x$ then (\ref{AC-u}) is the (AC$^{\Lambda}_{\succ}$) condition.
\end{remark}	
\begin{proof}
	Let us first focus on (\ref{I}). Denote by $t_{|\Lambda_{r_{n}}|}$ the highest element of $\Lambda_{r_{n}}$ according to $\prec$, by $t_{|\Lambda_{r_{n}}|-1}$ the second highest one, ..., by $t_{1}$ the lowest one. Further, for $m=1,...,|\Lambda_{r_{n}}|$ let $\mathcal{M}_{m}:=\max_{j=m,...,|\Lambda_{r_{n}}|}X_{t_{j}}$ and for $m=|\Lambda_{r_{n}}|+1$ let $\mathcal{M}_{m}:=0$. Thus, we have $\mathbb{P}(\mathcal{M}_{|\Lambda_{r_{n}}|+1}>u_{n})=0$ and
	\begin{equation*}
	\mathbb{P}(\max_{\mathbf{t}\in\Lambda_{r_{n}} }X_\mathbf{t}> u_{n})=\sum_{m=2}^{|\Lambda_{r_{n}}|+1}-\mathbb{P}(\mathcal{M}_{m}>u_{n})+\mathbb{P}(\mathcal{M}_{m-1}>u_{n}).
	\end{equation*}
	Consider $l\in\mathbb{N}$ with $l\ll|\Lambda_{r_{n}}|$ and for $m=2,...,|\Lambda_{r_{n}}|$ let 
	\begin{multline}
	    \mathcal{M}^{\circ}_{m}:=\max_{\mathbf{t}\in\{t_{m}-t_{m-1},...,t_{|\Lambda_{r_{n}}|}-t_{m-1}\}}X_\mathbf{t} \text{ and }\\
	    \label{Mm}\mathcal{M}^{\circ}_{m\setminus l}:=\max_{\mathbf{t}\in\{t_{m}-t_{m-1},...,t_{|\Lambda_{r_{n}}|}-t_{m-1}\}\setminus K_{l}}X_\mathbf{t}.
	\end{multline}
	We have
	\begin{multline}\nonumber
	-\mathbb{P}(\mathcal{M}_{m}>u_{n})+\mathbb{P}(\mathcal{M}_{m-1}>u_{n})=\\\label{Mm}-\mathbb{P}(\mathcal{M}^{\circ}_{m}>u_{n})+\mathbb{P}(\mathcal{M}^{\circ}_{m}\vee X_{\mathbf{0}}>u_{n})=\mathbb{P}(\mathcal{M}^{\circ}_{m}\leq u_{n},X_{\mathbf{0}}>u_{n})
	\end{multline}
	Notice that for $m=|\Lambda_{r_{n}}|+1$ we have that
	\begin{equation*}
	-\mathbb{P}(\mathcal{M}_{m}>u_{n})+\mathbb{P}(\mathcal{M}_{m-1}>u_{n})=\mathbb{P}(X_{\mathbf{0}}>u_{n})
	\end{equation*}
	and so when divided by $|\Lambda_{r_n}|\mathbb{P}(X>u_{n})$ is asymptotically negligible.
	
	Now, for each $j,n\in\mathbb{N}$ consider the points $t_{m}$, for $m=2,...,|\Lambda_{r_{n}}|$, such that $\{t_{m}-t_{m-1},...,t_{|\Lambda_{r_{n}}|}-t_{m-1}\}\cap K_{l}= \mathcal{D}_{j}\cap K_{l}$. For such points we have that (\ref{Mm}) is equal to
	\begin{equation*}
	\mathbb{P}(\mathcal{M}^{\circ}_{m}\leq u_{n},X_{\mathbf{0}}>u_{n},\max_{\mathbf{t}\in \mathcal{D}_{j}\cap K_{l} }X_\mathbf{t}\leq u_{n})+\mathbb{P}(\mathcal{M}^{\circ}_{m}\leq u_{n},X_{\mathbf{0}}>u_{n},\max_{\mathbf{t}\in \mathcal{D}_{j}\cap K_{l} }X_\mathbf{t}> u_{n})
	\end{equation*}
	\begin{equation*}
	=\mathbb{P}(\mathcal{M}^{\circ}_{m}\leq u_{n},X_{\mathbf{0}}>u_{n},\max_{\mathbf{t}\in \mathcal{D}_{j}\cap K_{l} }X_\mathbf{t}\leq u_{n})=\mathbb{P}(\mathcal{M}^{\circ}_{m\setminus l}\leq u_{n},X_{\mathbf{0}}>u_{n},\max_{\mathbf{t}\in \mathcal{D}_{j}\cap K_{l} }X_\mathbf{t}\leq u_{n})
	\end{equation*}
	\begin{equation*}
	=\mathbb{P}(X_{\mathbf{0}}>u_{n},\max_{\mathbf{t}\in \mathcal{D}_{j}\cap K_{l} }X_\mathbf{t}\leq u_{n})-\mathbb{P}(\mathcal{M}^{\circ}_{m\setminus l}> u_{n},X_{\mathbf{0}}>u_{n},\max_{\mathbf{t}\in \mathcal{D}_{j}\cap K_{l} }X_\mathbf{t}\leq u_{n}).
	\end{equation*}
	and that 
	\begin{multline*}
	\mathbb{P}(\mathcal{M}^{\circ}_{m\setminus l}> u_{n},X_{\mathbf{0}}>u_{n},\max_{\mathbf{t}\in \mathcal{D}_{j}\cap K_{l} }X_\mathbf{t}\leq u_{n})\leq \mathbb{P}(\mathcal{M}^{\circ}_{m\setminus l}> u_{n},X_{\mathbf{0}}>u_{n})\\
	    \leq \mathbb{P}(\hat{M}_{l,r_{n}}^{X}> u_{n},X_{\mathbf{0}}>u_{n}).
	\end{multline*}
	
	For the points $t_{m}$, for $m=2,...,|\Lambda_{r_{n}}|$, such that $\{t_{m}-t_{m-1},...,t_{|\Lambda_{r_{n}}|}-t_{m-1}\}\cap K_{l}\neq \mathcal{D}_{j}\cap K_{l}$ for every $j\in\mathbb{N}$, we will use that
	\begin{equation*}
	\frac{\mathbb{P}(\mathcal{M}^{\circ}_{m}\leq u_{n},X_{\mathbf{0}}>u_{n})}{|\Lambda_{r_{n}}|\mathbb{P}(X>u_{n})}\leq \frac{1}{|\Lambda_{r_{n}}|}
	\end{equation*}
	
	Observe that there are finitely many different subsets of $K_{l}$ and, following the notation of the proof of Theorem \ref{t1-L}, we denote their total number by $\tau_{l}$ and denote them by $\Xi_{l}^{(1)},....,\Xi_{l}^{(\tau_{l})}$. Further, for $z=1,...,\tau_{l}$, we let $\mu_{r_{n}}^{(z)}$ be the number of points $t_{m}$, $m=2,...,|\Lambda_{r_{n}}|$, such that $\{t_{m}-t_{m-1},...,t_{|\Lambda_{r_{n}}|}-t_{m-1}\}\cap K_{l}=\Xi_{l}^{(z)}$, that is $\mu_{r_{n}}^{(z)}=|\{t_{m},m=2,...,|\Lambda_{r_{n}}|:\{t_{m}-t_{m-1},...,t_{|\Lambda_{r_{n}}|}-t_{m-1}\}\cap K_{l}=\Xi_{l}^{(z)}\}|$. Recall that $\mathfrak{I}_{l}^{(z)}=\{i\in\mathbb{N}:\Xi_{l}^{(z)}=\mathcal{D}_{i}\cap K_{l}\}$. Then, by (i) and (ii) and in particular by point (I) in Proposition \ref{lem-AC1-L-2} we have
	\begin{equation*}
	\frac{\mu_{r_{n}}^{(z)}}{|\Lambda_{r_{n}}|}\to\sum_{i\in \mathfrak{I}_{l}^{(z)}}\lambda_{i},\quad\quad\textnormal{as $n\to\infty$}.
	\end{equation*}
	Notice that if $\Xi_{l}^{(z)}\neq \mathcal{D}_{i}\cap K_{l} $, for every $i\in\mathbb{N}$, then $\mathfrak{I}_{l}^{(z)}$ is empty and so $\frac{\mu_{r_{n}}^{(z)}}{|\Lambda_{r_n}|}\to0$, as $n\to\infty$, and we let $Z_{l}$ the subset of $\{1,...,\tau_{l}\}$ of such $z$s.
	Further, for $z\in\{1,...,\tau_{l}\}\setminus Z_{l}$ we let $\mathcal{D}_{j(z)}$ indicate the (or one of the) $\mathcal{D}_{i}$ such that $\Xi_{l}^{(z)}= \mathcal{D}_{i}\cap K_{l} $. Thus, 
	\begin{equation*}
	\sum_{z\in\{1,...,\tau_{l}\}\setminus Z_{l}}\frac{\mu_{r_{n}}^{(z)}}{|\Lambda_{r_n}|}\stackrel{n\to\infty}{\to}\sum_{z\in\{1,...,\tau_{l}\}\setminus Z_{l}}\sum_{i\in \mathfrak{I}_{l}^{(z)}}\lambda_{i}= \sum_{i=1}^{\infty}\lambda_{i}=1.
	\end{equation*}
	Hence, we have that
	\begin{equation*}
	\theta^{\Lambda}_{n}=\frac{\mathbb{P}(\max_{\mathbf{t}\in\Lambda_{r_{n}} }X_\mathbf{t}>u_{n})}{|\Lambda_{r_{n}}|\mathbb{P}(X>u_{n})}= \sum_{z\in\{1,...,\tau_{l}\}\setminus Z_{l}}\frac{\mu_{r_{n}}^{(z)}}{|\Lambda_{r_{n}}|}\mathbb{P}(\max_{\mathbf{t}\in \mathcal{D}_{j(z)}\cap K_{l} }X_\mathbf{t}\leq u_{n}|X_{\mathbf{0}}>u_{n})+A_{l,n}
	\end{equation*}
	where $A_{l,n}$ is such that
	\begin{equation*}
	|A_{l,n}|\leq \sum_{z\in\{1,...,\tau_{l}\}\setminus Z_{l}}\frac{\mu_{r_{n}}^{(z)}}{|\Lambda_{r_{n}}|}\mathbb{P}(\hat{M}_{l,r_{n}}^{X}> u_{n}|X_{\mathbf{0}}>u_{n}) + \sum_{z\in Z_{l}}\frac{\mu_{r_{n}}^{(z)}}{|\Lambda_{r_{n}}|}.
	\end{equation*}
	Therefore, applying (\ref{AC-u}) we obtain that (\ref{I}).
	
	To show that $\liminf\limits_{n\to\infty}\theta^{\Lambda}_{n}>0$ we proceed as follows. Consider the a set of points composed by points $\{t_{1},...,t_{|\Lambda_{r_{n}}|}\}$ which have a supremum distance of at least $2l$, and denote it $W_{n}$ and its points (in increasing order according to $\succ$) by $w_{1},...,w_{p_{n}}$ for some $p_{n}\in\mathbb{N}$. Observe that the sets 
	\begin{equation*}
\left\{X_{w_{m}}>u_{n},\max_{\mathbf{t}\succeq w_{m},\mathbf{t}\in\{t_{1},...,t_{|\Lambda_{r_{n}}|}\}\setminus K_{l-1}(w_{m})}X_{\mathbf{t}}\leq u_{n} \right\}\,,
	\end{equation*}for $m=1,...,p_{n}$, are disjoint and their union is a subset of $\{\max_{\mathbf{t}\in\Lambda_{r_{n}} }X_{\mathbf{t}}> u_{n}\}$. Observe also that $|\Lambda_{r_{n}}|\geq|W_{n}|\geq\lfloor|\Lambda_{r_{n}}|/(2l)^{k}\rfloor$. Hence, we have
	\begin{align*}
	\theta^{\Lambda}_{n}&=\frac{\mathbb{P}(\max_{\mathbf{t}\in\Lambda_{r_{n}} }X_{\mathbf{t}}>u_{n})}{|\Lambda_{r_{n}}|\mathbb{P}(X>u_{n})}\\
	&\geq \sum_{m=1}^{p_{n}}\frac{\mathbb{P}(X_{w_{m}}>u_{n},\max_{\mathbf{t}\succeq w_{m},\mathbf{t}\in\{t_{1},...,t_{|\Lambda_{r_{n}}|}\}\setminus K_{l-1}(w_{m})}X_\mathbf{t}\leq u_{n})}{|\Lambda_{r_{n}}|\mathbb{P}(X>u_{n})}\\
&	=\sum_{m=1}^{p_{n}}\frac{\mathbb{P}(X_\mathbf{0}>u_{n},\max_{\mathbf{t}\succeq \mathbf{0},\mathbf{t}\in\{t_{1}-w_{m},...,t_{|\Lambda_{r_{n}}|}-w_{m}\}\setminus K_{l-1}}X_\mathbf{t}\leq u_{n})}{|\Lambda_{r_{n}}|\mathbb{P}(X>u_{n})}\\
	&\geq \sum_{m=1}^{p_{n}}\frac{\mathbb{P}(X_{\mathbf{0}}>u_{n},\hat{M}_{l-1,r_{n}}\leq u_{n})}{|\Lambda_{r_{n}}|\mathbb{P}(X>u_{n})}\\
&=\frac{p_{n}}{|\Lambda_{r_{n}}|}[1-\mathbb{P}(\hat{M}_{l-1,r_{n}}> u_{n}|X_{\mathbf{0}}>u_{n})]\\
&\geq\frac{\lfloor|\Lambda_{r_{n}}|/(2l)^{k}\rfloor}{|\Lambda_{r_{n}}|}[1-\mathbb{P}(\hat{M}_{l-1,r_{n}}> u_{n}|X_{\mathbf{0}}>u_{n})].
	\end{align*}
	Then
	\begin{equation*}
	\liminf\limits_{n\to\infty}\theta_{n}=\frac{1}{(2l)^{k}}[1-\limsup\limits_{n\to\infty}\mathbb{P}(\hat{M}_{l-1,r_{n}}> u_{n}|X_{\mathbf{0}}>u_{n})].
	\end{equation*}
	This proves point (a).
	
	Now, assume that $\theta^{\Lambda}_{b}=\lim\limits_{n\to\infty}\theta^{\Lambda}_{n}$ exists. We need to show that $\theta^{\Lambda}_{X}=\theta^{\Lambda}_{b}$. By Taylor expansion we have
	\begin{align*}
	(\mathbb{P}(\max_{\mathbf{t}\in\Lambda_{r_{n}} }X_{\mathbf{t}}\leq u_{n}))^{k_{n}}&=\exp(k_{n}\log(1-\mathbb{P}(\max_{\mathbf{t}\in\Lambda_{r_{n}} }X_{\mathbf{t}}> u_{n})))\\
&	=\exp\Big(-\frac{|\Lambda_{n}|}{|\Lambda_{r_{n}}|}\mathbb{P}(\max_{\mathbf{t}\in\Lambda_{r_{n}} }X_{\mathbf{t}}> u_{n})(1+o(1))\Big)\\
&=\exp\Big(-\frac{\tau}{|\Lambda_{r_{n}}|}\frac{\mathbb{P}(\max_{\mathbf{t}\in\Lambda_{r_{n}} }X_{\mathbf{t}}> u_{n})}{\mathbb{P}(X>u_{n})}(1+o(1))\Big)\\
	&=\exp\Big(-\tau\theta^\Lambda_{n}(1+o(1))\Big)\to e^{-\theta_{b}\tau},\quad\text{as $n\to\infty$}.
	\end{align*}
	Hence, by the mixing condition (\ref{mixing-u}) we have
	\begin{multline*}
	\mathbb{P}(\max_{\mathbf{t}\in\Lambda_{n} }X_{\mathbf{t}}\leq u_{n})=[\mathbb{P}(\max_{\mathbf{t}\in\Lambda_{n} }X_{\mathbf{t}}\leq u_{n})-(\mathbb{P}(\max_{\mathbf{t}\in\Lambda_{r_{n}} }X_{\mathbf{t}}\leq u_{n}))^{k_{n}}]\\
	+(\mathbb{P}(\max_{\mathbf{t}\in\Lambda_{r_{n}} }X_{\mathbf{t}}\leq u_{n}))^{k_{n}}\to e^{-\theta_{b}\tau},\quad\text{as $n\to\infty$}.
	\end{multline*}
	Since this holds for any $\tau>0$, we conclude that $\theta^{\Lambda}_{X}=\theta^{\Lambda}_{b}$.
\end{proof}

\begin{proof}[Proof of Theorem \ref{co-u}]
	Recall (\ref{I}) and let $\theta^{(l)}:=\lim\limits_{n\to\infty}\sum_{j=1}^{\infty}\lambda_{j}\mathbb{P}(\max_{\mathbf{t}\in \mathcal{D}_{j}\cap K_{l} }X_\mathbf{t}\leq u_{n}\,|\, X_{\mathbf{0}}>u_{n})$, for $l\in\mathbb{N}$. By the continuous mapping theorem (and noticing that the sum is actually a finite sum since there are finitely many different combination of points inside $K_{l}$ for given $l$) we have $\theta^{(l)}=\sum_{j=1}^{\infty}\lambda_{j}\mathbb{P}(\max_{\mathbf{t}\in \mathcal{D}_{j}\cap K_{l} }Y_{\mathbf{t}}\leq 1)$ and by monotonicity of the probability measure $\theta^{(l)}\downarrow\sum_{j=1}^{\infty}\lambda_{j}\mathbb{P}(\sup_{\mathbf{t}\in \mathcal{D}_{j} }Y_{\mathbf{t}}\leq 1)$ as $l\to\infty$. Given that $\lim\limits_{l\to\infty}\theta^{(l)}=\theta^{\Lambda}_{b}$ exists, Theorem \ref{thm-u} point (a) ensures that for $(u_{n})=(u_{n}(\tau))$ and some $\tau>0$ we have that $\theta^{\Lambda}_{b}=\lim\limits_{n\to\infty}\theta^{\Lambda}_{n}$ exists and is positive. Then, from Theorem \ref{thm-u} point (b) for $(u_{n})=(u_{n}(\tau))$ and arbitrary $\tau>0$ we obtain that $\theta^{\Lambda}_{X}$ exists, is positive and it is equal to $\theta^{\Lambda}_{b}$, hence we obtain point (2). Moreover, from these arguments we immediately obtain the first equality in (\ref{u-index}), while for the others, using $\Theta_{\mathbf{0}}\stackrel{a.s.}{=}1$, we have that
	\begin{align*}
	\theta^{\Lambda}_{b}&=\sum_{j=1}^{\infty}\lambda_{j}\mathbb{P}(Y\sup_{\mathbf{t}\in \mathcal{D}_{j} }|\mathbf\Theta_\mathbf{t}|\leq 1)=\sum_{j=1}^{\infty}\lambda_{j}\bigg(1-\int_{1}^{\infty}\mathbb{P}(y\sup_{\mathbf{t}\in\mathcal{D}_{j}}\Theta_{\mathbf{t}}> 1)d(-y^{-\alpha})\bigg)\\
	&=\sum_{j=1}^{\infty}\lambda_{j}\bigg(1-\int_{0}^{1}\mathbb{P}(\sup_{\mathbf{t}\in\mathcal{D}_{j}}\Theta_{\mathbf{t}}^{\alpha}> u)du\bigg)=\sum_{j=1}^{\infty}\lambda_{j}\bigg(1-\mathbb{E}\Big[\sup_{\mathbf{t}\in\mathcal{D}_{j}}\Theta_{\mathbf{t}}^{\alpha}\wedge 1 \Big]\bigg)\\
	&=\sum_{j=1}^{\infty}\lambda_{j}\bigg(\mathbb{E}\Big[\Big(1-\sup_{\mathbf{t}\in\mathcal{D}_{j}}\Theta_{\mathbf{t}}^{\alpha}\Big)_{+} \Big]\bigg)=\sum_{j=1}^{\infty}\lambda_{j}\bigg(\mathbb{E}\Big[\sup_{\mathbf{t}\in\mathcal{D}_{j}\cup\{\mathbf{0}\}}\Theta_{\mathbf{t}}^{\alpha}-\sup_{\mathbf{t}\in\mathcal{D}_{j}}\Theta_{\mathbf{t}}^{\alpha} \Big]\bigg).
	\end{align*}
\end{proof}
\begin{theorem}\label{thm-u-I}
	Consider the following conditions:
	\\\textnormal{(I)} $(X_\mathbf{t})_{\mathbf{t}\in\mathbb{Z}^{k}}$ is a real valued stationary random field whose marginal distribution $F$ does not have an atom at the right endpoint $x_{F}$.
	\\\textnormal{(II)} For a sequence $u_{n}\uparrow x_{F}$ and an integer sequence $r_{n}\to\infty$ s.t.~$k_{n}=[|\Lambda_n|/|\Lambda_{r_{n}}|]\to\infty$ the following anti-clustering condition is satisfied: for every $j\in I^*$
	\begin{equation}\label{AC-u-I}
	\lim\limits_{l\to\infty}\limsup\limits_{n\to\infty}\mathbb{P}(\hat{M}^{\Lambda,X,(j)}_{2l,r_{n}}>u_{n}\,|\, \max\limits_{\mathbf{t}\in\mathcal{E}_j}X_\mathbf{t}>u_{n})=0.
	\end{equation}
	\noindent\textnormal{(III)} A mixing condition holds:
	\begin{equation}\label{mixing-u-I}
	\mathbb{P}(\max_{\mathbf{t}\in\Lambda_{n} }X_\mathbf{t}\leq u_{n})-(\mathbb{P}(\max_{\mathbf{t}\in\Lambda_{r_{n}} }X_\mathbf{t}\leq u_{n}))^{k_{n}}\to0,\quad n\to\infty,
	\end{equation}
	where $(u_{n})$, $(k_{n})$ and $(r_{n})$ are as in \textnormal{(II)}.
	\\\textnormal{(IV)} For any $\tau\geq0$ there exists a sequence $(u_{n})=(u_{n}(\tau))$ s.t.~$\lim\limits_{n\to\infty}|\Lambda_n|\mathbb{P}(X_\mathbf{0}>u_{n}(\tau))=\tau$ and \textnormal{(II)} and \textnormal{(III)} are satisfied for these sequences $(u_{n})$.
	
	Then, the following statements hold:
	\\\textnormal{(a)} If \textnormal{(I)} and \textnormal{(II)} are satisfied then
	\begin{equation}\label{I-new}
	\lim\limits_{l\to\infty}\limsup\limits_{n\to\infty}\bigg|\theta^{\Lambda}_{n}-\sum_{h\in I^*,h<m_{4l}}\mathbb{P}(\max_{\mathbf{t}\in \tilde{\mathcal{D}}_{h}\cap K_{2l} }X_\mathbf{t}\leq u_{n}|\max_{\mathbf{t}\in\mathcal{E}_{h}}X_\mathbf{t}>u_{n})\frac{|S'_{h,4l}|}{|\Lambda_{r_n}|}\frac{\mathbb{P}(\max_{\mathbf{t}\in\mathcal{E}_{h}}X_\mathbf{t}>u_{n})}{\mathbb{P}(X>u_{n})}\bigg|=0,
	\end{equation}
	and $\liminf\limits_{n\to\infty}\theta^{\Lambda}_{n}>0$.
	\\\textnormal{(b)} If \textnormal{(I)} and \textnormal{(IV)} are satisfied and $\theta^{\Lambda}_{b}=\lim\limits_{n\to\infty}\theta^{\Lambda}_{n}$ exists, then $\theta^{\Lambda}_{X}\in(0,1]$ exists and $\theta^{\Lambda}_{X}=\theta^{\Lambda}_{b}$.
\end{theorem}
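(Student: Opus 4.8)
The plan is to follow the two-part structure of the proof of Theorem \ref{thm-u}, but to replace the point-by-point telescoping used there by a block-by-block telescoping over the $\mathcal{E}$-blocks produced by Proposition \ref{prop:latlambd}, and to replace the conditioning event $\{X_{\mathbf{0}}>u_n\}$ by $\{\max_{\mathbf{t}\in\mathcal{E}_j}X_{\mathbf{t}}>u_n\}$. As already reduced, $(X_{\mathbf{t}})_{\mathbf{t}\in\mathbb{Z}^{k}}$ may be taken non-negative.

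For part (a), I would first order $\Lambda_{r_n}$ exactly as in the proof of Theorem \ref{t1-L-E-Pro}: using Proposition \ref{prop:latlambd}, write $\Lambda_{r_n}$ (up to an asymptotically negligible set) as a disjoint union $\bigcup_{m=1}^{\hat u}(\hat{\mathcal{E}}_m)_{\hat s_m}$ with anchors $\hat s_1\prec\cdots\prec\hat s_{\hat u}$, where the anchors split into the ``good'' ones $s_1,\dots,s_u$ drawn from the sets $S'_{i,4l}$, $i\in I^*$, $i<m_{4l}$, the ``bad'' ones $\bar s_1,\dots,\bar s_{\bar u}$ from $\bar S_{i,4l}:=S_{i,4l}\setminus S'_{i,4l}$, and the leftover singletons $\tilde s_1,\dots,\tilde s_{\tilde u}$. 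Setting $\mathcal{M}_m:=\max_{\mathbf{t}\in\bigcup_{j\ge m}(\hat{\mathcal{E}}_j)_{\hat s_j}}X_{\mathbf{t}}$ and telescoping, stationarity gives
\begin{equation*}
\mathbb{P}\Big(\max_{\mathbf{t}\in\Lambda_{r_n}}X_{\mathbf{t}}>u_n\Big)=\sum_{m=1}^{\hat u}\mathbb{P}\Big(\max_{\mathbf{t}\in\bigcup_{j>m}(\hat{\mathcal{E}}_j)_{\hat s_j-\hat s_m}}X_{\mathbf{t}}\le u_n,\ \max_{\mathbf{t}\in\hat{\mathcal{E}}_m}X_{\mathbf{t}}>u_n\Big).
\end{equation*}
For a good anchor $\hat s_m=s_k$ of type $h\in I^*$, the identity \eqref{tilde-point} (used already in \eqref{expectation}) shows that the part of $\bigcup_{j>m}(\hat{\mathcal{E}}_j)_{\hat s_j-s_k}$ lying in $K_{2l}$ equals $\tilde{\mathcal{D}}_h\cap K_{2l}$; the part outside $K_{2l}$ contributes, after conditioning on $\{\max_{\mathbf{t}\in\mathcal{E}_h}X_{\mathbf{t}}>u_n\}$, at most $\mathbb{P}(\hat{M}^{\Lambda,X,(h)}_{2l,r_n}>u_n\mid\max_{\mathbf{t}\in\mathcal{E}_h}X_{\mathbf{t}}>u_n)$, which vanishes in the iterated limit by the anti-clustering condition \eqref{AC-u-I}. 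The bad anchors and the leftover singletons are handled exactly as in \eqref{u-tilde}: each such term is bounded by $|\hat{\mathcal{E}}_m|\,\mathbb{P}(X_{\mathbf{0}}>u_n)$, and $(\tilde u+\sum_{i\in I^*,i<m_{4l}}|\bar S_{i,4l}||\mathcal{E}_i|)/|\Lambda_{r_n}|\to0$ in the double limit; the tail $\sum_{h\in I^*,h\ge m_{4l}}\gamma^*_h|\mathcal{E}_h|\to0$ disposes of the indices $h\ge m_{4l}$. Dividing by $|\Lambda_{r_n}|\mathbb{P}(X>u_n)$ and using $|S'_{h,4l}|/|\Lambda_{r_n}|\to\gamma^*_h$ then yields \eqref{I-new}. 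To show $\liminf_n\theta^\Lambda_n>0$, I would fix a single $h_0\in I^*$ and, for $l$ large so that $h_0<m_{4l}$, choose among $\{\mathbf{t}\in\Lambda_{r_n}:(\Lambda_{r_n})_{-\mathbf{t}}\supset\mathcal{E}_{h_0}\}$ (a set of size $\sim\gamma^*_{h_0}|\Lambda_{r_n}|$) a subfamily of block anchors of pairwise supremum distance larger than $4l$, so that the events $\{\max_{\mathbf{t}\in(\mathcal{E}_{h_0})_{\mathbf{w}}}X_{\mathbf{t}}>u_n,\ \max_{\mathbf{t}\succ\mathbf{w},\,\mathbf{t}\in\Lambda_{r_n}\setminus K_{2l}(\mathbf{w})}X_{\mathbf{t}}\le u_n\}$ are disjoint, are contained in $\{\max_{\Lambda_{r_n}}X>u_n\}$, and number at least of order $|\Lambda_{r_n}|$. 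Since $\{X_{\mathbf{0}}>u_n\}\subseteq\{\max_{\mathbf{t}\in\mathcal{E}_{h_0}}X_{\mathbf{t}}>u_n\}$, this gives a lower bound $\liminf_n\theta^\Lambda_n\ge c_l\,[1-\limsup_n\mathbb{P}(\hat{M}^{\Lambda,X,(h_0)}_{2l,r_n}>u_n\mid\max_{\mathbf{t}\in\mathcal{E}_{h_0}}X_{\mathbf{t}}>u_n)]$ with $c_l>0$ (of order $(4l)^{-k}$), which is positive for $l$ large by \eqref{AC-u-I}.

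Part (b) is then identical to the corresponding part of Theorem \ref{thm-u}: assuming $\theta^\Lambda_b=\lim_n\theta^\Lambda_n$ exists, a Taylor expansion gives $(\mathbb{P}(\max_{\Lambda_{r_n}}X\le u_n))^{k_n}=\exp(-\tau\theta^\Lambda_n(1+o(1)))\to e^{-\theta^\Lambda_b\tau}$, and the mixing condition \eqref{mixing-u-I} upgrades this to $\mathbb{P}(\max_{\Lambda_n}X\le u_n)\to e^{-\theta^\Lambda_b\tau}$ for every $\tau>0$, so $\theta^\Lambda_X$ exists and equals $\theta^\Lambda_b$; positivity follows from the $\liminf$ bound of part (a), and $\theta^\Lambda_X\le1$ from $\mathbb{P}(\max_{\Lambda_{r_n}}X>u_n)\le|\Lambda_{r_n}|\mathbb{P}(X>u_n)$.

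The main obstacle is the same one already met in Theorem \ref{t1-L-E-Pro}: $\Lambda_{r_n}$ has only an asymptotic, not an exact, block structure, so the accounting must keep the three classes of anchors (those from $S'$, those from $\bar S$, and the leftover singletons) under control and show that all errors introduced by the ``imperfect'' part of $\Lambda_{r_n}$ — together with the truncation to $K_{2l}$ and the $l$-dependence of $m_{4l}$ — are uniformly negligible in the iterated limit $\limsup_n$ followed by $l\to\infty$. The identities \eqref{tilde-point} and \eqref{u-tilde} from Proposition \ref{prop:latlambd} are exactly what makes this bookkeeping close, so the bulk of the work is a careful re-run of the telescoping estimates of the proof of Theorem \ref{t1-L-E-Pro} with the exponential functionals $g$ replaced by indicator events $\{\,\cdot\le u_n\}$.
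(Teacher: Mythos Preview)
Your proposal is correct and follows essentially the same approach as the paper: a block-by-block telescoping over the $(\hat{\mathcal{E}}_m)_{\hat s_m}$ decomposition of Proposition~\ref{prop:latlambd}, identification of the in-$K_{2l}$ part via \eqref{tilde-point}, error control from the anti-clustering condition and \eqref{u-tilde}, and the $\liminf$ bound via disjoint well-separated $\mathcal{E}_{h_0}$-blocks, with part (b) handled exactly as in Theorem~\ref{thm-u}. The paper carries out precisely this argument (with the well-separated anchors for the $\liminf$ bound taken from $S'_{h,4l}$), so there is nothing substantively different.
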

\begin{remark}
	Notice that when $u_{n}=a_{n}x$ then (\ref{AC-u-I}) is the (AC$^{\Lambda}_{\succ,I^*}$) condition.
\end{remark}
\begin{proof}
	Denote by $t_{|\Lambda_{r_{n}}|}$ the highest element of $\Lambda_{r_{n}}$ according to $\prec$, by $t_{|\Lambda_{r_{n}}|-1}$ the second highest one, ..., by $t_{1}$ the lowest one. Consider the $s$, $\tilde{s}$, and $\hat{s}$ introduced in the proof of Theorem \ref{t1-L-E-Pro}. Let $l\in\mathbb{N}$. For $m=1,...,\hat{u}$ let $\mathfrak{M}_{m}:=\max_{\mathbf{t}\in \cup_{i=m}^{\hat{u}}(\hat{\mathcal{E}}_{i})_{\hat{s}_{i}}}X_\mathbf{t}$ and for $m=|\Lambda_{r_{n}}|+1$ let $\mathfrak{M}_{m}:=0$. Thus, we have $\mathbb{P}(\mathfrak{M}_{|\Lambda_{r_{n}}|+1}>u_{n})=0$ and
	\begin{equation*}
	\mathbb{P}(\max_{\mathbf{t}\in\Lambda_{r_{n}} }X_\mathbf{t}> u_{n})=\sum_{m=2}^{\hat{u}+1}-\mathbb{P}(\mathfrak{M}_{m}>u_{n})+\mathbb{P}(\mathfrak{M}_{m-1}>u_{n}).
	\end{equation*}
	For $m=2,...,\hat{u}$ let $\mathfrak{M}^{\circ}_{m}:=\max_{\mathbf{t}\in \cup_{i=m}^{\hat{u}}(\hat{\mathcal{E}}_{i})_{\hat{s}_{i}-\hat{s}_{m-1}}}X_\mathbf{t}$ and $\mathfrak{M}^{\circ}_{m\setminus l}:=\max_{\mathbf{t}\in \cup_{i=m}^{\hat{u}}(\hat{\mathcal{E}}_{i})_{\hat{s}_{i}-\hat{s}_{m-1}}\setminus K_{l}}X_\mathbf{t}$.
	We have
	\begin{multline}
	-\mathbb{P}(\mathfrak{M}_{m}>u_{n})+\mathbb{P}(\mathfrak{M}_{m-1}>u_{n})=-\mathbb{P}(\mathfrak{M}^{\circ}_{m}>u_{n})+\mathbb{P}(\mathfrak{M}^{\circ}_{m}\vee \max_{\mathbf{t}\in\hat{\mathcal{E}}_{m-1}}X_\mathbf{t}>u_{n})\label{Mm-new}\\
	=\mathbb{P}(\mathfrak{M}^{\circ}_{m}\leq u_{n},\max_{\mathbf{t}\in\hat{\mathcal{E}}_{m-1}}X_\mathbf{t}>u_{n})
	\end{multline}
	Notice that for $m=|\Lambda_{r_{n}}|+1$ we have that
	\begin{equation*}
	-\mathbb{P}(\mathfrak{M}_{m}>u_{n})+\mathbb{P}(\mathfrak{M}_{m-1}>u_{n})=\mathbb{P}(\max_{\mathbf{t}\in\hat{\mathcal{E}}_{\hat{u}}}X_\mathbf{t}>u_{n})\leq \mathbb{P}(\max_{\mathbf{t}\in K_{l},\mathbf{t}\succeq\mathbf{0}}X_\mathbf{t}>u_{n})
	\end{equation*}
	and so when divided by $|\Lambda_{r_n}|\mathbb{P}(X>u_{n})$ is asymptotically negligible, for every fixed $l\in\mathbb{N}$.
	
	Moreover, we have that (\ref{Mm-new}) is equal to
	\begin{align*}
	\lefteqn{\mathbb{P}(\mathfrak{M}^{\circ}_{m}\leq u_{n},\max_{\mathbf{t}\in\hat{\mathcal{E}}_{m-1}}X_\mathbf{t}>u_{n},\max_{\mathbf{t}\in \cup_{i=m}^{\hat{u}}(\hat{\mathcal{E}}_{i})_{\hat{s}_{i}-\hat{s}_{m-1}}\cap K_{2l}}X_\mathbf{t}\leq u_{n})}\\
	&+\mathbb{P}(\mathfrak{M}^{\circ}_{m}\leq u_{n},\max_{\mathbf{t}\in\hat{\mathcal{E}}_{m-1}}X_\mathbf{t}>u_{n},\max_{\mathbf{t}\in \cup_{i=m}^{\hat{u}}(\hat{\mathcal{E}}_{i})_{\hat{s}_{i}-\hat{s}_{m-1}}\cap K_{2l}}X_\mathbf{t}> u_{n})\\
	=&\mathbb{P}(\mathfrak{M}^{\circ}_{m}\leq u_{n},\max_{\mathbf{t}\in\hat{\mathcal{E}}_{m-1}}X_\mathbf{t}>u_{n},\max_{\mathbf{t}\in \cup_{i=m}^{\hat{u}}(\hat{\mathcal{E}}_{i})_{\hat{s}_{i}-\hat{s}_{m-1}}\cap K_{2l}}X_\mathbf{t}\leq u_{n})\\
	=&\mathbb{P}(\mathfrak{M}^{\circ}_{m\setminus 2l}\leq u_{n},\max_{\mathbf{t}\in\hat{\mathcal{E}}_{m-1}}X_\mathbf{t}>u_{n},\max_{\mathbf{t}\in \cup_{i=m}^{\hat{u}}(\hat{\mathcal{E}}_{i})_{\hat{s}_{i}-\hat{s}_{m-1}}\cap K_{2l}}X_\mathbf{t}\leq u_{n})\\
	=&\mathbb{P}(\max_{\mathbf{t}\in\hat{\mathcal{E}}_{m-1}}X_\mathbf{t}>u_{n},\max_{\mathbf{t}\in \cup_{i=m}^{\hat{u}}(\hat{\mathcal{E}}_{i})_{\hat{s}_{i}-\hat{s}_{m-1}}\cap K_{2l}}X_\mathbf{t}\leq u_{n})\\
	&-\mathbb{P}(\mathfrak{M}^{\circ}_{m\setminus 2l}> u_{n},\max_{\mathbf{t}\in\hat{\mathcal{E}}_{m-1}}X_\mathbf{t}>u_{n},\max_{\mathbf{t}\in \cup_{i=m}^{\hat{u}}(\hat{\mathcal{E}}_{i})_{\hat{s}_{i}-\hat{s}_{m-1}}\cap K_{2l}}X_\mathbf{t}\leq u_{n})
	\end{align*}
	where
	\begin{multline*}
	\mathbb{P}(\mathfrak{M}^{\circ}_{m\setminus 2l}> u_{n},\max_{\mathbf{t}\in\hat{\mathcal{E}}_{m-1}}X_\mathbf{t}>u_{n},\max_{\mathbf{t}\in \cup_{i=m}^{\hat{u}}(\hat{\mathcal{E}}_{i})_{\hat{s}_{i}-\hat{s}_{m-1}}\cap K_{2l}}X_\mathbf{t}\leq u_{n})\\
	\leq \mathbb{P}(\mathfrak{M}^{\circ}_{m\setminus 2l}> u_{n},\max_{\mathbf{t}\in\hat{\mathcal{E}}_{m-1}}X_\mathbf{t}>u_{n})\leq \mathbb{P}(\hat{M}_{2l,r_{n}}^{\Lambda,X,(j)}> u_{n},\max_{\mathbf{t}\in\mathcal{E}_{j}}X_\mathbf{t}>u_{n}),
	\end{multline*}
	for some $j\in I^*$ with $j<m_{4l}$. Hence, we have that
	\begin{multline*}
	\theta^{\Lambda}_{n}=\frac{\mathbb{P}(\max_{\mathbf{t}\in\Lambda_{r_{n}} }X_\mathbf{t}>u_{n})}{|\Lambda_{r_n}|\mathbb{P}(X>u_{n})}\\
	= \sum_{i\in u}\mathbb{P}(\max_{\mathbf{t}\in \tilde{\mathcal{D}}_{j_i}\cap K_{2l} }X_\mathbf{t}\leq u_{n}|\max_{\mathbf{t}\in\mathcal{E}_{j_i}}X_\mathbf{t}>u_{n})\frac{\mathbb{P}(\max_{\mathbf{t}\in\mathcal{E}_{j_i}}X_\mathbf{t}>u_{n})}{|\Lambda_{r_n}|\mathbb{P}(X>u_{n})}+B_{l,n}
	\end{multline*}
	where the absolute value of $B_{l,n}$ is such that
	\begin{multline*}
	|B_{l,n}|\leq \frac{\sum_{h\in I^*,h<m_{4l}}|\bar{S}_{h,4l}||\mathcal{E}_{h}|+\tilde{u}}{|\Lambda_{r_n}|}\\
	+ \sum_{i\in I^*,i<m_{4l}}\mathbb{P}(\hat{M}_{l,r_{n}}^{\Lambda,X,(j_i)}> u_{n}|\max_{\mathbf{t}\in\mathcal{E}_{j_i}}X_\mathbf{t}>u_{n})\frac{|S'_{j_i,4l}|\mathbb{P}(\max_{\mathbf{t}\in\mathcal{E}_{j_i}}X_\mathbf{t}>u_{n})}{|\Lambda_{r_n}|\mathbb{P}(X>u_{n})}.
	\end{multline*}	
	
	By (\ref{AC-u-I}) and by the same arguments as the ones used in the proof of Theorem \ref{t1-L-E-Pro}, see in particular (\ref{bound-J}), (\ref{m-1}), and (\ref{u-tilde}), we obtain that $\lim\limits_{l\to\infty}\limsup\limits_{n\to\infty}|B_{l,n}|=0$. Moreover, since
	\begin{multline*}
	\sum_{i\in u}\mathbb{P}(\max_{\mathbf{t}\in \tilde{\mathcal{D}}_{j_i}\cap K_{2l} }X_\mathbf{t}\leq u_{n}|\max_{\mathbf{t}\in\mathcal{E}_{j_i}}X_\mathbf{t}>u_{n})\frac{\mathbb{P}(\max_{\mathbf{t}\in\mathcal{E}_{j_i}}X_\mathbf{t}>u_{n})}{|\Lambda_{r_n}|\mathbb{P}(X>u_{n})}\\
	=\sum_{h\in I^*,h<m_{4l}}|S'_{h,4l}|\mathbb{P}(\max_{\mathbf{t}\in \tilde{\mathcal{D}}_{h}\cap K_{2l} }X_\mathbf{t}\leq u_{n}|\max_{\mathbf{t}\in\mathcal{E}_{h}}X_\mathbf{t}>u_{n})\frac{\mathbb{P}(\max_{\mathbf{t}\in\mathcal{E}_{h}}X_\mathbf{t}>u_{n})}{|\Lambda_{r_n}|\mathbb{P}(X>u_{n})}.
	\end{multline*}
	we obtain (\ref{I-new}).
	
	To show that $\liminf\limits_{n\to\infty}\theta^{\Lambda}_{n}>0$ we proceed as follows. Consider $S'_{h,4l}$, for some $h\in I^*$ with $h<m_{4l}$. Let $W^{(h)}_n$ be the set of points in $S'_{h,4l}$ that have supremum distance of $4l$ from each other. Observe that the sets \begin{equation*}
	    \left\{\max_{\mathbf{t}\in\mathcal{E}_{h}}X_{\mathbf{t}+s_{m}}>u_{n},\max_{\mathbf{t}\succ s_m,\mathbf{t}\in \cup_{i=1}^{\hat{u}}(\hat{\mathcal{E}}_{i})_{\hat{s}_{i}}\setminus K_{2l}}X_\mathbf{t}\leq u_{n} \right\},\quad s_m\in W^{(h)}_n,
	\end{equation*}
	are disjoint and their union is a subset of $\{\max_{\mathbf{t}\in\Lambda_{r_{n}} }X_{\mathbf{t}}> u_{n}\}$. This is because 
	\begin{equation*}
	\bigg\{\max_{\mathbf{t}\in K_{2l}\cap\Lambda_{r_{n}}}X_{\mathbf{t}+s_{m}}>u_{n},\max_{\mathbf{t}\succ s_m,\mathbf{t}\in \cup_{i=1}^{\hat{u}}(\hat{\mathcal{E}}_{i})_{\hat{s}_{i}}\setminus K_{2l}}X_\mathbf{t}\leq u_{n} \bigg\},\quad s_m\in W^{(h)}_n,
	\end{equation*}
	are disjoint and their union is a subset of $\{\max_{\mathbf{t}\in\Lambda_{r_{n}} }X_{\mathbf{t}}> u_{n}\}$ and because $\left\{\max_{\mathbf{t}\in K_{2l}\cap\Lambda_{r_{n}}}X_{\mathbf{t}+s_{m}}>u_{n}\right\}\supset\left\{ \max_{\mathbf{t}\in\mathcal{E}_{h}}X_{\mathbf{t}+s_{m}}>u_{n}\right\}$. Observe that $|W^{(h)}_n|\geq \frac{|S'_{h,4l}|}{(4l)^{k}}$. Then, we have
	\begin{align*}
	\theta^{\Lambda}_{n}&=\frac{\mathbb{P}(\max_{\mathbf{t}\in\Lambda_{r_{n}} }X_{\mathbf{t}}>u_{n})}{|\Lambda_{r_{n}}|\mathbb{P}(X>u_{n})}\geq \sum_{s_m\in W^{(h)}_n}\frac{\mathbb{P}(\max_{\mathbf{t}\in\mathcal{E}_{h}}X_{\mathbf{t}+s_{m}}>u_{n},\max_{\mathbf{t}\succ s_m,\mathbf{t}\in \cup_{i=1}^{\hat{u}}(\hat{\mathcal{E}}_{i})_{\hat{s}_{i}}\setminus K_{2l}}X_\mathbf{t}\leq u_{n})}{|\Lambda_{r_{n}}|\mathbb{P}(X>u_{n})}\\
&	=\sum_{s_m\in W^{(h)}_n}\frac{\mathbb{P}(\max_{\mathbf{t}\in\mathcal{E}_{h}}X_\mathbf{t}>u_{n},\max_{\mathbf{t}\succ 0,\mathbf{t}\in \cup_{i=1}^{\hat{u}}(\hat{\mathcal{E}}_{i})_{\hat{s}_{i}-s_m}\setminus K_{2l}}X_\mathbf{t}\leq u_{n})}{|\Lambda_{r_{n}}|\mathbb{P}(X>u_{n})}\\
	&\geq |W^{(h)}_n|\frac{\mathbb{P}(\max_{\mathbf{t}\in\mathcal{E}_{h}}X_\mathbf{t}>u_{n},\hat{M}^{\Lambda,X,(h)}_{2l,r_{n}}\leq u_{n})}{|\Lambda_{r_{n}}|\mathbb{P}(X>u_{n})}\\
&	=\Big(1-\mathbb{P}(\max_{\mathbf{t}\in\mathcal{E}_{h}}X_\mathbf{t}>u_{n}|\hat{M}^{\Lambda,X,(h)}_{2l,r_{n}}\leq u_{n})\Big)\frac{|W^{(h)}_n|}{|\Lambda_{r_{n}}|}\frac{\mathbb{P}(\max_{\mathbf{t}\in\mathcal{E}_{h}}X_\mathbf{t}>u_{n})}{\mathbb{P}(X>u_{n})}.
	\end{align*} 
	Then
	\begin{equation*}
	\liminf\limits_{n\to\infty}\theta_{n}\geq\frac{\gamma^*_h c_h}{(4l)^{k}}[1-\limsup\limits_{n\to\infty}\mathbb{P}(\hat{M}^{\Lambda,X,(h)}_{2l,r_{n}}> u_{n}|X_{\mathbf{0}}>u_{n})]>0,
	\end{equation*}
	for some $l$ large enough. This proves point (a). The proof of point (b) follows from the same arguments as the ones used for the proof of Theorem \ref{thm-u} point (b).
\end{proof}	

\begin{proof}[Proof of Theorem \ref{co-u-I}]
	Recall (\ref{I-new}) and let 
	\begin{equation*}
	\theta^{(l)}:=\lim\limits_{n\to\infty}\sum_{h\in I^*,h<m_{4l}}\mathbb{P}(\max_{\mathbf{t}\in \tilde{\mathcal{D}}_{h}\cap K_{2l} }X_\mathbf{t}\leq u_{n}|\max_{\mathbf{t}\in\mathcal{E}_{h}}X_\mathbf{t}>u_{n})\frac{|S'_{h,4l}|}{|\Lambda_{r_n}|}\frac{\mathbb{P}(\max_{\mathbf{t}\in\mathcal{E}_{h}}X_\mathbf{t}>u_{n})}{\mathbb{P}(X>u_{n})},
	\end{equation*}
	for $l\in\mathbb{N}$. Using the arguments in the proof of Theorem \ref{t1-L-E-Pro}, we have that
	\begin{align*}
	\theta^{(l)}&=\lim\limits_{n\to\infty}\sum_{h\in I^*,h<m_{4l}}\gamma^*_h\frac{\mathbb{P}(\max_{\mathbf{t}\in \tilde{\mathcal{D}}_{h}\cap K_{2l} }X_\mathbf{t}\leq u_{n},\max_{\mathbf{t}\in\mathcal{E}_{h}}X_\mathbf{t}>u_{n},\rho_{\mathcal{E}_{j_i}}(X)>\frac{u_{n}}{D_{j_i}})}{\mathbb{P}(X>u_{n})}\\
&	=\sum_{h\in I^*,h<m_{4l}}\gamma^*_{h}c_{h}D_{h}^{\alpha}\mathbb{P}\Big(\max_{\mathbf{t}\in\mathcal{E}_{h}}Y_{\mathcal{E}_{h},\mathbf{t}}>D_{h},\max_{\mathbf{t}\in \tilde{\mathcal{D}}_{h}\cap K_{2l} }Y_{\mathcal{E}_{h},\mathbf{t}}\leq D_{h}\Big)\\
&	=\sum_{h\in I^*,h<m_{4l}}\int_{1}^{\infty}\gamma^*_{h}c_{h}D_{h}^{\alpha}\mathbb{P}\Big(y\max_{\mathbf{t}\in\mathcal{E}_{h}}\Theta_{\mathcal{E}_{h},\mathbf{t}}>D_{h},y\max_{\mathbf{t}\in \tilde{\mathcal{D}}_{h}\cap K_{2l} }\Theta_{\mathcal{E}_{h},\mathbf{t}}\leq D_{h}\Big)d(-y^{-\alpha}),
	\end{align*}	
	and
	\begin{align*}
	\lefteqn{\theta^{\Lambda}_{b}=\lim\limits_{l\to\infty}\theta^{(l)}=\lim\limits_{l\to\infty}\sum_{h\in I^*,h<m_{4l}}\int_{1}^{\infty}\gamma^*_{h}c_{h}D_{h}^{\alpha}\mathbb{P}\Big(y\max_{\mathbf{t}\in\mathcal{E}_{h}}\Theta_{\mathcal{E}_{h},\mathbf{t}}>D_{h},y\max_{\mathbf{t}\in \tilde{\mathcal{D}}_{h}\cap K_{2l} }\Theta_{\mathcal{E}_{h},\mathbf{t}}\leq D_{h}\Big)d(-y^{-\alpha})}\\
&	=\sum_{h\in I^*}\int_{1}^{\infty}\gamma^*_{h}c_{h}D_{h}^{\alpha}\mathbb{P}\Big(y\max_{\mathbf{t}\in\mathcal{E}_{h}}\Theta_{\mathcal{E}_{h},\mathbf{t}}>D_{h},y\sup_{\mathbf{t}\in \tilde{\mathcal{D}}_{h} }\Theta_{\mathcal{E}_{h},\mathbf{t}}\leq D_{h}\Big)d(-y^{-\alpha})\\
&	=\sum_{h\in I^*}\int_{0}^{\infty}\gamma^*_{h}c_{h}D_{h}^{\alpha}\mathbb{P}\Big(y\max_{\mathbf{t}\in\mathcal{E}_{h}}\Theta_{\mathcal{E}_{h},\mathbf{t}}>D_{h},y\sup_{\mathbf{t}\in \tilde{\mathcal{D}}_{h} }\Theta_{\mathcal{E}_{h},\mathbf{t}}\leq D_{h}\Big)d(-y^{-\alpha}).
	\end{align*}
	Given that $\lim\limits_{l\to\infty}\theta^{(l)}=\theta^{\Lambda}_{b}$ exists, Theorem \ref{thm-u} point (a) ensures that for $(u_{n})=(u_{n}(\tau))$ and some $\tau>0$ we have that $\theta^{\Lambda}_{b}=\lim\limits_{n\to\infty}\theta^{\Lambda}_{n}$ exists and is positive. Thus, from Theorem \ref{thm-u} point (b) for $(u_{n})=(u_{n}(\tau))$ and arbitrary $\tau>0$ we obtain that $\theta^{\Lambda}_{X}$ exists, is positive and it is equal to $\theta^{\Lambda}_{b}$. From these arguments we obtain the first equality in (\ref{index4}) while for the others we have that we have that
	\begin{align*}
	\theta^{\Lambda}_{b}&=\sum_{i\in I^*}\gamma^*_{i}c_{i}D_{i}^{\alpha}\mathbb{P}\Big(Y\max_{\mathbf{t}\in\mathcal{E}_{i}}\Theta_{\mathcal{E}_{i},\mathbf{t}}>D_{i},Y\sup_{\mathbf{t}\in \tilde{\mathcal{D}}_{i} }\Theta_{\mathcal{E}_{i},\mathbf{t}}\leq D_{i}\Big)\\
	&=\sum_{i\in I^*}\gamma^*_{i}c_{i}D_{i}^{\alpha}\bigg(\mathbb{P}\Big(Y\max_{\mathbf{t}\in\mathcal{E}_{i}}\Theta_{\mathcal{E}_{i},\mathbf{t}}>D_{i}\Big)-\mathbb{P}\Big(Y\max_{\mathbf{t}\in\mathcal{E}_{i}}\Theta_{\mathcal{E}_{i},\mathbf{t}}>D_{i},Y\sup_{\mathbf{t}\in \tilde{\mathcal{D}}_{i} }\Theta_{\mathcal{E}_{i},\mathbf{t}}> D_{i}\Big)\bigg)\\
	&=\sum_{i\in I^*}\gamma^*_{i}c_{i}D_{i}^{\alpha}\int_{1}^{\infty}\mathbb{P}\Big(y\max_{\mathbf{t}\in\mathcal{E}_{i}}\Theta_{\mathcal{E}_{i},\mathbf{t}}>D_{i}\Big)-\mathbb{P}\Big(y\max_{\mathbf{t}\in\mathcal{E}_{i}}\Theta_{\mathcal{E}_{i},\mathbf{t}}>D_{i},y\sup_{\mathbf{t}\in \tilde{\mathcal{D}}_{i} }\Theta_{\mathcal{E}_{i},\mathbf{t}}> D_{i}\Big)d(-y^{-\alpha})\\
	&\stackrel{u=y^{-\alpha}}{=}\sum_{i\in I^*}\gamma^*_{i}c_{i}D_{i}^{\alpha}\int_{0}^{1}\mathbb{P}\Big(\max_{\mathbf{t}\in\mathcal{E}_{i}}\Theta^{\alpha}_{\mathcal{E}_{i},\mathbf{t}}>uD^{\alpha}_{i}\Big)-\mathbb{P}\Big(\max_{\mathbf{t}\in\mathcal{E}_{i}}\Theta^{\alpha}_{\mathcal{E}_{i},\mathbf{t}}>uD^{\alpha}_{i},\sup_{\mathbf{t}\in \tilde{\mathcal{D}}_{i} }\Theta^{\alpha}_{\mathcal{E}_{i},\mathbf{t}}> uD_{i}^{\alpha}\Big)du\\
	&=\sum_{i\in I^*}\gamma^*_{i}c_{i}D_{i}^{\alpha}\bigg(\mathbb{E}\Big[D^{-\alpha}_{i}\max_{\mathbf{t}\in\mathcal{E}_{i}}\Theta^{\alpha}_{\mathcal{E}_{i},\mathbf{t}}\wedge 1\Big]-\mathbb{E}\Big[D_{i}^{-\alpha}\max_{\mathbf{t}\in\mathcal{E}_{i}}\Theta^{\alpha}_{\mathcal{E}_{i},\mathbf{t}}\wedge D_{i}^{-\alpha}\sup_{\mathbf{t}\in \tilde{\mathcal{D}}_{i} }\Theta^{\alpha}_{\mathcal{E}_{i},\mathbf{t}}\wedge 1\Big]\bigg)\\
	&=\sum_{i\in I^*}\gamma^*_{i}c_{i}\bigg(\mathbb{E}\Big[\max_{\mathbf{t}\in\mathcal{E}_{i}}\Theta^{\alpha}_{\mathcal{E}_{i},\mathbf{t}}\wedge D_{i}^{\alpha}\Big]-\mathbb{E}\Big[\max_{\mathbf{t}\in\mathcal{E}_{i}}\Theta^{\alpha}_{\mathcal{E}_{i},\mathbf{t}}\wedge \sup_{\mathbf{t}\in \tilde{\mathcal{D}}_{i} }\Theta^{\alpha}_{\mathcal{E}_{i},\mathbf{t}}\wedge D_{i}^{\alpha}\Big]\bigg)\\
	&=\sum_{i\in I^*}\gamma^*_{i}c_{i}\bigg(\mathbb{E}\Big[\max_{\mathbf{t}\in\mathcal{E}_{i}}\Theta^{\alpha}_{\mathcal{E}_{i},\mathbf{t}}\Big]-\mathbb{E}\Big[\max_{\mathbf{t}\in\mathcal{E}_{i}}\Theta^{\alpha}_{\mathcal{E}_{i},\mathbf{t}}\wedge \sup_{\mathbf{t}\in \tilde{\mathcal{D}}_{i} }\Theta^{\alpha}_{\mathcal{E}_{i},\mathbf{t}}\Big]\bigg)\\
&	=\sum_{i\in I^*}\gamma^*_{i}c_{i}\mathbb{E}\Big[\Big(\max_{\mathbf{t}\in\mathcal{E}_{i}}\Theta^{\alpha}_{\mathcal{E}_{i},\mathbf{t}}- \sup_{\mathbf{t}\in \tilde{\mathcal{D}}_{i} }\Theta^{\alpha}_{\mathcal{E}_{i},\mathbf{t}}\Big)_{+}\Big]=\sum_{i\in I^*}\gamma^*_{i}c_{i}\mathbb{E}\Big[\sup_{\mathbf{t}\in\mathcal{E}_{i}\cup\tilde{\mathcal{D}}_{i}}\Theta^{\alpha}_{\mathcal{E}_{i},\mathbf{t}}-\sup_{\mathbf{t}\in \tilde{\mathcal{D}}_{i} }\Theta^{\alpha}_{\mathcal{E}_{i},\mathbf{t}}\Big],
	\end{align*}
	where we used the fact that since $\rho_{\mathcal{E}_{i}}(\Theta)=1$ a.s., then $\max_{\mathbf{s}\in\mathcal{E}_{i}}\Theta(\mathbf{s}\leq D_{i}$ a.s.~by Corollary \ref{co-D-C-norm}. Moreover, applying a change of variable and the time change formula we get the representation
	\begin{equation*}
	\sum_{j\in I^*}\gamma^*_{j}c_{j}\mathbb{E}\bigg[\frac{\sup_{\mathbf{t}\in\mathcal{H}_{j}}\Theta_{\mathcal{E}_{j},\mathbf{t}}^{\alpha}}{\sum_{\mathbf{s}\in\mathcal{H}_{j}}|\Theta_{\mathcal{E}_{j},\mathbf{s}}|^{\alpha}} \bigg]=\sum_{j\in I^*}\gamma^*_{j}c_{j}\mathbb{E}\bigg[\sup\limits_{\mathbf{t}\in\mathcal{H}_{j}}{Q}_{\mathcal{E}_{j},\mathcal{L}_{j},\mathbf{t}}^{\alpha}\bigg].
	\end{equation*}
	Finally, by the time-change formula applied to 
	\begin{equation*}
	    f_{t}((\Theta_{\mathcal{E}_{j},\mathbf{s}})_{\mathbf{s}\in\mathcal{H}_{j}})=\frac{\max_{\mathbf{z}\in(\mathcal{E}_{j})_{\mathbf{t}}}\Theta_{\mathcal{E}_{j},\mathbf{z}}^{\alpha}}{\sum_{\mathbf{s}\in\mathcal{H}_{j}}\Theta_{\mathcal{E}_{j},\mathbf{s}}^{\alpha}}\mathbf{1}(\mathbf T^{*}_{j}=\mathbf{t})
	\end{equation*} 
	and shifting $\mathbf{t}$ to $\mathbf{0}$, for every $\mathbf{t}\in\mathcal{L}_{j}$, we have
	\begin{multline*}
	\mathbb{E}\bigg[\sup\limits_{\mathbf{t}\in(\mathcal{E}_{j})_{T^{*}_{j}}} {Q}_{\mathcal{E}_{j},\mathcal{L}_{j},\mathbf{t}}^{\alpha}\bigg]=\sum_{\mathbf{t}\in\mathcal{L}_{j}}\mathbb{E}\bigg[\frac{\max_{\mathbf{z}\in(\mathcal{E}_{j})_{\mathbf{t}}}\Theta_{\mathcal{E}_{j},\mathbf{z}}^{\alpha}}{\sum_{\mathbf{s}\in\mathcal{H}_{j}}\Theta_{\mathcal{E}_{j},\mathbf{s}}^{\alpha}} \mathbf{1}(\mathbf T^{*}_{j}=\mathbf{t})\bigg]\\
	=\sum_{\mathbf{t}\in\mathcal{L}_{j}}\mathbb{E}\bigg[\frac{\max_{\mathbf{z}\in\mathcal{E}_{j}}\Theta_{\mathcal{E}_{j},\mathbf{z}}^{\alpha}}{\sum_{\mathbf{s}\in\mathcal{H}_{j}}\Theta_{\mathcal{E}_{j},\mathbf{s}}^{\alpha}} \mathbf{1}(\mathbf T^{*}_{j}=\mathbf{0})\sum_{\mathbf{i}\in(\mathcal{E}_{j})_{-\mathbf{t}}}\Theta_{\mathcal{E}_{j},\mathbf{i}}^{\alpha}\bigg]=\mathbb{E}\bigg[\max_{\mathbf{z}\in\mathcal{E}_{j}}\Theta_{\mathcal{E}_{j},\mathbf{z}}^{\alpha} \mathbf{1}(\mathbf T^{*}_{j}=\mathbf{0})\bigg].
	\end{multline*}
\end{proof}

\subsection{Proof of Proposition \ref{pro-BR-1}}
 For every $j\ge 1$ we also denote  $R^{(j)}_{l,\Lambda_{n}}:=\big(\bigcup_{\mathbf{t}\in\{\mathbf{s}\in\Lambda_{n}:(\Lambda_{n})_{-\mathbf{s}}\supset\mathcal{E}_j\}}((\Lambda_{n})_{-\mathbf{t}}\cap\{\mathbf{s}\in\mathbb{Z}^{k}:\mathbf{s}\succ\mathbf{0}\})\big)\setminus K_{l}$ and   $\hat{M}^{\Lambda,X,(j)}_{l,n}:=\max_{\mathbf{i}\in R^{(j)}_{l,\Lambda_{n}}}{X}_{\mathbf{i}}$ so that Condition (AC$^{\Lambda}_{\succeq,I^*}$) is satisfied  if for every $j\in I^*$
\begin{equation*}
\lim\limits_{l\to\infty}\limsup_{n\to\infty}\mathbb{P}\Big(\hat{M}^{\Lambda,X,(j)}_{2l,r_{n}}>a_{n}^\Lambda x\,\big|\max\limits_{\mathbf{t}\in\mathcal{E}_j} {X}_{\mathbf{t}}>a_{n}^\Lambda x\Big)=0.
\end{equation*}
We start by showing the following Lemma
\begin{lemma}
	Let $(X_\textbf{t})_{\textbf{t}\in\mathbb{Z}^{k}}$ be a stationary  max-stable random field. Then $(X_{\textbf{t}})_{\textbf{t}\in\mathbb{Z}^{k}}$ is jointly regularly varying and the finite-dimensional distributions of its tail field $(Y_{\textbf{t}})_{\textbf{t}\in\mathbb{Z}^{k}}$ is given by
	\begin{multline}
	\mathbb{P}(Y_{\textbf{t}_{1}}<y_{1},...,Y_{\textbf{t}_{n}}<y_{n})\label{BR-distribution}\\
	=\frac{1}{\mathbb{E}[V_{\textbf{0}}]}\left\{\mathbb{E}\left[\max\left(\max_{i=1,...,n}\frac{1}{y_{i}}V_{\textbf{t}_{i}},V_{\textbf{0}}\right)\right]-\mathbb{E}\left[\max_{i=1,...,n}\frac{1}{y_{i}}V_{\textbf{t}_{i}}\right]\right\}
	\end{multline}
	for $\textbf{t}_{1},...,\textbf{t}_{n}\in\mathbb{Z}^{k}$ and $y_{1},...,y_{n}\in(0,\infty)$.
\end{lemma}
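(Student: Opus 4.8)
The plan is to read off the finite-dimensional laws of $X$ from the Poisson representation \eqref{eq:maxstable}, condition on $\{X_{\mathbf{0}}>x\}$, and let $x\to\infty$. First I would compute, for any finite $\{\mathbf{t}_1,\dots,\mathbf{t}_n\}\subset\mathbb{Z}^k$ and $z_1,\dots,z_n>0$, the joint distribution function: since $(U_i,V_i)_{i\ge1}$ is a Poisson point process on $(0,\infty)\times\mathcal V$ with intensity $u^{-2}du$ times the law of $V$, the event $\{X_{\mathbf{t}_j}\le z_j,\ j=1,\dots,n\}$ is the event that no atom $(U_i,V_i)$ satisfies $U_iV_{i,\mathbf{t}_j}>z_j$ for some $j$; computing the mean of this Poisson count (an elementary one-dimensional integral in $u$) gives
\begin{equation*}
\mathbb{P}(X_{\mathbf{t}_1}\le z_1,\dots,X_{\mathbf{t}_n}\le z_n)=\exp\Big(-\mathbb{E}\big[\textstyle\max_{1\le j\le n}V_{\mathbf{t}_j}/z_j\big]\Big).
\end{equation*}
In particular $\mathbb{P}(X_{\mathbf{0}}>x)=1-e^{-\mathbb{E}[V_{\mathbf{0}}]/x}\sim\mathbb{E}[V_{\mathbf{0}}]/x$, so $X_{\mathbf{0}}$ is regularly varying with index $\alpha=1$.

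Next, for $y_1,\dots,y_n>0$ I would write the conditional distribution function as a ratio of differences of the above expressions (applied at the indices $\mathbf{t}_1,\dots,\mathbf{t}_n$, respectively $\mathbf{t}_1,\dots,\mathbf{t}_n,\mathbf{0}$):
\begin{multline*}
\mathbb{P}\big(X_{\mathbf{t}_1}\le xy_1,\dots,X_{\mathbf{t}_n}\le xy_n\mid X_{\mathbf{0}}>x\big)\\
=\frac{\exp\!\big(-\tfrac1x\mathbb{E}[\max_jV_{\mathbf{t}_j}/y_j]\big)-\exp\!\big(-\tfrac1x\mathbb{E}[\max(\max_jV_{\mathbf{t}_j}/y_j,V_{\mathbf{0}})]\big)}{1-e^{-\mathbb{E}[V_{\mathbf{0}}]/x}}.
\end{multline*}
A first–order Taylor expansion of numerator and denominator, valid because $\mathbb{E}[V_{\mathbf{0}}]<\infty$ and, by stationarity, $\mathbb{E}[\max_jV_{\mathbf{t}_j}/y_j]<\infty$, shows that as $x\to\infty$ this converges to
\begin{equation*}
\frac{1}{\mathbb{E}[V_{\mathbf{0}}]}\Big\{\mathbb{E}\big[\max(\textstyle\max_jV_{\mathbf{t}_j}/y_j,\,V_{\mathbf{0}})\big]-\mathbb{E}\big[\textstyle\max_jV_{\mathbf{t}_j}/y_j\big]\Big\},
\end{equation*}
which is exactly the right–hand side of \eqref{BR-distribution}.

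It then remains to check that the family of limit functions obtained this way is a consistent family of genuine distribution functions: each is continuous and nondecreasing in $(y_1,\dots,y_n)$; letting all $y_j\to\infty$ and using dominated convergence with the integrable envelope $V_{\mathbf{0}}+\sum_jV_{\mathbf{t}_j}$ one gets the limit $1$; and consistency under deleting a coordinate ($y_j\to\infty$) is immediate from the formula. Hence by Kolmogorov's extension theorem there is a random field $(Y_{\mathbf{t}})_{\mathbf{t}\in\mathbb{Z}^k}$ with the finite-dimensional distribution functions \eqref{BR-distribution}, and the pointwise convergence of the (continuous) conditional distribution functions to the (continuous) limit yields $\mathcal{L}(x^{-1}X_{\mathbf{t}}:\mathbf{t}\in\mathbb{Z}^k\mid X_{\mathbf{0}}>x)\stackrel{fdd}{\to}\mathcal{L}(Y_{\mathbf{t}}:\mathbf{t}\in\mathbb{Z}^k)$. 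Taking $n=1$, $\mathbf{t}_1=\mathbf{0}$, $y_1=y\ge1$ in \eqref{BR-distribution} gives $\mathbb{P}(Y_{\mathbf{0}}>y)=y^{-1}$. By Theorem 2.1 in \cite{SW} this shows $(X_{\mathbf{t}})$ is jointly regularly varying with index $\alpha=1$ and that $(Y_{\mathbf{t}})$ is its tail field.

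I do not expect a serious obstacle here. The only points demanding a little care are: justifying that pointwise convergence of the conditional distribution functions, together with the properties of the limit just listed, really delivers the finite-dimensional weak convergence required by Theorem 2.1 in \cite{SW}; and keeping the Taylor expansion and the dominated-convergence step clean (for instance noting that the edge case $\mathbf{0}\in\{\mathbf{t}_1,\dots,\mathbf{t}_n\}$ is harmless, since $\max(V_{\mathbf{0}}/y_1,V_{\mathbf{0}})=V_{\mathbf{0}}/\min(y_1,1)$ reproduces the same formula). Both are routine once the integrability $\mathbb{E}[V_{\mathbf{0}}]<\infty$ is used.
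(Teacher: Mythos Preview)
Your proof is correct and follows essentially the same route as the paper: compute the joint distribution function from the spectral representation, form the conditional probability as a ratio of differences of exponentials, and Taylor-expand to obtain the limit \eqref{BR-distribution}. You add some extra verification (consistency of the limiting family, the Pareto tail of $Y_{\mathbf{0}}$, invocation of Theorem~2.1 in \cite{SW}) that the paper leaves implicit, but the core argument is identical.
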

\begin{proof}
	It follows from similar computations as the ones in the proof of Proposition 6.1 in \cite{SW}. For any $x_{1},...,x_{n}\in(0,\infty)$ we have (see Examples 1.5.4 and 4.4.3 in \cite{MW})
	\begin{equation*}
	\mathbb{P}(X_{\textbf{t}_{1}}\leq x_{1},...,X_{\textbf{t}_{n}}\leq x_{n})=\exp\left\{-\mathbb{E}\left[\max_{i=1,...,n}\frac{V_{\textbf{t}_{i}}}{x_{i}}\right] \right\}.
	\end{equation*}
	Thus, for any $x>0$
	\begin{align*}
\lefteqn{	\mathbb{P}(x^{-1}X_{\textbf{t}_{1}}\leq y_{1},...,x^{-1}X_{\textbf{t}_{n}}\leq y_{n}\,\,|\,\,X_{\textbf{0}}>x)}\\
&	=\frac{\mathbb{P}(X_{\textbf{t}_{1}}\leq xy_{1},...,X_{\textbf{t}_{n}}\leq xy_{n})-\mathbb{P}(X_{\textbf{t}_{1}}\leq xy_{1},...,X_{\textbf{t}_{n}}\leq xy_{n},X_{\textbf{0}}\leq x)}{\mathbb{P}(X_{\textbf{0}}>x)}\\
&	=\frac{\exp\left\{-\frac{1}{x}\mathbb{E}\left[\max_{i=1,...,n}\frac{V_{\textbf{t}_{i}}}{y_{i}}\right] \right\}-\exp\left\{-\frac{1}{x}\mathbb{E}\left[\max\left(\max_{i=1,...,n}\frac{V_{\textbf{t}_{i}}}{y_{i}},V_{\textbf{0}}\right)\right] \right\}}{1-e^{-\mathbb{E}[V_{\textbf{0}}]/x}}\\
&	\sim \frac{x}{\mathbb{E}[V_{\textbf{0}}]}\left(\exp\left\{-\frac{1}{x}\mathbb{E}\left[\max_{i=1,...,n}\frac{V_{\textbf{t}_{i}}}{y_{i}}\right] \right\}-\exp\left\{-\frac{1}{x}\mathbb{E}\left[\max\left(\max_{i=1,...,n}\frac{V_{\textbf{t}_{i}}}{y_{i}},V_{\textbf{0}}\right)\right] \right\}  \right)\\
&	= \frac{x}{\mathbb{E}[V_{\textbf{0}}]}\bigg(\exp\left\{-\frac{\mathbb{E}[V_{\textbf{0}}]}{x}\frac{1}{\mathbb{E}[V_{\textbf{0}}]}\mathbb{E}\left[\max_{i=1,...,n}\frac{V_{\textbf{t}_{i}}}{y_{i}}\right] \right\}\\
&	-\exp\left\{-\frac{\mathbb{E}[V_{\textbf{0}}]}{x}\frac{1}{\mathbb{E}[V_{\textbf{0}}]}\mathbb{E}\left[\max\left(\max_{i=1,...,n}\frac{V_{\textbf{t}_{i}}}{y_{i}},V_{\textbf{0}}\right)\right] \right\}  \bigg)
	\end{align*}
	which converges to (\ref{BR-distribution}) as $x\to\infty$.
\end{proof}

We follow partially the proof of Proposition 6.2 in \cite{SW}. Fix any $j\in I^*$, observe that
	\begin{multline*}
	\mathbb{P}\Big(\hat{M}^{\Lambda,X,(j)}_{2l,r_{n}}>a_{n}^\Lambda x\,\big|\max\limits_{\mathbf{t}\in\mathcal{E}_j} X_{\mathbf{t}}>a_{n}^\Lambda x\Big)
	=1-\\
	\frac{\mathbb{P}\Big(\max\Big(\hat{M}^{\Lambda,X,(j)}_{2l,r_{n}},\max\limits_{\mathbf{t}\in\mathcal{E}_j} X_{\mathbf{t}}\Big)>a_{n}^\Lambda x\Big)-\mathbb{P}\Big(\hat{M}^{\Lambda,X,(j)}_{2l,r_{n}}>a_{n}^\Lambda x\Big)}{\mathbb{P}\Big(\max\limits_{\mathbf{t}\in\mathcal{E}_j} X_{\mathbf{t}}>a_{n}^\Lambda x\Big)}
	\end{multline*}
	and that
	\begin{equation*}
	\mathbb{P}\Big(\max\limits_{\mathbf{t}\in\mathcal{E}_j} X_{\mathbf{t}}>a_{n}^\Lambda x\Big)=\Big(1-e^{-\mathbb{E}\big[\max\limits_{\mathbf{t}\in\mathcal{E}_j} V_{\mathbf{t}}\big]/a_{n}^\Lambda x}\Big)\sim \mathbb{E}\Big[\max\limits_{\mathbf{t}\in\mathcal{E}_j} V_{\mathbf{t}}\Big]/a_{n}^\Lambda  x\,,\qquad n\to \infty\,.
	\end{equation*}
	Since
	\begin{equation*}
	0\leq -(a_{n}^\Lambda x)^{-1}\mathbb{E}\Big[\hat{M}^{\Lambda,V,(j)}_{2l,r_{n}}\Big]\leq -(a_{n}^\Lambda x)^{-1}|\Lambda_{r_n}|\to0,
	\end{equation*}
	as $n\to\infty$, we also have
		\begin{equation*}
	\mathbb{P}\Big(\hat{M}^{\Lambda,X,(j)}_{2l,r_{n}}>a_{n}^\Lambda x\Big)=\Big(1-e^{-\mathbb{E}\big[\hat{M}^{\Lambda,V,(j)}_{2l,r_{n}}\big]/a_{n}^\Lambda x}\Big)\sim \mathbb{E}\Big[\hat{M}^{\Lambda,V,(j)}_{2l,r_{n}}\Big]/a_{n}^\Lambda  x\,,\qquad n\to \infty\,.
	\end{equation*}
	Then, the (AC$_{\succ}$) condition is satisfied if and only if
	\begin{equation}\label{BR-1}
	\lim\limits_{l\to\infty}\liminf_{n\to\infty}a_{n}^\Lambda x\Big[\mathbb{P}\Big(\max\Big(\hat{M}^{\Lambda,X,(j)}_{2l,r_{n}},\max\limits_{\mathbf{t}\in\mathcal{E}_j} X_{\mathbf{t}}\Big)>a_{n}^\Lambda x\Big)-\mathbb{P}\Big(\hat{M}^{\Lambda,X,(j)}_{2l,r_{n}}>a_{n}^\Lambda x\Big)\Big]=\mathbb{E}[\max\limits_{\mathbf{t}\in\mathcal{E}_j} V_{\mathbf{t}}].
	\end{equation}
	Since
	\begin{align*}
&	\mathbb{P}\Big(\max\Big(\hat{M}^{\Lambda,X,j}_{2l,r_{n}},\max\limits_{\mathbf{t}\in\mathcal{E}_j} X_{\mathbf{t}}\Big)>a_{n}^\Lambda x\Big)-\mathbb{P}\Big(\hat{M}^{\Lambda,X,(j)}_{2l,r_{n}}>a_{n}^\Lambda x\Big)\\
&	=\exp\Big(-(a_{n}^\Lambda x)^{-1}\mathbb{E}\Big[\hat{M}^{\Lambda,V,(j)}_{2l,r_{n}}\Big]\Big)-\exp\Big(-(a_{n}^\Lambda x)^{-1}\mathbb{E}\Big[\max\Big(\hat{M}^{\Lambda,V,(j)}_{2l,r_{n}},\max\limits_{\mathbf{t}\in\mathcal{E}_j} V_{\mathbf{t}}\Big)\Big]\Big)\\
&	\sim (a_{n}^\Lambda x)^{-1}\Big(\mathbb{E}\Big[\max\Big(\hat{M}^{\Lambda,V,(j)}_{2l,r_{n}},\max\limits_{\mathbf{t}\in\mathcal{E}_j} V_{\mathbf{t}}\Big)\Big]-\mathbb{E}\Big[\hat{M}^{\Lambda,V,(j)}_{2l,r_{n}}\Big]\Big)
\end{align*}
	and then (\ref{BR-1}) holds if and only if
	\begin{equation}\label{BR-2}
	\lim\limits_{l\to\infty}\liminf_{n\to\infty}\mathbb{E}\Big[\max\Big(\hat{M}^{\Lambda,V,(j)}_{2l,r_{n}},\max\limits_{\mathbf{t}\in\mathcal{E}_j} V_{\mathbf{t}}\Big)\Big]-\mathbb{E}\Big[\hat{M}^{\Lambda,V,(j)}_{2l,r_{n}}\Big]=\mathbb{E}\Big[\max\limits_{\mathbf{t}\in\mathcal{E}_j} V_{\mathbf{t}}\Big].
	\end{equation}
	
	Then
	\begin{multline*}
	\lim\limits_{l\to\infty}\liminf_{n\to\infty}\mathbb{E}\Big[\max\Big(\hat{M}^{\Lambda,V,(j)}_{2l,r_{n}},\max\limits_{\mathbf{t}\in\mathcal{E}_j} V_{\mathbf{t}})\Big]-\mathbb{E}\Big[\hat{M}^{\Lambda,V,(j)}_{2l,r_{n}}\Big]\\
	=\lim\limits_{l\to\infty}\mathbb{E}\Big[\Big(\max_{\mathbf{t}\in R^{(j)}_{2l,\Lambda_{r_n}}\cup\mathcal{E}_j } V_{\textbf{t}} - \max\limits_{\mathbf{t}\in R^{(j)}_{2l,\Lambda_{r_n}}} V_{\textbf{t}}\Big)1\big(\max\limits_{\mathbf{t}\in\mathcal{E}_j} V_{\mathbf{t}}\ne 0\big)\Big]\,.
	\end{multline*}
	By assumption $\max\limits_{\mathbf{t}\in R^{(j)}_{2l,\Lambda_{r_n}}} V_{\textbf{t}}=\max\limits_{\mathbf{t}\in R^{(j)}_{2l,\Lambda_{r_n}}\cap (\cup_{i\ge 1}(({\cal H})_i)_+)} V_{\textbf{t}}$, thus $\max\limits_{\mathbf{t}\in R^{(j)}_{2l,\Lambda_{r_n}}} V_{\textbf{t}}1\big(\max\limits_{\mathbf{t}\in\mathcal{E}_j} V_{\mathbf{t}}\ne 0\big)\to 0$ a.s. as $l\to \infty$ under \eqref{BR}
and then \eqref{BR-2} follows by dominated convergence.

\section*{Acknowledgements}
Riccardo Passeggeri would like to acknowledge the support of the Fondation Sciences Mathématiques de Paris (FSMP). Olivier Wintenberger would like to acknowledge the support of the French Agence Nationale de la Recherche (ANR) under reference ANR20-CE40- 0025-01 (T-REX project). He also would like to thank Thomas Mikosch for useful discussions on the topic. 
\bibliographystyle{plain}

\end{quote}
\end{document}